\documentclass[11pt]{amsart}
\usepackage{amsmath, amssymb, graphicx, mathrsfs, enumerate, amsthm, textcomp, url, bbm}
\usepackage[T1]{fontenc}
\usepackage[T5]{fontenc}
\usepackage{mathdots}
\usepackage{comment}
\usepackage[toc,page]{appendix}
\usepackage{hyperref}
\usepackage{tikz-cd}
\hypersetup{
    colorlinks=true,
    linkcolor=blue,
    filecolor=magenta,      
    urlcolor=cyan,
}

\usepackage{color}

\input xy
\xyoption{all}

\allowdisplaybreaks
\usepackage[margin=1in]{geometry}

\numberwithin{equation}{section}
\newtheorem{theorem}{Theorem}[section]
\newtheorem{proposition}[theorem]{Proposition}
\newtheorem{corollary}[theorem]{Corollary}
\newtheorem{lemma}[theorem]{Lemma}

\newtheorem{definition}[theorem]{Definition}

\newtheorem{setting}[theorem]{Setting}
\newtheorem*{remark*}{Remark}

\newtheorem{remark}[theorem]{Remark}

\newcommand{\mfo}{\mathfrak{o}}

\newcommand{\Mfo}{\mathcal{O}}

\newcommand{\gl}{\mathfrak{gl}}

\newcommand{\clb}{\overline{\mathrm{Cl}}}

\newcommand{\End}{\mathrm{End}}
\newcommand{\cL}{\mathcal{L}}
\newcommand{\ce}{\mathcal{E}}
\newcommand{\co}{\mathcal{O}}

\newcommand{\ord}{\mathrm{ord}}
\newcommand*{\Rom}[1]{\expandafter\@slowromancap\romannumeral #1@}
\begin{document}

\title[Global geometrization of local smooth integral models]
    {Global geometrization of local smooth integral models in the Hitchin fibration for $\mathrm{GL}_n$: The Bass case}
\keywords{compactified Jacobian, spectral curve, smoothening, Hitchin fibration}

\subjclass[2020]{MSC 11F72, 14F20, 	14H60}

\author[Sungmun Cho]{Sungmun Cho}
\author[Jungtaek Hong]{Jungtaek Hong}
\thanks{We were supported by  Samsung Science and Technology Foundation under Project Number SSTF-BA2001-04.}

\address{Sungmun Cho \\  Department of Mathematics, POSTECH, 77, Cheongam-ro, Nam-gu, Pohang-si, Gyeongsangbuk-do, 37673, KOREA}

\email{sungmuncho12@gmail.com}

\address{Jungtaek Hong \\ Department of Mathematics, POSTECH, 77, Cheongam-ro, Nam-gu, Pohang-si, Gyeongsangbuk-do, 37673, KOREA}

\email{jungtaekhong123@gmail.com}

\maketitle

\begin{abstract}
In previous joint work, we proposed a new method to study local orbital integrals for $\mathrm{GL}_n$ (where $n=3$ or in the Bass case), by employing a smoothening method of a certain scheme defined over a henselian ring.
In this paper, we geometrize this local smoothening method within the framework of the global Hitchin fibration for $\mathrm{GL}_n$ in the Bass case. 
Consequently, we provide a closed formula for the $\ell$-adic cohomology of the compactified Jacobian of a spectral curve over a finite field with double singularities whose local rings are integral domains, provided that the normalization of a spectral curve is isomorphic to $\mathbb{P}^1$.
    \end{abstract}

\tableofcontents

\section{Introduction}


The (local) orbital integral is the geometric side of \textit{the celebrated Arthur-Selberg trace formula}, which is a fundamental object for understanding endoscopic transfer. 
This has been studied by many mathematicians, such as Ngô Bảo Châu, using the Hitchin fibration.
The latter is a global object on a curve over a finite field. 
On the other hand, the authors' former works (\cite{CKL} and \cite{CHL}) proposed another geometric method, called \textit{smoothening}. 
 This involves a local object used to desingularize a certain scheme defined over a henselian ring of arbitrary characteristic. 
 The main goal of this paper is to compare these two methods (for the Bass case in $\mathrm{GL}_n$) and to derive an explicit formula for the $\ell$-adic cohomology of the compactified Jacobian of a spectral curve $Y_\chi$ (when the normalization of $Y_\chi$ is isomorphic to $\mathbb{P}^1_k$) having double singularities whose local rings $\Mfo_{Y_{\chi,x}}$ are integral domains. 

\subsection{Global object: Hitchin fibration for $\mathrm{GL}_n$} 
For a smooth projective curve $C$ with  a large effective divisor $D$ defined over a finite field $k$, 
 the Hitchin moduli stack $\mathbb{M}$ parametrizes the vector bundles $\mathcal{E}$ on $C$ of rank $n$ along with a morphism $\theta:\mathcal{E}\rightarrow \mathcal{E}(D)$.
The Hitchin fibration $\Phi$ is defined to be the "characteristic polynomial" morphism from  $\mathbb{M}$ to the affine space $\mathbb{A}$, which parametrizes the coefficients of the characteristic polynomial $\chi$ of an endomorphism.
    By Ngô's product formula(cf. Proposition \ref{prop:prod}), $\Phi^{-1}(\chi)(k)$ is understood as the product of the affine Springer fiber at each closed points of $C$, modulo certain equivalence.
Consequently the point count of $\Phi^{-1}(\chi)(k)$ is (a finite product of) the orbital integral for $\mathrm{GL}_n$ corresponding to $\chi$.

Ngô proved the support theorem for the Hitchin fibration.
This is a cohomological statement that allows us to deduce the fundamental lemma solely by studying the regular locus of $\mathbb{A}$, without calculating the cohomology of $\Phi^{-1}(\chi)$ and thus of the orbital integral.
Over the regular and elliptic locus, the Hitchin fibers are abelian varieties but in general they are not.
On the other hand, $\Phi^{-1}(\chi)$ has a geometric moduli description described below. 
\begin{theorem}(\cite[Section 4.4.1]{Ngo10})\label{thm:spectralcorrintro}
    $\Phi^{-1}(\chi)$ is isomorphic to the \textbf{compactified Jacobian} $\overline{\mathrm{Pic}^0}(Y_\chi)$, which parametrizes the rank-1 torsion-free sheaves on a curve $Y_\chi$ depending on $\chi$, called the \textbf{spectral curve}.
\end{theorem}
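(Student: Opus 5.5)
The plan is the classical spectral correspondence of Beauville--Narasimhan--Ramanan, carried out in families over the base $\mathbb{A}$ and then restricted to the fibre over $\chi$.

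\textbf{Construction of the spectral curve.} Let $\pi\colon \mathbb{V}(\mathcal{O}_C(D)) \to C$ be the total space of the line bundle $\mathcal{O}_C(D)$, and let $t$ be the tautological section of $\pi^*\mathcal{O}_C(D)$. The point $\chi=(a_1,\dots,a_n)$, with $a_i\in H^0(C,\mathcal{O}_C(iD))$, gives a section
\[
t^n+a_1t^{n-1}+\cdots+a_n
\]
of $\pi^*\mathcal{O}_C(nD)$. Its zero locus is the spectral curve $Y_\chi$. It is finite and flat of degree $n$ over $C$, with
\[
\pi_*\mathcal{O}_{Y_\chi}=\bigoplus_{i=0}^{n-1}\mathcal{O}_C(-iD).
\]
For the fibre to be the compactified Jacobian one needs $Y_\chi$ reduced, and in fact integral, i.e.\ $\chi$ in the elliptic locus. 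I will assume this, since it is the setting of the paper.

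\textbf{The correspondence.} The key observation is that giving a pair $(\mathcal{E},\theta)$ with characteristic polynomial $\chi$ is the same as giving a module over the sheaf of algebras $\mathrm{Sym}(\mathcal{O}_C(-D))/(\chi)=\pi_*\mathcal{O}_{Y_\chi}$. Here $\theta$ is the action of $t$, and Cayley--Hamilton says exactly that $\chi(\theta)=0$. So $(\mathcal{E},\theta)$ corresponds to a coherent sheaf $\mathcal{F}$ on $Y_\chi$ with $\pi_*\mathcal{F}=\mathcal{E}$.

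The steps, in order:
\begin{enumerate}[(i)]
\item Cayley--Hamilton for $\theta\colon\mathcal{E}\to\mathcal{E}(D)$, checked locally, where it is the usual statement.
\item $\mathcal{F}$ is torsion free of rank one on $Y_\chi$. Since $\pi$ is finite, $\pi_*$ identifies torsion-free $\mathcal{O}_{Y_\chi}$-modules with those modules whose pushforward is torsion free over $C$, i.e.\ locally free over the Dedekind scheme $C$. Rank one follows from comparing ranks at the generic point: there $\mathcal{E}_\eta$ is a module over the field $K(Y_\chi)$, of dimension $n$ over $K(C)$.
\item Conversely, a torsion-free rank-one $\mathcal{F}$ has $\pi_*\mathcal{F}$ locally free of rank $n$ with a Higgs field. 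Its characteristic polynomial is $\chi$ because the two polynomials agree generically, where $\pi_*\mathcal{F}$ is one-dimensional over $K(Y_\chi)$.
\end{enumerate}

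\textbf{Families and degree.} The construction is functorial in the test scheme $S$, using flat base change for the finite morphism $\pi$. This gives an equivalence of stacks between $\Phi^{-1}(\chi)$ and the stack of torsion-free rank-one sheaves on $Y_\chi$.

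The degree of $\mathcal{E}$ fixes the degree of $\mathcal{F}$ via Riemann--Roch,
\[
\deg\mathcal{F}=\deg\mathcal{E}+\tfrac{n(n-1)}{2}\deg D,
\]
which matches $\overline{\mathrm{Pic}^0}$ after the appropriate normalisation of degrees. Automorphisms match as well: both sides carry the $\mathbb{G}_m$ of scalars, since $\mathrm{End}(\mathcal{F})=\mathcal{O}$ for $Y_\chi$ integral. Ngô's statement is presumably modulo this gerbe or with a rigidification.

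\textbf{Main obstacle.} I expect the main difficulty to be the family version of step (ii). Over a general base $S$, "torsion free" must be read as $S$-flat with fibrewise torsion-free fibres. One has to show that $\pi_*\mathcal{F}$ is locally free exactly when $\mathcal{F}$ is $S$-flat with torsion-free fibres. I would handle this using flatness of $\pi$ and the fibrewise criterion for flatness, reducing to the case of a field base.
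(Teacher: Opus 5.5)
Your proposal is correct and is essentially the intended argument. The paper gives no proof of its own; it quotes the result from Ng\^o, and from Chaudouard for the version rigidified at $\infty_1$, where it is exactly the Beauville--Narasimhan--Ramanan correspondence you sketch. The degree normalisation you left open closes as follows: with $\deg\mathcal{E}=0$ your computation gives $\deg\mathcal{F}=\frac{n(n-1)}{2}\deg D$, and tensoring with $\pi_\chi^*\mathcal{O}_C(-\frac{n-1}{2}D)$ (a line bundle since $D$ is assumed even) identifies this component with $\overline{\mathrm{Pic}^0}(Y_\chi)$. Also, $\mathcal{H}\mathit{om}(\mathcal{F},\mathcal{F})$ is the structure sheaf of a partial normalization rather than $\mathcal{O}_{Y_\chi}$, but its global sections are still $k$, so your $\mathbb{G}_m$ claim stands.
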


Thus, an explicit formula for the cohomology of $\Phi^{-1}(\chi)$  carries 
rich geometric information, not only for endoscopic transfer.

\subsection{Local object: Smoothening}
To explicitly compute the (local) orbital integral for $\chi$, \cite{CKL} and \cite{CHL} introduced a method called \textit{smoothening}, which provides another geometric interpretation of the orbital integral in a purely local setting.
The key idea is to use geometric measures to reinterpret the orbital integral.
Choose $\gamma\in \mathfrak{gl}_n(\mfo)$ where $\mfo$ is the ring of integers $\mfo$ of a non-Archimedean local field, so that its characteristic polynomial $\chi$ has coefficients in $\mfo$.
Consider a morphism $\varphi: \mathfrak{gl}_{n,\mfo}\rightarrow \mathbb{A}_{\mfo}^n$ over $\mfo$  sending a matrix to the coefficients of the characteristic polynomial.
Then the orbital integral is reinterpreted as the volume of $\varphi^{-1}(\chi)(\mfo)$.
If $\varphi^{-1}(\chi)$ is smooth, then Weil's volume formula easily yields the orbital integral:
\begin{theorem}\cite[Theorem 2.2.5]{Weil}
    If $\varphi^{-1}(\chi)$ is a smooth scheme over $\mfo$, then the orbital integral is $\frac{\#(\varphi^{-1}(\chi)(k))}{q^{n^2-n}}$, where $q$ is the cardinality of the residue field $k$ of $\mfo$.
\end{theorem}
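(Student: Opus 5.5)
The plan is to derive the statement from Weil's comparison between the canonical measure on the $\mfo$-points of a smooth scheme and point counts over the residue field. Set $X=\varphi^{-1}(\chi)$, a closed subscheme of $\mathfrak{gl}_{n,\mfo}$ cut out by $n$ equations, and assume it is smooth over $\mfo$. Since $\varphi$ is dominant on generic fibers, $X$ has relative dimension $n^2-n$.

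First I will normalize the orbital integral as a volume. I will use the quotient measure $|\omega_{\mathfrak{gl}_n}|/|\varphi^*\omega_{\mathbb{A}^n}|$, where $\omega_{\mathfrak{gl}_n}$ and $\omega_{\mathbb{A}^n}$ are the standard invariant top forms $\bigwedge dx_{ij}$ and $\bigwedge da_i$. This defines a measure on the fiber, and the orbital integral is its total mass on $X(\mfo)$; this is the reinterpretation stated just before the theorem. By the smoothness of $X$ and the Jacobian criterion, the differential of $\varphi$ is surjective on $X$, so there is a nowhere-vanishing relative top form $\omega_X$ on $X$ with $\omega_{\mathfrak{gl}_n}=\omega_X\wedge\varphi^*\omega_{\mathbb{A}^n}$ along $X$. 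Because $\omega_X$ is defined over $\mfo$ and is invertible in $\Omega^{n^2-n}_{X/\mfo}$, the measure in question is $|\omega_X|$.

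Second, I will compute $\int_{X(\mfo)}|\omega_X|$ using Weil's argument. By Hensel's lemma, applied via the smoothness of $X$, the reduction map $X(\mfo)\to X(k)$ is surjective. Each fiber is an analytic polydisc: locally at a $k$-point, $X$ is étale over $\mathbb{A}^{n^2-n}_{\mfo}$, and étale coordinates identify the fiber with $(\pi\mfo)^{n^2-n}$. In these coordinates $\omega_X$ equals a unit times $dy_1\wedge\cdots\wedge dy_{n^2-n}$, so each fiber has volume $q^{-(n^2-n)}$. Summing over $X(k)$ gives $\#X(k)/q^{n^2-n}$.

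The main obstacle is the first step. The identification of the orbital integral with this particular volume, rather than a constant multiple of it, depends on a choice of Haar measures on $\mathrm{GL}_n$ and on the centralizer, so it has to be matched with the paper's normalization. I will take that normalization as given by the cited reinterpretation. I will also check that the étale-coordinate argument keeps $|\omega_X|$ equal to $1$ on polydiscs, which follows because $\omega_X$ generates the relative canonical sheaf integrally.
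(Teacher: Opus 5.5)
Your proposal is correct and follows essentially the paper's route. The paper gives no argument beyond citing Weil's Theorem 2.2.5, and you unpack exactly that theorem: you check via the conormal sequence that the quotient form $\omega_{\mathfrak{gl}_n}/\varphi^*\omega_{\mathbb{A}^n}$ restricts to a generator of $\Omega^{n^2-n}_{X/\mfo}$, and then you run Weil's residue-disc count. The one point to tighten is the relative dimension $n^2-n$. It does not follow from dominance of $\varphi$ alone; it follows from the generic fiber being the orbit of the regular semisimple $\gamma$, together with flatness of the smooth scheme $X$ over $\mfo$, so that every component of $X$ meets the generic fiber.
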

However, $\varphi^{-1}(\chi)$ is not smooth for most $\chi$ of interest.  
To resolve this issue, \cite{CKL} (for $n=3$) and \cite{CHL} (in the Bass case) stratified $\varphi^{-1}(\chi)(\mfo)$ into a disjoint union of  sets of $\mfo$-points of smooth schemes over $\mfo$, so that the volume of each stratum is obtained using Weil's formula.

We briefly explain the stratification in the case that $\mfo[X]/(\chi(X))$ is a \textit{Bass order} and an integral domain, which is  treated in \cite{CHL}.
In this case, stratification of $\varphi^{-1}(\chi)(\mfo)$ is indexed by a pair $T=(\mathcal{F},l)$ called a \textit{type}, which consists of a finite-dimensional $k$-algebra $\mathcal{F}$ and an integer $l$.
Viewing a matrix $m$ in $\varphi^{-1}(\chi)(\mfo)$ as an endomorphism of a lattice $L\cong \mfo^n$, $\mathcal{F}$ represents the cokernel of $m$ up to isomorphism, and $l$ governs the size of the Jordan blocks of $\overline{m}$, the reduction of $m$ modulo a uniformizer of $\mfo$.
Then, the collection of matrices sharing the same type $T$ is functorially extended to form a representable subfunctor $(\varphi^T)^{-1}(\chi)$ of $\varphi^{-1}(\chi)$ as  an fppf-sheaf on $\mfo$, which turns out to be a smooth scheme over $\mfo$. 
A more detailed process of smoothening for Bass orders is explained in Section \ref{subsec:CHL}.

Here a Bass order is an order of a number field or a local field whose fractional ideals are generated by two elements. This often appears in algebraic number theory, algebraic geometry, and topology. 
In algebraic geometry, the local ring $\Mfo_{Y_{\chi,x}}$ of a spectral curve $Y_\chi$ at a singular point $x$ is a Bass order if and only if $x$ is a double point(cf. Proposition \ref{prop:Bass}). 
We refer to \cite[Introduction]{CHL} for further details on the background of Bass orders. 
For the rest of Introduction, we will suppose that the local ring $\Mfo_{Y_{\chi,x}}$ of a spectral curve $Y_\chi$ at any singular point $x$ is a Bass order (i.e. singularities of $Y_\chi$  are double points) and an integral domain.

\subsection{Comparison: algebraic and geometric stratification}
It is natural to ask whether the local smoothening process in the Bass case is applicable to the framework of the Hitchin fibration to yield a smooth stratification of the Hitchin fiber $\Phi^{-1}(\chi)$.
To globalize the stratification given in the previous subsection based on local matrix-theoretic arguments, 
 we use the adelic description of $\mathbb{M}(R)$ for a $k$-algebra $R$, as stated in Remark \ref{rmk:adelic}.
We define a global type $T$ (cf. Definition \ref{def:typeofchia}) for an adelic description of $\mathbb{M}(R)$  to reflect each local type at a point of a curve $C$. 
For each type $T$, we construct the restriction $\Phi^T:\mathbb{M}^T\rightarrow \mathbb{A}^T$ of $\Phi:\mathbb{M}\rightarrow \mathbb{A}$, called \textit{the restricted Hitchin fibration}. 
This  yields the following \textbf{algebraic stratification}.


\begin{theorem}[Corollary  \ref{cor:algstraoff-1a}]
    Let $\mathcal{I}$ be the finite set of types $T$ of $\chi$.
    Then, for $T\in \mathcal{I}$, $(\Phi^T)^{-1}(\chi)$ is a locally closed and smooth subvariety of $\Phi^{-1}(\chi)$.
    Furthermore, for a finite field extension $k'/k$ or for $k'=\bar{k}$, we have the following stratification of $\Phi^{-1}(\chi)(k')$:
    $$\Phi^{-1}(\chi)(k')=\bigsqcup\limits_{T\in \mathcal{I}}(\Phi^T)^{-1}(\chi)(k').$$
\end{theorem}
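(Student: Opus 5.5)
Throughout, fix $\chi\in\mathbb{A}(k)$. The plan is to reduce everything to the local smoothening of \cite{CHL} at the finitely many singular points of $Y_\chi$, using Ngô's product formula (Proposition \ref{prop:prod}) and the adelic description of $\mathbb{M}(R)$ in Remark \ref{rmk:adelic}.

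\textbf{Step 1: localize the type.} Let $x_1,\dots,x_r$ be the closed points of $C$ over which $Y_\chi$ is singular. Away from these points the spectral curve is smooth, so the local type is the trivial one, and the global type $T$ of Definition \ref{def:typeofchia} is a tuple $(T_{x_1},\dots,T_{x_r})$ of local types in the sense of \cite{CHL}. For each $x_i$ only finitely many local types occur: the cokernel algebra $\mathcal{F}$ is bounded by the conductor, and $l$ is bounded by $n$. Hence $\mathcal{I}$ is finite.

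\textbf{Step 2: the stratification on points.} Take $k'$ finite over $k$ or $k'=\bar k$, and a point $(\mathcal{E},\theta)\in\Phi^{-1}(\chi)(k')$. Its completion at each $x_i$ gives a matrix in $\varphi^{-1}(\chi)(\mfo_{x_i}\hat\otimes k')$, well defined up to $\mathrm{GL}_n$-conjugation. The local type is conjugation invariant, since both the isomorphism class of the cokernel and the Jordan data of the reduction are. So every such point has exactly one global type. Because $\Phi^T$ is defined as the restriction to points of type $T$, this gives the disjoint union on $k'$-points directly.

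\textbf{Step 3: local closedness and smoothness.} This step carries the real content. Via the product formula, $\Phi^{-1}(\chi)$ is a quotient of $\prod_i \mathcal{X}_{x_i}$, where $\mathcal{X}_{x_i}$ is the local affine Springer fiber, taken modulo the lattice action, up to a universal homeomorphism. Under this quotient, $(\Phi^T)^{-1}(\chi)$ corresponds to $\prod_i \mathcal{X}_{x_i}^{T_{x_i}}$.

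\emph{Local closedness.} The type conditions are rank conditions on $\overline m$ and its powers, together with length or isomorphism conditions on the cokernel. Both are constructible and stable under specialization in the appropriate direction, so each stratum is locally closed.

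\emph{Smoothness.} I would identify the $T_{x_i}$-stratum of the local affine Springer fiber with the quotient $\varphi^{T}(\chi)(\mfo)/\mathrm{GL}_n(\mfo)$, realized geometrically. Concretely, it is the orbit of lattices stable under $\mfo[X]/(\chi)$ whose endomorphism ring is a fixed overorder $R_T$; for Bass orders the type determines this ring. That set is a torsor under $(R_T\otimes\mfo)^\times/\mfo[\gamma]^\times$, modulo the lattice action. Since such lattices are locally principal $R_T$-modules by the Bass property, the stratum is a homogeneous space under a smooth connected algebraic group, namely the quotient of local Jacobians $\mathrm{Pic}(Y_T)\to$ over $\mathrm{Pic}(Y_\chi)$ with $Y_T$ the partial normalization. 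Homogeneous spaces of smooth groups over a field are smooth.

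I expect the main obstacle to be matching the global type with the overorder $R_T$ uniformly in families over $R$. Concretely, this means showing that the subfunctor $\mathbb{M}^T$ is representable and that it agrees on points with the "fixed endomorphism ring" locus. I would first try to derive this from the corresponding \cite{CHL} statement that the local subfunctor $(\varphi^T)^{-1}(\chi)$ is representable and smooth.
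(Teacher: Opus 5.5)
Your Step 2 is essentially the paper's argument for the decomposition on points (Remark \ref{rmk:stra}). Step 3, however, has a genuine gap at the smoothness claim. The object in the statement is the scheme $(\Phi^T)^{-1}(\chi)$, the fiber of $\Phi^T:\mathbb{M}^T\to\mathbb{A}^T$. Its structure is cut out functorially by the type conditions on $R$-points: the cokernel of $g_x^{-1}\theta g_x$ must be isomorphic to $\mathcal{F}_x\otimes_k R$, and $\overline{\pi_x^{d_x}g_x^{-1}\theta g_x}$ must be $\mathrm{GL}_n(R)$-conjugate to $J_{l_x+1,n-l_x-1}$ (Remarks \ref{rmk:ademf}, \ref{rmk:ademfly}). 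Matching its $\bar k$-points with a $\mathrm{Pic}^0(Y_\chi)$-orbit and invoking ``homogeneous spaces of smooth groups are smooth'' only shows that the orbit with its \emph{reduced} induced structure is smooth. It does not rule out that the scheme defined by the type conditions is non-reduced. Even a scheme on which a smooth group acts transitively on points can be non-reduced, e.g.\ $\mathbb{A}^1\times\mathrm{Spec}\,k[\epsilon]/(\epsilon^2)$ with $\mathbb{G}_a$ acting on the first factor. Likewise, passing through the product formula ``up to universal homeomorphism'' only transports topological information. Reducedness is exactly what the statement asserts, and the paper's logic runs the other way: it proves smoothness of $(\Phi^T)^{-1}(\chi)$ first. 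Only because both sides are then known to be reduced does Theorem \ref{thm:eqoftwostra} upgrade the equality with $\mathrm{Pic}^0(Y'_\chi)$ from $k$-points to schemes.

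You name ``matching the type in families'' as the main obstacle, but deferring it to \cite{CHL} leaves the core of the proof undone. What is needed is an infinitesimal lifting statement: for a $k[\epsilon]/(\epsilon^m)$-point $(g_m,\theta_m)$ of $\mathbb{M}^T$ over $\chi_m$ and any lift $\chi_{m+1}\in\mathbb{A}^T(k[\epsilon]/(\epsilon^{m+1}))$, find a lift in $\mathbb{M}^T$ over $\chi_{m+1}$. Locally this is the formal smoothness of $\varphi_M^{-d_x}$ and $\varphi_{l_x}^{-d_x}$ (Corollaries \ref{cor:genlocalthm1}, \ref{cor:genlocalthm}); for $x\in\mathcal{S}^{\mathcal{F}}_1$ one first puts $g_{x,m}^{-1}\theta_m g_{x,m}$ into the block form of Lemma \ref{lem:fatpoints}. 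The real work is the gluing:
\begin{enumerate}
\item normalize $\theta_m$ to the companion matrix;
\item conjugate each local lift to the companion matrix $\theta_{m+1}$ by some $h_{x,m+1}$;
\item lift the element $g_{x,m}h_{x,m}^{-1}$, which centralizes $\theta_m$, to one centralizing $\theta_{m+1}$ (Lemma \ref{lemma:centralizer}, using that $\theta_0$ is regular semisimple over $F_x$);
\item check that the resulting $g_{x,m+1}$ equals $1$ for almost all $x$.
\end{enumerate}
This gives formal smoothness of $\mathbb{M}^T$ along the fiber and, for $m=1$, surjectivity of the tangent map of $\Phi^T$. Hence $\Phi^T$ is smooth along the fiber, and so is the fiber itself (Theorem \ref{thm:smoothness}).

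Smaller points:
\begin{itemize}
\item The fractional $R_x$-ideals with endomorphism ring $R_T$ form $E_x^\times/R_T^\times$. This is a homogeneous space of $E_x^\times/R_x^\times$ with stabilizer $R_T^\times/R_x^\times$, not a torsor under $R_T^\times/R_x^\times$.
\item To know a stratum is a whole orbit, you need both that the type determines the overorder and that distinct types give distinct overorders. You assert both rather than prove them.
\item Your constructibility sketch for local closedness is only heuristic. The paper obtains local closedness by pulling back locally closed points of $Coh(C)_0$ and $[\gl_n/\mathrm{GL}_n]$.
\end{itemize}
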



On the other hand, there is a well-known stratification of the compactified Jacobian $\overline{\mathrm{Pic}^0}(Y_\chi)$ when the stalk $\Mfo_{Y_{\chi,x}}$ of a spectral curve $Y_\chi$ at any singular point $x$ is a Bass order and an integral domain, due to \cite{Gau97}. 
We call it the \textbf{geometric stratification}, which is stated below:

\begin{proposition}\cite[Proposition 3.4]{Gau97}(or Proposition \ref{prop:geomstrfork0})\label{prop14}
    For a field extension $k'/k$ where $k'$ is a finite extension or $k'=\bar{k}$, we have the following stratification of $\Phi^{-1}(\chi)(k')$:
    $$\Phi^{-1}(\chi)(k')= \bigsqcup_{\pi':Y_\chi'\rightarrow Y_\chi}\mathrm{Pic}^0(Y_\chi')(k'),$$
    where the index runs over the set of finite surjective birational maps $\pi':Y_\chi'\rightarrow Y_\chi$ up to isomorphism, and $\mathrm{Pic}^0(Y_\chi')$ is the generalized Jacobian variety which parametrizes line bundles of degree $0$ on $Y_\chi'$.
    Here, we use Theorem \ref{thm:spectralcorrintro} to identify $\Phi^{-1}(\chi)$ with $\overline{\mathrm{Pic}^0}(Y_\chi)$.
\end{proposition}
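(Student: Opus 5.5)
The plan is to use the identification $\Phi^{-1}(\chi)\cong\overline{\mathrm{Pic}^0}(Y_\chi)$ of Theorem \ref{thm:spectralcorrintro}. After that, the stratification is obtained by sending each rank-one torsion-free sheaf $\mathcal{F}$ on $Y_\chi\otimes k'$ to its \emph{endomorphism ring}. Write $\nu:\tilde Y\to Y_\chi$ for the normalization. For such an $\mathcal{F}$, the sheaf of $\Mfo_{Y_\chi}$-algebras $\mathcal{E}nd(\mathcal{F})$ is coherent and torsion-free. It is contained in the constant sheaf of total fractions $K$ of $Y_\chi$, since $\mathcal{F}$ has rank one and $Y_\chi$ is integral, and it contains $\Mfo_{Y_\chi}$. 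Hence it lies between $\Mfo_{Y_\chi}$ and $\nu_*\Mfo_{\tilde Y}$, so $Y'_{\mathcal{F}}:=\mathrm{Spec}\,\mathcal{E}nd(\mathcal{F})\to Y_\chi$ is a finite birational surjection. Conversely, every finite birational $\pi':Y'\to Y_\chi$ arises from an intermediate algebra $\Mfo_{Y_\chi}\subset\pi'_*\Mfo_{Y'}\subset\nu_*\Mfo_{\tilde Y}$. These algebras differ from $\Mfo_{Y_\chi}$ only at the finitely many singular points, so the index set is finite.

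The key local input is the Bass property (Proposition \ref{prop:Bass}). The singularities are double points and the local rings are Bass orders that are integral domains. For a Bass order $R$, every fractional ideal $I$ is invertible over its own multiplier ring $\mathrm{End}_R(I)=(I:I)$; this is the standard characterization of Bass orders as those orders all of whose overorders are Gorenstein. So $\mathcal{F}$ is an invertible $\Mfo_{Y'_{\mathcal{F}}}$-module, i.e.\ $\mathcal{F}=\pi'_*\mathcal{L}$ for a line bundle $\mathcal{L}$ on $Y'_{\mathcal{F}}$. Conversely, for any $\pi'$ and line bundle $\mathcal{L}$ on $Y'$, the sheaf $\pi'_*\mathcal{L}$ is rank-one torsion-free on $Y_\chi$ with $\mathcal{E}nd(\pi'_*\mathcal{L})=\pi'_*\Mfo_{Y'}$, because $\mathcal{L}$ is locally free over $\Mfo_{Y'}$. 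The line bundle $\mathcal{L}$ is determined by $\pi'_*\mathcal{L}$, since $\pi'$ is affine and the module structure over $\pi'_*\Mfo_{Y'}$ is recovered from the endomorphism action. This yields a bijection between $\overline{\mathrm{Pic}}(Y_\chi)(k')$ and $\bigsqcup_{\pi'}\mathrm{Pic}(Y')(k')$.

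It remains to match degrees, and to descend from $\bar k$ to finite $k'$. The degree $\deg\mathcal{F}=\chi(\mathcal{F})-\chi(\Mfo_{Y_\chi})$ satisfies $\chi(\pi'_*\mathcal{L})=\chi(\mathcal{L})$, so $\deg_{Y_\chi}\pi'_*\mathcal{L}=\deg_{Y'}\mathcal{L}+\delta(\pi')$ with $\delta(\pi')=\chi(\Mfo_{Y'})-\chi(\Mfo_{Y_\chi})$. The degree-$0$ sheaves on $Y_\chi$ therefore correspond to line bundles of a fixed degree $-\delta(\pi')$ on $Y'$. I would identify that component with $\mathrm{Pic}^0(Y')$ by twisting with a fixed line bundle of degree $\delta(\pi')$ defined over $k$, which exists because $Y'$ is geometrically integral over a finite field; this matches the normalization of Gau97 for the degree-$0$ stratum. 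For finite $k'$, the endomorphism-ring construction is Galois-equivariant, so a $k'$-rational sheaf has a $k'$-rational $Y'$, and the decomposition holds on $k'$-points. Here one uses that $\mathrm{Pic}(Y')(k')$ equals the $k'$-rational isomorphism classes, as the Brauer group of a finite field vanishes.

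I expect the main obstacle to be the local algebraic step: every torsion-free rank-one module over a Bass domain is invertible over its endomorphism ring. If not imported from \cite{CHL} or \cite{Gau97}, I would argue it as follows. Overorders of a Bass order are Gorenstein, and over a Gorenstein order $S$, a fractional ideal $I$ with $(I:I)=S$ satisfies $I\cdot(S:I)=S$. This follows from the duality $I\mapsto (S:I)$ being an involution on $S$-lattices.
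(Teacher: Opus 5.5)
Your proposal follows the same route as the paper's proof of Proposition \ref{prop:geomstrfork0}. Both take $Y'=\underline{\mathrm{Spec}}\,\mathcal{E}nd(\mathcal{F})$, use that overorders of a Bass order are Gorenstein to conclude that $\mathcal{F}$ is invertible over $\mathcal{O}_{Y'}$, and read off disjointness from $\mathcal{O}_{Y'}=\mathcal{E}nd(\mathcal{F})$. Where the paper cites Vasconcelos for the invertibility, you prove it directly via the involution $I\mapsto(S:I)$. Your bookkeeping of the degree shift $\delta(\pi')$ (degree-$0$ sheaves on $Y_\chi$ correspond to $\mathrm{Pic}^{-\delta(\pi')}(Y')$) and of $\mathrm{Br}(k')=0$ is more careful than the paper's. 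The one thing to add is that the partial normalization you construct over $k'$ is already defined over $k$, since the index set is taken over $k$; the paper gets this from the $k$-rationality of the singular points and the explicit list of overorders in Proposition \ref{thm:overorders}.
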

The indexing maps $\pi':Y_\chi'\rightarrow Y_\chi$ are called \textit{partial normalizations} of $Y_\chi$.
We prove that the algebraic and geometric stratifications are the same, as stated in the following:

\begin{theorem}\label{thm:eqoftwostraintro}[Theorem \ref{thm:eqoftwostra}]
    Let $\pi':Y_\chi'\rightarrow Y_\chi$ be a partial normalization of $Y_\chi$.
    Then there exists a unique type $T$ of $\chi$ such that
    $$(\Phi^T)^{-1}(\chi)= \mathrm{Pic}^0(Y_\chi')$$
    with respect to the identification between $\Phi^{-1}(\chi)\cong \overline{\mathrm{Pic}^0}(Y_\chi)$ in Theorem \ref{thm:spectralcorrintro}.
\end{theorem}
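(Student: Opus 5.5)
The proof will be local: both stratifications are determined point by point over the finitely many singular points of $Y_\chi$, and I will match the local data at each point.

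\textbf{Geometric side.} Via Theorem \ref{thm:spectralcorrintro}, a point of $\Phi^{-1}(\chi)$ is a rank-one torsion-free sheaf $\mathcal{F}$ on $Y_\chi$. It lies in the stratum $\mathrm{Pic}^0(Y_\chi')$ exactly when $\mathcal{E}nd(\mathcal{F}) = \pi'_*\mathcal{O}_{Y_\chi'}$. Since $Y_\chi$ has only double points whose local rings $\mathcal{O}_{Y_{\chi,x}}$ are Bass orders and domains, Bass' theorem applies: every fractional ideal is invertible over its own endomorphism ring, an overorder of $\mathcal{O}_{Y_{\chi,x}}$. Moreover, the overorders form a chain
\[
\mathcal{O}_{Y_{\chi,x}} = R_0 \subset R_1 \subset \cdots \subset R_{m_x} = \widetilde{\mathcal{O}}_{Y_{\chi,x}},
\]
as in \cite{CHL}. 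Hence a partial normalization $\pi'$ is the same as a choice of one overorder $R_{i_x}$ at each singular $x$, i.e.\ a tuple of integers $(i_x)_x$.

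\textbf{Algebraic side.} Under the adelic description of Remark \ref{rmk:adelic}, the local component at $x$ of a point of $\Phi^{-1}(\chi)$ is a lattice $L_x$ with an endomorphism $\gamma_x$, equivalently an $\mathcal{O}_{Y_{\chi,x}}$-lattice in $K_x=\mathrm{Frac}(\mathcal{O}_{Y_{\chi,x}})$. By Definition \ref{def:typeofchia}, its global type is the tuple of local types $T_x=(\mathcal{F}_x,l_x)$. The key local lemma to extract from \cite{CHL} is that the local type of $L_x$ depends only on, and determines, the overorder $\mathrm{End}(L_x)=R_{i}$. Concretely:
\begin{itemize}
\item $\mathcal{F}_x$, the cokernel of $\gamma_x$, should be $R_i/\gamma R_i$, because $L_x\cong R_i$ as an $R_i$-module.
\item $l_x$, which measures the Jordan blocks of $\bar\gamma_x$, should be read off from the $k$-algebra $R_i\otimes k$ together with its nilpotent part.
\end{itemize}
I would show injectivity of $i\mapsto T_x$ using the monotonicity of an invariant along the chain, for example $\dim_k R_i/\gamma R_i$ or the conductor exponent.

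\textbf{Conclusion.} The assignment $\pi'\mapsto T=(T_{x})$ with $T_x=T(R_{i_x})$ then gives the equality of sets $(\Phi^T)^{-1}(\chi)(k')=\mathrm{Pic}^0(Y_\chi')(k')$ for every $k'$. Uniqueness of $T$ follows because the strata in Corollary \ref{cor:algstraoff-1a} are disjoint and, as I will check, nonempty. To upgrade equality of $k'$-points for all finite $k'$ and $\bar k$ to an equality of subvarieties, I will use that both sides are locally closed and reduced: $(\Phi^T)^{-1}(\chi)$ is smooth by Corollary \ref{cor:algstraoff-1a}, and $\mathrm{Pic}^0(Y'_\chi)$ is smooth. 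Two reduced locally closed subschemes with the same $\bar k$-points coincide.

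\textbf{Main obstacle.} The hardest step is the local lemma. The type in \cite{CHL} is defined matrix-theoretically and functorially over $\mathfrak{o}$-algebras, so it must be shown to coincide with the module-theoretic invariant $\mathrm{End}(L_x)$ after base change to $k'$-points. My first attempt would be an explicit computation: take $\mathcal{O}_{Y_{\chi,x}}\cong \mathfrak{o}[X]/(\chi)$ and the Bass chain $R_i=\mathfrak{o}+\pi^{i}\widetilde{\mathcal{O}}$ or the analogous description, then compute the type of $\gamma$ acting on $R_i$ directly.
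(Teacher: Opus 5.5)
Your global architecture is the paper's. You reduce to $\bar k$-points because both strata are smooth and locally closed. You identify a geometric stratum with the set of points having a fixed tuple of overorders $(g_x\mathcal{O}_x^n:g_x\mathcal{O}_x^n)$ (Corollary \ref{cor:prod2}, Proposition \ref{prop:orbits}). You then prove locally that the type at $x$ and this overorder determine each other. Your argument that the type depends only on the overorder, via $L_x\cong R_i$ from the Bass property, is fine. That direction only gives the inclusion $\mathrm{Pic}^0(Y'_\chi)\subseteq(\Phi^T)^{-1}(\chi)$.

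The converse, that the type determines the overorder, is what makes this an equality and is the real content of the theorem. You leave it open, and the tools you suggest for it do not work:
\begin{itemize}
\item The length $\dim_k R_i/\gamma_x R_i$ is not monotone along the chain. It equals $\ord_x\det\gamma_x=n d_x+\ord_x(a_n)$ for every $\gamma_x$-stable lattice, which is why all $\mathcal{F}$ in Definition \ref{def:coherentsheafF} have the same length at $x$. Types are separated by the isomorphism class of the cokernel and the Jordan type of the reduction, not by the length.
\item The conductor exponent is an invariant of $R_i$, so using it presupposes the type--overorder dictionary you are trying to build.
\item The model $\mathfrak{o}+\pi^i\widetilde{\mathcal{O}}$ is correct only for $n=2$. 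For odd $n\ge 3$ and $i\ge 1$ it does not even contain $\gamma_x$, whose $E_x$-valuation is $2<n$. The actual overorders are $\mathcal{O}_x[\gamma_x,\gamma_x^r/\pi_x]$ with $\frac{n+1}{2}\le r\le n$ (Proposition \ref{thm:overorders}).
\end{itemize}

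The missing idea is a membership criterion. For $y\in E_x$, one has $y\in(g_x\mathcal{O}_x^n:g_x\mathcal{O}_x^n)$ if and only if $g_x^{-1}yg_x\in\mathfrak{gl}_n(\mathcal{O}_x)$. By Proposition \ref{thm:overorders}, an overorder $R'$ is pinned down by a single integer:
\begin{itemize}
\item for $n=2$, the largest $r$ with $\gamma_x/\pi_x^r\in R'$;
\item for $n\ge 3$, the smallest $r$ with $\gamma_x^r/\pi_x\in R'$.
\end{itemize}
The criterion turns this integer into a matrix invariant of $M_x=g_x^{-1}\gamma_x g_x$:
\begin{itemize}
\item For $n=2$ it is the largest $a_x$ with $M_x\in\pi_x^{a_x}\mathfrak{gl}_2(\mathcal{O}_x)$. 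This determines $\mathcal{F}_x\cong\mathcal{O}_x/(\pi_x^{a_x})\oplus\mathcal{O}_x/(\pi_x^{b_x})$, since $a_x+b_x$ is fixed.
\item For $n\ge 3$ it is the nilpotency index of $\overline{M_x}$. This index is $n-l_x-1$ when $\mathcal{F}_x\cong k\oplus k$. It is $n$ when $\mathcal{F}_x\cong\mathcal{O}_x/(\pi_x^2)$, because then $\overline{M_x}$ has rank $n-1$.
\end{itemize}
These formulas are injective in the local type, so both directions follow at once. You never need to compute the type of $\gamma_x$ acting on each overorder, nor use $L_x\cong R_i$ in the local step.
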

Here $Y_\chi'$ is not necessarily a spectral curve. Due to this theorem, $\mathrm{Pic}^0(Y_\chi')$ is described as the fiber of a restricted Hitchin fibration. 
We will briefly outline a main structure of the proof. 

The proof starts from the observation that a partial normalization of $Y_\chi$ is completely determined by the data of \textit{overorders} of $\Mfo_x[X]/(\chi_x(X))$ for each $x\in C$.
An overorder of $\Mfo_x[X]/(\chi_x(X))$ is an order lying between it and its normalization.
These overorders have an explicit description given in \cite{CHL}. Note that it is based on an explicit formula for the local orbital integral of $\mathrm{GL}_n$ in the Bass case.
Using this description, we prove the theorem by showing that the data of overorders also completely determines the matrices in $\Phi^{-1}(\chi)$ that share the same given type, which yields the agreement between the two strata.

\begin{remark}
\begin{enumerate}
    \item 
The geometric stratification of \cite[Proposition 3.4]{Gau97} (see Proposition \ref{prop14}) fails when $Y_\chi$ admits a singularity that is not a double point, due to the definition of a Bass order (cf. \cite[Proposition 6.3]{CHL}). 
However, we expect an algebraic stratification to exist once the local smoothening process is established (for example, when $n=3$).
We will pursue this direction in future work.

\item A partial normalization $Y_\chi'$ is not necessarily a spectral curve, as the stalk of $Y_\chi'$ at a point is not necessarily a simple extension of the stalk of $C$ (cf. Proposition \ref{thm:overorders}). 
Nonetheless, Theorem \ref{thm:eqoftwostraintro} implies that $\mathrm{Pic}^0(Y_\chi')$ can still be described within the framework of a restricted Hitchin fibration (cf. Diagram \eqref{diag:f'}), and that each point of $\mathrm{Pic}^0(Y_\chi')(k)$ has an adelic description (cf. Remark \ref{rmk:ademf} for $n=2$ and Remark \ref{rmk:ademfly} for $n\geq 3$). 
We refer to Remark \ref{rmk:nouse} for further discussion. 
\end{enumerate}
\end{remark}


\subsection{Explicit formula for $H_c^i(\overline{\mathrm{Pic}^0}(Y_\chi)_{\bar{k}},\mathbb{Q}_\ell)$ in the Bass case}
In the case where the normalization of $Y_\chi$ is isomorphic to $\mathbb{P}^1_k$, by using a similar argument to the proof of Theorem \ref{thm:eqoftwostraintro}, we show that for each type $T$, $(\Phi^T)^{-1}(\chi)$ is an affine space of dimension explicitly given in Lemma \ref{lem:affine}.
Thus, the algebraic (or geometric) stratification yields an affine paving of the Hitchin fiber $\Phi^{-1}(\chi)$, leading to the formula for its cohomology.


\begin{theorem}[Theorem \ref{thm:cohomology}]
    Let $\mathcal{I}$ be the set of types of $\chi$ (cf. Definition \ref{def:typeofchia} with $n\geq 3$ and Remark \ref{rmk:typeforn=2} with $n=2$).
If the normalization of $Y_\chi$ is isomorphic to $\mathbb{P}^1_k$, then $H_c^i(\Phi^{-1}(\chi)_{\bar{k}},\mathbb{Q}_\ell)$ has the following decomposition as a $\mathrm{Gal}(\bar{k}/k)$-module:  
    $$H_c^i(\Phi^{-1}(\chi)_{\bar{k}},\mathbb{Q}_\ell)\left(\cong  H_c^i(\overline{\mathrm{Pic}^0}(Y_\chi)_{\bar{k}},\mathbb{Q}_\ell)\right) \cong \bigoplus \limits_{T\in \mathcal{I}}H_c^i((\Phi^T)^{-1}(\chi)_{\bar{k}},\mathbb{Q}_\ell),$$
 where for a type $T=(\mathcal{F},(l_x)_{x\in \mathcal{S}^{\mathcal{F}}_1})$,
    $$H_c^i((\Phi^T)^{-1}(\chi)_{\bar{k}},\mathbb{Q}_\ell)\cong \begin{cases}
        \mathbb{Q}_\ell(-\frac{i}{2})&\textit{ if $n\geq 3$ and $i=2(\sum\limits_{x\in \mathcal{S}'}(\frac{n-3}{2}-l_x))$};\\
        \mathbb{Q}_\ell(-\frac{i}{2})&\textit{ if $n=2$ and $i=\sum\limits_{x\in\mathcal{S}'}(b_x-a_x-1)=2(\sum\limits_{x\in\mathcal{S}'}(\frac{2d_x+\ord_x(a_2)-1}{2}-a_x))$};\\
        0&otherwise.
    \end{cases}  $$
\begin{itemize}
\item Here $D=\sum\limits_{x} d_x[x]$ and $\chi(X)=X^n-a_1X^{n-1}+\dots +(-1)^na_n$ with $a_i\in H^0(C, \co_C(iD))$;
\item $\mathcal{S}'$ is given in \eqref{eq:defofs'} (see Setting \ref{settings}.(2) for $\mathcal{S}$) and $\mathcal{S}_1^\mathcal{F}$ is given in Definition \ref{def:typeofchia}.(2);
\item $\mathcal{F}$ is described in Remark \ref{rmk:rx}.(1) (cf. Definition \ref{def:coherentsheafF});
\item when $n\geq 3$,  $n$ is odd (cf. Proposition \ref{thm:overorders}.(1)) and $l_x \in \mathbb{Z}$ for $x\in \mathcal{S}_1^\mathcal{F}$ such that $0\leq l_x \leq n/2-1$ (cf. Definition \ref{def:typeofchia}), where   we let $l_x:=-1$ if $x\in \mathcal{S}'\backslash\mathcal{S}^{\mathcal{F}}_1$;
    \item when $n=2$, $\ord_x(a_2)$ is odd (cf. Proposition \ref{thm:overorders}.(2)) and    $0\leq a_x\leq b_x \in \mathbb{Z}$   such that $a_x+b_x=2d_x+\ord_x(a_2)$ (cf. Lemma \ref{lem:affine}). 
\end{itemize}
\end{theorem}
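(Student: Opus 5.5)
The plan is to combine the algebraic stratification of Corollary \ref{cor:algstraoff-1a} with the affine-space description of each stratum from Lemma \ref{lem:affine}. By Theorem \ref{thm:spectralcorrintro} we identify $\Phi^{-1}(\chi)$ with $\overline{\mathrm{Pic}^0}(Y_\chi)$, which settles the parenthetical isomorphism. Corollary \ref{cor:algstraoff-1a} then gives a decomposition of $\Phi^{-1}(\chi)$ into finitely many locally closed smooth subvarieties $(\Phi^T)^{-1}(\chi)$, $T\in\mathcal{I}$, on $\bar k$-points.

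First, I would upgrade this set-theoretic stratification to a stratification by locally closed subschemes defined over $k$. Each $(\Phi^T)^{-1}(\chi)$ is already a $k$-subvariety, and the strata are disjoint and cover the $\bar k$-points. Hence, after passing to reduced structures, the strata form a finite partition into locally closed subsets. I would order them using the closure relations: these can be read off from Theorem \ref{thm:eqoftwostra}, since the strata are the $\mathrm{Pic}^0(Y'_\chi)$, and the closure of one such stratum meets only those attached to less normal partial normalizations. This lets me filter $\Phi^{-1}(\chi)$ by closed unions $Z_0\subset Z_1\subset\cdots$ with each $Z_j\setminus Z_{j-1}$ a single stratum. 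The excision long exact sequences in $H_c^*(-_{\bar k},\mathbb{Q}_\ell)$ are Galois-equivariant, so they give a spectral sequence from $\bigoplus_T H_c^*((\Phi^T)^{-1}(\chi)_{\bar k})$ to $H_c^*(\Phi^{-1}(\chi)_{\bar k})$.

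Second, I would invoke Lemma \ref{lem:affine}: when the normalization of $Y_\chi$ is $\mathbb{P}^1_k$, each $(\Phi^T)^{-1}(\chi)$ is an affine space $\mathbb{A}^{d_T}_k$. Here $d_T=\sum_{x\in\mathcal{S}'}(\frac{n-3}{2}-l_x)$ for $n\ge3$, and $d_T=\sum_{x\in\mathcal{S}'}\frac{2d_x+\ord_x(a_2)-1}{2}-a_x$ for $n=2$. The stated equality with $\sum(b_x-a_x-1)/2$ is the arithmetic identity $a_x+b_x=2d_x+\ord_x(a_2)$. It follows that $H_c^i(\mathbb{A}^{d}_{\bar k},\mathbb{Q}_\ell)$ equals $\mathbb{Q}_\ell(-d)$ for $i=2d$ and vanishes otherwise, which gives the displayed case formula.

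Third, I would show that the spectral sequence degenerates and the filtration splits. All nonzero terms sit in even degrees, so every connecting homomorphism $H_c^{i}(U)\to H_c^{i+1}(Z)$ goes between a group concentrated in even degrees and one in odd degrees, and is therefore zero by induction on the filtration. The resulting short exact sequences of Galois modules have pure pieces of weight $i$. Frobenius acts semisimply on each piece by the scalar $q^{i/2}$, so the extensions split as $\mathrm{Gal}(\bar k/k)$-modules and we obtain the direct sum decomposition. I expect the main obstacle to be the first step. Producing a genuine filtration by closed subschemes needs either the closure relation between strata or a general argument that a finite partition into locally closed constructible pieces admits such an ordering. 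The general argument works: some stratum is always open in the union of the remaining strata, because a maximal-dimensional generic point argument applied inductively finds one. This lets me avoid computing closures explicitly.
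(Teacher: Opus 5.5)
Your strategy matches the paper's: algebraic stratification, then Lemma~\ref{lem:affine}, then the standard computation for a paving by affine spaces. The paper simply cites that last step from \cite[Section 12.2]{Jan04} and \cite{RS77}. Both places where you try to prove this standard input yourself contain genuine gaps.

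\textbf{The ordering of the strata.} The ``general argument'' in your last paragraph is false: a finite partition into locally closed pieces, even into affine spaces, need not contain an open piece. Take a triangle of lines $L_1,L_2,L_3\subset\mathbb{P}^2$ with vertices $p_{ij}=L_i\cap L_j$. Then $L_1\setminus\{p_{13}\}$, $L_2\setminus\{p_{12}\}$, $L_3\setminus\{p_{23}\}$ are three copies of $\mathbb{A}^1$ partitioning it, and none of them is open. The naive paving conclusion also fails there, since $H^0$ and $H^1$ are nonzero.

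So the ordering must come from the closure relations, and you state those backwards. The closure of $\mathrm{Pic}^0(Y'_\chi)$ is contained in $\pi'_*\overline{\mathrm{Pic}^0}(Y'_\chi)$, which is closed by \cite[Lemma 3.1]{Bea99} (as in Theorem~\ref{thm:geomstr}). By the endomorphism-sheaf description in the proof of Proposition~\ref{prop:geomstrfork0}, this set is exactly the union of the strata $\mathrm{Pic}^0(Y''_\chi)$ for partial normalizations $Y''_\chi$ factoring through $Y'_\chi$. These are the \emph{more} normal ones; for instance, the stratum of the normalization is a single closed point.

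The fix: list the strata so that $\mathrm{Pic}^0(Y''_\chi)$ precedes $\mathrm{Pic}^0(Y'_\chi)$ whenever $Y''_\chi$ factors through $Y'_\chi$. Increasing dimension works, since by Lemma~\ref{lem:affine} the dimension drops strictly in that situation. Then each $Z_j$ is a finite union of the closed sets $\pi'_*\overline{\mathrm{Pic}^0}(Y'_\chi)$, hence closed, and $Z_j\setminus Z_{j-1}$ is a single stratum.

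\textbf{The splitting step.} This is a non sequitur. Knowing that Frobenius acts by $q^{i/2}$ on every graded piece only says that on $H^i_c$ it is $q^{i/2}$ times a unipotent operator. All pieces have the same weight, so purity gives no obstruction. Indeed $\mathrm{Ext}^1_{\mathrm{Gal}(\bar k/k)}(\mathbb{Q}_\ell(-j),\mathbb{Q}_\ell(-j))\cong H^1(\hat{\mathbb{Z}},\mathbb{Q}_\ell)\neq 0$; for example, Frobenius could act by a nontrivial Jordan block with eigenvalue $q^j$. You need an extra input, which is exactly what the paper delegates to \cite{RS77}.

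With the ordering above, that input is easy to supply:
\begin{enumerate}
\item Let $Z_{\le d}$ be the union of strata of dimension $\le d$. It is one of the $Z_j$, hence closed.
\item Its open complement is filtered by strata of dimension $>d$, so its $H^{2d}_c$ and $H^{2d+1}_c$ vanish. Hence $H^{2d}_c(\Phi^{-1}(\chi)_{\bar k},\mathbb{Q}_\ell)\cong H^{2d}_c((Z_{\le d})_{\bar k},\mathbb{Q}_\ell)$.
\item The right side is top-degree cohomology of a $d$-dimensional variety. It is therefore $\bigoplus\mathbb{Q}_\ell(-d)$ indexed by its $d$-dimensional irreducible components, namely the closures of the $d$-dimensional strata.
\item Those closures are geometrically irreducible and defined over $k$. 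So Frobenius acts on $H^{2d}_c$ by the scalar $q^d$, in particular semisimply.
\end{enumerate}

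A minor slip: the connecting map in your excision sequence runs from the closed piece to the open one, $H^i_c(Z)\to H^{i+1}_c(U)$. Your parity argument is unaffected.
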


Using a precise description of the set of types $\mathcal{I}$ in Remark \ref{rmk:rx}.(1), we derive an explicit closed formula of the above theorem below.

\begin{corollary}[Corollary \ref{cor:explicitcohomology}]
Suppose that the normalization of $Y_\chi$ is isomorphic to $\mathbb{P}^1_k$.
Let $\mathcal{S}'=\{x_1, \dots, x_t\}$ so that $\#\mathcal{S}'=t$, the number of singularities of $Y_\chi$ (cf. \eqref{eq:emphas'}).  Let
\[
\begin{cases}
        n=2m+1, ~P_m(X)=1 + X + X^2 + \dots + X^m  &\textit{ if $n\geq 3$};\\
       2d_{x_j}+\ord_{x_j}(a_2)=1+2m_j\left(\geq 3\right), ~Q_j(X) = 1 + X + X^2 + \dots + X^{m_j} &\textit{ if $n=2$}. 
    \end{cases}
\]
Then for any integer $i$,  $H_c^{2i+1}(\overline{\mathrm{Pic}^0}(Y_\chi)_{\bar{k}},\mathbb{Q}_\ell)=0$ and any direct summand of $H_c^{2i}(\overline{\mathrm{Pic}^0}(Y_\chi)_{\bar{k}},\mathbb{Q}_\ell)$ is isomorphic to $\mathbb{Q}_\ell(-i)$ (if it is not zero).
In addition, the Poincar\'e polynomial is 
$$\begin{cases}
\left(P_m(X^2)\right)^t  &\textit{ if $n\geq 3$};\\
Q_1(X^2) Q_2(X^2) \dots Q_t(X^2) &\textit{ if $n=2$}. 
    \end{cases}$$
\end{corollary}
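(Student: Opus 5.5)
The plan is to deduce the corollary directly from Theorem \ref{thm:cohomology}, which expresses $H_c^i(\overline{\mathrm{Pic}^0}(Y_\chi)_{\bar{k}},\mathbb{Q}_\ell)$ as a direct sum over $T\in\mathcal{I}$ of one-dimensional pieces $\mathbb{Q}_\ell(-\frac{i}{2})$ placed in a single even degree. Vanishing of odd cohomology is then immediate, because every nonzero summand sits in degree $2\sum_{x\in\mathcal{S}'}(\text{integer})$. In each case the relevant integer is an integer because $n$ is odd (so $\frac{n-3}{2}\in\mathbb{Z}$), resp. because $\ord_x(a_2)$ is odd (so $\frac{2d_x+\ord_x(a_2)-1}{2}\in\mathbb{Z}$). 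Likewise, every summand of $H_c^{2i}$ is $\mathbb{Q}_\ell(-i)$. The remaining work is purely combinatorial: compute the generating function $\sum_{T\in\mathcal{I}} X^{\deg(T)}$, where $\deg(T)$ is the unique degree in which the stratum $(\Phi^T)^{-1}(\chi)$ contributes.

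For this I will use the precise description of $\mathcal{I}$ in Remark \ref{rmk:rx}.(1), which I expect to say that the set of types is a product over the singular points $x\in\mathcal{S}'$ of independent local choices. This reflects the fact that a partial normalization is determined by a choice of overorder at each singular point (Theorem \ref{thm:eqoftwostra} together with Proposition \ref{thm:overorders}), and that in the Bass domain case the overorders at $x$ form a chain.

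In the case $n=2m+1\geq 3$, the local choice at $x$ is either $x\notin\mathcal{S}_1^{\mathcal{F}}$ (so $l_x=-1$), or $x\in\mathcal{S}_1^{\mathcal{F}}$ with $0\le l_x\le n/2-1$, i.e. $0\le l_x\le m-1$. Hence $l_x$ ranges over $\{-1,0,\dots,m-1\}$, which has $m+1$ values. The exponent $\frac{n-3}{2}-l_x=m-1-l_x$ then ranges over $\{0,1,\dots,m\}$, each value occurring exactly once. The degree is $2\sum_x(m-1-l_x)$, so the Poincaré polynomial factors as $\prod_{x\in\mathcal{S}'}\sum_{j=0}^m X^{2j}=(P_m(X^2))^t$.

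In the case $n=2$, the local data at $x_j$ is a pair $0\le a_x\le b_x$ with $a_x+b_x=1+2m_j$. Thus $a_x\in\{0,\dots,m_j\}$, and the exponent $\frac{1+2m_j-1}{2}-a_x=m_j-a_x$ ranges over $\{0,\dots,m_j\}$ once each. This gives $\prod_j Q_j(X^2)$.

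The main obstacle is verifying from Remark \ref{rmk:rx}.(1) and Definition \ref{def:typeofchia} that the correspondence between types and tuples of local parameters is genuinely a bijection onto the full product. This means checking that every local combination at distinct points of $\mathcal{S}'$ occurs, and that the algebra $\mathcal{F}$ carries no additional data beyond the membership $x\in\mathcal{S}_1^{\mathcal{F}}$ (for $n\ge3$), resp. beyond $a_x$ (for $n=2$). I would check this by matching types with partial normalizations via Theorem \ref{thm:eqoftwostra}, whose uniqueness gives injectivity. For surjectivity, I would count overorders at each $x$ using the chain description in Proposition \ref{thm:overorders}: there are $m+1$ overorders for $n=2m+1$, and $m_j+1$ for $n=2$.
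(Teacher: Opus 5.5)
Your proposal is correct and follows essentially the same route as the paper. You read off the vanishing of odd-degree cohomology and the Tate twists $\mathbb{Q}_\ell(-i)$ from Theorem \ref{thm:cohomology}, and then use the product description of types in Remark \ref{rmk:rx}.(1) to get one contribution in each degree $2j$, $0\le j\le m$ (resp. $0\le j\le m_j$), per singular point. The paper phrases the same count as the number of integer solutions of $Y_1+\dots+Y_t=i$ with bounded $Y_j$. Your extra check that every type actually gives a nonempty stratum, via Theorem \ref{thm:eqoftwostra} together with the overorder count, is left implicit in the paper through Lemma \ref{lem:affine}; it is a harmless addition rather than a missing step.
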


We refer to Corollary \ref{cor:explicitcohomology} for a numerical description of the above formula.

\begin{remark}
The factorization of the Poincaré polynomial in the above corollary can be understood via the Künneth formula on affine Springer fibers; we refer to Remark \ref{rmk:kun} for details.
\end{remark}


\textbf{Organization.}
In Section \ref{sec:localsmoothening}, we summarize the local smoothing arguments of \cite{CKL} and \cite{CHL}. 
Building on this, Section \ref{section3} introduces the Hitchin fibration $\Phi$ and globalizes the smoothening process; by defining the type $T$ and the strata $(\Phi^T)^{-1}(\chi)$, we establish the algebraic stratification.
Section \ref{Section:geomstr} defines the spectral curve and provides a geometric description of the Hitchin fibration. We conclude this section by constructing the geometric stratification and proving its equivalence to the algebraic one.
Finally Section \ref{sec:dec} derives an explicit formula for the $\ell$-adic cohomology of the compactified Jacobian of a spectral curve over a finite field with double singularities whose local rings are integral domains.

\textbf{Acknowledgement.} 
We sincerely thank Wansu Kim and  Sug Woo Shin for pointing out an error in Lemma \ref{lem:repre}, Zhiwei Yun for suggesting an affine paving in Theorem \ref{thm:cohomology}, Zongbin Chen for helpful discussions on the purity theorem for affine Springer fibers, Lucien Hennecart for an advice regarding the topology on the stack of coherent sheaves, and Kyoung-Seog Lee for recommending excellent sources on stacks.
We also appreciate Ngô Bảo Châu for encouraging this project at the Samsung Global Research Symposium 2025 and Yuchan Lee for meaningful and ongoing  discussions.

\section{Revisit to the local smoothening process}\label{sec:localsmoothening}
In this section, we review the smoothening process described in \cite{CKL} and \cite{CHL}. 
Let us start with the notations in the local setting.
\begin{itemize}

    \item Let $k$ be a finite field extension of $\mathbb{F}_p$ or an algebraic closure of $\mathbb{F}_p$, and let $\bar{k}$ be an algebraic closure of $k$.

\item Let $\mfo$ be the ring of integers in a non-Archimedean local field $F$ whose residue field is $k$. We choose a uniformizer $\pi$ in $\mfo$.
We write $\ord(a)$ with $a\in F$ for the exponential order of $a$.

\item For $a\in A$ or $\psi(X) \in A[X]$ with a flat $\mfo$-algebra $A$, $\overline{a}\in A \otimes_{\mfo} k$ or $\overline{\psi(X)}\in A\otimes_{\mfo} k[X]$ is the reduction of $a$ or $\psi(X)$ modulo $\pi$, respectively.

\item Let $\mathfrak{gl}_{n,A}$ be the Lie algebra of $\mathrm{GL}_{n,A}$ which is the general linear group of dimension $n^2$ defined over $A$, where  $A$ is an $\mfo$-algebra;

\item Let $\mathrm{M}_n(A)$ be the set of $n \times n$ matrices with entries in $A$;

\item Let $\mathbb{A}^{n}_A$ be the affine space of dimension $n$ defined over $A$;
\item Let $\chi_{m}(X)\in R[X]$  be the characteristic polynomial of $m \in \mathfrak{gl}_{n, \mfo}(R)$ for a flat $\mfo$-algebra $R$;

\item  Define the following morphism of schemes defined over $\mfo$: 
$$\varphi_n: \mathfrak{gl}_{n, \mfo} \longrightarrow \mathbb{A}_{\mfo}^n,   ~~~~~~~  m \mapsto 
(a_{1}, \dots, a_n) \left(=\textit{coefficients of }\chi_{m}(X)\right),$$ 
where $\chi_{m}(X)=X^n-a_1X^{n-1}+\dots + (-1)^{n-1}a_{n-1}X+(-1)^na _n$.

\item Choose an elliptic (so regular) and semisimple element $\gamma \in \mathfrak{gl}_{n,\mfo}(\mfo)$. Equivalently, $\chi_{\gamma}(X)$ is irreducible over $\mfo$.
Let $d:=\ord(a_n)$. 
\end{itemize}

The orbital integral for a regular and semi-simple element $\gamma$ is defined as the volume of $\varphi_n^{-1}(\chi_{\gamma})(\mfo)$ with respect to the geometric measure. We refer to \cite[Section 2]{CKL} for a more precise explanation.
If the scheme $\varphi_n^{-1}(\chi_{\gamma})$ is smooth over $\mfo$, then its volume can be easily computed. In general, the scheme is far from smooth, which makes it hard to investigate. 
A main result of \cite{CKL} and \cite{CHL} is to give a precise description of a smooth integral model of $\varphi_n^{-1}(\chi_{\gamma})$, which will be summarized in this section.







\subsection{Revisit to the smoothening process in \cite{CKL}}\label{subsec:CKL}
The following is taken from \cite[Section 3.2]{CKL}:
\begin{itemize}
\item Let $L$ be a free $\mfo$-module of rank $n$ and let $M\subset L$ be a submodule (thus free) of rank $n$.

\item Define a functor $\cL(L,M)$ on the category of flat $\mfo$-algebras to the category of sets such that 
\[
\cL(L,M)(R)=\{f\mid\textit{$f:L\otimes R\rightarrow M\otimes R $ is $R$-linear and surjective}\}
\]
for a flat $\mfo$-algebra $R$.
The functor $\cL(L,M)$ is then represented by an open subscheme of $\mathrm{Hom}_{\mfo}(L,M)$, which is an affine space over $\mfo$ of dimension $n^2$, so as to be smooth over $\mfo$.
Here, `open subscheme' structure is due to surjectivity in $\cL(L,M)(R)$. 

\item  We can assign the $\mfo$-scheme structures on 
$\mathrm{End}(L)(\mfo)$ by defining the functor on the category of flat $\mfo$-algebras to the category of  sets as follows:
\[
\End(L)(R)=\End_{R}(L\otimes R) ~~  \textit{  for an $\mfo$-algebra $R$.}
\]

We then express $\gl_{n, \mfo}=\End(L)$ so that $\cL(L,M)(\mfo)$  is  an open subset  of $\gl_{n, \mfo}(\mfo)$. 



\end{itemize}

\begin{definition}{\cite[Definition 3.5]{CKL}}\label{def_type}
The type of $M$ is defined to 
    be $(k_1,\dots,k_n)$ where $k_i\in \mathbb{Z}_{\geq 0}$ and $ k_i\leq k_j$ for $i\leq j$, such that 
    \[
    L/M\cong \mfo/(\pi^{k_1})\oplus\mfo/(\pi^{k_2})\oplus \dots \oplus \mfo/(\pi^{k_n}).
    \]
For example, if $L$ (resp. $M$) is spanned by $(e_1, e_2)$ (resp. $(e_1, \pi \cdot  e_2)$), then the type of $M$ is $(0, 1)$.
\end{definition}




Choose $M$ such that  $type(M)=(0, \dots, 0, d)$. 
We define another functor $\mathbb{A}_{M}$ on the category of flat $\mfo$-algebras to the category of sets such that 
$\mathbb{A}_{M}(R)=R^{n-1}\times  
\pi^d R$
for a flat $\mfo$-algebra $R$. Then $\mathbb{A}_{M}$ 
is represented
by an affine space over $\mfo$ of dimension $n$.
Define a morphism 
$$\varphi_{M}: \cL(M)(R) \longrightarrow \mathbb{A}_{M}(R), ~~~~~~~~~ m \mapsto \textit{coefficients of }\chi_{m}(X). $$ 
Here we emphasize that $d$ is allowed to be $0$ so that $M=L$, which happens when the reduction of $\chi_{\gamma}(X)$ in $k[X]$ is irreducible.
We refer to \cite[Remark 4.3]{CKL} for further details about $\varphi_{M}$.

\begin{theorem}{\cite[Theorem 4.6]{CKL}}\label{thm:localtype1line}
The morphism $\varphi_{M}$ is smooth at any point contained in  $\varphi_{M}^{-1}(\chi_{\gamma})(\bar{k})$.
\end{theorem}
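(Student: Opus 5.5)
We will prove smoothness with the Jacobian criterion. Both $\cL(M)$, meaning $\cL(L,M)$ viewed inside $\End(L)$, and $\mathbb{A}_M$ are smooth over $\mfo$ of relative dimensions $n^2$ and $n$. So by the fiberwise criterion for smoothness (flatness plus smoothness of the special fiber), it suffices to show that for every $\bar{k}$-point $m \in \varphi_M^{-1}(\chi_\gamma)(\bar{k})$ the differential
$$d\varphi_M \otimes \bar{k}: T_m\cL(M)_{\bar{k}} \to T\mathbb{A}_{M,\bar{k}}$$
is surjective. Here the reduction of the source is $\mathrm{Hom}_{\bar{k}}(\bar{L},\bar{M})$ and the reduction of the target is $\bar{k}^{n-1}\times \pi^d\bar{k}$. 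Surjectivity at all $\bar{k}$-points of the fiber then gives smoothness there, by \cite[Tag 01V9]{} style arguments, or simply by the local structure theorem for smooth morphisms between smooth schemes.

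\textbf{Step 1 (coordinates).} Choose bases adapted to the type $(0,\dots,0,d)$: take $L=\mfo^n$ and $M=\mfo e_1\oplus\dots\oplus\mfo e_{n-1}\oplus \pi^d\mfo e_n$. An element of $\cL(M)(R)$ is then a matrix $m=m'\cdot \mathrm{diag}(1,\dots,1,\pi^d)$ with $m'\in \mathrm{GL}$-open, i.e.\ $m'$ surjective from $L\otimes R$ to $L\otimes R$ after rescaling. Thus $\cL(M)$ is identified with $\mathrm{GL}_{n}$ (or an open subscheme of $\gl_n$ where $\det m'$ is a unit). In these coordinates $a_n=\det m=\pi^d\det m'$, so $\varphi_M$ sends $m'$ to $(a_1,\dots,a_{n-1},\det m')$ after dividing the last coordinate by $\pi^d$. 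This division is exactly what the target $R^{n-1}\times\pi^dR$ encodes.

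\textbf{Step 2 (reduction mod $\pi$).} Over $\bar{k}$ we have $\bar m=\bar{m}'\,\mathrm{diag}(1,\dots,1,0)$ when $d>0$, with $\bar m'$ invertible. So $\bar m$ has rank exactly $n-1$, and its characteristic polynomial is $\overline{\chi_\gamma}(X)$. We must show that the map
$$X'\mapsto \Big(\tfrac{d}{dt}a_i\big((\bar m'+tX')D\big)\Big)_{i<n},\ \tfrac{d}{dt}\det(\bar m'+tX')$$
is surjective, where $D=\mathrm{diag}(1,\dots,1,0)$. The last component is $\det\bar m'\cdot\mathrm{tr}(\bar m'^{-1}X')$. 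The first $n-1$ components are the differentials of the characteristic-polynomial map at $\bar m$, applied to the tangent directions $X'D$.

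\textbf{Step 3 (key point, main obstacle).} We need a regularity statement: $\bar m$ is a regular element of $\gl_n(\bar k)$, i.e.\ it has a cyclic vector. The reason is that $\mfo[X]/(\chi_\gamma)$ is a domain and $\gamma$ corresponds to a lattice module of rank one over it whose cokernel type forces the reduction to be cyclic. Concretely, $\overline{\chi_\gamma}$ is a power of an irreducible polynomial, and the rank-$(n-1)$ condition on $\bar m$ (at eigenvalue $0$ when $\overline{\chi_\gamma}=X^n$, and similarly after translation) forces a single Jordan block for each eigenvalue. Once $\bar m$ is regular, Kostant's theorem says that $d\chi$ at $\bar m$ is surjective from $\gl_n$. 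Its image restricted to directions $Y$ is computed by $Y\mapsto(\mathrm{tr}(Y\bar m^{j}))_{j}$ up to an invertible triangular change of coordinates.

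Then we must check that the subspace $\{X'D\}$ together with the extra coordinate $\mathrm{tr}(\bar m'^{-1}X')$ still surjects. The plan is an explicit computation in a companion-type normal form for $\bar m$. First conjugate $\bar m$ by $\mathrm{GL}$ elements preserving $\bar M$; this amounts to using the parabolic preserving the flag, which is allowed because $\cL(M)$ is stable under the appropriate group action. Then exhibit $n$ explicit tangent vectors whose images form a triangular basis. This normal-form computation, especially when $\overline{\chi_\gamma}$ is not $X^n$ and must be handled via the translation $X\mapsto X-c$, is where I expect the real work. The case $d=0$ reduces to the classical fact that the characteristic-polynomial map is smooth at regular elements.
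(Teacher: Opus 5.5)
Your reduction is sound. Both $\cL(L,M)$ and $\mathbb{A}_M$ are smooth over $\mfo$, so smoothness of $\varphi_M$ at a point of the special fiber is equivalent to surjectivity of the tangent map of $\varphi_M\otimes\bar k$ there. For $d>0$ your regularity claim is also right: $\bar m$ has rank $n-1$, and since $\bar a_n=0$ and $\chi_\gamma$ is irreducible, Hensel's lemma forces $\overline{\chi_\gamma}=X^n$, so $\bar m$ is a single nilpotent Jordan block. In particular, the case ``$d>0$ but $\overline{\chi_\gamma}\neq X^n$'' that you worry about never occurs.

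The gap is that the proposal stops exactly at the step that constitutes the theorem. You never show that the directions $\bar D Y$ (with $\bar D=\mathrm{diag}(1,\dots,1,0)$), together with the extra coordinate $\mathrm{tr}(\bar A^{-1}Y)$, surject onto $\bar k^n$; the announced normal-form computation is not carried out. No normal form is needed:
\begin{itemize}
\item Write $m=\mathrm{diag}(1,\dots,1,\pi^d)\,A$ with $A$ invertible. This is the correct side: $m'\,\mathrm{diag}(1,\dots,1,\pi^d)$ has image $m'(M)$ rather than $M$, though by transposition the error is harmless.
\item Put $N=\bar D\bar A$. From $\mathrm{adj}(X-N)=\sum_{j=0}^{n-1}X^{n-1-j}N^j$ one gets $da_i(Z)=\pm\mathrm{tr}(N^{i-1}Z)$.
\item Hence the differential at $\bar A$ is $Y\mapsto\big(\pm\mathrm{tr}(N^{j}\bar D Y)\big)_{0\le j\le n-2}$ together with $\det\bar A\cdot\mathrm{tr}(\bar A^{-1}Y)$.
\item By nondegeneracy of the trace pairing, this map is surjective iff $\bar D,N\bar D,\dots,N^{n-2}\bar D,\bar A^{-1}$ are linearly independent.
\item Right multiplication by $\bar A$ turns these into $N,N^2,\dots,N^{n-1},I$, since $N^j\bar D\bar A=N^{j+1}$. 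They are independent because $N$ is regular nilpotent.
\end{itemize}

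Your handling of $d=0$ is wrong as written. There $M=L$ and $\cL(L,M)=\mathrm{GL}_n$, so there is no rank condition. Translation is not available either: it does not preserve $\cL(L,L)$, and it changes $d$. If $\overline{\chi_\gamma}=\bar g^e$ with $e\ge 2$, the fiber contains non-regular points where $\varphi_M$ is genuinely not smooth. For example, $\chi_\gamma=(X-1)^2-\pi$ has $d=0$, the identity lies in $\varphi_M^{-1}(\chi_\gamma)(\bar k)$, and $d(\mathrm{tr},\det)$ has rank $1$ there.

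So the statement must be read with the standing hypothesis of \cite{CKL}, echoed in the remark before it, that $M=L$ is used only when $\overline{\chi_\gamma}$ is irreducible over $k$. Then $\overline{\chi_\gamma}$ is separable over $\bar k$, every point of the fiber is regular semisimple, and your appeal to smoothness of the characteristic-polynomial map at regular elements does finish that case. The paper itself gives no argument here; it imports the result from \cite[Theorem 4.6]{CKL}.
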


By \cite[Remark 7.4]{GY} and \cite[Section 3.4.2]{CKL}, we generalize the above result as follows, which will be necessary for a global smoothness in Section \ref{section3}.
For any integer $t$, let 
\begin{equation}\label{eq:localhitch}
    \varphi_{M}^t: \pi^t \cdot \cL(L,M)(R) \longrightarrow \mathbb{A}_{M}^t(R),  ~~~~~~~~~ \pi^t\cdot m \mapsto \textit{coefficients of }\chi_{\pi^t\cdot m}(X),
\end{equation}
where $\mathbb{A}_{M}^t(R):= \left(\bigoplus\limits_{i=1}^{n-1}\pi^{i\cdot t}R\right) \oplus \pi^{n\cdot t+d}R$ for a flat $\mfo$-algebra $R$.

\begin{corollary}\label{cor:genlocalthm1}
The morphism $\varphi_{M}^t$ is smooth at any point contained in  $\left(\varphi^t_{M}\right)^{-1}(\chi_{\pi^t\cdot \gamma})(\bar{k})$.
\end{corollary}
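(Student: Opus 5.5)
The plan is to reduce Corollary \ref{cor:genlocalthm1} to Theorem \ref{thm:localtype1line} by the scaling isomorphism $m\mapsto \pi^t m$. First I set up the two isomorphisms. On the source, the functor $R\mapsto \pi^t\cdot\cL(L,M)(R)$ is identified with $\cL(L,M)$ via $m\mapsto \pi^t m$; on flat $\mfo$-algebras multiplication by $\pi^t$ is injective, so this is a bijection. For $t<0$ one reads $\pi^t\cdot\cL(L,M)$ inside $\End(L)\otimes F$, in the sense of \cite[Section 3.4.2]{CKL}. On the target, define
$$s_t:\mathbb{A}_M(R)=R^{n-1}\times\pi^dR\longrightarrow \mathbb{A}_M^t(R)=\Big(\bigoplus_{i=1}^{n-1}\pi^{it}R\Big)\oplus\pi^{nt+d}R,\qquad (a_1,\dots,a_n)\mapsto(\pi^ta_1,\pi^{2t}a_2,\dots,\pi^{nt}a_n).$$
This is again a bijection, functorial in $R$. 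As schemes, both $\mathbb{A}_M$ and $\mathbb{A}^t_M$ are affine $n$-spaces over $\mfo$, with coordinates $a_i$ and $a_n/\pi^d$ (resp. $a_i/\pi^{it}$ and $a_n/\pi^{nt+d}$). In these coordinates $s_t$ is the identity map, so it is an isomorphism of $\mfo$-schemes. The same holds on the source, where both sides are the same open subscheme of $\mathrm{Hom}(L,M)\cong\mathbb{A}^{n^2}$ written in rescaled coordinates.

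Next I check that the square commutes: $\varphi^t_M\circ(\pi^t\cdot)=s_t\circ\varphi_M$. This follows from the identity $\chi_{\pi^t m}(X)=\pi^{nt}\chi_m(\pi^{-t}X)$, which says the $i$-th coefficient of $\chi_{\pi^tm}$ is $\pi^{it}a_i(m)$. Under these isomorphisms the fibre $(\varphi^t_M)^{-1}(\chi_{\pi^t\gamma})$ corresponds exactly to $\varphi_M^{-1}(\chi_\gamma)$, since $s_t(\chi_\gamma)=\chi_{\pi^t\gamma}$. Hence the $\bar k$-points correspond, and smoothness at a point is preserved under composing with isomorphisms. Theorem \ref{thm:localtype1line} then gives the claim.

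The main obstacle is to make rigorous that these functors on flat $\mfo$-algebras are represented by schemes, and that the scaling maps are isomorphisms of the representing schemes rather than just bijections of points. This is the convention question behind \cite[Remark 7.4]{GY}. I would handle it by writing all four objects explicitly as affine spaces, or an open subscheme of one, in rescaled coordinates. In those coordinates both scaling maps become identities, and the morphisms become polynomial maps with $\mfo$-coefficients.
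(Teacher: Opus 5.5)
Your proposal is correct and matches the paper's proof, which uses the same commutative square. There, the rescaling isomorphisms $\iota_1\colon \pi^t m\mapsto m$ and $\iota_2\colon(\pi^t a_1,\dots,\pi^{nt}a_n)\mapsto(a_1,\dots,a_n)$ (the inverses of your maps) reduce the claim to Theorem \ref{thm:localtype1line}. Your check of commutativity via $\chi_{\pi^t m}(X)=\pi^{nt}\chi_m(\pi^{-t}X)$, and your remark on representing the functors (including the case $t<0$) in rescaled coordinates, spell out details the paper leaves implicit.
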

\begin{proof}
The proof directly follows from the commutative diagram below (cf. \cite[Diagram (3.9)]{CKL}):
\begin{equation}\label{diag48}
\xymatrixcolsep{5pc}\xymatrix{
\pi^t \cdot \cL(L,M) \ar[d]^{\iota_1, \cong} \ar[r]^{\varphi_{M}^t} &  \mathbb{A}_{M}^t\ar[d]^{\iota_2, \cong} \\
\cL(L,M) \ar[r]^{\varphi_{M}} &  \mathbb{A}_{M}
}
\end{equation}
Here both $\iota_1$ and $\iota_2$ are isomorphisms of $\mfo$-schemes described below: \[
\left\{
  \begin{array}{l }
\iota_1: \pi^t \cdot \cL(L,M)\cong \cL(L,M), ~~~~~~~ \pi^t\cdot m \mapsto m;\\
\iota_2: \mathbb{A}_{M}^t \cong \mathbb{A}_{M}, ~~~~~~~ (\pi^t a_1, \dots, \pi^{n\cdot t} a_n) \mapsto (a_1, \dots, a_n).
    \end{array} \right.
\]
\end{proof}





\subsection{Revisit to the smoothening process in \cite{CHL}}\label{subsec:CHL}

\subsubsection{The construction of smoothening in Bass case of \cite{CHL}}\label{subsec:revisitchl}

Choose $M$ with  $type(M)=(0, \dots, 0, 1, 1)$ and choose a basis $(e_1, \dots, e_n)$ for $L$ such that $(e_1, \dots, \pi e_{n-1}, \pi e_n)$ is a basis for $M$.

Consider two integers $l_1$ and $l_2$ such that $l_1+l_2=n-2$ with $0\leq l_1\leq l_2$.
For each $l_1$, choose a matrix $X_{l_1}\in \mathrm{M}_{n-2}(\mfo)$ whose reduction modulo $\pi$ is similar to $J_{l_1}\perp J_{l_2}$, where $J_{l_i}$ is the Jordan canonical form of size $l_i$ with diagonal entries $0$ and with superdiagonal entries $1$.

Define a functor $\mathrm{End}_\mfo(L)_{M, l_1}$ on the category of flat $\mfo$-algebras to the category of sets such that 
\[
\mathrm{End}_\mfo(L)_{M, l_1}(R)=\left\{m=\begin{pmatrix}
X_{l_1}+\pi X^\dag_{1,1} &X_{1,2} & X_{1,3} \\
\pi X_{2,1} &\pi X_{2,2} & \pi X_{2,3} \\
\pi X_{3,1} &\pi X_{3,2} & \pi X_{3,3}
\end{pmatrix}\in \cL(L,M)(R) 
\right\}
\]
for a flat $\mfo$-algebra $R$, where $X^\dag_{1,1}\in \mathrm{M}_{n-2}(R)$ is of size $n-2$ and so on.
The functor $\mathrm{End}_\mfo(L)_{M, l_1}$ is then represented by an open subscheme of the affine space over $\mfo$ of dimension $n^2$.
Note that the functor $\mathrm{End}_\mfo(L)_{M, l_1}$ depends on the choice of $X_{l_1}$, but two different choices give an isomorphism given by a conjugation.


We define another functor $\mathbb{A}_{l_1}$  on the category of flat $\mfo$-algebras to the category of sets such that
$\mathbb{A}_{l_1}(R)=(\pi R)^{l_2+1}\times (\pi^2 R)^{n-l_2-1}$
for a flat $\mfo$-algebra $R$. Then $\mathbb{A}_{l_1}$ is represented by an affine space over $\mfo$ of dimension $n$. 
Define a morphism $\varphi_{l_1}$ with $l_1\geq 0$ as follows;
\begin{equation}\label{eq:localhitchbass}
\varphi_{l_1}: \mathrm{End}_\mfo(L)_{M, l_1} \longrightarrow \mathbb{A}_{l_1}, ~~~~~ m \mapsto  \textit{coefficients of }\chi_m(X).
\end{equation}
  
We refer to \cite[Section 4.3.1 and Remark 4.6]{CHL} for further details about $\varphi_{l_1}$.

\begin{theorem}{\cite[Proposition 4.7]{CHL}}\label{thm:localBassmain}
If $n\geq 4$ is even such that $X^2+\overline{1/\pi_x\cdot a_{n/2}}\cdot X+\overline{1/\pi_x^2\cdot a_{n}} \in k[X]$ is irreducible, or if $n\geq 3$ is odd, then  $\varphi_{l_1}$ is smooth at any point contained in  $\varphi_{l_1}^{-1}(\chi_{\gamma})(\bar{k})$.
\end{theorem}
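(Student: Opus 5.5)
The plan is to follow the Jacobian-criterion strategy of \cite[Theorem 4.6]{CKL}. Both $\mathrm{End}_\mfo(L)_{M,l_1}$ and $\mathbb{A}_{l_1}$ are smooth over $\mfo$ of relative dimensions $n^2$ and $n$. By the fiberwise criterion for smoothness, it then suffices to show that at every point $\bar m\in\varphi_{l_1}^{-1}(\chi_\gamma)(\bar k)$ the special fiber map
$$\overline{\varphi_{l_1}}:\mathrm{End}_\mfo(L)_{M,l_1}\otimes k\to \mathbb{A}_{l_1}\otimes k$$
has surjective differential. The map is flat, by miracle flatness on the smooth total space once its fibers have the right dimension, so this surjectivity is what we need.

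First I will write $\overline{\varphi_{l_1}}$ in the coordinates $(X^\dag_{1,1},X_{1,2},X_{1,3},X_{2,*},X_{3,*})$ modulo $\pi$, with the target coordinates normalized. The first $l_2+1$ coefficients are divided by $\pi$ and the remaining $n-l_2-1$ by $\pi^2$. To do this I will expand $\chi_m(X)=\det(X-m)$ using the block shape of $m$. The top-left block is $X_{l_1}+\pi X_{1,1}^\dag$ with reduction $J_{l_1}\perp J_{l_2}$. The last two rows are divisible by $\pi$.

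A Schur-complement computation should give
$$\chi_m(X)\equiv \det(X-X_{l_1}-\pi X^\dag_{1,1})\cdot\det\bigl(X-\pi Z(X)\bigr),$$
where $Z(X)=X_{3\times3}$-type lower block plus the correction $\pi(X_{2,1},X_{3,1})^{t}(X-X_{l_1})^{-1}(X_{1,2},X_{1,3})$, with suitable indices. Extracting the coefficients of order $\pi$ and $\pi^2$ then yields explicit polynomial functions over $k$. The order-$\pi$ parts come from $\mathrm{tr}$-type linear terms in $X^\dag_{1,1}$ and the diagonal of $X_{2,2},X_{3,3}$. The $\pi^2$ parts come from $\det$ of the $2\times2$ block and from the pairings $X_{2,1}J^{i}X_{1,2}$.

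Next I will compute the differential at $\bar m$. The key point is that a nilpotent Jordan block $J_l$ is regular in $\mathfrak{gl}_l$. So the differential of the characteristic-polynomial map restricted to $X^\dag_{1,1}$ surjects onto the coefficients seen by each block, and the cross terms $X_{2,1}J^iX_{1,2}$, $i=0,\dots,l_2$, give independent directions. I will then verify surjectivity onto all $n$ normalized coefficients by exhibiting explicit tangent vectors: elementary matrices placed at the ends of the Jordan chains, as in \cite[Section 4.3]{CHL}.

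The main obstacle is the rank count and the hypotheses. The $2\times2$ block $\overline{(X_{2,2},X_{2,3};X_{3,2},X_{3,3})}$ controls the $\pi^2$-coefficients in degrees tied to $a_{n/2}, a_n$. Since $\chi_\gamma$ is irreducible, the reduction of the $2\times 2$ block's characteristic polynomial equals $X^2+\overline{a_{n/2}/\pi}X+\overline{a_n/\pi^2}$ when $n$ is even, and irreducibility of this quadratic forces the block to be regular (non-scalar). That regularity is exactly what gives surjectivity on those two coordinates. For $n$ odd, the two Jordan sizes are $l_1<l_2$, and the Eisenstein-like shape of $\chi_\gamma$ forces the needed nondegeneracy automatically. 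I would handle the even and odd cases separately at this step and check the remaining coordinates by a triangularity argument in the Jordan-chain basis.
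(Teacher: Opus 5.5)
Your reduction is sound: since $\mathrm{End}_\mfo(L)_{M,l_1}$ and $\mathbb{A}_{l_1}$ are smooth over $\mfo$, it suffices to show that the differential of $\overline{\varphi_{l_1}}$ is surjective at each point of the special fiber. This is the route of \cite{CKL} and \cite{CHL}; the paper itself gives no proof and only quotes \cite[Proposition 4.7]{CHL}. The gap is in the computation that is supposed to use the hypothesis.

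Put $A=X_{l_1}+\pi X^\dag_{1,1}$ and $B=(X_{1,2}\ X_{1,3})$. Let $C$ be the $2\times(n-2)$ block with rows $X_{2,1},X_{3,1}$, and $D$ the lower right $2\times 2$ block. The Schur complement is $\chi_m(X)=\det(X-A)\det\bigl(X-\pi D-\pi C(X-A)^{-1}B\bigr)$. There is no extra $\pi$ in front of $C(X-A)^{-1}B$. Consequently the pairings $\mathrm{tr}(\bar C\bar J^{\,j}\bar B)$, $0\le j\le l_2-1$, already appear in the order-$\pi$ coordinates $\overline{a_{j+2}/\pi}$; for example, for $n=3$ one has $\overline{a_2/\pi}=-(\bar X_{2,1}\bar X_{1,2}+\bar X_{3,1}\bar X_{1,3})$. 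Meanwhile $D$ enters the order-$\pi$ part only through $\overline{a_1/\pi}$.

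More seriously, the residual quadratic is not the characteristic polynomial of $\bar D$. Take $n=4$, $l_1=l_2=1$, so $\bar X_{l_1}=0$. Expanding principal minors gives
\begin{gather*}
\overline{a_1/\pi}=\mathrm{tr}\,\bar X^\dag_{1,1}+\mathrm{tr}\,\bar D,\qquad \overline{a_2/\pi}=-\mathrm{tr}(\bar B\bar C),\qquad \overline{a_4/\pi^2}=\det(\bar B\bar C),\\
\overline{a_3/\pi^2}=-\mathrm{tr}\bigl(\mathrm{adj}(\bar X^\dag_{1,1})\bar B\bar C\bigr)-\mathrm{tr}\bigl(\mathrm{adj}(\bar D)\bar C\bar B\bigr).
\end{gather*}
So the quadratic is the characteristic polynomial of $\bar B\bar C$ and does not involve $\bar D$ at all. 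Take $\bar D=0$, $\bar B=I_2$, and $\bar C$ any matrix with that characteristic polynomial, then solve for $\bar X^\dag_{1,1}$. This gives a point of the fiber at which $\bar D$ is not regular. Moreover, at every point the $\bar D$-directions have zero derivative on $(\overline{a_2/\pi},\overline{a_4/\pi^2})$. Hence your step ``regularity of the $2\times2$ block gives surjectivity on those two coordinates'' cannot be carried out.

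What the hypothesis actually controls is the $2\times 2$ matrix of pairings. In the example, the differential is surjective if and only if $\bar B\bar C$ is not scalar. This is needed in two places:
\begin{itemize}
\item for the $(X_{1,2},X_{1,3},X_{2,1},X_{3,1})$-directions to cover $(\overline{a_2/\pi},\overline{a_4/\pi^2})$, since there the map is $d\chi$ at $\bar B\bar C$, and if $\bar B\bar C=r$ is scalar then $d\overline{a_4/\pi^2}=-r\,d\overline{a_2/\pi}$ identically;
\item for the $X^\dag_{1,1}$-directions $Y\mapsto\bigl(\mathrm{tr}\,Y,\ \mathrm{tr}(Y\bar B\bar C)-\mathrm{tr}\,Y\,\mathrm{tr}(\bar B\bar C)\bigr)$ to have rank $2$.
\end{itemize}
Note that $\bar X_{l_1}=0$ here, so regularity of Jordan blocks is not the mechanism behind the $X^\dag_{1,1}$-directions either; indeed $J_{l_1}\perp J_{l_2}$ is not regular in $\gl_{n-2}$ once $l_1\ge 1$. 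An irreducible quadratic over the finite field $k$ is separable, so $\bar B\bar C$ is regular semisimple at every $\bar k$-point of the fiber.

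For general even $n=2l+2$ with $l_1=l_2=l$, the relevant matrix is $\bar C\bar J^{\,l-1}\bar B$ with $\bar J=J_l\perp J_l$ (taking $X_{l_1}=J_l\perp J_l$). It is the endomorphism of $L/M$ induced by $m^{l+1}/\pi$, which is integral because $\bar m^{l+1}=0$. Since $\bar m$ has Jordan type $(l+1,l+1)$ and commutes with $\overline{m^{l+1}/\pi}$, a Newton polygon comparison shows that its characteristic polynomial is the given quadratic up to $X\mapsto -X$. The surjectivity argument has to be rebuilt around the regularity of this matrix.

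Finally, the odd case does not use an ``Eisenstein-like shape'' of $\chi_\gamma$: for $n=3$ the differential is surjective at every point of the special fiber, whatever $\chi_\gamma$ is.
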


As in Corollary \ref{cor:genlocalthm1}, the above result is generalized as follows.
For any integer $t$, let 
\begin{equation}\label{eq:localhitchinbasst}
\varphi_{l_1}^t: \pi^t \cdot \mathrm{End}_\mfo(L)_{M, l_1}(R) \longrightarrow \mathbb{A}_{l_1}^t(R),  ~~~~~~~~~ \pi^d\cdot m \mapsto \textit{coefficients of }\chi_{\pi^t\cdot m}(X),
\end{equation}
where $\mathbb{A}_{l_1}^t(R):= \left(\bigoplus\limits_{i=1}^{l_2+1}\pi^{i\cdot t+1}R\right) \oplus \left(\bigoplus\limits_{i=l_2+2}^{n}\pi^{i\cdot t+2}R\right)$ for a flat $\mfo$-algebra $R$.

\begin{corollary}\label{cor:genlocalthm}
The morphism $\varphi_{l_1}^t$ is smooth at any point contained in  $\left(\varphi^t_{l_1}\right)^{-1}(\chi_{\pi^t\cdot \gamma})(\bar{k})$.
\end{corollary}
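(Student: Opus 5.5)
The plan is to imitate the proof of Corollary \ref{cor:genlocalthm1}: reduce Corollary \ref{cor:genlocalthm} to Theorem \ref{thm:localBassmain} through a commutative square of $\mfo$-schemes whose vertical arrows are isomorphisms given by rescaling. First I would write the square
\[
\xymatrixcolsep{5pc}\xymatrix{
\pi^t \cdot \mathrm{End}_\mfo(L)_{M, l_1} \ar[d]^{\iota_1, \cong} \ar[r]^{\varphi_{l_1}^t} &  \mathbb{A}_{l_1}^t\ar[d]^{\iota_2, \cong} \\
\mathrm{End}_\mfo(L)_{M, l_1} \ar[r]^{\varphi_{l_1}} &  \mathbb{A}_{l_1}
}
\]
where $\iota_1(\pi^t\cdot m)=m$, and $\iota_2$ divides the $i$-th coordinate by $\pi^{i\cdot t}$. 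The functor $\pi^t\cdot \mathrm{End}_\mfo(L)_{M, l_1}$ is defined on flat $\mfo$-algebras as the image of multiplication by $\pi^t$. Since $R$ is $\mfo$-flat, $\pi$ is a nonzerodivisor on $R$, so multiplication by $\pi^t$ is injective and $\iota_1$ is an isomorphism of functors, hence of representing schemes. For $t<0$ one works inside $F$-valued points, or equivalently with $\pi^{-t}$ in the opposite direction; the same argument applies.

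Next I would check that $\iota_2$ is well defined and bijective. The target $\mathbb{A}^t_{l_1}(R)$ has $i$-th coordinate in $\pi^{it+1}R$ for $i\le l_2+1$ and in $\pi^{it+2}R$ for $i\ge l_2+2$. Dividing by $\pi^{it}$, which is legitimate by flatness, lands exactly in $(\pi R)^{l_2+1}\times(\pi^2R)^{n-l_2-1}=\mathbb{A}_{l_1}(R)$. Each $\iota_2$ is a bijection of sets, functorial in $R$, and hence an isomorphism of the affine spaces over $\mfo$.

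Commutativity follows from $\chi_{\pi^t m}(X)=\pi^{nt}\chi_m(\pi^{-t}X)$. This identity shows that the $i$-th coefficient of $\chi_{\pi^t m}$ is $\pi^{it}$ times the $i$-th coefficient of $\chi_m$. In particular $\varphi^t_{l_1}$ does land in $\mathbb{A}^t_{l_1}$, since $\varphi_{l_1}$ lands in $\mathbb{A}_{l_1}$ by construction. The same identity shows that $\iota_2(\chi_{\pi^t\gamma})=\chi_\gamma$, so $\iota_1$ carries $(\varphi^t_{l_1})^{-1}(\chi_{\pi^t\gamma})$ isomorphically onto $\varphi_{l_1}^{-1}(\chi_\gamma)$, including on $\bar k$-points.

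Finally, smoothness of a morphism at a point is preserved under composition with isomorphisms on the source and the target. Theorem \ref{thm:localBassmain} gives smoothness of $\varphi_{l_1}$ along $\varphi_{l_1}^{-1}(\chi_\gamma)(\bar k)$, and this transfers to $\varphi^t_{l_1}$. The corollary inherits the hypotheses of Theorem \ref{thm:localBassmain}, namely $n$ odd, or $n$ even with the stated irreducibility condition, applied to $\gamma$.

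The only mildly delicate step is the bookkeeping of the exponents $it+1$ and $it+2$, so that $\iota_2$ matches $\mathbb{A}^t_{l_1}$ with $\mathbb{A}_{l_1}$. This amounts to the homogeneity of the coefficient of degree $i$, and I expect no real obstacle there.
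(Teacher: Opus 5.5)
Your proposal is correct and follows the paper's approach: the paper only says the proof is parallel to Corollary \ref{cor:genlocalthm1}, i.e.\ the commutative square with the rescaling isomorphisms $\iota_1$ and $\iota_2$ reducing to Theorem \ref{thm:localBassmain}. You wrote this out with the exponent bookkeeping ($it+1$, $it+2$) and the identity $\chi_{\pi^t m}(X)=\pi^{nt}\chi_m(\pi^{-t}X)$ made explicit, and you noted correctly that the corollary inherits the hypotheses of Theorem \ref{thm:localBassmain}.
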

\begin{proof}
The proof is parallel to that of Corollary \ref{cor:genlocalthm1} and thus we skip it.
\end{proof}

\subsubsection{Bass orders in a non-archimedean local field}\label{subsec:bass}
In this subsection, we will review the notion of Bass order, which is the main subject in \cite{CHL}. 
The above smoothening processes yield a formula for orbital integrals when $\chi_\gamma(X)$ determines a Bass order $\mfo[X]/(\chi_\gamma(X))$. 
Since we will geometrize these smoothening processes in Section \ref{section3} and will use it in geometric stratification in Section \ref{Section:geomstr}, we will review a Bass order below.
Note that a Bass order is defined for both global and local number fields, but we will work on its local nature in this paper. 
We refer to \cite[Section 3]{CHL} for a detailed explanation,  to \cite[Proposition 3.5]{CHL} for a local classification, and to \cite[Theorem 1.2]{CHL} for a global classification.

 
 \begin{definition}\cite[Part 1, Notations and Definition 3.1]{CHL}\label{def:bass}
Let $E$ be a finite field extension of $F$ with $\Mfo_E$ its ring of integers. Let $\Mfo$ be an order containing $\mfo$ in $E$, that is, $\Mfo$ is a subring of $\Mfo_E$ such that $\Mfo\otimes_\mfo F\cong E$. 
    \begin{enumerate}
\item $\Mfo$ is called a Bass order if every ideal of $\Mfo$ is generated by at most two elements. 
\item A \textit{fractional $\Mfo$-ideal} $I$ is a finitely generated $\Mfo$-submodule of $E$ such that $I\otimes_{\Mfo}F=E$.
As an $\mfo$-module, every fractional $R$-ideal is free of rank $n$.

\item The ideal class monoid of $\Mfo$, denoted by $\clb(\Mfo)$, is the set of fractional $\Mfo$-ideals up to multiplication by principal fractional $\Mfo$-ideals. We use $[I]$ for an element of $\clb(\Mfo)$, where $I$ is a fractional $\Mfo$-ideal.  
     \item An overorder of $\Mfo$ is an order in $E$ which contains $\Mfo$.
     \end{enumerate}
 \end{definition}
 \begin{proposition}\label{prop:overorder}
 For a Bass order $\Mfo$,   $[I]=[(I:I)]$ in $\clb(\Mfo)$, where $(I:I):=\{x\in E\mid xI\subset I\}$ is an overorder of $\Mfo$ which is independent of the choice of $I$.
 \end{proposition}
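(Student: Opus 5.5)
The statement to prove is that for a Bass order $\Mfo$ and any fractional ideal $I$, we have $[I]=[(I:I)]$ in $\clb(\Mfo)$, i.e. $I$ is principal over its multiplier ring. I would use the standard characterization of Bass orders: every overorder of a Bass order is Gorenstein, and conversely.

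The plan is as follows.

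\textbf{Step 1: $\Mfo':=(I:I)$ is an overorder.} Clearly $\Mfo\subset \Mfo'$. Since $I$ is a finitely generated torsion-free $\mfo$-module of full rank, $\Mfo'$ embeds into $\mathrm{End}_\mfo(I)$ and is therefore a finite $\mfo$-module. Hence $\Mfo'$ is an order in $E$, so an overorder of $\Mfo$.

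\textbf{Step 2: $\Mfo'$ is Gorenstein.} Every ideal of $\Mfo'$ is an $\Mfo'$-module generated by at most two elements. To see this, note that an ideal of $\Mfo'$, rescaled into $\Mfo$ after multiplying by a suitable conductor element, is an $\Mfo$-ideal, hence $2$-generated over $\Mfo$ and so over $\Mfo'$. Thus $\Mfo'$ is again Bass. By the Bass characterization, every overorder of a Bass order is Gorenstein (equivalently, every ideal is $2$-generated for one-dimensional local noetherian reduced rings with finite normalization; Bass' theorem). Alternatively, I would cite \cite[Section 3]{CHL} for this.

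\textbf{Step 3: $I$ is invertible over $\Mfo'$.} Over a Gorenstein order, a fractional ideal $J$ with $(J:J)$ equal to the order itself is invertible. This is the standard fact that for Gorenstein $\Mfo'$, the conditions $(J:J)=\Mfo'$ and $J$ being invertible are equivalent; it uses duality $J\mapsto (\Mfo':J)$ being an involution. Apply this to $J=I$.

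\textbf{Step 4: $I$ is principal over $\Mfo'$.} Since $\Mfo'$ is semilocal (it is finite over the complete, hence henselian, local ring $\mfo$, and is local when $E$ is a field), invertible ideals are principal. So $I=\alpha\Mfo'$ for some $\alpha\in E^\times$, giving $[I]=[\Mfo']$.

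\textbf{Step 5: independence of the choice.} Read the independence claim as saying that $(I:I)$ is unchanged when $I$ is replaced by another representative of $[I]$, since $(\alpha I:\alpha I)=(I:I)$. Hence the overorder is determined by the class $[I]$.

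The main obstacle is Step 3, the Gorenstein invertibility criterion. I would take it from the literature (Bass, \emph{On the ubiquity of Gorenstein rings}) rather than reprove it.
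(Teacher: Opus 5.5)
Your proposal is correct and follows the same outline as the paper's proof. You show that $(I:I)$ is an overorder, obtain independence from $(\alpha I:\alpha I)=(I:I)$ (the paper cites \cite[Proposition 1.1.11]{DTZ} here), and deduce $[I]=[(I:I)]$ from the Bass property. The one difference is that the paper simply invokes \cite[Remark 3.2.(1)]{CHL} for the last step, whereas you spell out the standard argument behind it: overorders of a Bass order are Gorenstein, so $I$ is invertible over its multiplier ring by duality, hence principal because that ring is local.
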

 \begin{proof}
     It is straightforward to check that $(I:I)$ is a subring of $E$ which contains $\Mfo$.
     Therefore, it is an order.
     By \cite[Proposition 1.1.11]{DTZ}, $(I:I)$ is independent from the choice of $I$ in $[I]$.
     Since $\Mfo$ is a Bass order, the claim that $[I]=[(I:I)]$ follows from \cite[Remark 3.2.(1)]{CHL}.
 \end{proof}

\begin{remark}\label{charbassrmk}
\cite[Proposition 3.6]{CHL} describes a complete characterization for when $\mfo[X]/(\chi_\gamma(X))$ is a Bass order in a field $F[X]/(\chi_\gamma(X))$. We will summarize it in this remark.

Suppose that $\mfo[X]/(\chi_\gamma(X))$ is not the maximal order of $F[X]/(\chi_\gamma(X))$. Choose $K$ to be the maximal unramified extension of $F$ contained $F[X]/(\chi_\gamma(X))$ whose residue field is the same as that of $\mfo[X]/(\chi_\gamma(X))$.
Then choose an irreducible polynomial $g_\chi(X)\in \mfo_K[X]$ such that $\mfo[X]/(\chi_\gamma(X))\cong \mfo_K[X]/(g_\gamma(X))$.
Note that $\mfo_K[X]/(g_\gamma(X))\cong \mfo_K[X]/(g_\gamma(X-a))$ for $a\in \mfo_K$.

Then \cite[Proposition 3.6]{CHL} explains that up to translation by $X\mapsto X-a$,  $\mfo_K[X]/(g_\gamma(X))$ is a Bass order if and only if either $\ord_K(g_\gamma(0))=2$ or $[F[X]/(\chi_\gamma(X)):K]=2$.

If $k=\bar{k}$, then $\mfo_K=\mfo$ and thus $g_\gamma(X)=\chi_\gamma(X)$. 
Consequently $\mfo[X]/(\chi_\gamma(X))$  is a Bass order if and only if  either $n=2$ or $n$ is odd with $d=2$ up to translation, since $\chi_\gamma(X)$ is assumed to be irreducible over $\mfo$.
A Bass order does not have to be a simple extension of $\mfo$ (cf. Proposition \ref{thm:overorders}). 
\end{remark}

\section{Global geometrization of local smoothening process}\label{section3}
In this section, we geometrize the local smoothening method explained in Section \ref{sec:localsmoothening}, in the context of the global Hitchin fibration for $\mathrm{GL}_n$.
The primary objective is to construct a smooth stratification of the Hitchin fiber $\Phi^{-1}(\chi)$ in the Bass case (cf. Theorem  \ref{thm:smoothness} and Corollary \ref{cor:smoothstr} for $n\geq 3$, and Theorem \ref{thm:smoon=2} for $n=2$).
We first introduce the notations in the global setting, following \cite[Section 3]{Ch14}. 
\begin{itemize}
    \item Let $k$ be a finite field extension of $\mathbb{F}_p$ or an algebraic closure of $\mathbb{F}_p$. Suppose that $p>n$.
    \item Let $C$ be a smooth, geometrically connected, and projective curve defined over $k$.
    \item Unless otherwise specified, by a point $x\in C$ we mean $x\in C(k)$.
    \item For a $k$-scheme $S$, $C\times_{\mathrm{Spec}~k}S$ is denoted by $C_S$, and $C\times_{\mathrm{Spec}~k}\mathrm{Spec}~R$ by $C\times_kR$.
    \item Let $F$ be the function field of $C$. Here $F$ is different from the local setting in Section \ref{sec:localsmoothening}.
    \item For $x \in C$, let $\Mfo_x$ be the completed local ring of $C$ at $x$, and let $F_x$ be the fraction field of $\Mfo_x$.
    Choose a uniformizer $\pi_x$ of $\Mfo_x$, contained in $F$.
    \item Let $\mathcal{O}$ be the product of all completions $\prod\limits_{x\in C}\mathcal{O}_x$.
    \item Let $\mathbb{A}_C$ be the ring of adeles of $C$, the restricted product $\prod'_{x\in C}F_x$ with respect to $\mathcal{O}$. 
    \item $D=\sum\limits_{x} d_x[x]$ is an even and effective divisor on $C$ such that $\mathrm{deg}(D)>2g_C$, where $g_C$ is the genus of $C_{\bar{k}}$.
     Let $\pi_D=\prod\limits_{x\in C}\pi_x^{d_x}$, which is an element of $F$.
\end{itemize}
    
    \subsection{Construction of Hitchin fibration}\label{subsec:constofM}
    In this subsection, we will construct the elliptic Hitchin fibration, which is a proper morphism of schemes over $k$. 
 This is well explained in \cite[Sections 4.6-4.7]{CL10}.
 Since the construction is essential to our purpose, we will summarize it in this subsection. 

For a $k$-algebra $R$, a \textit{Hitchin bundle} over $R$ is a pair $(\mathcal{E}, \theta)$ where 
\begin{itemize}
    \item[-] $\mathcal{E}$ is a vector bundle on $C_R$ of rank $n$ and of degree $0$;
    \item[-] $\theta: \mathcal{E}\rightarrow \ce(D)=\mathcal{E}\otimes_{\mathcal{O}_{C_R}}\mathcal{O}_{C_R}(D)$ is a homomorphism of $\co_{C_R}$-modules.
\end{itemize}

Let $\mathbb{M}$ be the algebraic $k$-stack which classifies Hitchin bundles $(\ce, \theta)$ and let $\mathbb{A}:=\bigoplus\limits_{i=1}^nH^0(C,\mathcal{O}_C(iD))$ be an affine space over $k$.
Then the Hitchin fibration is the morphism of $k$-stacks
\[
\Phi:\mathbb{M} \longrightarrow \mathbb{A}, 
~~~~~~~   (\ce, \theta) \mapsto \chi_{\theta}=\left(a_1, \dots, a_n\right) \left(=\textit{coefficients of }\chi_{\theta}(X)\right), 
\]
where the characteristic polynomial $\chi_\theta(X)$ of $\theta$ is $X^n-a_1X^{n-1}+\dots +(-1)^na_n$ with $a_i\in H^0(C, \co_C(iD))$.
By abuse of notation, we often say that $\chi_\theta(X)$ is an element of $\mathbb{A}$.

Let $\mathbb{A}^{ell}$ be the open subscheme of $\mathbb{A}$ which consists of irreducible polynomials $\chi$ in $F[X]$, equivalently consists of characteristic polynomials of elliptic (and thus regular and semisimple) elements. 
Let $\mathbb{M}^{ell}=\mathbb{M}\times_{\mathbb{A}}\mathbb{A}^{ell}$, which is an open substack of $\mathbb{M}$. 

Assuming that such point exists, we choose $\infty\in C(k)$, which does not belong to the support of the divisor $D$ (so that $d_{\infty}=0$). 
Let $\mathbb{A}^{\infty}$ be the open subscheme of $\mathbb{A}^{ell}$ consisting of characteristic polynomials whose reduction modulo $\pi_{\infty}$, denoted by $\overline{\chi_{\infty}}$, has $n$-distinct simple roots in $k[X]$ (cf. \cite[Sections 3.4 and 11.1]{CL10}). 

Let $\mathcal{A}^{ell}$ be the \'etale Galois cover of $\mathbb{A}^{\infty}$ of the symmetric group $S_n$ given by 
\[
\mathcal{A}^{ell}(R)=\{(\chi, \tau)\in \mathbb{A}^{\infty}(R)\times R^n| \overline{\chi_{\infty}}=\prod_{i=1}^{n}(X-\tau_i)\in R[X]\}. 
\]
Then the fiber product $\mathbb{M}^{\infty}\times_{\mathbb{A}^{\infty}}\mathcal{A}^{ell}$ classifies quadruples $(\ce, \theta, \chi,\tau)$, where $(\ce, \theta)$ is a Hitchin bundle and $(\chi, \tau)\in \mathcal{A}^{ell}$ is such that $\chi=\Phi(\ce, \theta)$.  
Here $\mathbb{M}^{\infty}=\mathbb{M}\times_{\mathbb{A}}\mathbb{A}^{\infty}$ is an open substack of $\mathbb{M}$. 

Note that $\theta_{\infty}$ has $n$-distinct eigenvalues and $n$-linearly independent eigenvectors.
Let
\[
\mathcal{M}^{ell}\longrightarrow \mathbb{M}^{\infty}\times_{\mathbb{A}^{\infty}}\mathcal{A}^{ell}
\]
be the $\mathbb{G}_m$-torsor obtained by choosing an eigenvector $e_1$ corresponding to the eigenvalue $\tau_1$.
Finally we have a Hitchin morphism by base change
\[
\Phi^{ell}: \mathcal{M}^{ell}\longrightarrow \mathbb{M}^{\infty}\times_{\mathbb{A}^{\infty}}\mathcal{A}^{ell} \longrightarrow \mathcal{A}^{ell}. 
\]
Then \cite[Theorem 6.1.1]{Ch14} says that $\Phi^{ell}$ is proper and that $\mathcal{M}^{ell}$ is a smooth scheme over $k$.

The above discussion is summarized in the following diagram:
    
\begin{equation}\label{diag:1st}
           \begin{tikzcd}  
        \mathcal{M}^{ell} \arrow[rr, bend left, "\Phi^{ell}", "proper" swap] \arrow[r, "\textit{$\mathbb{G}_m$-torsor}"] \arrow[rd, "smooth" swap] \arrow[d, "smooth" swap]&
              [5em]   \mathbb{M}^{\infty}\times_{\mathbb{A}^{\infty}}\mathcal{A}^{ell} \arrow[r] \arrow[d, "\textit{\'etale}"] &  \mathcal{A}^{ell} \arrow[d, "\textit{\'etale}"] \ar[dl, phantom, "\square"]\\
  \mathrm{Spec}~k     & \mathbb{M}^{\infty} \arrow[r, "\Phi"] \arrow[d,hook,"open"] \ar[dr, phantom, "\square"] &  \mathbb{A}^{\infty} \arrow[d,hook, "open"]  \\
       & \mathbb{M}^{ell} \arrow[r, "\Phi"]    \arrow[d,hook,"open"]             &  \mathbb{A}^{ell} \arrow[d,hook,"open"] \ar[dl, phantom, "\square"]\\
       & \mathbb{M} \arrow[r, "\Phi"]                &  \mathbb{A}=\bigoplus\limits_{i=1}^nH^0(C,\mathcal{O}_C(iD)).
        \end{tikzcd}
    \end{equation}

    Here $\square$ means a Cartesian diagram.
We will use $\Phi$ for $\Phi^{ell}$ from Section \ref{subsec:globalsm} if there is no confusion.

\begin{remark}\label{rmk:adelic}
\begin{enumerate}
\item{[\textbf{Adelic description}]}  
In this remark, we give an adelic description of $\mathbb{M}(R)$ for  a $k$-algebra $R$, following 
\cite[Section 7.3]{CL10}. Here loc. cit. assumes $k=\bar{k}$, but the argument also holds over $k$  by \cite{BL}. 
We write an adelic group $\mathrm{GL}_n(\mathbb{A}_C\otimes_k R)=\prod'_{x\in C}\mathrm{GL}_n(F_x\otimes_k R)$ so that each element $g=(g_x)_{x\in C}\in \mathrm{GL}_n(\mathbb{A}_C\otimes_k R)$ means that $g_x\in \mathrm{GL}_n(\Mfo_x\otimes_k R)$ for all but finitely many points $x\in C$. We also write $\mathrm{GL}_n(\Mfo\otimes_k R)=\prod_{x\in C}\mathrm{GL}_n(\Mfo_x\otimes_k R)$.
Then $\mathrm{GL}_n(F\otimes_k R)$ acts on $\mathrm{GL}_n(\mathbb{A}_C\otimes_k R)$ by left multiplication, $\mathrm{GL}_n(\mathcal{O}\otimes_k R)$ acts on $\mathrm{GL}_n(\mathbb{A}_C\otimes_k R)$ by right multiplication, and $\mathrm{GL}_n(F\otimes_k R)$ acts on $\mathfrak{gl}_n(F\otimes_kR)$ by the adjoint action. 
By \cite[Corollary 7.3.2]{CL10},
$\mathbb{M}(R)$ consists of pairs $$(g,\theta)\big(=((g_x)_{x\in C}),\theta)\big)\in \mathrm{GL}_n(F\otimes_k R)\backslash \Big(\mathrm{GL}_n(\mathbb{A}_C\otimes_k R)/\mathrm{GL}_n(\mathcal{O}\otimes_k R)\times \mathfrak{gl}_n(F\otimes_kR)\Big) \textit{ such that}$$ 
    \begin{enumerate}
        \item The degree of $\det(g)$ is $0$. 
        \item The characteristic polynomial of $\theta$ lies in $ \mathbb{A}(R)$.
        \item $g^{-1}\theta g\in \pi_D^{-1}\gl_n(\mathcal{O}\otimes_kR)$, equivalently $g_x^{-1}\theta g_x\in \pi_x^{-d_x}\gl_n(\mathcal{O}_x\otimes_kR)$ for all $x\in C$.
    \end{enumerate}
Here $deg(\mathrm{det}(g))=\sum\limits_{x\in C}\ord(\mathrm{det}(g_x))$.
Then $\Phi((g,\theta))$ consists of the coefficients of $\chi_\theta(X)$.



\item{[\textbf{Translation invariance}]} This is taken from \cite[Lemma 3.4]{CKL}. 
For $c\in H^0(C, \Mfo_C(D))$ and for $\theta: \mathcal{E}\rightarrow \ce(D)$, let $\theta_c:=\theta+c\cdot id: \mathcal{E}\rightarrow \ce(D)$.  
We define $\iota_c:\mathbb{A}\rightarrow \mathbb{A}$ such that the coefficients of $\chi(X)$ maps to the coefficients of $\chi(X-c)$. Then the following diagram is commutative:
\[\xymatrixcolsep{5pc}\xymatrix{
\mathbb{M}  \ar[d]^{(\ce, \theta) \mapsto (\ce, \theta_c)} \ar[r]^{\Phi} &  \mathbb{A} \ar[d]^{\iota_c} \\ \mathbb{M}  \ar[r]^{\Phi} & \mathbb{A}.
}\]
Since $\iota_c\circ \iota_{-c}=\iota_{-c}\circ \iota_{c}=\iota_0$ is the identity on $\mathbb{A}$, $\iota_c$ is an automorphism. Thus $\Phi^{-1}(\chi(X))$ is isomorphic to $\Phi^{-1}(\chi(X-c))$.

\end{enumerate}

\end{remark}

 




    For $x\in C(k)$ and for $\chi(X)\in \mathbb{A}(k)$, 
we let
\begin{equation}\label{eq:chi_x}
    \chi_x(X):=\pi_x^{n\cdot d_x}\cdot\chi(X/\pi_x^{d_x}) \in \co_x[X] \textit{ and } \overline{\chi_x(X)}:=\chi_x(X) \textit{ modulo }\pi_x\in k[X].
\end{equation}

Note that 
$\chi_x(X)=X^n-\pi_x^{d_x}\cdot a_1X^{n-1}+\dots +(-1)^n\pi_x^{n\cdot d_x}\cdot a_n$.

\begin{definition}\label{def:mathcalS}
For $\chi(X)\in\mathbb{A}(k)$, we define  $\mathcal{S}$ to be the (finite) set of points in $C(k)$ such that the polynomial $\chi_x(X)$ as an element of $\Mfo_x[X]\otimes_k\bar{k}$ is irreducible.
\end{definition}

\begin{lemma}\label{lem:trans}
 Suppose that $\deg(D)\geq 2g_C-1+\#\mathcal{S}$.
  Then,  for $\chi(X)\in \mathbb{A}^{ell}(k)$, there exists $c\in H^0(C, \Mfo_C(D))$ such that the constant term of $\chi(X+c)$, denoted by $a_n^c$, satisfies $n\cdot d_{x}+\ord_{x}(a^c_n)>0$ for all $x$'s in $\mathcal{S}$.
\end{lemma}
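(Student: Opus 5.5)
The plan is to read the condition $n\cdot d_x+\ord_x(a_n^c)>0$ locally: it says that the reduction of $\chi^c_x(X)$ modulo $\pi_x$ has $0$ as a root. So we first determine, at each $x\in\mathcal{S}$, which translation of $X$ achieves this. We then realize all these translations at once by a single global section $c\in H^0(C,\Mfo_C(D))$, using Riemann--Roch and the degree bound.

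\emph{Step 1: the local root.} Fix $x\in\mathcal{S}$. The ring $\Mfo_x\otimes_k\bar{k}$ (completed) is henselian with algebraically closed residue field $\bar{k}$, and $\chi_x(X)$ is monic and irreducible over it. By Hensel's lemma, $\overline{\chi_x(X)}$ is then a power of a single irreducible polynomial over $\bar{k}$, which must be linear. Hence $\overline{\chi_x(X)}=(X-\alpha_x)^n$ for some $\alpha_x\in\bar{k}$. Comparing the coefficients of $X^{n-1}$ gives $n\alpha_x=\overline{\pi_x^{d_x}a_1}$. Since $p>n$, we get $\alpha_x=\frac{1}{n}\overline{\pi_x^{d_x}a_1}\in k$; here we use $x\in C(k)$ and $a_1\in H^0(C,\Mfo_C(D))$.

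\emph{Step 2: translating.} For $c\in H^0(C,\Mfo_C(D))$ we have $\pi_x^{d_x}c\in\Mfo_x$, and directly from \eqref{eq:chi_x}
\[
(\chi(X+c))_x(X)=\pi_x^{nd_x}\chi(X/\pi_x^{d_x}+c)=\chi_x(X+\pi_x^{d_x}c).
\]
If $\overline{\pi_x^{d_x}c}=\alpha_x$, this reduces modulo $\pi_x$ to $X^n$. Its constant term is $(-1)^n\pi_x^{nd_x}a_n^c$, so this is exactly the condition $nd_x+\ord_x(a_n^c)>0$. It therefore suffices to find $c$ whose image in the fiber $\Mfo_C(D)\otimes k(x)\cong k$ (trivialized by $\pi_x^{d_x}$) equals $\alpha_x$ for every $x\in\mathcal{S}$.

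\emph{Step 3: global interpolation.} Put $Z=\sum_{x\in\mathcal{S}}[x]$, a reduced divisor of $k$-points of degree $\#\mathcal{S}$. The exact sequence
\[
0\to\Mfo_C(D-Z)\to\Mfo_C(D)\to\bigoplus_{x\in\mathcal{S}}\Mfo_C(D)\otimes k(x)\to0
\]
shows that $H^0(C,\Mfo_C(D))\to\bigoplus_{x\in\mathcal{S}}k$ is surjective once $H^1(C,\Mfo_C(D-Z))=0$. By Serre duality this holds when $\deg(D-Z)>2g_C-2$, i.e.\ when $\deg D\geq 2g_C-1+\#\mathcal{S}$, which is exactly the hypothesis. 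Choosing $c$ in the preimage of $(\alpha_x)_{x\in\mathcal{S}}$ finishes the argument.

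The only step needing care is Step 1, namely the rationality $\alpha_x\in k$ and the fact that irreducibility over $\Mfo_x\otimes\bar{k}$ forces a single repeated root. The trace formula together with $p>n$ handles the former, and Hensel's lemma the latter.
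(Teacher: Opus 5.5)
Your proposal is correct and follows the same route as the paper's proof. Hensel's lemma gives $\overline{\chi_x(X)}=(X-\alpha_x)^n$ at each $x\in\mathcal{S}$. Serre duality and the degree bound give $H^1(C,\Mfo_C(D-Z))=0$, so $H^0(C,\Mfo_C(D))\to\bigoplus_{x\in\mathcal{S}}k$ is surjective and $(\alpha_x)_x$ lifts to the required $c$. Your extra checks fill in details the paper leaves implicit: that $\alpha_x\in k$ (via the $X^{n-1}$-coefficient and $p>n$), and the identity $(\chi(X+c))_x(X)=\chi_x(X+\pi_x^{d_x}c)$, which links the interpolation condition to $n d_x+\ord_x(a_n^c)>0$.
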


\begin{proof}
Let $x\in\mathcal{S}$.
Then, by Hensel's Lemma, $\overline{\chi_x(X)}=(X-\overline{b}_x)^n$ for some $\overline{b}_x\in k$.
We prove the existence of $c\in H^0(C,\Mfo_C(D))$ whose reduction modulo the maximal ideal at each $x\in \mathcal{S}$ is equal to $\overline{b}_x$. 
The exact sequence $0\rightarrow \Mfo_C(D-S)\rightarrow \Mfo_C(D)\rightarrow \bigoplus\limits_{x\in\mathcal{S}}k\rightarrow 0$ induces a long exact sequence $$0\rightarrow H^0(C,\Mfo_C(D-S))\rightarrow H^0(C,\Mfo_C(D))\rightarrow \bigoplus\limits_{x\in\mathcal{S}}k\rightarrow H^1(C,\Mfo_C(D-S))\rightarrow \dots$$
 We claim that $H^1(C,\Mfo_C(D-S))=0$, so that the map $H^0(C,\Mfo_C(D))\rightarrow \bigoplus\limits_{x\in\mathcal{S}}k$ is surjective.
 By Serre duality, $H^1(C,\Mfo_C(D-S))\cong H^0(C,\Mfo(S-D+K))^\vee$, where $K$ is the canonical divisor.
 The degree of $S-D+K$ is equal to $\deg(S-D)+\deg(K)=\#\mathcal{S}-\deg(D)+(2g_C-2)$.
 Therefore, by the assumption on $\deg(D)$, we have $\deg(S-D+K)< 0$, so that $ H^0(C,\Mfo(S-D+K))$ vanishes.
 Choosing $c$ as an element in the preimage of $(\overline{b}_x)_{x\in \mathcal{S}}$ under the map $H^0(C,\Mfo_C(D))\rightarrow \bigoplus\limits_{x\in \mathcal{S}}k$ then concludes the proof. 
\end{proof}

\begin{setting}\label{settings}
\begin{enumerate}
    \item 
From now on until the end of the paper, we fix an irreducible polynomial $\chi$ over $F$ given by $(a_1, a_2, \dots, a_n)\in \mathbb{A}^\infty(k)$.
We also fix $(\chi, \tau)\in \mathcal{A}^{ell}(k)$ which lies over $\chi$.

\item 
We define the set of points $\mathcal{S}$ associated to $\chi$ following Definition \ref{def:mathcalS}.
We suppose that 
\begin{enumerate}
    \item the polynomial $\overline{\chi_x(X)}$ is separable over $k$ for every $x\not\in \mathcal{S}$. 
    \item $\deg(D)\geq 2g_C-1+\#\mathcal{S}$.
\end{enumerate}
Here by Remark \ref{rmk:adelic}.(2) the Hitchin fiber $\Phi^{-1}(\chi)$ is invariant under translation, up to isomorphism over $k$. 
Thus by Lemma \ref{lem:trans}, we may and do assume that  
\begin{equation}
    n\cdot d_{x}+\ord_{x}(a_n)>0  ~~~~~   \textit{  if and only if  }  ~~~~~  x\in \mathcal{S}.
    \end{equation}
\end{enumerate}
  \end{setting}

\subsection{Construction of $\mathcal{M}^{\mathcal{F}}$}\label{subsec:mf}

We first recall a stack of coherent sheaves on $C$, denoted by $Coh(C)$, following \cite[\href{https://stacks.math.columbia.edu/tag/08KA}{Tag 08KA}]{stacks-project}. 
For a $k$-scheme $S$,  $Coh(C)(S)$ is the category of quasi-coherent $\Mfo_{C_S}$-sheaves of finite presentation, flat, and of proper support over $S$.
Then $Coh(C)$ is an algebraic stack over $k$ by \cite[\href{https://stacks.math.columbia.edu/tag/08WC}{Tag 08WC}]{stacks-project}. 
Let $Coh(C)_0$ be the substack of $Coh(C)$ parametrizing sheaves of $0$-dimensional support. 
Then $Coh(C)_0$ is an open substack of $Coh(C)$, which is explained in \cite[Section 3.1]{FR}.

\begin{lemma}\label{lem:repre}
For a $k$-scheme $S$, define $\mathcal{Q}_S$ as follows:
\[
\mathcal{Q}_S: \mathcal{M}^{ell}(S) \longrightarrow \mathbb{M}^{ell}(S) \longrightarrow Coh(C)_0(S), ~~~~~~ (\mathcal{E}, \theta) \in \mathbb{M}^{ell}(S)  \mapsto  \textit{cokernel of }\theta.
\]
Then it forms a morphism of $k$-stacks, denoted by $\mathcal{Q}$.
\end{lemma}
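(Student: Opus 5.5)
We have to show three things: for each $S$, the assignment $(\mathcal{E},\theta)\mapsto \mathrm{coker}(\theta)$ produces an object of $Coh(C)_0(S)$; it is functorial in isomorphisms of Hitchin bundles; and it commutes with pullback along $S'\to S$. Then $\mathcal{Q}$ is a $1$-morphism of fibered categories over $(Sch/k)$, composed with the forgetful morphism $\mathcal{M}^{ell}\to\mathbb{M}^{ell}$ (the $\mathbb{G}_m$-torsor map and the projection in Diagram \eqref{diag:1st}).

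\textbf{Injectivity and finite presentation.} Let $(\mathcal{E},\theta)\in\mathbb{M}^{ell}(S)$ with $\theta:\mathcal{E}\to\mathcal{E}(D)$. Set $\mathcal{Q}:=\mathrm{coker}(\theta)$, a quasi-coherent sheaf of finite presentation on $C_S$, being the cokernel of a map of finite locally free sheaves. Its support lies in the zero locus of $\det\theta\in H^0(C_S,\mathcal{O}(nD))$. For each geometric point $s\in S$, the characteristic polynomial $\chi_s$ lies in $\mathbb{A}^{ell}$, so it is irreducible over $F$. In particular $a_n=\det\theta_s$ is a nonzero section. Thus $\theta_s$ is generically an isomorphism, hence injective, and its cokernel has finite support on $C_s$. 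It follows that $\mathrm{Supp}(\mathcal{Q})\subset V(\det\theta)$ is quasi-finite over $S$. It is also closed in $C_S$, which is proper over $S$, so it is proper and quasi-finite, hence finite over $S$. So $\mathcal{Q}$ has proper, relatively $0$-dimensional support.

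\textbf{Flatness (the main step).} We must show $\mathcal{Q}$ is flat over $S$. We use the local criterion of flatness in the fiberwise form for a map of finite presentation modules over $C_S$ flat over $S$. The sequence $\mathcal{E}\xrightarrow{\theta}\mathcal{E}(D)\to\mathcal{Q}\to 0$ has $\mathcal{E}$ and $\mathcal{E}(D)$ flat over $S$. Moreover, $\theta_s$ is injective on every fiber $C_s$, as shown above. By the fiberwise criterion (EGA IV 11.3.7, or Stacks Tag 046Y), $\theta$ is then injective and universally injective on fibers, and $\mathcal{Q}$ is $S$-flat. Concretely, locally on $C_S$ this reduces to a square matrix over a ring flat over $R$ whose determinant is a nonzerodivisor on every fiber. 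Note that we may reduce to $S$ affine and noetherian via finite presentation, since $\mathbb{M}$ is locally of finite presentation. This step is where the ellipticity of the base is genuinely used: over all of $\mathbb{M}$, $\det\theta$ could vanish identically on some fibers, and the cokernel would then fail both flatness and $0$-dimensionality. Combined with the previous step, this gives $\mathcal{Q}\in Coh(C)_0(S)$.

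\textbf{Functoriality.} Let $\phi:(\mathcal{E},\theta)\xrightarrow{\sim}(\mathcal{E}',\theta')$ be an isomorphism, so $\theta'\circ\phi=\phi(D)\circ\theta$. It induces an isomorphism of cokernels, and this assignment respects composition. For $f:S'\to S$, pullback $f^*$ is right exact, so $f^*\mathrm{coker}(\theta)\cong\mathrm{coker}(f^*\theta)$ canonically. These canonical isomorphisms satisfy the cocycle compatibility for composites $S''\to S'\to S$, because they come from the universal property of cokernels. Hence $\mathcal{Q}$ is a morphism of categories fibered in groupoids, i.e.\ a morphism of $k$-stacks $\mathcal{M}^{ell}\to Coh(C)_0$.
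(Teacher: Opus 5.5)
Your proof is correct and follows the same outline as the paper's: ellipticity forces $\det\theta=a_n\neq 0$ on every fiber, so the cokernel has relatively $0$-dimensional support, and right exactness of pullback gives compatibility with base change. Your fiberwise flatness step, together with the properness of the support, is something the paper's proof omits entirely, even though $S$-flatness is part of what it means to be an object of $Coh(C)(S)$, so your version is the more complete argument.
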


\begin{proof}

For a Hitchin bundle $(\mathcal{E},\theta)\in \mathbb{M}^{ell}(S)$, since $\chi_\theta$ is irreducible, $\theta$ is an isomorphism outside a finite set of points, and thus its cokernel has relative dimension 0 over $S$.

It remains to show that $\mathcal{Q}$ commutes with base change. 
For a $k$-scheme morphism $S'\rightarrow S$, let $(\mathcal{E}',\theta')\in \mathcal{M}(S')$ be the image of $(\theta, \mathcal{E})\in \mathcal{M}(S)$, which is defined to be the base change of $(\mathcal{E},\theta)$ to $\mathcal{O}_{S'}$. 
Then it suffices to show that the cokernel of $\theta'$ is isomorphic to the base change of the cokernel of $\theta$ to $\mathcal{O}_{S'}$. This follows from the right exactness of the tensor product.    
\end{proof}

\begin{definition}\label{def:coherentsheafF}
We say that \textit{a coherent sheaf $\mathcal{F}$ on $C$ is associated to $\chi$} if the support of $\mathcal{F}$ is  $\mathcal{S}$ such that the length of $\mathcal{F}_x$ with $x\in \mathcal{S}$, as an $\Mfo_x$-module, is $n\cdot d_x+\ord_x(a_n) \left(>0\right)$. 
\end{definition}

Note that such $\mathcal{F}$ is a direct sum of skyscraper sheaves supported at $\mathcal{S}$ and thus exists, since the stalk functor is adjoint to the skyscraper functor (cf. \cite[\href{https://stacks.math.columbia.edu/tag/009C}{Lemma 009C}]{stacks-project}). Loc. cit. also yields that if two coherent sheaves with finite support are isomorphic at each stalk then they are isomorphic.

Then $\mathcal{F}$ defines a unique and locally closed point in $Coh(C)_0(k)$ by 
\cite[Theorem 3.5]{FR}.
Loc. cit. yields that $\mathcal{F}$ is a closed point of $Coh(C)_0(k)$ if and only if each stalk $\mathcal{F}_x$ with $x\in \mathcal{S}$ is isomorphic to $k$. 
Note that this can also be seen from the stratification of $Coh(C)_0$ given in \cite[Section 2.2.2]{Hen24}.

\begin{definition}\label{def:mf}
For a coherent sheaf $\mathcal{F}$ in Definition \ref{def:coherentsheafF}, define 
$\mathcal{M}^{\mathcal{F}}:=\mathcal{M}^{ell}\times_{Coh(C)_0}\mathrm{Spec}~k$, where $\mathrm{Spec}~k\rightarrow Coh(C)_0$ gives the sheaf $\mathcal{F}$.
Similarly, we define $\mathbb{M}^{\mathcal{F}}:=\mathbb{M}^{ell}\times_{Coh(C)_0}\mathrm{Spec}~k$.
\end{definition}

\begin{remark}\label{rmk:locallyclosed}
 $\mathcal{M}^{\mathcal{F}}$ is a locally closed substack of $\mathcal{M}^{ell}$ by Lemma \ref{lem:repre} and \cite[\href{https://stacks.math.columbia.edu/tag/0501}{Lemma 0501}]{stacks-project}, and thus is an algebraic space by \cite[\href{https://stacks.math.columbia.edu/tag/0503}{Lemma 0503}]{stacks-project}.
\cite[Corollary II.6.16]{Knu71} then yields that it is representable by a scheme.
In conclusion, 
$\mathcal{M}^{\mathcal{F}}$ is a locally closed subscheme of $\mathcal{M}^{ell}$, and it is closed if and only if $\mathcal{F}_x\cong k$ for any $x\in \mathcal{S}$. 
\end{remark}

\begin{remark}\label{rmk:ademf}
We give an adelic description of $\mathbb{M}^{\mathcal{F}}(R)$, where $R$ is a $k$-algebra, using Remark \ref{rmk:adelic}.
Consider $(g,\theta)\big(=((g_x)_{x\in C}),\theta)\big)\in \mathbb{M}^{ell}(R)$ so that $g_x^{-1}\theta g_x\in \pi_x^{-d_x}\gl_n(\mathcal{O}_x\otimes_kR)$ for all $x\in C$.
Then $(g,\theta)\big(=((g_x)_{x\in C}),\theta)\big)\in \mathbb{M}^{\mathcal{F}}(R)$ if and only if 
the cokernel of $g_x^{-1}\theta g_x$ is isomorphic to $\mathcal{F}_x\otimes_kR$ for all $x\in C$. 
Here by the cokernel of $g_x^{-1}\theta g_x$, we view it as a morphism from $(\mathcal{O}_x\otimes_kR)^n$ to $(\pi_x^{-d_x}\mathcal{O}_x\otimes_kR)^n$.
Note that the cokernel is non-trivial if and only if $x\in \mathcal{S}$. 
We emphasize that the cokernel of $g_x^{-1}\theta g_x$ with $x\in \mathcal{S}$ exactly defines the type in the sense of Definition \ref{def_type}.
That is, 
\[\textit{Cokernel of }g_x^{-1}\theta g_x\cong \mathcal{F}_x\otimes_kR\cong \left(\Mfo_x/(\pi_x^{k_1})\oplus\Mfo_x/(\pi_x^{k_2})\oplus \dots \oplus \Mfo_x/(\pi_x^{k_n})\right)\otimes_kR\] 
for $x\in \mathcal{S}$, where $k_1+\dots+k_n=n\cdot d_x+\ord_x(a_n)>0$.

\end{remark}




\subsection{Construction of $\mathcal{M}^{\mathcal{F}, l_x}$}\label{sec:bassglobal}

Let $n\geq 3$. 
Let $\mathcal{F}$ be a coherent sheaf such that $\mathcal{F}_x\cong k\oplus k$ for a certain $x\in \mathcal{S}$ in Definition \ref{def:coherentsheafF}. 
We consider an Artin stack $[\mathfrak{gl}_n/\mathrm{GL}_n]$ over $k$, the quotient of the affine space $\mathfrak{gl}_n\cong \mathbb{A}_k^{n^2}$ by the $\mathrm{GL}_n$-action given by conjugation.
For a $k$-algebra $R$, a point in $[\mathfrak{gl}_n/\mathrm{GL}_n](R)$ is the groupoid of an orbit of the action of $\mathrm{GL}_n(R)$ on $\mathfrak{gl}_n(R)$.

\begin{lemma}
    For a $k$-algebra $R$ and for $x\in \mathcal{S}$, define $\mathcal{R}^x_R$ as follows:
    \[
\mathcal{R}^x_R: \mathcal{M}^{\mathcal{F}}(R) \longrightarrow \mathbb{M}^{\mathcal{F}}(R) \longrightarrow [\mathfrak{gl}_n/\mathrm{GL}_n](R), ~~~~~~ (g,\theta)\in (\mathbb{M}^{\mathcal{F}})(R)\mapsto \overline{\pi_x^{d_x}g_x^{-1}\theta g_x}.
\]
Here $(g, \theta)$ is an adelic description of a point in $\mathbb{M}^{\mathcal{F}}(R)$ (cf. Remark \ref{rmk:ademf})
and $\overline{\pi_x^{d_x}g_x^{-1}\theta g_x}$ is the reduction of $\pi_x^{d_x}g_x^{-1}\theta g_x$ modulo $\pi_x$.
Then it forms a morphism of $k$-stacks, denoted by $\mathcal{R}^x$.
\end{lemma}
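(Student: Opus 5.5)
To prove that $\mathcal{R}^x$ is a morphism of $k$-stacks, I will check two things: that $\mathcal{R}^x_R$ is well defined on isomorphism classes (and on morphisms of the groupoids), and that it is compatible with base change along every $k$-algebra map $R\to R'$. This parallels the proof of Lemma \ref{lem:repre}.

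\textbf{Well-definedness.} By Remark \ref{rmk:adelic}.(1), a point of $\mathbb{M}^{\mathcal{F}}(R)$ is a pair $(g,\theta)$ defined only up to the action of $\mathrm{GL}_n(F\otimes_kR)$ on the left and of $\mathrm{GL}_n(\mathcal{O}\otimes_kR)$ on the right. I will treat the two actions separately.
\begin{enumerate}
\item Replacing $(g,\theta)$ by $(hg, h\theta h^{-1})$ with $h\in \mathrm{GL}_n(F\otimes_k R)$ leaves $g_x^{-1}\theta g_x$ literally unchanged.
\item Replacing $g$ by $gu$ with $u=(u_x)\in \mathrm{GL}_n(\mathcal{O}\otimes_kR)$ changes $\pi_x^{d_x}g_x^{-1}\theta g_x$ into $u_x^{-1}(\pi_x^{d_x}g_x^{-1}\theta g_x)u_x$.
\end{enumerate}
In the second case, condition (c) of Remark \ref{rmk:adelic}.(1) says that $\pi_x^{d_x}g_x^{-1}\theta g_x$ lies in $\mathfrak{gl}_n(\mathcal{O}_x\otimes_kR)$. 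Reducing modulo $\pi_x$ uses $\mathcal{O}_x/\pi_x=k$ (since $x\in C(k)$), so $(\mathcal{O}_x\otimes_kR)/\pi_x\cong R$. Hence the reduction changes by conjugation by $\overline{u_x}\in\mathrm{GL}_n(R)$. So the image is a well-defined object of $[\mathfrak{gl}_n/\mathrm{GL}_n](R)$, i.e.\ a well-defined orbit.

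\textbf{Morphisms.} An isomorphism between two adelic representatives, given by a pair $(h,u)$, maps to the conjugation by $\overline{u_x}$. This makes $\mathcal{R}^x_R$ a functor of groupoids. Precomposing with the $\mathbb{G}_m$-torsor $\mathcal{M}^{\mathcal{F}}\to\mathbb{M}^{\mathcal{F}}$ (forgetting the eigenvector) is harmless.

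\textbf{Base change.} For $R\to R'$, the adelic description of the pullback of $(\mathcal{E},\theta)$ is obtained by applying $-\otimes_RR'$ to $g$ and $\theta$, since the identification in \cite[Corollary 7.3.2]{CL10} is functorial. Reduction modulo $\pi_x$ commutes with this tensor product, because $(\mathcal{O}_x\otimes_kR)\otimes_RR'=\mathcal{O}_x\otimes_kR'$ and the quotient by $\pi_x$ is right exact. Therefore $\mathcal{R}^x_{R'}$ of the pullback equals the pullback of $\mathcal{R}^x_R(g,\theta)$.

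\textbf{Main obstacle.} The one subtle step is justifying that the adelic description is natural in $R$ for general (not only field-valued) $R$. This needs the stacky version of \cite[Corollary 7.3.2]{CL10} together with \cite{BL}, in particular that trivializations over $\mathcal{O}_x\otimes_kR$ exist, fppf-locally on $R$. I would handle this by checking the construction fppf-locally on $\mathrm{Spec}\,R$ and descending. This descent works because $[\mathfrak{gl}_n/\mathrm{GL}_n]$ is a stack and the construction is independent of the chosen trivialization, by the well-definedness step above.
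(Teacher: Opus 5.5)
Your proposal is correct and follows the paper's proof. Well-definedness comes from the fact that two adelic representatives give matrices $\pi_x^{d_x}g_x^{-1}\theta g_x$ conjugate under $\mathrm{GL}_n(\mathcal{O}_x\otimes_k R)$, so their reductions modulo $\pi_x$ are conjugate under $\mathrm{GL}_n(R)$; naturality in $R$ comes from right exactness of the tensor product, as in Lemma \ref{lem:repre}. Your extra bookkeeping (separating the left and right actions, tracking morphisms of groupoids, and the fppf-descent caveat for general $R$) is consistent with, and slightly more careful than, the paper's sketch.
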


\begin{proof}
Since schemes are locally affine, it suffices to check that $\mathcal{R}^x$ yields a well-defined natural transformation between contravariant functors from the category of $k$-algebras to the category of groupoids.
If $(g,\theta)$ and $(g',\theta')$ are two representatives of a same point in $\mathbb{M}^{\mathcal{F}}(R)$, then $\pi_x^{d_x}g_x^{-1}\theta g_x$ and $\pi_x^{d_x}{g'_x}^{-1}\theta'g_x'$ are conjugate by an element of $\mathrm{GL}_n(\mathcal{O}_x\otimes_kR)$,
and thus their reductions modulo $\pi_x$ are conjugate by an element of $\mathrm{GL}_n(R)$.
This yields the well-definedness of $\mathcal{R}^x$.
The naturality of $\mathcal{R}^x$ follows from the right exactness of the tensor product, as in the proof of Lemma \ref{lem:repre}.
Therefore, $\mathcal{R}^x$ gives a well-defined morphism of Artin stacks.
\end{proof}

Let  $J_{i}$ be the Jordan canonical form of size $i$ with diagonal entries $0$ and with superdiagonal entries $1$, and let $J_{i,j}:=J_i\perp J_j$. Then $J_{i,j} \in \gl_n(k)$ with $i+j=n$ represents a point in $[\mathfrak{gl}_n/\mathrm{GL}_n](k)$.

\begin{definition}\label{def:constmfl}
For an integer $0\leq l_x\leq n/2-1$ with a fixed $x\in \mathcal{S}$, we define $\mathcal{M}^{\mathcal{F},l_x}$ to be $(\mathcal{R}^x)^{-1}(J_{l_x+1, n-l_x-1})$. More precisely, $\mathcal{M}^{\mathcal{F},l_x}:=\mathcal{M}^{\mathcal{F}}\times_{[\mathfrak{gl}_n/\mathrm{GL}_n]}\mathrm{Spec}~k$, where $\mathrm{Spec}~k \rightarrow [\mathfrak{gl}_n/\mathrm{GL}_n]$ assigns $J_{l_x+1, n-l_x-1}$. 
Then $\mathcal{M}^{\mathcal{F}, l_x}$ is a locally closed subscheme of $\mathcal{M}^{\mathcal{F}}$ as in Remark \ref{rmk:locallyclosed}.
\end{definition}


\begin{remark}\label{rmk:ademfly}
We give an adelic description of $\mathbb{M}^{\mathcal{F},l_x}(R)$ with a $k$-algebra $R$, using Remark \ref{rmk:ademf}.

Consider $(g,\theta)\big(=((g_x)_{x\in C}),\theta)\big)\in \mathbb{M}^{ell}(R)$ so that $g_x^{-1}\theta g_x\in \pi_x^{-d_x}\gl_n(\mathcal{O}_x\otimes_kR)$ for all $x\in C$. 
Then $(g,\theta)\in \mathbb{M}^{\mathcal{F},l_x}(R)$ if and only if 
 $(g,\theta)\in \mathbb{M}^{\mathcal{F}}(R)$ (cf. Remark \ref{rmk:ademf}) and 
 $\overline{\pi_x^{d_x}g_x^{-1}\theta g_x} \left(\in \gl_n(R)\right)$ is conjugate to $J_{l_x+1, n-l_x-1}$ by an element of $\mathrm{GL}_n(R)$, for a fixed $x\in \mathcal{S}$. 
\end{remark}

\begin{lemma}\label{lem:fatpoints}
Choose $(g,\theta)\in \mathbb{M}^{\mathcal{F}}(R)$ for $R=k[\epsilon]/(\epsilon^m)$, $m\geq 1$. 
To reduce the notation, we let 
\[
\left\{
  \begin{array}{l}
M_x\left(\in \gl_n(\mathcal{O}_x\otimes_kR)\right):=\pi_x^{d_x}g_x^{-1}\theta g_x \textit{ so that } \overline{M_x}\left(\in \gl_n(R)\right):=\pi_x^{d_x}g_x^{-1}\theta g_x \textit{  modulo $\pi_x$};\\
\textit{$M_{x,0}\left(\in \gl_n(\mathcal{O}_x)\right):=M_x$ modulo $\epsilon$ so that $\overline{M_{x,0}}\left(\in \gl_n(k)\right):=M_{x,0}$ modulo $\pi_x$}.
    \end{array} \right.
\]    

Then $(g,\theta)\in \mathbb{M}^{\mathcal{F},l_x}(R)$ if and only if $M_x$ is conjugate to $\begin{pmatrix}
J_{l_x, n-2-l_x}+\pi X^\dag_{1,1} &X_{1,2} & X_{1,3} \\
\pi X_{2,1} &\pi X_{2,2} & \pi X_{2,3} \\
\pi X_{3,1} &\pi X_{3,2} & \pi X_{3,3}
\end{pmatrix}\in \mathrm{GL}_n(\Mfo_x\otimes_kR)$ by an element of $\mathrm{GL}_n(\Mfo_x\otimes_kR)$.
Here $J_{l_x, n-2-l_x}:=J_{l_x}\perp J_{n-2-l_x}\in \gl_{n-2}(k)$. 
\end{lemma}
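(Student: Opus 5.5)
The plan is to prove the two directions by working over the local Artinian base $R=k[\epsilon]/(\epsilon^m)$, where $\Mfo_x\otimes_kR\cong R[[\pi_x]]$ is a complete local ring with residue field $k$. Over such a ring, a finite free $R$-module is determined by its rank, every surjection of free modules splits, and every idempotent or basis of the reduction lifts.

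First, the easy direction (``if''). Suppose $M_x$ is conjugate to a matrix $m$ of the displayed block shape. Conjugation does not change the cokernel up to isomorphism, and it conjugates $\overline{M_x}$ by the reduction of the conjugating element, which lies in $\mathrm{GL}_n(R)$. So it suffices to check two things for $m$ itself. The first is that the cokernel of $m$ is $(k\oplus k)\otimes_kR=\mathcal{F}_x\otimes_kR$; this holds because $(g,\theta)\in\mathbb{M}^{\mathcal{F}}(R)$ is assumed and $m\in\cL(L,M)$ with $M$ of type $(0,\dots,0,1,1)$. The second is that $\overline{m}=\begin{pmatrix}J_{l_x,n-2-l_x}&\overline{X_{1,2}}&\overline{X_{1,3}}\\0&0&0\\0&0&0\end{pmatrix}$ is $\mathrm{GL}_n(R)$-conjugate to $J_{l_x+1,n-l_x-1}$. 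For this I will use surjectivity of $m$ onto $M$: modulo $\pi$ it forces the $(n-2)\times 2$ block $(\overline{X_{1,2}}\ \overline{X_{1,3}})$, together with the image of $J_{l_x,n-2-l_x}$, to span everything. Concretely, the images of the last two basis vectors must hit the two cokernel directions of $J_{l_x}\perp J_{n-2-l_x}$, namely the first coordinates of each Jordan block. A column operation then extends each Jordan block by one, giving $J_{l_x+1}\perp J_{n-1-l_x}$. The same elementary matrices work over $R$, since they only require certain entries to be units in $R$, and these entries are units because they are units modulo $\epsilon$.

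Second, the ``only if'' direction. Suppose the cokernel of $M_x$ is $\cong (k\oplus k)\otimes_kR$, free of rank $2$ over $R$, and $\overline{M_x}\sim J_{l_x+1,n-l_x-1}$ over $R$. Since the cokernel is killed by $\pi_x$ and is $R$-free of rank $2$, the image $M':=M_x(L)$ is a sub-$\Mfo_x\otimes R$-module with $\pi L\subset M'\subset L$ and $L/M'$ free of rank $2$. Lifting an $R$-basis of $M'/\pi L$ (a rank $n-2$ direct summand of $L/\pi L$) to $L$, I choose a basis $(e_1,\dots,e_n)$ of $L$ so that $M'$ is spanned by $(e_1,\dots,e_{n-2},\pi e_{n-1},\pi e_n)$. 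In this basis $M_x$ has the block form with the bottom two rows divisible by $\pi$. The upper-left block $A$ reduces to the endomorphism induced by $\overline{M_x}$ on $\overline{M_x}(L/\pi L)=\mathrm{Im}\,\overline{M_x}$. Because $\overline{M_x}\sim J_{l_x+1}\perp J_{n-1-l_x}$, this restriction is $J_{l_x}\perp J_{n-2-l_x}$, provided the image is $R$-free and the induced map is conjugate to it over $R$. Further conjugating by block-diagonal $\mathrm{diag}(h,1,1)$, with $h\in \mathrm{GL}_{n-2}(\Mfo_x\otimes R)$ a lift of the conjugator, gives $A\equiv J_{l_x,n-2-l_x}\bmod\pi$. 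This is exactly the required shape.

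The main obstacle is the step just used: showing that for $N\in\gl_n(R)$ conjugate to $J_{l_x+1,n-l_x-1}$, the image of $N$ is a free direct summand and $N|_{\mathrm{Im} N}$ is $\mathrm{GL}_{n-2}(R)$-conjugate to $J_{l_x,n-2-l_x}$. My first approach is to transport everything through the conjugation to $N=J_{l_x+1,n-l_x-1}$ itself, where the image is spanned by standard basis vectors and the restriction is visibly $J_{l_x}\perp J_{n-2-l_x}$. Then I will check that the image $\overline{M_x}(L/\pi L)$ coincides with $M'/\pi L$ compatibly with that conjugation.
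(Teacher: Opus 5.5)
Your proposal is correct, but it handles the Jordan structure over $R=k[\epsilon]/(\epsilon^m)$ differently from the paper. The paper never writes down a conjugating matrix. In both directions it invokes a Jordan decomposition theorem for nilpotent matrices over $R$ (quoted from McDonald), after checking nilpotency data, with $l'=n-2-l_x$:
\begin{itemize}
\item for ``if'': $\overline{M_x}^{\,l'+1}=0$ and $\overline{M_{x,0}}^{\,l'}\neq 0$;
\item for ``only if'': $\overline{X_{1,1}}^{\,l'}=0$, deduced from surjectivity of $(\overline{X_{1,1}}\ \overline{X_{1,2}}\ \overline{X_{1,3}})$, and $\overline{X_{1,1}}^{\,l'-1}\not\equiv 0 \bmod \epsilon$.
\end{itemize}
It then pins down the Jordan type by reducing modulo $\epsilon$ and counting blocks. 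You instead build the conjugation by column operations in the ``if'' direction. In the ``only if'' direction you identify $\overline{X_{1,1}}$ with $\overline{M_x}|_{\mathrm{Im}\,\overline{M_x}}$ and transport along $P$, where $\overline{M_x}=PJ_{l_x+1,n-l_x-1}P^{-1}$. You also justify normalizing the image of $M_x$ to $\langle e_1,\dots,e_{n-2},\pi e_{n-1},\pi e_n\rangle$, which the paper simply takes from Remark~\ref{rmk:ademf}.

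The paper's route is shorter given the cited theorem, and it is phrased through the nilpotency index, which is the invariant reused in the proof of Theorem~\ref{thm:eqoftwostra}. Your route is self-contained. It uses directly the structure that actually makes the conclusion true, namely surjectivity onto $M$ and the free rank-two cokernel. This matters: over $R$, the conditions $A^s=0$ and $A^{s-1}\not\equiv 0 \bmod \epsilon$, as quoted in the paper's proof, do not by themselves force $A$ to be conjugate to a Jordan matrix. For example, $J_2\perp\begin{pmatrix}0&\epsilon\\0&0\end{pmatrix}$ over $k[\epsilon]/(\epsilon^2)$ satisfies them but has non-free image.

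The step you flag as the main obstacle is not one, and you should just write it out. $\mathrm{Im}\,\overline{M_x}=P(\mathrm{Im}\,J_{l_x+1,n-l_x-1})$ is a free direct summand of rank $n-2$, on which $\overline{M_x}$ acts by a $\mathrm{GL}_{n-2}(R)$-conjugate of $J_{l_x,n-2-l_x}$. Moreover $\mathrm{Im}\,\overline{M_x}=(M'+\pi L)/\pi L=M'/\pi L$ holds automatically, because $\pi L\subset M'$.

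Three small corrections:
\begin{itemize}
\item With the paper's convention (ones on the superdiagonal), the cokernel of $J_l$ is spanned by the \emph{last} basis vector of the block, not the first.
\item When $l_x=0$, $J_{0,n-2}=J_{n-2}$ has a rank-one cokernel. After a $\mathrm{GL}_2(R)$-change of $e_{n-1},e_n$, one of them extends $J_{n-2}$ to $J_{n-1}$ and the other is corrected into the kernel, giving a block $J_1$. So ``extend each block by one'' needs this adjustment.
\item The surjectivity of $m$ onto $M$ need not be assumed separately. The map $L/\mathrm{Im}(m)\twoheadrightarrow L/M$ is a surjection between modules both isomorphic to $(k\oplus k)\otimes_k R$ (the first by the hypothesis $(g,\theta)\in\mathbb{M}^{\mathcal{F}}(R)$), hence an isomorphism.
\end{itemize}
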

\begin{proof}
A key ingredient in the proof is \cite[Theorem III.2]{Mc78}, which states the Jordan decomposition theorem over an Artinian local principal ideal ring $R$.
In our case, it is stated as follows. If there exists an integer $s> 0$ such that a square matrix $A$ over $R$ satisfies $A^s=0$ in $\mathfrak{gl}_n(R)$ and the reduction of $A^{s-1}$ modulo $\epsilon$ is nonzero in $\gl_n(k)$, then $A$ is conjugate to a Jordan block matrix with zero diagonal, by an element of $\mathrm{GL}_n(R)$.
Here, a Jordan block matrix with zero diagonal is a block diagonal matrix whose diagonal blocks have $1$ on its superdiagonal entries and $0$ elsewhere.


\begin{enumerate}
    \item 
    We first prove the `if' part. 
Suppose that $M_x$ is conjugate to $\begin{pmatrix}
J_{l_x, n-2-l_x}+\pi_x X^\dag_{1,1} &X_{1,2} & X_{1,3} \\
\pi_x X_{2,1} &\pi_x X_{2,2} & \pi_x X_{2,3} \\
\pi_x X_{3,1} &\pi_x X_{3,2} & \pi_x X_{3,3}
\end{pmatrix}\in \gl_n(\Mfo_x\otimes_kR)$ by an element of $\mathrm{GL}_n(\Mfo_x\otimes_kR)$ such that $\begin{pmatrix}
J_{l_x, n-2-l_x}+\pi_x X^\dag_{1,1} &X_{1,2} & X_{1,3} \\
 X_{2,1} & X_{2,2} &  X_{2,3} \\
 X_{3,1} & X_{3,2} &  X_{3,3}
\end{pmatrix}$ is invertible. 
Then we claim that $(g,\theta)\in \mathbb{M}^{\mathcal{F},l_x}(R)$. 

Since $M_x$ is defined up to conjugation by an element of $\mathrm{GL}_n(\Mfo_x\otimes_k R)$, 
we may and do write $M_x=\begin{pmatrix}
J_{l_x, n-2-l_x}+\pi_x X^\dag_{1,1} &X_{1,2} & X_{1,3} \\
\pi_x X_{2,1} &\pi_x X_{2,2} & \pi_x X_{2,3} \\
\pi_x X_{3,1} &\pi_x X_{3,2} & \pi_x X_{3,3}
\end{pmatrix}$ satisfying the invertibility condition stated in the claim.

Thus, 
it suffices to show that $\overline{M_x}$ is conjugate to $J_{l_x+1, n-l_x-1}$ by an element of $\mathrm{GL}_n(R)$, by Remark \ref{rmk:ademfly}.
We put $l_x':=n-2-l_x$ so that $l_x'\geq l_x$ to simplify notation. 
To use the Jordan decomposition theorem of \cite[Theorem III.2]{Mc78} stated at the beginning of the proof, we claim that $\overline{M_x}^{l_x'+1}=0$, but $\overline{M_{x,0}}^{l_x'}\neq 0$. 
For the first claim, the fact that $J_{l_x, l_x'}^{l_x'}=0$ yields 
\[
\overline{M_x}^{l_x'+1}=\begin{pmatrix}
J_{l_x, l_x'}^{l_x'}\cdot J_{l_x, l_x'} &J_{l_x, l_x'}^{l_x'}\cdot \overline{X_{1,2}} & J_{l_x, l_x'}^{l_x'}\cdot \overline{X_{1,3}} \\
0&0&0\\
0&0&0
\end{pmatrix}=0.
\]

For the second claim, we observe that $\overline{M_{x,0}}$ is of the form $\begin{pmatrix}
J_{l_x, l_x'} &\ast & \ast\ast \\
0&0&0\\ 0&0&0
\end{pmatrix}\in \gl_n(k)$ which is of rank $n-2$. 
This directly implies that $\overline{M_{x,0}}$ is conjugate to $J_{l_x+1, l_x'+1}$ by an element of $\mathrm{GL}_n(k)$. 
In particular, the minimal polynomial of $\overline{M_{x,0}}$ is $X^{l_x'+1}$. 
This yields that $\overline{M_{x,0}}^{l_x'}\neq 0\in  \mathfrak{gl}_n(k)$, which verifies the second claim. 

To sum up, $\overline{M_x}$ is conjugate to a Jordan block matrix with zero diagonal (which we denote by $J$), by an element of $\mathrm{GL}_n(R)$. 
Then $\overline{M_{x,0}}$ is also conjugate to $J$, by an element of $\mathrm{GL}_n(k)$ and thus $J=J_{l_x+1, n-l_x-1}$. This completes the `if' part.

    \item For the `only if' part, suppose that $(g,\theta)\in \mathbb{M}^{\mathcal{F},l_x}(R)$.
Since $M_x$ is defined up to conjugation by an element of $\mathrm{GL}_n(\Mfo_x\otimes_k R)$, 
we may and do write $M_x=\begin{pmatrix}
X_{1,1} &X_{1,2} & X_{1,3} \\
\pi_x X_{2,1} &\pi_x X_{2,2} & \pi_x X_{2,3} \\
\pi_x X_{3,1} &\pi_x X_{3,2} & \pi_x X_{3,3}
\end{pmatrix}\in \gl_n(\Mfo_x\otimes_kR)$ such that $\begin{pmatrix}
X_{1,1} &X_{1,2} & X_{1,3} \\
 X_{2,1} & X_{2,2} &  X_{2,3} \\
 X_{3,1} & X_{3,2} &  X_{3,3}
\end{pmatrix}$ is invertible, where the size of $X_{1,1}$ is $(n-2)\times (n-2)$, and so on (cf. Remark \ref{rmk:ademf}), and such that $\overline{M_x}$ is conjugate to $J_{l_x+1, n-l_x-1}$ by an element of $\mathrm{GL}_n(R)$ (cf. Remark \ref{rmk:ademfly}).

We first claim that $\overline{X_{1,1}}$ is conjugate to $J_{l_x, n-2-l_x}$ by an element of $\mathrm{GL}_{n-2}(R)$. 
We again put $l_x':=n-2-l_x$ so that $l_x'\geq l_x$ to simplify notation. 
To use the Jordan decomposition theorem of \cite[Theorem III.2]{Mc78} stated at the beginning of the proof, we claim that $\overline{X_{1,1}}^{l_x'}=0$ in $\gl_{n-2}(R)$ and that the reduction of $\overline{X_{1,1}}^{l_x'-1}$ modulo $\epsilon$ is nonzero in $\gl_{n-2}(k)$.

Since $\overline{M_x}$ is conjugate to $J_{l_x+1, l_x'+1}$ by an element of $\mathrm{GL}_n(R)$, we have $\overline{M_{x}}^{l_x'+1}=0$ in $\gl_n(R)$ and $\overline{M_{x,0}}^{l_x'+1}=0$ in $\gl_n(k)$.
This directly shows that the minimal polynomial of the reduction of $\overline{X_{1,1}}$ modulo $(\epsilon)$ is $X^{l_x'}$ in $\gl_{n-2}(k)$ because the two lowest rows of $\overline{M_{x,0}}$ are zero.
In particular, the reduction of $\overline{X_{1,1}}^{l_x'-1}$ modulo $\epsilon$ is nonzero in $\gl_{n-2}(k)$.
This proves the second claim.
For the first claim, we consider the following matrix equation:
\[
0=\overline{M_x}^{l_x'+1}=\begin{pmatrix}
\overline{X_{1,1}}^{l_x'}\cdot \overline{X_{1,1}} &\overline{X_{1,1}}^{l_x'}\cdot \overline{X_{1,2}} & \overline{X_{1,1}}^{l_x'}\cdot \overline{X_{1,3}} \\
0&0&0\\
0&0&0
\end{pmatrix}\in \gl_n(R).
\]
Suppose that $\overline{X_{1,1}}^{l_x'}\neq 0$ in $\gl_{n-2}(R)$.
Then all of column vectors of $\overline{X_{1,1}}, \overline{X_{1,2}}, \overline{X_{1,3}}$ are contained in the kernel of $\overline{X_{1,1}}^{l_x'}$. 
On the other hand, these column vectors span $R^{n-2}$ since  $\begin{pmatrix}
\overline{X_{1,1}} &\overline{X_{1,2}} & \overline{X_{1,3}} \\
 \overline{X_{2,1}} & \overline{X_{2,2}} &  \overline{X_{2,3}} \\
 \overline{X_{3,1}} & \overline{X_{3,2}} &  \overline{X_{3,3}}
\end{pmatrix}\in \gl_n(R)$ is invertible.
This contradicts the assumption that $\overline{X_{1,1}}^{l_x'}\neq 0$.

Therefore, the Jordan decomposition theorem yields that $\overline{X_{1,1}}$ is conjugate to a Jordan block matrix by an element of $\mathrm{GL}_{n-2}(R)$. 
By the invertibility condition, this Jordan block matrix can have at most two zero columns, and hence at most two diagonal blocks.
By this condition and the equalities $\overline{X_{1,1}}^{l_x'}=0$ and $\overline{X_{1,1}}^{l_x'-1}\neq 0$, this Jordan block matrix is exactly $J_{l_x,l_x'}$, which verifies the desired claim.

Choose $h_{n-2}\in \mathrm{GL}_{n-2}(R)$ such that the conjugation of $\overline{X_{1,1}}$ by $h_{n-2}$ is $J_{l_x,l_x'}$ in $\gl_{n-2}(R)$.
Then $h_n:=\begin{pmatrix}
    h_{n-2} & 0 \\ 0 & id_2
\end{pmatrix}$ is an element of $\mathrm{GL}_n(R)$, which is contained in $\mathrm{GL}_n(\Mfo_x\otimes_kR)$.
In conclusion, the conjugation of $M_x$ by $h_n$ yields the desired claim. 
\end{enumerate}
\end{proof}

\subsection{Algebraic stratification}\label{subsec:globalsm}    
  In this subsection, we define a global type of $\chi$ and then define the restricted Hitchin fibration. 
  To simplify notation, we use $\Phi: \mathcal{M}^{ell}\longrightarrow  \mathcal{A}^{ell}$ for $\Phi^{ell}: \mathcal{M}^{ell}\longrightarrow  \mathcal{A}^{ell}$.  
    We will  treat the case for $n\geq 3$ and the case for $n=2$ separately.

\subsubsection{Algebraic stratification  with $n\geq 3$}\label{subsubsecn>3}
We suppose that $n\geq 3$ in this subsection.

    \begin{definition}\label{def:typeofchia}
   The \textit{type} of $\chi$  is a tuple $T=(\mathcal{F}, (l_x)_{x\in \mathcal{S}^{\mathcal{F}}_1})$ where:
        \begin{enumerate}
            \item $\mathcal{F}$: as in Definition \ref{def:coherentsheafF} such that $\mathcal{F}_x\cong k\oplus k$ or $\mathcal{F}_x\cong \Mfo_x/(\pi_x^{n\cdot d_x+\ord_x(a_n)})$ for $x\in \mathcal{S}$;
            \item  $\mathcal{S}^{\mathcal{F}}_1:=\{x\in \mathcal{S}|\mathcal{F}_x\cong k\oplus k\}$, and  $l_x$: an integer with  $0\leq l_x \leq n/2-1$.
        \end{enumerate}
    \end{definition}
    
Then, for a type $T=(\mathcal{F},(l_x)_{x\in \mathcal{S}^{\mathcal{F}}_1})$ of $\chi$, repeating the construction of Section \ref{sec:bassglobal} for each $x\in \mathcal{S}^{\mathcal{F}}_1$ yields a locally closed subscheme 
$\mathcal{M}^T$ (resp. $\mathbb{M}^T$) of $\mathcal{M}$ (resp. $\mathbb{M}^{ell}$).
We define a closed subscheme $\mathbb{A}^{T}$ of $\mathbb{A}^\infty$ such that
$$\mathbb{A}^{T}:=
   \left(\bigoplus\limits_{i=1}^{n} H^0\Big(C,\mathcal{O}_C\big(iD-\sum\limits_{x\in\mathcal{S}^{\mathcal{F}}_1} a_{i,x}[x]-\sum\limits_{x\in\mathcal{S}\setminus\mathcal{S}^{\mathcal{F}}_1} b_{i,x}[x]\big)\Big)\right)\cap \mathbb{A}^\infty.
   $$
   Here $  a_{i,x}=\left\{ \begin{array}{l l}
1   & \textit{if $i\leq n-l_x-1$};\\
2 & \textit{if $i> n-l_x-1$}
    \end{array} \right.$ and $  b_{i,x}=\left\{ \begin{array}{l l}
 0  & \textit{if $i\leq n-1$};\\
n\cdot d_{x}+\ord_{x}(a_n) & \textit{if $i=n$}
    \end{array} \right.$

Let $\mathcal{A}^{T}:=\mathcal{A}^{ell}\times_{\mathbb{A}^\infty} \mathbb{A}^{T}$.
Then we have the following result about the restricted Hitchin fibration.
\begin{lemma}\label{lem:rest}
For each type $T$ of $\chi$,
the image of  $\Phi$ restricted to $\mathcal{M}^{T}$ lies in 
$ \mathcal{A}^{T}$.
\end{lemma}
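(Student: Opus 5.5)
The plan is to reduce the statement to a local computation of valuations of characteristic polynomial coefficients, one point $x\in\mathcal{S}$ at a time. Since $\mathcal{A}^T=\mathcal{A}^{ell}\times_{\mathbb{A}^\infty}\mathbb{A}^T$ and $\mathbb{A}^T$ is closed in $\mathbb{A}^\infty$, it suffices to show that the composite $\mathcal{M}^T\to\mathbb{A}^\infty$ factors through $\mathbb{A}^T$. As $\mathcal{M}^T$ is a scheme, I would first reduce to a test on points, and then argue with the adelic description of Remarks \ref{rmk:adelic}, \ref{rmk:ademf} and \ref{rmk:ademfly}. Concretely, for $(g,\theta)\in\mathbb{M}^T(R)$ I need that $a_i\in H^0(C,\mathcal{O}_C(iD))\otimes R$ actually lies in the subspace with the prescribed extra vanishing at each $x\in\mathcal{S}$. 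Since $\mathbb{A}^T$ is a linear subspace, this amounts to a condition on the $x$-adic expansion of $a_i$ at each $x$. A subtlety is that $\mathcal{M}^T$ may be non-reduced, so a check on $\bar k$-points would not suffice. I would therefore verify the condition on $R$-points for Artinian $R=k[\epsilon]/(\epsilon^m)$, which is exactly the setting of Lemma \ref{lem:fatpoints}, or more cleanly on arbitrary $R$-points using the defining conditions.

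Next I pass to the local matrix. With $M_x=\pi_x^{d_x}g_x^{-1}\theta g_x\in\gl_n(\mathcal{O}_x\otimes R)$, the characteristic polynomial of $M_x$ is $\chi_x(X)$ from \eqref{eq:chi_x}, whose coefficients are $\pi_x^{id_x}a_i$. The condition $a_i\in H^0(\mathcal{O}_C(iD-c[x]))$ at $x$ therefore becomes $\pi_x^{c}\mid \pi_x^{id_x}a_i$ in $\mathcal{O}_x\otimes R$, that is, $\ord_x$ of the $i$-th coefficient of $\chi_{M_x}$ is at least $c$.

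The case $x\in\mathcal{S}\setminus\mathcal{S}^{\mathcal{F}}_1$ needs only $b_{i,x}$, which is $0$ for $i<n$ and trivial there. For $i=n$, the condition $\mathrm{coker}(M_x)\cong\mathcal{F}_x\otimes R$, which has length $nd_x+\ord_x(a_n)$, gives $\det M_x\in \pi_x^{nd_x+\ord_x(a_n)}(\mathcal{O}_x\otimes R)^\times$ by a Fitting-ideal argument. Indeed, the zeroth Fitting ideal of $\mathcal{O}_x/(\pi^N)\otimes R$ is $(\pi^N)$.

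For $x\in\mathcal{S}^{\mathcal{F}}_1$, I use Lemma \ref{lem:fatpoints} in the general-$R$ form, namely the ``only if'' argument. After conjugation, $M_x$ has the block form with top-left block $J_{l_x,n-2-l_x}+\pi_xX^\dagger_{1,1}$ and last two rows divisible by $\pi_x$. In this form the local statement is exactly that $\varphi_{l_1}$ lands in $\mathbb{A}_{l_1}(R)=(\pi R)^{l_2+1}\times(\pi^2R)^{n-l_2-1}$ with $l_1=l_x$ and $l_2=n-2-l_x$, as recalled in \eqref{eq:localhitchbass} from \cite{CHL}. This gives valuation $\ge1$ for $i\le n-l_x-1$ and $\ge2$ for $i>n-l_x-1$, matching $a_{i,x}$.

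If I need to verify this directly rather than cite \cite{CHL}, I would expand $\det(X-M_x)$. Each coefficient $c_i$ is a signed sum of principal $i\times i$ minors. Any minor involving one of the last two rows picks up a factor $\pi_x$, and a minor involving both picks up $\pi_x^2$. A minor supported in the top $(n-2)$ block is, modulo $\pi_x$, a principal minor of the nilpotent $J$, and nilpotence kills the sum. One then tracks that the mod-$\pi_x$ contributions from one bottom row vanish when $i\le l_2+1$ fails, which uses $J^{l_2}=0$.

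The main obstacle is this last bookkeeping: showing that $c_i\equiv0\pmod{\pi_x^2}$ for $i\ge n-l_x$. I would handle it by reducing modulo $\pi_x^2$ and writing $M_x=A+\pi_xB$ with $A=\begin{pmatrix}J&\ast\\0&0\end{pmatrix}$. Then $c_i\equiv e_i(A)+\pi_x\,\mathrm{tr}(\wedge^{i-1}\text{-adjugate}\cdot B)$, where $e_i(A)=0$. For the linear term, one shows that $\wedge^{i-1}A$ restricted appropriately vanishes once $i-1\ge l_2+1$, using that $A^{l_2+1}=0$ and that $A$ has rank $n-2$.

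Finally, the result must be compatible with the $\tau$-data. This is automatic, since $\mathcal{A}^T$ is a fiber product over $\mathbb{A}^\infty$.
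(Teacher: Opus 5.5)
Your proposal is correct and follows essentially the paper's proof. You check on $R$-points via the adelic description, get the condition on $a_n$ at $x\in\mathcal{S}\setminus\mathcal{S}^{\mathcal{F}}_1$ from the cokernel (your Fitting-ideal argument makes that step precise), and at $x\in\mathcal{S}^{\mathcal{F}}_1$ identify the local condition with the target of the local morphism \eqref{eq:localhitchbass}.

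One small adjustment is needed. Lemma \ref{lem:fatpoints} is proved only for $R=k[\epsilon]/(\epsilon^m)$, and such points alone would not detect a closed condition on a possibly non-reduced scheme. So for arbitrary $R$, argue as the paper does, directly from $M_x\sim J_{l_x+1,n-l_x-1}+\pi_xB$ (Remark \ref{rmk:ademfly}). There $e_i(J_{l_x+1,n-l_x-1}+\pi_xB)\equiv(-1)^{i-1}\pi_x\,\mathrm{tr}(J_{l_x+1,n-l_x-1}^{\,i-1}B)\pmod{\pi_x^2}$, which vanishes for $i>n-l_x-1$; this replaces your imprecise $\wedge^{i-1}$ step.
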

\begin{proof}
For a $k$-algebra $R$, let $(g,\theta)\in \mathbb{M}^{T}(R)$, following Remark \ref{rmk:adelic}.
It suffices to show that the tuple of the coefficients of the characteristic polynomial of $\theta$ is contained in $\mathbb{A}^{T}(R)$. 

Firstly, consider  $x\in \mathcal{S}^{\mathcal{F}}_1$.
By Remark \ref{rmk:ademfly}, an element $\pi_{x}^{d}g_{x}^{-1}\theta g_{x} \in \gl_n(\Mfo_x\otimes_kR)$ is conjugate to $J_{l_x+1,n-l_x-1}+\pi_{x} M\in \mathfrak{gl}_n(\mathcal{O}_{x}\otimes_k R)$ for a certain $M\in \mathfrak{gl}_n(\mathcal{O}_{x}\otimes_k R)$.
The coefficients of the characteristic polynomial of $J_{l_x+1,n-l_x-1}+\pi_{x} M$ are then contained in $(\pi_{x}\Mfo_{x}\otimes_k R)^{n-l_x-1}\times (\pi_{x}^2\Mfo_{x}\otimes_k R)^{l_x+1}$.
Since the characteristic polynomial of $\theta$ equals that of $g_{x}^{-1}\theta g_{x}$, 
it is contained in the completion of $\mathbb{A}^{T}$ at $x$.
We note that this completion is exactly the same as the local morphism described in  (\ref{eq:localhitchbass}).

Secondly,  by Remark \ref{rmk:ademf},
the cokernel of  $g_{x}^{-1}\theta g_{x} \left(\in \pi_{x}^{-d_{x}}\mathfrak{gl}_n(\mathcal{O}_{x}\otimes_k R)\right)$ for $x\in \mathcal{S}\setminus \mathcal{S}^{\mathcal{F}}_1$ is isomorphic to $R[X]/(X^{n\cdot d_{x}+\ord_{x}(a_n)})$.
Thus the coefficients of $\chi_\theta$ are contained in the completion of $\mathbb{A}^{T}$ at $x$.
\end{proof}

We denote the restricted Hitchin fibration by $\Phi^T:\mathcal{M}^{T}\rightarrow \mathcal{A}^{T}$.
Abusing the notation, we also denote the restricted Hitchin fibration on $\mathbb{M}^{T}$ to $\mathbb{A}^{T}$ by $\Phi^T$.
This setting is visualized as follows:
\begin{equation}\label{diag:f'}
           \begin{tikzcd}[row sep=2.1em, column sep=5.5em]
        & \ar[ddl,  phantom, "\square" swap] \arrow[ld, "smooth" swap]\mathcal{M}^{T} \arrow[r, "\Phi^T"]    \arrow[d,hook,"\textit{locally closed}"]             & \mathcal{A}^{T} \arrow[d,hook,"closed"] \arrow[rd, "\textit{\'etale}"] \ar[ddr, phantom, "\square"] &\\
 \mathbb{M}^{T} \arrow[d,hook, "\textit{locally closed}"] &  \arrow[ld, "\textit{smooth}"]  \mathcal{M}^{ell} \arrow[r, "\Phi"]                &  \mathcal{A}^{ell} \arrow[rd, "\textit{\'etale}" swap] & \mathbb{A}^{T} \arrow[d,hook, "closed" ]\\ 
    \mathbb{M}^{ell}   & & &   \mathbb{A}^{\infty}  \arrow[r,   "open"] &\mathbb{A}.
        \end{tikzcd}
    \end{equation}
Here the left and the right square diagrams are Cartesian, whereas the middle is not.

\begin{remark}\label{rmk:stra}
Suppose that $n\cdot d_x+\ord_x(a_n)\leq 2$ for all $x\in \mathcal{S}$. 
Let $\mathcal{I}$ be the set of types $T$ of $\chi$ in the sense of Definition \ref{def:typeofchia}. 
Note that the set $\mathcal{I}$ is finite.   Then
an adelic description of an element in $\Phi^{-1}((\chi, \tau))(\chi)$ directly yields that  
\[
\Phi^{-1}((\chi, \tau))(k)=\bigsqcup_{T\in \mathcal{I}}(\Phi^T)^{-1}((\chi, \tau))(k) ~~~~  \textit{ and } ~~~~~ \Phi^{-1}(\chi)(k)=\bigsqcup_{T\in \mathcal{I}}(\Phi^T)^{-1}(\chi)(k).
\]
We call it \textbf{algebraic stratification}. 
In Section \ref{subsec:globalsmoo}, we will prove that the scheme $(\Phi^T)^{-1}((\chi, \tau))$ is smooth over $k$ for any $T\in \mathcal{I}$ (cf. Corollary \ref{cor:smoothstr}).
\end{remark}

\subsubsection{Algebraic stratification  with $n=2$}\label{subsecn=2}
We suppose that $n=2$ in this subsection.
For $\mathcal{F}$ as  in Definition \ref{def:coherentsheafF},  $\mathcal{F}_x\cong \Mfo_x/(\pi_x^{a_x})\oplus \Mfo_x/(\pi_x^{b_x})$ for $x\in \mathcal{S}$ such that $0\leq a_x\leq b_x$ and $a_x+b_x=2  d_x+\ord_x(a_2)$.  
We define a closed subscheme $\mathbb{A}^{\mathcal{F}}$ of $\mathbb{A}^\infty$ such that
$$\mathbb{A}^{\mathcal{F}}:=
   \left(H^0\Big(C,\mathcal{O}_C\big(D-\sum\limits_{x\in\mathcal{S}} a_{x}[x]\big)\Big)\bigoplus
   H^0\Big(C,\mathcal{O}_C\big(2D-\sum\limits_{x\in\mathcal{S}} (a_x+b_{x})[x]\big)\Big)
   \Big)\right)\cap \mathbb{A}^\infty.
   $$
Let $\mathcal{A}^{\mathcal{F}}:=\mathcal{A}^{ell}\times_{\mathbb{A}^\infty} \mathbb{A}^{\mathcal{F}}$.
Then we have the following result about the restricted Hitchin fibration.
\begin{lemma} 
The image of  $\Phi$ restricted to $\mathcal{M}^{\mathcal{F}}$ lies in 
$ \mathcal{A}^{\mathcal{F}}$.
\end{lemma}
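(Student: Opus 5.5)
The argument is the $n=2$ analogue of Lemma \ref{lem:rest}. We use the adelic description of Remark \ref{rmk:ademf} and check the vanishing conditions on the coefficients $(a_1,a_2)$ of $\chi_\theta$ one point at a time. Fix a $k$-algebra $R$ and $(g,\theta)\in\mathbb{M}^{\mathcal{F}}(R)$. The characteristic polynomial of $\theta$ equals that of $g_x^{-1}\theta g_x$ for every $x$. Away from $\mathcal{S}$ no extra condition is imposed, and the membership in $\mathbb{A}^\infty$ is automatic since $\mathcal{M}^{\mathcal{F}}\subset \mathcal{M}^{ell}$ maps to $\mathcal{A}^{ell}$. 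It therefore suffices to show, for each $x\in\mathcal{S}$, that $\pi_x^{d_x}a_1\in \pi_x^{a_x}\Mfo_x\otimes_kR$ and $\pi_x^{2d_x}a_2\in\pi_x^{a_x+b_x}\Mfo_x\otimes_kR$. In other words, we must show that $\mathrm{tr}(M_x)$ lies in $\pi_x^{a_x}$ and $\det(M_x)$ lies in $\pi_x^{a_x+b_x}$, where $M_x=\pi_x^{d_x}g_x^{-1}\theta g_x\in\gl_2(\Mfo_x\otimes_kR)$.

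The key step is to put $M_x$ into a normal form. Its cokernel is $\mathcal{F}_x\otimes_kR\cong(\Mfo_x/\pi_x^{a_x}\oplus\Mfo_x/\pi_x^{b_x})\otimes_kR$. I will argue that after multiplying $M_x$ on the left and on the right by elements of $\mathrm{GL}_2(\Mfo_x\otimes_kR)$, it becomes $\mathrm{diag}(\pi_x^{a_x},\pi_x^{b_x})$, that is, $M_x=P\,\mathrm{diag}(\pi_x^{a_x},\pi_x^{b_x})\,Q$. Over $R=k$ this is the elementary divisor theorem over the DVR $\Mfo_x$. Over a general $R$ I would use that $\mathrm{coker}(M_x)$ is a finitely presented $\Mfo_x\otimes_kR$-module isomorphic to the given one. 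Two presentations $(\Mfo_x\otimes_kR)^2\to(\Mfo_x\otimes_kR)^2$ of the same module by injective maps with free target of rank $2$ differ by automorphisms of the source and target. This is a Schanuel-type argument: lift the isomorphism of cokernels to the free covers, then compare kernels.

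With the normal form in hand, the desired membership follows. We have $\det M_x=\det P\det Q\cdot\pi_x^{a_x+b_x}$, which handles $a_2$. Every entry of $M_x$ lies in the ideal $\pi_x^{a_x}$ because $a_x\le b_x$, so $\mathrm{tr}M_x\in\pi_x^{a_x}\Mfo_x\otimes_kR$, which handles $a_1$. Translating back through \eqref{eq:chi_x} gives $\ord_x(a_1)\ge a_x-d_x$ and $\ord_x(a_2)\ge a_x+b_x-2d_x$. These are exactly the conditions cutting out $H^0(C,\Mfo_C(D-\sum a_x[x]))\oplus H^0(C,\Mfo_C(2D-\sum(a_x+b_x)[x]))$. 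Since these conditions hold functorially in $R$, the morphism factors through $\mathcal{A}^{\mathcal{F}}=\mathcal{A}^{ell}\times_{\mathbb{A}^\infty}\mathbb{A}^{\mathcal{F}}$.

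I expect the main obstacle to be the normal-form step over an arbitrary $k$-algebra $R$, where $\Mfo_x\otimes_kR$ is no longer a DVR. A way to avoid it is to use Fitting ideals instead. The $0$-th Fitting ideal of the cokernel is generated by $\det M_x$, and the first Fitting ideal is generated by the entries of $M_x$. Fitting ideals are invariants of the module and commute with base change, so they can be computed directly from $\mathcal{F}_x\otimes_kR$. This gives $\mathrm{Fitt}_0=(\pi_x^{a_x+b_x})$ and $\mathrm{Fitt}_1=(\pi_x^{a_x})$. Hence $\det M_x$ lies in $\pi_x^{a_x+b_x}$ and all entries of $M_x$ lie in $\pi_x^{a_x}$, so the same conclusion holds with no normal form needed. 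I would present this Fitting-ideal argument as the proof.
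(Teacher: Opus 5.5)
Your proof is correct and follows the same route as the paper. The paper simply refers back to the $x\in\mathcal{S}\setminus\mathcal{S}^{\mathcal{F}}_1$ case of Lemma \ref{lem:rest}, which checks the coefficients pointwise at each $x\in\mathcal{S}$ using the cokernel condition $\mathrm{coker}(M_x)\cong\mathcal{F}_x\otimes_kR$ of Remark \ref{rmk:ademf}. Your Fitting-ideal step, with $\mathrm{Fitt}_0=(\pi_x^{a_x+b_x})$ controlling $\det M_x$ and $\mathrm{Fitt}_1=(\pi_x^{a_x})$ controlling the entries and hence the trace, makes explicit what the paper leaves implicit. In particular it handles the trace condition, which has no counterpart in the cyclic-cokernel case the paper cites, and it rightly avoids the normal-form step, which is not automatic over the non-local ring $\Mfo_x\otimes_kR$.
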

\begin{proof}
The proof is parallel to the case with $x\in \mathcal{S}\backslash
 \mathcal{S}^{\mathcal{F}}_1$ in Lemma \ref{lem:rest}, and thus we skip it. 
 \end{proof}

\begin{remark}\label{rmk:typeforn=2}
    We denote the restricted Hitchin fibration by $\Phi^\mathcal{F}:\mathcal{M}^{\mathcal{F}}\rightarrow \mathcal{A}^{\mathcal{F}}$.
Abusing the notation, we also denote the restricted Hitchin fibration on $\mathbb{M}^{\mathcal{F}}$ to $\mathbb{A}^{\mathcal{F}}$ by $\Phi^\mathcal{F}$.
For synchronization with the case that $n\geq 3$, we say that $\mathcal{F}$ is a type $T$ of $\chi$ as in Definition \ref{def:typeofchia}, and thus we use  $\Phi^T:\mathcal{M}^{T}\rightarrow \mathcal{A}^{T}$ in place of $\Phi^\mathcal{F}:\mathcal{M}^{\mathcal{F}}\rightarrow \mathcal{A}^{\mathcal{F}}$.
Then Diagram \eqref{diag:f'} holds when $n=2$.
\end{remark}

\begin{remark}\label{rmk:stran=2}
Let $\mathcal{I}$ be the finite set of coherent sheaves  $\mathcal{F} (=\textit{type } T)$ associated to $\chi$  in Definition \ref{def:coherentsheafF}.
Then an adelic description of an element in $\Phi^{-1}((\chi, \tau))(\chi)$ directly yields that  
\[
\Phi^{-1}((\chi, \tau))(k)=\bigsqcup_{T\in \mathcal{I}}(\Phi^T)^{-1}((\chi, \tau))(k) ~~~~  \textit{ and } ~~~~~ \Phi^{-1}(\chi)(k)=\bigsqcup_{T\in \mathcal{I}}(\Phi^T)^{-1}(\chi)(k).
\]
We call it \textbf{algebraic stratification} when $n=2$. 
\end{remark}

\subsection{Global smoothness}\label{subsec:globalsmoo}
A goal of this subsection is to prove that the morphism $\Phi^T:\mathcal{M}^T\rightarrow \mathcal{A}^T$ (cf. Diagram (\ref{diag:f'}))  is smooth at any point in $(\Phi^T)^{-1}((\chi, \tau))(k)$ for a type $T$ (cf. Definition \ref{def:typeofchia} when $n\geq 3$ and Remark \ref{rmk:typeforn=2} when $n=2$).
Since it highly depends on the smoothness of local smoothness given in Section \ref{sec:localsmoothening}, we will start by introducing local objects associated to $f$ and the restricted Hitchin fibration $\Phi^T$.

\begin{itemize}
    \item For $x\in C(k)$, let $\mathbb{A}_x:=\bigoplus\limits_{i=1}^n \co_C(iD)_x$, where
    $\co_C(iD)_x$ is the completion of the stalk of the coherent sheaf $\co_C(iD)$ at $x$ so that  $\co_C(iD)_x=\pi_x^{-i\cdot d_x}\co_x$.

        \item Let $\mathbb{M}_x:=\pi_x^{-d_x} \gl_{n, \co_x}$, and let 
 $\Phi_x:\mathbb{M}_x\rightarrow \mathbb{A}_x$ be the local Hitchin fibration.

\item Let $\mathbb{A}_x^{T}$ be the `completion' of $\mathbb{A}^{T}$ at $x\in C$ in the above sense so that 
\[
\mathbb{A}_x^{T}=\left\{ \begin{array}{l l}
\bigoplus\limits_{i=1}^n\pi_x^{-i\cdot d_x}\co_x  
& \textit{if $x\notin \mathcal{S}$};\\
\bigoplus\limits_{i=1}^n\pi_x^{-i\cdot d_x+a_{i,x}+b_{i,x}}\co_x  
 & \textit{if $x\in \mathcal{S}$ and $n\geq 3$};\\
\pi_x^{-d_x+a_x}\co_x \bigoplus \pi_x^{-2d_x+a_x+b_x}\co_x 
 & \textit{if $x\in \mathcal{S}$ and $n=2$}.
    \end{array} \right.
\]
Here we refer to Section \ref{subsubsecn>3} (resp. Section \ref{subsecn=2}) for $a_{i,x}$ and $b_{i,x}$ (resp. $a_x$ and $b_x$).

\item For a variety $X$ over $k$, we say that $x_{m+1}\in X(k[\epsilon]/(\epsilon^{m+1}))$ is a lift of $x_{m}\in X(k[\epsilon]/(\epsilon^{m}))$ if the reduction of $x_{m+1}$ modulo $\epsilon^m$ is $x_m$.
\end{itemize}

From now on until the end of this subsection except for Theorem \ref{thm:smoon=2}, we will work with the case $n\geq 3$ for simplicity of the presentation of the paper.
Arguments for the case with $n=2$ are parallel to those for the case that $\mathcal{S}^{\mathcal{F}}_1=\emptyset$ with $n\geq 3$, and thus we skip details but state the main result in Theorem \ref{thm:smoon=2}.
Let $R=k[\epsilon]/(\epsilon^m)$. 
To emphasize the role of $m$, we denote 
\[
\left\{ \begin{array}{l}
\textit{$(g_m,\theta_m)\left(=\left((g_{x,m})_{x\in C}, \theta_m\right)\right)$: an element of  $\mathbb{M}^{T}(k[\epsilon]/(\epsilon^m))$ with $m\geq 1$};\\
\textit{$\chi_m\in \mathbb{A}^{T}(k[\epsilon]/(\epsilon^m))$: the characteristic polynomial of $\theta_m$ such that $\chi_m\equiv \chi$ modulo $\epsilon$}
    \end{array} \right.
\]

Here $\chi_m$ is also an element of $\mathbb{A}_x^{T}(k[\epsilon]/(\epsilon^m))$ for $x\in C$.
Recall that $g_{x,m}^{-1}\theta_m g_{x,m}$ is contained in $\pi_x^{-d_x}\gl_n(\mathcal{O}_x[\epsilon]/(\epsilon^{m}))$ by Remark \ref{rmk:adelic}, and satisfies the conditions stated in Remarks \ref{rmk:ademf} and \ref{rmk:ademfly}.

\begin{lemma}[Existence of a local lift]\label{lem:localsmoothness}
    For a lift $\chi_{m+1}$ in $\mathbb{A}^{T}(k[\epsilon]/(\epsilon^{m+1}))$ of $\chi_m$,
there exists an element $X_{x,m+1}$ in $\pi_x^{-d_x}\gl_n(\mathcal{O}_x[\epsilon]/(\epsilon^{m+1}))$ satisfying the following conditions:
    \begin{enumerate}[(a)]
        \item The reduction of $X_{x,m+1}$ modulo $\epsilon^m$ is  $g_{x,m}^{-1}\theta_m g_{x,m}$;
        \item The characteristic polynomial of $X_{x,m+1}$ is $\chi_{m+1}$;
        \item The cokernel of $X_{x,m+1}$ in $\pi_x^{-d_x}\gl_n(\Mfo_x[\epsilon]/(\epsilon^{m+1}))$ is isomorphic to $\mathcal{F}_x\otimes_k k[\epsilon]/(\epsilon^{m+1})$;
        \item If $x\in \mathcal{S}^{\mathcal{F}}_1$, then the reduction of $\pi_x^{d_x}X_{x,m+1}$ modulo $\pi_x$ is conjugate to $J_{l_x+1,n-l_x-1}$ by an element of $\mathrm{GL}_n(k[\epsilon]/(\epsilon^{m+1}))$.
    \end{enumerate}
\end{lemma}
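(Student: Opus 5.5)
The proof is local at $x$ and splits into three cases according to the position of $x$. In each case the lift comes from a smooth local morphism. The three cases are $x\notin\mathcal{S}$, $x\in\mathcal{S}\setminus\mathcal{S}^{\mathcal{F}}_1$, and $x\in\mathcal{S}^{\mathcal{F}}_1$. Write $M_{x,m}:=\pi_x^{d_x}g_{x,m}^{-1}\theta_m g_{x,m}\in\gl_n(\Mfo_x[\epsilon]/(\epsilon^m))$. We look for $X_{x,m+1}=\pi_x^{-d_x}\widetilde{M}$, where $\widetilde{M}$ lifts $M_{x,m}$ and its characteristic polynomial is the rescaled polynomial $\chi_{m+1,x}$ as in \eqref{eq:chi_x}. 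Multiplication by $\pi_x^{d_x}$ identifies the two settings, exactly as in Corollaries \ref{cor:genlocalthm1} and \ref{cor:genlocalthm} (with $t=-d_x$). So we may work with the un-twisted local morphisms $\varphi_M$ and $\varphi_{l_1}$ of Section \ref{sec:localsmoothening}.

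Step 1: $x\notin\mathcal{S}$. By Setting \ref{settings}, $\overline{\chi_x}$ is separable, or at least $\theta$ is an isomorphism at $x$ with trivial cokernel. Hence $\varphi_n$ (that is, $\varphi_M$ with $M=L$, $d=0$) is smooth at $M_{x,0}$: a matrix whose reduction is regular has differential of $\chi$ surjective. Conditions (c) and (d) are vacuous, or automatic because the cokernel stays zero.

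Step 2: $x\in\mathcal{S}\setminus\mathcal{S}^{\mathcal{F}}_1$. Here $\mathcal{F}_x$ is cyclic of length $n d_x+\ord_x(a_n)$. By Remark \ref{rmk:ademf}, after conjugating by $\mathrm{GL}_n(\Mfo_x\otimes R)$, the matrix $M_{x,m}$ lies in $\cL(L,M)(\Mfo_x[\epsilon]/(\epsilon^m))$ with $\mathrm{type}(M)=(0,\dots,0,d)$. By Lemma \ref{lem:rest}, $\chi_{m+1}$ lies in $\mathbb{A}_M(\Mfo_x[\epsilon]/(\epsilon^{m+1}))$. Theorem \ref{thm:localtype1line} says $\varphi_M$ is smooth at every point over $\chi_\gamma$. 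Since $\Mfo_x[\epsilon]/(\epsilon^{m+1})\to\Mfo_x[\epsilon]/(\epsilon^m)$ is a surjection with square-zero kernel, formal smoothness produces a lift $\widetilde{M}\in\cL(L,M)$ with the prescribed characteristic polynomial. Membership in $\cL(L,M)$, meaning surjectivity onto $M\otimes R$, gives cokernel $L/M\otimes R\cong\mathcal{F}_x\otimes k[\epsilon]/(\epsilon^{m+1})$, which is (c). Conjugating back by the lift of the conjugating element gives (a).

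Step 3: $x\in\mathcal{S}^{\mathcal{F}}_1$. By Lemma \ref{lem:fatpoints}, we may conjugate $M_{x,m}$ into $\mathrm{End}_{\Mfo_x}(L)_{M,l_x}(\Mfo_x[\epsilon]/(\epsilon^m))$ with $X_{l_x}=J_{l_x,n-2-l_x}$. The target $\mathbb{A}^T_x$ matches $\mathbb{A}_{l_1}$ (as observed in the proof of Lemma \ref{lem:rest}). Theorem \ref{thm:localBassmain}, applicable since $n$ is odd or under the stated irreducibility, gives smoothness of $\varphi_{l_x}$ over $\chi_x$. We then lift within $\mathrm{End}(L)_{M,l_x}$ by formal smoothness. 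The ``if'' direction of Lemma \ref{lem:fatpoints} applied with $R=k[\epsilon]/(\epsilon^{m+1})$ then yields (d), and the $\cL(L,M)$ condition yields (c).

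The main obstacle is the formal-smoothness step. Smoothness is stated over the base $\Mfo_x$ at $\bar k$-points, whereas we need lifting along $\Mfo_x[\epsilon]/(\epsilon^{m+1})\to\Mfo_x[\epsilon]/(\epsilon^m)$, and these are not $\Mfo_x$-algebras of finite type. I would first argue that smoothness at all points of the special fiber over $\chi_\gamma$ implies smoothness of $\varphi$ on an open neighborhood containing the whole preimage of the section $\chi_\gamma$, including the generic fiber points, which are regular semisimple and hence smooth. The lifting property then holds for arbitrary affine test schemes via the infinitesimal criterion, applied to $\mathrm{Spec}\,\Mfo_x[\epsilon]/(\epsilon^{m+1})$ over $\mathbb{A}_{l_1}$, where the point maps into that smooth locus because its reduction does.
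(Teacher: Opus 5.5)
Your proposal is correct and follows the paper's own proof. The paper handles $x\notin\mathcal{S}^{\mathcal{F}}_1$ by formal smoothness of $\varphi_M^{-d_x}$ (Corollary \ref{cor:genlocalthm1}, with $M=L$ off $\mathcal{S}$). It handles $x\in\mathcal{S}^{\mathcal{F}}_1$ by the normal form of Lemma \ref{lem:fatpoints} plus formal smoothness of $\varphi_{l_x}^{-d_x}$ (Corollary \ref{cor:genlocalthm}), with (d) coming from the ``if'' part of Lemma \ref{lem:fatpoints}, exactly as you do.

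Your additions make explicit what the paper leaves implicit:
\begin{itemize}
\item a direct regular-element argument at $x\notin\mathcal{S}$, which is cleaner than the paper's appeal to Corollary \ref{cor:genlocalthm1} there, since $\chi_x$ is not irreducible at such $x$;
\item lifting the conjugating element, so that (a) holds on the nose rather than only up to conjugacy;
\item the remark that smoothness at the closed point suffices for lifting along $\Mfo_x[\epsilon]/(\epsilon^{m+1})\to\Mfo_x[\epsilon]/(\epsilon^m)$, because that ring is local.
\end{itemize}
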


\begin{remark}
Before providing the proof, let us explain what the above four conditions mean.
\begin{enumerate}
    \item The condition (c) comes from the right hand side of Remark \ref{rmk:ademf}, which explains an adelic description of when an element of $ \mathbb{M}(k[\epsilon]/(\epsilon^{m+1}))$ is contained in $ \mathbb{M}^{\mathcal{F}}(k[\epsilon]/(\epsilon^{m+1}))$.

    \item The condition (d) comes from the right hand side of Remark \ref{rmk:ademfly}, which explains an adelic description of when an element of $ \mathbb{M}^{\mathcal{F}}(k[\epsilon]/(\epsilon^{m+1}))$ is contained in $ \mathbb{M}^{\mathcal{F}, l_x}(k[\epsilon]/(\epsilon^{m+1}))$.

    \item The conditions (a) and (b) mean that $X_{x,m+1}$ is a lift of $g_{x,m}^{-1}\theta_m g_{x,m}$ according to the local Hitchin morphism stated in Corollaries \ref{cor:genlocalthm1} and \ref{cor:genlocalthm}, since $\Mfo_x[\epsilon]/(\epsilon^m)$ is a flat $\Mfo_x$-algebra so that the local Hitchin morphism is to assign the coefficients of the characteristic polynomial. 

\end{enumerate}



\end{remark}

\begin{proof}
 Suppose that $x \notin \mathcal{S}^{\mathcal{F}}_1$.
Then $g_{x,m}^{-1}\theta_m g_{x,m}$ and $\chi_m$ are $k[\epsilon]/(\epsilon^m)$-points of both sides of the morphism given in (\ref{eq:localhitch}) with $t=-d_x$. 
Note that  $M=L$ if $x\notin \mathcal{S}$. 
The existence of $X_{x, m+1}$ follows from the formal smoothness of the morphism by Corollary \ref{cor:genlocalthm1}.

Suppose that $x\in \mathcal{S}^{\mathcal{F}}_1$. 
Then, by Lemma \ref{lem:fatpoints}, $g_{x,m}^{-1}\theta_mg_{x,m}$ has a representative as an $\Mfo_x[\epsilon]/(\epsilon^m)$-point of $\mathrm{End}_{\Mfo_x}(L)_{M, l_x}$ given in (\ref{eq:localhitchinbasst}) with $t=-d_x$. 
The existence of $X_{x, m+1}$ follows from the formal smoothness of $\varphi_{l_x}^{-d_x}$ in Corollary \ref{cor:genlocalthm}.
Here, the verification of the condition (d) for $X_{x,m+1}$ is explained in the 'if' part of the proof of Lemma \ref{lem:fatpoints}.
\end{proof}



\begin{proposition}[Existence of a global lift]\label{prop:kmpointslift}
For a lift $\chi_{m+1}\in \mathbb{A}^{T}(k[\epsilon]/(\epsilon^{m+1}))$ of $\chi_m$, any point in  $(\Phi^T)^{-1}(\chi_m)$, which is contained in $\mathbb{M}^{T}(k[\epsilon]/(\epsilon^m))$, lifts to a point in $\mathbb{M}^{T}(k[\epsilon]/(\epsilon^{m+1}))$ whose characteristic polynomial is $\chi_{m+1}$.




\end{proposition}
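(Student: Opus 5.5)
The plan is to glue the local lifts of Lemma \ref{lem:localsmoothness} into a global point using the adelic description of Remark \ref{rmk:adelic}. This is the standard product-formula argument: a point of $\mathbb{M}$ is determined by a global $\theta\in\gl_n(F\otimes_k R)$ together with local lattices $g_x$, and only finitely many places impose nontrivial conditions. Fix $(g_m,\theta_m)\in\mathbb{M}^T(k[\epsilon]/(\epsilon^m))$ with characteristic polynomial $\chi_m$, and a lift $\chi_{m+1}$.

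\textbf{Step 1 (global generic lift).} Since $\chi_{m+1}$ lifts $\chi_m$ and $\chi$ is irreducible over $F$, the generic fiber is regular semisimple. Choose a companion-type lift: let $\theta_{m+1}\in\gl_n(F[\epsilon]/(\epsilon^{m+1}))$ be any element lifting $\theta_m$ with characteristic polynomial $\chi_{m+1}$. This exists because the characteristic polynomial map is smooth at regular elements over $F$; one can also write $\theta_m$ as a conjugate of a companion matrix of $\chi_m$ and lift the companion matrix directly.

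\textbf{Step 2 (local comparison).} For each $x\in C$, Lemma \ref{lem:localsmoothness} gives $X_{x,m+1}$ lifting $g_{x,m}^{-1}\theta_m g_{x,m}$, with characteristic polynomial $\chi_{m+1}$ and satisfying the cokernel condition (c) and the Jordan condition (d). Both $X_{x,m+1}$ and $g_{x,m}^{-1}\theta_{m+1}g_{x,m}$ are regular semisimple lifts over $F_x[\epsilon]/(\epsilon^{m+1})$ of the same element with the same characteristic polynomial. Hence they are conjugate by some $h_x\in\mathrm{GL}_n(F_x[\epsilon]/(\epsilon^{m+1}))$ with $h_x\equiv 1$ mod $\epsilon^m$. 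To see this, note that the difference of the two lifts is $\epsilon^m Y$ with $Y$ in the kernel of the differential of $\chi$, which equals the image of $\mathrm{ad}$ at a regular element, i.e. $Y=[Z,\cdot]$. Then take $h_x=1+\epsilon^m Z$.

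\textbf{Step 3 (global point).} Set $g_{x,m+1}:=\tilde g_{x,m}h_x^{-1}$, where $\tilde g_{x,m}$ is any lift of $g_{x,m}$. Then $g_{x,m+1}^{-1}\theta_{m+1}g_{x,m+1}=X_{x,m+1}$. For almost all $x$ (outside $\mathcal{S}$ and the points where the data are nonintegral) we can take $X_{x,m+1}=g_{x,m}^{-1}\theta_{m+1}g_{x,m}$ whenever this is already integral, so $h_x=1$ and $g_{m+1}$ is a genuine adele. The degree condition is unchanged because $\det h_x\equiv 1$ mod $\epsilon^m$ is a unit. Conditions (c) and (d), via Remarks \ref{rmk:ademf} and \ref{rmk:ademfly}, then show $(g_{m+1},\theta_{m+1})\in\mathbb{M}^T(k[\epsilon]/(\epsilon^{m+1}))$.

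\textbf{Main obstacle.} The delicate point is Step 2 together with the finiteness in Step 3. One must check that the infinitesimal conjugation can be chosen over $F_x$ for the finitely many bad points, and trivially elsewhere. One must also check that modifying $g_x$ by $h_x$ does not destroy integrality at points where the lift $X_{x,m+1}$ was chosen independently. I would handle this by noting that at good points $g_{x,m}^{-1}\theta_{m+1} g_{x,m}$ is integral for all but finitely many $x$, since $\theta_{m+1}$ has entries in $F$. At those points it automatically satisfies (a)--(c), because the cokernel is trivial off $\mathcal{S}$.
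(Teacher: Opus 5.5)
Your argument is correct, but it handles the key step differently from the paper.

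\textbf{The paper's route.} The paper first conjugates $\theta_m$ to the companion matrix of $\chi_m$ and takes $\theta_{m+1}$ to be the companion matrix of $\chi_{m+1}$. It then uses \cite[Theorem III.2]{Mc78} to find $h_{x,m+1}$ with $X_{x,m+1}=h_{x,m+1}^{-1}\theta_{m+1}h_{x,m+1}$. This $h_{x,m+1}$ need not reduce to $g_{x,m}$. The discrepancy $c_{x,m}=g_{x,m}h_{x,m}^{-1}$ centralizes $\theta_m$, and Lemma \ref{lemma:centralizer} lifts it to some $c_{x,m+1}$ centralizing $\theta_{m+1}$; that lemma works by diagonalizing over $\overline{F_x}$ and solving a linear equation. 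The paper then sets $g_{x,m+1}=c_{x,m+1}h_{x,m+1}$.

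\textbf{Your route.} You lift $g_{x,m}$ arbitrarily to $\tilde g_{x,m}$ and correct it by $h_x=1+\epsilon^m Z$. Let $A_0$ be the reduction mod $\epsilon$ of $g_{x,m}^{-1}\theta_m g_{x,m}$; it is regular semisimple over $F_x$ because $\chi$ is irreducible and $p>n$. At $A_0$, the kernel of the differential of the characteristic-polynomial map equals $[\mathfrak{gl}_n(F_x),A_0]$, since both have dimension $n^2-n$. The expansion $\chi(A+\epsilon^m Y)=\chi(A)+\epsilon^m\, d\chi_{A_0}(Y)$ is exact because $2m\geq m+1$.

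\textbf{Comparison.} This is the orbit-map side of the same infinitesimal fact that the paper uses on the centralizer side. Your version is shorter: it needs no companion-matrix normalization, no appeal to McDonald's theorem at this step, and no separate centralizer lemma. The congruence $g_{x,m+1}\equiv g_{x,m}\pmod{\epsilon^m}$ is also automatic for you. The paper's route instead gives an explicit conjugacy over all of $F_x[\epsilon]/(\epsilon^{m+1})$ and isolates centralizer-lifting as a standalone lemma.

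Your treatment of almost all places matches the paper's: take $\tilde g_{x,m}$ integral, $X_{x,m+1}=\tilde g_{x,m}^{-1}\theta_{m+1}\tilde g_{x,m}$ and $h_x=1$. Condition (c) holds there because $\pi_x^{nd_x}a_n$ is a unit off $\mathcal{S}$.

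\textbf{Notation.} In Step 2 write $\tilde g_{x,m}^{-1}\theta_{m+1}\tilde g_{x,m}$ rather than $g_{x,m}^{-1}\theta_{m+1}g_{x,m}$, since $g_{x,m}$ is only defined mod $\epsilon^m$. Set $A=\tilde g_{x,m}^{-1}\theta_{m+1}\tilde g_{x,m}$ and $Y=[Z,A_0]$. Then $X_{x,m+1}=h_xAh_x^{-1}$, which is consistent with your choice $g_{x,m+1}=\tilde g_{x,m}h_x^{-1}$.
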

\begin{proof}


Since $\chi_m$ is the characteristic polynomial of $\theta_m$, 
\cite[Theorem III.2]{Mc78} yields that $\theta_m$ is conjugate to a companion matrix of the polynomial $\chi_m$ by an element of $\mathrm{GL}_n(F[\epsilon]/(\epsilon^m))$.
Thus, by applying the left action of $\mathrm{GL}_n(F[\epsilon]/(\epsilon^m))$ on $(g_m,\theta_m)$ (cf. Remark \ref{rmk:adelic}), we may assume that $\theta_m$ is the companion matrix of $\chi_m$.
Let $\theta_{m+1}\in \mathfrak{gl}_n(F[\epsilon]/(\epsilon^{m+1}))$ be the companion matrix of a polynomial $\chi_{m+1}$.
We then aim to find a lift $g_{m+1}$ of $g_m$ in $\mathrm{GL}_n(\mathbb{A}_C[\epsilon]/(\epsilon^{m+1}))$ such that $(g_{m+1},\theta_{m+1})$ is a desired lift in $\mathbb{M}^{T}(k[\epsilon]/(\epsilon^{m+1}))$.

Lemma \ref{lem:localsmoothness} guarantees the existence of  a (local) lift $X_{x,m+1}$ of $g_{x,m}^{-1}\theta_m g_{x,m}$ in $\pi_x^{-d_x}\mathfrak{gl}_n(\mathcal{O}_x[\epsilon]/(\epsilon^{m+1}))$ with characteristic polynomial $\chi_{m+1}$ satisfying the cokernel condition (cf. Lemma \ref{lem:localsmoothness}.(c)) and the Jordan block condition (cf. Lemma \ref{lem:localsmoothness}.(d)).
Since $\theta_{m+1}$ and $X_{x,m+1}$ have the same characteristic polynomial $\chi_{m+1}$, \cite[Theorem III.2]{Mc78} yields the existence of an element $h_{x,m+1}\in \mathrm{GL}_n(F_x[\epsilon]/(\epsilon^{m+1}))$ such that $X_{x,m+1}=h_{x,m+1}^{-1}\theta_{m+1}h_{x,m+1}$.

Let $h_{x,m}$ be the reduction of $h_{x,m+1}$ modulo $\epsilon^m$. 
Then  $g_{x,m}^{-1}\theta_mg_{x,m}=h_{x,m}^{-1}\theta_m h_{x,m}$ so that the element $g_{x,m}h_{x,m}^{-1}$ in $\mathrm{GL}_n(F_x[\epsilon]/(\epsilon^m))$ centralizes $\theta_m$.
Denote $c_{x,m}:=g_{x,m}h_{x,m}^{-1}$.
We then use Lemma \ref{lemma:centralizer}, which will be stated and proved below, to guarantee the existence of a lift $c_{x,m+1}\in \mathrm{GL}_n(F_x[\epsilon]/(\epsilon^{m+1}))$ of $c_{x, m}$ such that $c_{x,m+1}$ stabilizes $\theta_{m+1}$ in $\mathrm{GL}_n(F_x[\epsilon]/(\epsilon^{m+1}))$.
Define $g_{x,m+1}:=c_{x,m+1}h_{x,m+1}$ in $\mathrm{GL}_n(F_x[\epsilon]/(\epsilon^{m+1}))$.
We finally claim that $(g_{m+1}, \theta_{m+1})\left(=\left(g_{x,m+1})_{x\in C}, \theta_{m+1}\right)\right)$ is a desired lift of $(g_{m}, \theta_{m})$.

Firstly, the reduction of $g_{x,m+1}$ modulo $\epsilon^m$ is equal to $c_{x,m}h_{x,m}=g_{x,m}$, and we have $$g_{x,m+1}^{-1}\theta_{m+1}g_{x,m+1}
=h_{x,m+1}^{-1}\theta_{m+1}h_{x,m+1}=X_{x,m+1}  \in \pi_x^{-d_x}\mathfrak{gl}_n(\mathcal{O}_x[\epsilon]/(\epsilon^{m+1}))$$
so that $g_{x,m+1}^{-1}\theta_{m+1}g_{x,m+1}$ satisfies all conditions (a)-(d) of Lemma \ref{lem:localsmoothness}.
Therefore, it remains to show that $g_{m+1}=(g_{x,m+1})_{x\in C}$ is a well-defined element of $\mathrm{GL}_n(\mathbb{A}_C[\epsilon]/(\epsilon^{m+1}))$. 

We choose the set of $x$'s such that  $x\notin \mathcal{S}\cup \mathrm{Supp}(D)$, $g_{x,m}=1$, and $\theta_{m+1}\in \gl_n(\mathcal{O}_x[\epsilon]/(\epsilon^{m+1}))$. 
At such point $x$, the companion matrix $\theta_{m+1}$ satisfies conditions (a)-(c) in Lemma \ref{lem:localsmoothness}. Here the condition (d) is unnecessary. 
Thus we  put $X_{x,m+1}=\theta_{m+1}$ at such point $x$ so that $h_{x,m+1}=1$ and  $c_{x,m}=g_{x,m}=1$.
We now put $c_{x,m+1}=1$, which satisfies the condition of Lemma \ref{lemma:centralizer}. 
Then $g_{x,m+1}=1$, which completes the proof. 
\end{proof}

\begin{lemma}\label{lemma:centralizer}
    Choose a lift $\theta_{m+1}$  in $\mathfrak{gl}_n(F[\epsilon]/(\epsilon^{m+1}))$ of $\theta_m$.
    If  $c_{x,m}$ in $\mathrm{GL}_n(F_{x}\otimes_k k[\epsilon]/(\epsilon^{m}))$ centralizes $\theta_m$, then there exists a lift $c_{x,m+1}$ in $\mathrm{GL}_n(F_{x}[\epsilon]/(\epsilon^{m}))$ of $c_{x,m}$ that centralizes $\theta_{m+1}$.
\end{lemma}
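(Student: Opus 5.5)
The plan is to use that $\theta_m$ (hence $\theta_{m+1}$) is regular, so its centralizer is a polynomial algebra in it. In the proof of Proposition \ref{prop:kmpointslift}, $\theta_m$ and $\theta_{m+1}$ are companion matrices of $\chi_m$ and $\chi_{m+1}$. These polynomials are monic with reduction $\chi$ modulo $\epsilon$, and $\chi$ is irreducible and separable over $F$, since $\chi\in\mathbb{A}^{ell}$ and $p>n$. Write $A_m:=F_x[\epsilon]/(\epsilon^m)$. I will show that the centralizer of a companion matrix $\theta$ of a monic polynomial $f$ over any commutative ring $A$ is exactly $A[\theta]\cong A[X]/(f)$. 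The key point is that the vector $e_1$ is cyclic: $e_1,\theta e_1,\dots,\theta^{n-1}e_1$ is a basis of $A^n$. If $c$ commutes with $\theta$, write $ce_1=\sum_i b_i\theta^i e_1$ and set $P(X)=\sum_i b_iX^i$. Then $c$ and $P(\theta)$ agree on every $\theta^je_1$, so $c=P(\theta)$. This step is purely formal and works over $A_m$ and $A_{m+1}$ alike.

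Given this, write $c_{x,m}=P_m(\theta_m)$ with $P_m\in A_m[X]$ of degree $<n$. This representation is unique, because the powers $\theta_m^i e_1$ with $i<n$ form a basis. Choose any lift $P_{m+1}\in A_{m+1}[X]$ of $P_m$ of degree $<n$, for instance by lifting the coefficients along the surjection $A_{m+1}\to A_m$. Put $c_{x,m+1}:=P_{m+1}(\theta_{m+1})$. It commutes with $\theta_{m+1}$. Its reduction modulo $\epsilon^m$ is $P_m(\theta_m)=c_{x,m}$, since $\theta_{m+1}$ reduces to $\theta_m$ and the reduction map is a ring homomorphism.

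It remains to check invertibility. The kernel of $\mathrm{M}_n(A_{m+1})\to\mathrm{M}_n(A_m)$ is $\epsilon^m\mathrm{M}_n(A_{m+1})$, which is a square-zero ideal. Hence a matrix whose reduction is invertible is itself invertible: its determinant reduces to a unit, and any lift of a unit through a nilpotent thickening is a unit. So $c_{x,m+1}\in\mathrm{GL}_n(A_{m+1})$. The statement writes $F_x[\epsilon]/(\epsilon^m)$ for the target group, which I read as the typo $\epsilon^{m+1}$.

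The only point requiring care is that the lemma is stated for an arbitrary lift $\theta_{m+1}$ rather than a companion matrix. If $\theta_{m+1}$ is not assumed to be a companion matrix, I will first show that $e_1$ remains a cyclic vector for it. The determinant of the matrix $(e_1,\theta_{m+1}e_1,\dots,\theta_{m+1}^{n-1}e_1)$ reduces modulo $\epsilon$ to the corresponding determinant for $\theta_1$. That reduction is a unit, because $\theta_1$ is regular and $e_1$ is cyclic for it, which holds once we have conjugated to companion form as in Proposition \ref{prop:kmpointslift}. The argument above then applies verbatim with this cyclic basis. The polynomial description also shows that nothing beyond regularity is needed, so I expect no genuine obstacle.
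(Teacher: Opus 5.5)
Your proof is correct, but it takes a different route from the paper's.

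\textbf{The paper's route.} The paper linearizes the problem. It writes $\theta_{m+1}=\theta_m+\epsilon^m\alpha$ and $c_{x,m+1}=c_{x,m}+\epsilon^m\beta$. It then reduces the claim to solvability of a linear equation $\beta\theta_0-\theta_0\beta=\delta$, where $\delta\in\mathfrak{gl}_n(F_x)$ is determined by $\alpha$, $\theta_m$ and $c_{x,m}$. To solve it, the paper passes to $\overline{F_x}$. There it uses Hensel's lemma and the Jordan decomposition over the Artinian ring $\overline{F_x}[\epsilon]/(\epsilon^{m+1})$ to diagonalize $\theta_{m+1}$; this is where separability of $\chi$, i.e.\ regular semisimplicity of $\theta_0$, is used. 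It then observes that the conjugated $c_{x,m}$ is diagonal modulo $\epsilon^m$ and writes down the solution $\beta=-\beta'$. Finally it descends solvability of the linear system from $\overline{F_x}$ back to $F_x$.

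\textbf{Your route.} You identify the centralizer of the cyclic matrix $\theta_m$ with $A_m[\theta_m]\cong A_m[X]/(\chi_m)$, and you lift $c_{x,m}=P_m(\theta_m)$ by lifting the coefficients of $P_m$. In effect this says that the centralizer of a regular element is the unit group of a finite free algebra, and such units lift through square-zero thickenings.

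\textbf{Comparison.} Your argument is shorter and works directly over $F_x[\epsilon]/(\epsilon^{m+1})$. It needs no algebraic closure, Hensel lifting or descent step, and it produces the lift explicitly. It also needs only regularity of $\theta_0$, not semisimplicity. The paper's obstruction-style computation instead relies on the distinct-eigenvalue hypothesis, which is available here because $p>n$ and $\chi$ is irreducible.

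\textbf{A small simplification.} The cyclicity of $\theta_{m+1}$ that you check in your last paragraph is never actually used. $P_{m+1}(\theta_{m+1})$ commutes with $\theta_{m+1}$ for any lift. What you do need is a cyclic vector for $\theta_m$, so that $c_{x,m}$ can be written as a polynomial in $\theta_m$. Your determinant argument supplies this from the regularity of the reduction modulo $\epsilon$ in case $\theta_m$ is not already in companion form.
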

\begin{proof}
We write  $\theta_m=\theta_m+0\cdot \epsilon^m$ and $c_{x,m}=c_{x,m}+0\cdot \epsilon^m$ so that they are viewed as elements of $\mathfrak{gl}_n(F[\epsilon]/(\epsilon^{m+1}))$ and $\mathrm{GL}_n(F_{x}[\epsilon]/(\epsilon^{m+1}))$, respectively. 
Then  $\theta_{m+1}$  is of the form $\theta_m+\epsilon^m\alpha$ with $\alpha\in \mathfrak{gl}_n(F)$
and $c_{x,m+1}$  is of the form $c_{x,m}+\epsilon^m\beta$ with $\beta\in \gl_n(F_x)$.
Thus it suffices to show the existence of a solution  $\beta\in \mathfrak{gl}_n(F_x)$ satisfying the following matrix equation:
    $$(\theta_m+\epsilon^m\alpha)(c_{x,m}+\epsilon^m\beta)=(c_{x,m}+\epsilon^m\beta)(\theta_m+\epsilon^m\alpha) \textit{ in } \mathfrak{gl}_n(F_x[\epsilon]/(\epsilon^{m+1})).$$
    Let $\theta_0\in \mathfrak{gl}_n(F)$ and $c_{x,0}\in \mathrm{GL}_n(F_x)$ be the reduction of $\theta_m$ and $c_{x,m}$ modulo $\epsilon$, respectively.
    Since $\epsilon^m\theta_m=\epsilon^m\theta_0$ and $\epsilon^m c_{x,m}=\epsilon^m c_{x,0}$, the above matrix equation is equivalent to the following:
\begin{equation}\label{eq:lineareq}
    \epsilon^m(\beta\theta_0-\theta_0\beta)=\theta_mc_{x,m}-c_{x,m}\theta_m+\epsilon^m(\alpha c_{x,0}-c_{x,0}\alpha) \textit{ in } \mathfrak{gl}_n(F_x[\epsilon]/(\epsilon^{m+1})).
    \end{equation}
    Since $c_{x,m}$ centralizes $\theta_m$ in $\mathfrak{gl}_n(F_x[\epsilon]/(\epsilon^m))$, the element  $\theta_mc_{x,m}-c_{x,m}\theta_m$ is contained in $\epsilon^m\mathfrak{gl}_n(F_x[\epsilon]/(\epsilon^{m+1}))$.
    Thus, Equation (\ref{eq:lineareq}) divided by $\epsilon^m$ is a linear matrix equation with coefficients in $F_x$.
    Now, it suffices to prove that Equation (\ref{eq:lineareq}) divided by $\epsilon^m$ has a solution $\beta$ in $\mathfrak{gl}_n(F_x)$. 
    This is equivalent to showing the existence of a solution $\beta$ in $\mathfrak{gl}_n(\overline{F_x})$, where $\overline{F_x}$ is the algebraic closure of $F_x$.

Let $\chi_{m+1}\in F_x[\epsilon]/(\epsilon^{m+1})[X]$ be the characteristic polynomial of  $\theta_{m+1}$.
We view $\chi_{m+1}$ as an element of $\overline{F_x}[[\epsilon]][X]$, by assigning $0$ to the coefficients of $\epsilon^i$ with $i> m$.
Since $\chi_{m+1}$ modulo $\epsilon$ is the characteristic polynomial  of $\theta_0$, it is separable in $\overline{F_x}[X]$.
Using Hensel's Lemma, $\chi_{m+1}$  splits into distinct linear factors in $\overline{F_x}[[\epsilon]][X]$, and thus it splits into distinct linear factors in $\left(\overline{F_x}[\epsilon]/(\epsilon^{m+1})\right)[X]$.
    Therefore, \cite[Theorem III.2]{Mc78} is applicable which guarantees the existence of a matrix $P_x\in \mathrm{GL}_n(\overline{F_x}[\epsilon]/(\epsilon^{m+1}))$ such that $P_x\theta_{m+1}P_x^{-1}$ is a diagonal matrix in $\gl_n(\overline{F_x}[\epsilon]/(\epsilon^{m+1}))$. 
    Thus the reduction of $P_x\theta_{m+1}P_x^{-1}$ modulo $\epsilon^m$ is also a diagonal matrix in $\gl_n(\overline{F_x}[\epsilon]/(\epsilon^{m}))$.

Now, since the reduction of $P_x(c_{x,m}+0\cdot \epsilon^m)P_x^{-1}$ modulo $\epsilon^m$ centralizes the reduction of $P_x\theta_{m+1}P_x^{-1}$ modulo $\epsilon^m$ and the reduction of $P_x\theta_{m+1}P_x^{-1}$ modulo $\epsilon$ has distinct diagonal entries in $\overline{F_x}$, we claim that the reduction of $P_x(c_{x,m}+0\cdot \epsilon^m)P_x^{-1}$ modulo $\epsilon^m$  is also a diagonal matrix in $\gl_n(\overline{F_x}[\epsilon]/(\epsilon^{m}))$.

The above claim is justified as follows. 
Consider a diagonal matrix $A\in \gl_n(\overline{F_x}[\epsilon]/(\epsilon^{m}))$ which has distinct diagonal entries modulo $\epsilon$ in $\overline{F}_x$. Choose $B\in \gl_n(\overline{F_x}[\epsilon]/(\epsilon^{m}))$ such that $AB=BA$.
Let $b_{i,j}\in \overline{F_x}[\epsilon]/(\epsilon^{m})$ be the $(i,j)$-entry of $B$ and let $\lambda_i\in \overline{F_x}[\epsilon]/(\epsilon^{m})$ be the $i$-th diagonal entry of $A$.
The equation $AB=BA$ yields that $(\lambda_i-\lambda_j)b_{i,j}=0$. 
Here  $\lambda_i-\lambda_j$  is a unit in $\overline{F_x}[\epsilon]/(\epsilon^{m})$ if $i\neq j$ since $\lambda_i \not\equiv \lambda_j$ modulo $\epsilon$. This directly yields that $b_{i,j}=0$ if $i\neq j$ so that $B$ is diagonal.

    Therefore, taking the conjugation by $P_x$ to Equation (\ref{eq:lineareq}), 
    we may assume $\theta_m, \theta_0,\alpha$, and $c_{x,0}$ to be diagonal and may write $c_{x,m}=D_{x,m}+\epsilon^m\beta'$, where $D_{x,m}$ is a diagonal matrix in $\mathfrak{gl}_n(\overline{F_x}[\epsilon]/(\epsilon^{m+1}))$ with no $\epsilon^m$-term.
    Here $\beta'\in \gl_n(\overline{F_x})$ does not need to be diagonal. 
    Then Equation (\ref{eq:lineareq}) is equivalent to the following matrix equation:
    $$\beta\theta_0-\theta_0\beta=\theta_0\beta'-\beta'\theta_0 \textit{ in } \mathfrak{gl}_n(\overline{F_x}).$$
    This matrix equation has a solution $\beta=-\beta'$.
\end{proof}

We now prove a global geometrization of Theorems \ref{thm:localtype1line} and \ref{thm:localBassmain} when $n\geq 3$, mainly based on Proposition \ref{prop:kmpointslift}.
    Here, we recall that $a=(\chi,\tau)$ is a point of $\mathcal{A}(k)$ which lies over $\chi$.

\begin{theorem}\label{thm:smoothness}
For a type $T=(\mathcal{F}, (l_x)_{x\in \mathcal{S}^{\mathcal{F}}_1})$ of $\chi$,        $\mathcal{M}^{T}$ is nonsingular at every point of $(\Phi^T)^{-1}((\chi, \tau))(\bar{k})$ and the restricted Hitchin fibration $\Phi^T:\mathcal{M}^{T}\rightarrow \mathcal{A}^{T}$ is smooth at every point in $(\Phi^T)^{-1}((\chi, \tau))(\bar{k})$.
\end{theorem}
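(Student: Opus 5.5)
The plan is to deduce both assertions from the infinitesimal lifting criterion, using Proposition \ref{prop:kmpointslift} as the essential input. Smoothness of a finite type morphism at a point can be tested on Artinian thickenings of that point. Since $\mathcal{A}^T$ is locally of finite type over a field, it suffices to treat the curvilinear thickenings $k[\epsilon]/(\epsilon^m)\to k[\epsilon]/(\epsilon^{m+1})$. After base change to $\bar{k}$, which commutes with every construction in sight, we may assume $k=\bar{k}$. So we must show the following: for $x_0\in(\Phi^T)^{-1}((\chi,\tau))(k)$, every $k[\epsilon]/(\epsilon^m)$-point $x_m$ of $\mathcal{M}^T$ lifting $x_0$, and every lift $a_{m+1}\in\mathcal{A}^T(k[\epsilon]/(\epsilon^{m+1}))$ of $a_m=\Phi^T(x_m)$, there is a lift $x_{m+1}$ of $x_m$ with $\Phi^T(x_{m+1})=a_{m+1}$.

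First I pass from $\mathcal{M}^T\to\mathcal{A}^T$ to $\mathbb{M}^T\to\mathbb{A}^T$ using Diagram \eqref{diag:f'}. The map $\mathcal{A}^T\to\mathbb{A}^T$ is \'etale, so it is enough to lift the image $\chi_{m+1}\in\mathbb{A}^T$. Over $\mathbb{M}^\infty\times_{\mathbb{A}^\infty}\mathcal{A}^{ell}$, the map $\mathcal{M}^{ell}$ is a $\mathbb{G}_m$-torsor, whose points are choices of an eigenvector $e_1$ of $\theta_\infty$ for the eigenvalue $\tau_1$. I therefore need two things. The first is a lift $(g_{m+1},\theta_{m+1})\in\mathbb{M}^T(k[\epsilon]/(\epsilon^{m+1}))$ with characteristic polynomial $\chi_{m+1}$; this is supplied by Proposition \ref{prop:kmpointslift}. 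The second is a lift of the torsor datum and of $\tau$. The eigenvalues $\tau_i$ deform uniquely, because $\overline{\chi_\infty}$ is separable. The eigenvector $e_1$ then extends because the $\tau_1$-eigenspace of $\theta_{m+1,\infty}$ is a free rank one direct summand, again by separability; an idempotent-lifting or Hensel argument gives this. The result is a $k[\epsilon]/(\epsilon^{m+1})$-point of $\mathcal{M}^T$. It lies in $\mathcal{M}^T$ and not merely in $\mathcal{M}^{ell}$ because conditions (c) and (d) of Lemma \ref{lem:localsmoothness} are preserved, and $\mathcal{M}^T$ is cut out by the adelic conditions of Remarks \ref{rmk:ademf} and \ref{rmk:ademfly}.

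Second, this shows that $\Phi^T$ is formally smooth along the fibre. Since $\mathcal{M}^T$ is locally of finite type, $\Phi^T$ is smooth at these points. For nonsingularity of $\mathcal{M}^T$, I would first show that $\mathcal{A}^T$ is smooth over $k$ at $(\chi,\tau)$. The space $\mathbb{A}^T$ is an open subset of a linear space $\bigoplus_i H^0(C,\mathcal{O}_C(iD-\cdots))$, hence an affine space, and $\mathcal{A}^T\to\mathbb{A}^T$ is \'etale. Composing the two smooth maps shows that $\mathcal{M}^T\to\operatorname{Spec}k$ is smooth at every point of the fibre. Over $\bar{k}$ this is nonsingularity.

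The main obstacle I expect is the reduction to curvilinear test rings. The standard criterion uses arbitrary small extensions $A'\to A$ of local Artinian $k$-algebras, whereas Proposition \ref{prop:kmpointslift} only treats $k[\epsilon]/(\epsilon^{m})$, because it relies on \cite[Theorem III.2]{Mc78} over principal ideal rings. I would first try to avoid general extensions by comparing dimensions. The lifting property along $k[\epsilon]/(\epsilon^{m})$ for all $m$ shows that every tangent vector of the fibre direction extends to arbitrary order, and so does every tangent vector of $\mathcal{A}^T$. From this, together with the fact that the fibre $(\Phi^T)^{-1}(\chi,\tau)$ is reduced of the expected dimension (or, alternatively, a Jacobian-criterion count of $\dim T_x\mathcal{M}^T$ against the local dimension), one concludes smoothness. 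If that proves awkward, the fallback is to redo Lemma \ref{lem:localsmoothness} and Proposition \ref{prop:kmpointslift} for a general small extension. The local lift comes from the formal smoothness in Corollaries \ref{cor:genlocalthm1} and \ref{cor:genlocalthm}. The companion-matrix normalization and Lemma \ref{lemma:centralizer} still work for $\chi$ elliptic, since $\theta$ is regular, so its centralizer is a smooth torus and the conjugating elements lift.
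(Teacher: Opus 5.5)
Your architecture is the paper's. Everything rests on Proposition \ref{prop:kmpointslift}, with $\mathcal{M}^T\to\mathbb{M}^T$ handled by the smooth ($\mathbb{G}_m$-torsor) map and $\mathcal{A}^T\to\mathbb{A}^T$ by \'etaleness.

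\textbf{The gap.} It is the step you flag yourself: passing from lifting along $\bar k[\epsilon]/(\epsilon^m)\to\bar k[\epsilon]/(\epsilon^{m+1})$ to smoothness. Your stated reason (``since $\mathcal{A}^T$ is locally of finite type over a field, it suffices to treat curvilinear thickenings'') is not a justification, because the infinitesimal lifting criterion needs all small extensions of local Artinian rings. Neither repair closes it.
\begin{itemize}
\item The dimension comparison relies on facts established nowhere: that the fibre is reduced of the expected dimension, or a computation of $\dim_p\mathcal{M}^T$.
\item The fallback of redoing everything for arbitrary small extensions skips the part that genuinely needs principal Artinian rings. That part is Lemma \ref{lem:fatpoints}, which places a point of $\mathbb{M}^T$ in the chart $\mathrm{End}_{\mfo}(L)_{M,l_x}$ and certifies the Jordan-type condition (d) for the lift. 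Both directions use \cite[Theorem III.2]{Mc78}.
\end{itemize}

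\textbf{How the paper closes it.} It uses a curvilinear criterion for regularity, \cite[Corollary 1.10]{Win77}, and argues in the opposite order from you.
\begin{enumerate}
\item Nonsingularity of $\mathcal{M}^T$ comes first. Push a jet of $\mathcal{M}^T$ to $\mathbb{M}^T$. Lift it there by Proposition \ref{prop:kmpointslift}, after extending its characteristic polynomial arbitrarily inside $\mathbb{A}^T$, which is open in an affine space. Then lift back through the smooth map $\mathcal{M}^T\to\mathbb{M}^T$.
\item Smoothness of $\Phi^T$ comes second. Source and target are now nonsingular at the relevant points, so surjectivity on Zariski tangent spaces suffices \cite[Proposition III.10.4]{Har77}. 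That surjectivity is the case $m=1$ of the lifting.
\end{enumerate}

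\textbf{Your own observation already suffices.} The fact that every tangent vector extends to all orders gives regularity with no dimension input. Write $\widehat{\mathcal{O}}_{\mathcal{M}^T,p}\cong\bar k[[x_1,\dots,x_N]]/I$ with $N$ the embedding dimension and $I\subset\mathfrak{m}^2$. Suppose $I\neq0$.
\begin{itemize}
\item Take $d\geq 2$ with $I\subset\mathfrak{m}^d$ and $I\not\subset\mathfrak{m}^{d+1}$.
\item Take $f\in I$ with nonzero degree-$d$ leading form $f_d$, and a vector $v$ with $f_d(v)\neq0$.
\item Then $x_i\mapsto v_i\epsilon$ is a $\bar k[\epsilon]/(\epsilon^d)$-point of $\mathcal{M}^T$ at $p$.
\item Every extension $x_i\mapsto v_i\epsilon+w_i\epsilon^d$ sends $f$ to $f_d(v)\epsilon^d\neq0$ modulo $\epsilon^{d+1}$, so this jet does not lift.
\end{itemize}
This contradicts the lifting property, so $I=0$ and $\mathcal{M}^T$ is regular at $p$. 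Your relative statement, that $\Phi^T$ is smooth at $p$, then follows from the tangent-space criterion as above.
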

\begin{proof}
   We first prove nonsingularity of $\mathcal{M}^{T}$ at a point in $(\Phi^T)^{-1}((\chi, \tau))(\bar{k})$.
Here $\mathcal{M}^{T}$ is a scheme over ${k}$ of finite type so as to be a flat ${k}$-scheme (cf. Definition \ref{def:constmfl}). Then \cite[Corollary 1.10]{Win77} is applicable so that it suffices to show 
formal smoothness with respect to small extensions of the form $\mathrm{Spec}~ \bar{k}[\epsilon]/(\epsilon^m)\rightarrow \mathrm{Spec}~\bar{k}[\epsilon]/(\epsilon^{m+1})$.
Since the morphism $\mathcal{M}^{ell}\rightarrow \mathbb{M}^{ell}$ is smooth (cf. Diagram \eqref{diag:1st}) so that the morphism $\mathcal{M}^{T} \rightarrow  \mathbb{M}^{T}$ is smooth as well (cf. Diagram \eqref{diag:f'}),  it suffices to show formal smoothness of $\mathbb{M}^{T}$ at points in $(\Phi^T)^{-1}(\chi)$.
    This is a direct consequence of Proposition \ref{prop:kmpointslift}.
    
For the smoothness of $\Phi^T$ at a point in  $(\Phi^T)^{-1}((\chi, \tau))(\bar{k})$, it suffices to show the surjectivity on the Zariski tangent space by \cite[Proposition 10.4]{Har77} since $\mathcal{M}^{T}$ is nonsingular at every point of $(\Phi^T)^{-1}((\chi, \tau))(\bar{k})$.
This is a special case of Proposition \ref{prop:kmpointslift} with $m=1$.
\end{proof}

\begin{corollary}[\textbf{Algebraic stratification of $\Phi^{-1}((\chi, \tau))$}]\label{cor:smoothstr}
Suppose that $\ord_x(a_n)+n\cdot d_x\leq 2$ for all $x\in \mathcal{S}$.
Then $(\Phi^T)^{-1}((\chi, \tau))$ is a locally closed and smooth subvariety of $\Phi^{-1}((\chi, \tau))$ for a type $T$ of $\chi$ (cf. Definition \ref{def:typeofchia}). 
In addition, $\Phi^{-1}((\chi, \tau))(k')$ admits the following stratification:
\[
\Phi^{-1}((\chi, \tau))(k')=\bigsqcup_{T\in \mathcal{I}}(\Phi^T)^{-1}((\chi, \tau))(k') ~~~~  \textit{  for any extension $k'/k$ in $\bar{k}$},
\]
where the index set $\mathcal{I}$ is the set of types $T$ of $\chi$.

\end{corollary}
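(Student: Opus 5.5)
The statement has three parts: each stratum $(\Phi^T)^{-1}((\chi,\tau))$ is locally closed, it is smooth, and the strata partition the $k'$-points for every $k'/k$ in $\bar{k}$. I will prove them in that order, reducing each to results already established.

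\textbf{Local closedness.} By Definition \ref{def:constmfl} and Remark \ref{rmk:locallyclosed}, $\mathcal{M}^T$ is obtained from $\mathcal{M}^{ell}$ by finitely many base changes along points of $Coh(C)_0$ and of $[\mathfrak{gl}_n/\mathrm{GL}_n]$, one for each $x\in\mathcal{S}^{\mathcal{F}}_1$. Hence $\mathcal{M}^T$ is a locally closed subscheme of $\mathcal{M}^{ell}$. Its fiber over $(\chi,\tau)$ under $\Phi^T$ is
\[
(\Phi^T)^{-1}((\chi,\tau))=\mathcal{M}^T\times_{\mathcal{M}^{ell}}\Phi^{-1}((\chi,\tau)),
\]
because $\mathcal{A}^T\hookrightarrow\mathcal{A}^{ell}$ is a closed immersion (Lemma \ref{lem:rest} and Diagram \eqref{diag:f'}). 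A base change of a locally closed immersion is a locally closed immersion, so the fiber is locally closed in $\Phi^{-1}((\chi,\tau))$.

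\textbf{Smoothness.} Theorem \ref{thm:smoothness} says that $\Phi^T$ is smooth at every $\bar{k}$-point of $(\Phi^T)^{-1}((\chi,\tau))$. Smooth morphisms are stable under base change, so the fiber $(\Phi^T)^{-1}((\chi,\tau))$ is smooth over $k$ at all of its $\bar{k}$-points. A finite-type $k$-scheme is smooth at all its points if it is smooth at all its closed points. Therefore the fiber is a smooth subvariety. If one wants reducedness stated separately, it follows from smoothness.

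\textbf{The stratification.} Here I redo Remark \ref{rmk:stra} over an arbitrary $k'$. Take a point of $\Phi^{-1}((\chi,\tau))(k')$ and give it an adelic description $(g,\theta)$ via Remark \ref{rmk:adelic}, which by \cite{BL} is valid over any field. For each $x\in\mathcal{S}$, the cokernel of $g_x^{-1}\theta g_x$ is an $\Mfo_x\otimes_k k'$-module of length $n d_x+\ord_x(a_n)\le 2$.
\begin{enumerate}
\item If the length is $1$, the cokernel is $k'$, which is the cyclic case.
\item If the length is $2$, the cokernel is either $k'\oplus k'$ or $(\Mfo_x/\pi_x^2)\otimes k'$.
\end{enumerate}
Since $k'$ is a field, these cases are exhaustive and mutually exclusive. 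This determines the sheaf $\mathcal{F}$ uniquely; Definition \ref{def:coherentsheafF} and the stalkwise criterion quoted after it show that isomorphic stalks give isomorphic sheaves.

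Now fix $x\in\mathcal{S}^{\mathcal{F}}_1$, so the cokernel is $k'\oplus k'$ and $\overline{\pi_x^{d_x}g_x^{-1}\theta g_x}$ has rank $n-2$. I then need this reduced matrix to be nilpotent, which reduces to showing that $\overline{\chi_x}=X^n$. By Setting \ref{settings}, $x\in\mathcal{S}$ means $\chi_x$ is irreducible over $\Mfo_x\otimes\bar{k}$, so its reduction is a power of a single linear factor. Since $\chi_x(0)\equiv 0$ modulo $\pi_x$ after the normalization of Setting \ref{settings}, that factor is $X$, giving $\overline{\chi_x}=X^n$.

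A nilpotent matrix of rank $n-2$ has exactly two Jordan blocks. It is therefore conjugate over $k'$ to $J_{l+1,n-l-1}$ for a unique $l$ with $0\le l\le n/2-1$. This produces a unique type $T$ with the point lying in $\mathcal{M}^T(k')$, which proves both the disjointness and the covering of the decomposition.

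The step I expect to need the most care is the last one. I must check that the base-changed fiber products of stacks defining $\mathcal{M}^T$ have as $k'$-points exactly the pointwise conditions of Remarks \ref{rmk:ademf} and \ref{rmk:ademfly}. The concern is that being a $k'$-point of a fiber over a residual gerbe might allow twisted forms over non-closed $k'$. For $k'$ a field, though, isomorphism classes of finite-length modules and Jordan types have no nontrivial forms, so I will verify directly that every twisted form is already split.
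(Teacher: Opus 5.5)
Your proposal is correct and follows the paper's route. The paper gets local closedness from Diagram \eqref{diag:f'}, smoothness from Theorem \ref{thm:smoothness}, and the decomposition from the adelic description in Remark \ref{rmk:stra}; you make explicit what the paper leaves implicit (the base-change argument, the passage from smoothness of $\Phi^T$ to smoothness of its fiber, and the case analysis of cokernels of length at most $2$ with the two-Jordan-block count for a nilpotent reduction of rank $n-2$), and you carry the argument out over an arbitrary $k'$ rather than only $k$.
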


\begin{proof}
    By Diagram (\ref{diag:f'}), $(\Phi^T)^{-1}((\chi, \tau))$ is a locally closed subvariety of $\Phi^{-1}((\chi, \tau))$, and is smooth by Theorem \ref{thm:smoothness}. 
   The stratification follows from Remark \ref{rmk:stra}.
\end{proof}

\begin{remark}
The Hitchin fibration
$\Phi: \mathcal{M}^{ell}\longrightarrow  \mathcal{A}^{ell}$ is proper by Faltings (cf. \cite[Theorem 6.1.1]{Ch14}).
Thus $\Phi^{-1}((\chi, \tau))$ is proper over $k$, whereas $(\Phi^T)^{-1}((\chi, \tau))$ is not necessarily proper.
\end{remark}

All arguments used in Theorem \ref{thm:smoothness} when $n\geq 3$ with the restriction that $\mathcal{S}^{\mathcal{F}}_1=\emptyset$ hold when $n=2$. Thus we state a global geometrization with $n=2$ below, without proof.

\begin{theorem}\label{thm:smoon=2}
Suppose that $n=2$. 
Let $T$ be a type of $\chi$ in Remark \ref{rmk:stran=2} and $\mathcal{I}$ be the set of $T$'s.
\begin{enumerate}
    \item 
 $\mathcal{M}^{T}$ is nonsingular at every point of $(\Phi^T)^{-1}((\chi, \tau))(\bar{k})$ and the restricted Hitchin fibration $\Phi^T:\mathcal{M}^{T}\rightarrow \mathcal{A}^{T}$ is smooth at every point in $(\Phi^T)^{-1}((\chi, \tau))(\bar{k})$.

\item{\textbf{Algebraic stratification of $\Phi^{-1}((\chi, \tau))$}} 
$(\Phi^T)^{-1}((\chi, \tau))$ is a locally closed and smooth subvariety of $\Phi^{-1}((\chi, \tau))$ and $\Phi^{-1}((\chi, \tau))(k')$ admits the following stratification:
\[
\Phi^{-1}((\chi, \tau))(k')=\bigsqcup_{T\in \mathcal{I}}(\Phi^T)^{-1}((\chi, \tau))(k') ~~~~  \textit{  for any extension $k'/k$ in $\bar{k}$}.
\]
\end{enumerate}
\end{theorem}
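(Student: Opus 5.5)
The plan is to mirror the proof of Theorem \ref{thm:smoothness} and Corollary \ref{cor:smoothstr}, with the case $n=2$ treated as the case $\mathcal{S}^{\mathcal{F}}_1=\emptyset$. For $n=2$ a type is just a coherent sheaf $\mathcal{F}$, and Jordan-type conditions at points of $\mathcal{S}$ never enter.

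\textbf{Step 1: local lift.} For $x\in C$, take $g_{x,m}^{-1}\theta_m g_{x,m}\in\pi_x^{-d_x}\gl_2(\Mfo_x[\epsilon]/(\epsilon^m))$. By Remark \ref{rmk:ademf}, its cokernel is $\mathcal{F}_x\otimes_k k[\epsilon]/(\epsilon^m)$, and $\mathcal{F}_x\cong\Mfo_x/(\pi_x^{a_x})\oplus\Mfo_x/(\pi_x^{b_x})$ for $x\in\mathcal{S}$. Set $L=\Mfo_x^2$ and let $M\subset L$ have type $(a_x,b_x)$, so the element is a point of $\pi_x^{-d_x}\cL(L,M)$. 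Corollary \ref{cor:genlocalthm1} is stated only for type $(0,\dots,0,d)$, so I would invoke the smoothness of $\varphi_M$ for the general Bass case $n=2$ from \cite{CHL}. For types $(0,d)$, and for $x\notin\mathcal{S}$ where $M=L$, Corollary \ref{cor:genlocalthm1} applies directly. Formal smoothness then gives a lift $X_{x,m+1}$ satisfying (a)--(c) of Lemma \ref{lem:localsmoothness}. Here the completion $\mathbb{A}^T_x$ of $\mathbb{A}^{\mathcal{F}}$ matches the target $\mathbb{A}^{-d_x}_M$ of the local morphism, namely $\pi_x^{-d_x+a_x}\Mfo_x\oplus\pi_x^{-2d_x+a_x+b_x}\Mfo_x$.

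\textbf{Step 2: global lift.} I would repeat the proof of Proposition \ref{prop:kmpointslift} verbatim. First conjugate $\theta_m$ to the companion matrix of $\chi_m$ and let $\theta_{m+1}$ be the companion matrix of $\chi_{m+1}$. Then write each $X_{x,m+1}=h_{x,m+1}^{-1}\theta_{m+1}h_{x,m+1}$ and lift the centralizing element $c_{x,m}$ using Lemma \ref{lemma:centralizer}, which is rank-independent. At almost all $x$ choose $g_{x,m+1}=1$, so that $g_{m+1}$ is adelic.

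\textbf{Step 3: conclusion.} Part (1) follows as in Theorem \ref{thm:smoothness}. Nonsingularity of $\mathbb{M}^T$, and hence of $\mathcal{M}^T$ via the smooth map $\mathcal{M}^T\to\mathbb{M}^T$, follows from \cite[Corollary 1.10]{Win77}. Smoothness of $\Phi^T$ comes from surjectivity on tangent spaces, which is the case $m=1$ of the lift, via \cite[Proposition 10.4]{Har77}. For part (2), local closedness follows from Remark \ref{rmk:locallyclosed} and Diagram \eqref{diag:f'}. The disjoint stratification over $k'$ follows from Remark \ref{rmk:stran=2} applied after base change to $k'$: every point has a well-defined cokernel sheaf, which is associated to $\chi$, and the set $\mathcal{I}$ is finite.

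The main obstacle is Step 1 for general types $(a_x,b_x)$ with $a_x>0$. There I must check that the local smoothening of \cite{CHL} for $n=2$ is exactly the morphism $\pi_x^{-d_x}\cL(L,M)\to\mathbb{A}^T_x$ and that it is smooth along the fiber. If it is not, I would prove smoothness directly. For $n=2$, a matrix with cokernel of type $(a,b)$ can be written, after scaling by $\pi^{a}$, as $\pi^a m'$ with $m'$ of type $(0,b-a)$. The scaling trick of Corollary \ref{cor:genlocalthm1} then reduces the claim to Theorem \ref{thm:localtype1line}.
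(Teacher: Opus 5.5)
Your proposal follows the paper's route. The paper gives no separate proof of Theorem \ref{thm:smoon=2}; it simply asserts that the proofs of Theorem \ref{thm:smoothness} and Corollary \ref{cor:smoothstr} with $\mathcal{S}^{\mathcal{F}}_1=\emptyset$ carry over, which is exactly your Steps 2--3. Your rescaling fallback in Step 1 is the right way to make the local step precise, and no appeal to \cite{CHL} is needed: writing $\pi_x^{d_x}g_x^{-1}\theta g_x=\pi_x^{a_x}m'$ and applying Corollary \ref{cor:genlocalthm1} with $t=-d_x+a_x$ and $d=b_x-a_x$ gives target $\pi_x^{-d_x+a_x}\co_x\oplus\pi_x^{-2d_x+a_x+b_x}\co_x=\mathbb{A}^T_x$, a detail the paper leaves implicit.
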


\section{Comparison between two stratifications: Bass case}\label{Section:geomstr}
In this section, we introduce another description of the Hitchin fiber $\Phi^{-1}(\chi)$ and $\Phi^{-1}((\chi, \tau))$, using the \textit{spectral curve} $Y_\chi$, following \cite{Ch14}.
In the geometric sense, each point in $\Phi^{-1}(\chi)$ corresponds to a rank-1 torsion-free sheaf over $Y_\chi$.
The moduli space of these rank-1 torsion-free sheaves, denoted by $\overline{\mathrm{Pic}^0}(Y_\chi)$, allows the geometric stratification according to \textit{partial normalizations} of $Y_\chi$.

The main goal is Theorem \ref{thm:eqoftwostra}, which shows that the geometric stratification agrees with the algebraic stratification of $\Phi^{-1}(\chi)$ as schemes. 
Throughout this section, we assume that $k=\bar{k}$.


\subsection{Spectral curve and the Hitchin fiber}
We continue using the notation used in Section \ref{section3}.
We first review the definition of a spectral curve, which is taken from \cite[Sections 4.2]{Ch14}. 
Let 
$$\pi_\Sigma: \Sigma_D=\underline{\mathrm{Spec}}(\bigoplus\limits_{i=0}^\infty \Mfo_C(-iD)X^i)\rightarrow C$$
be the total space of the line bundle $\Mfo_C(D)$, which is the relative spectrum of the symmetric algebra of its dual. 
The \textit{spectral curve} $Y_\chi$ associated 
to $\chi\in \mathbb{A}^\infty(k)$ is the closed curve in $\Sigma_D$ defined by 
$$Y_\chi=\underline{\mathrm{Spec}}((\bigoplus\limits_{i=0}^\infty \Mfo_C(-iD)X^i)/\mathcal{I}_\chi),$$
where the ideal sheaf $\mathcal{I}_\chi$ is generated by $\Mfo_C(-nD)\chi$.
By the construction, there is a canonical projection $\pi_\chi: Y_\chi\rightarrow C$, which is a finite cover of degree $n$.
Since $\chi\in \mathbb{A}^\infty(k)$, $\pi_\chi$ is \'etale over $\infty$.
If we consider $(\chi,\tau)\in \mathcal{A}^{ell}(k)$, 
then the spectral curve $Y_{(\chi,\tau)}$ is defined in the same way as $Y_\chi$, but it comes with an additional datum of an ordering on the points of $\pi_\chi^{-1}(\infty)$: $\pi_\chi^{-1}(\infty)=\{\infty_1, \dots, \infty_n\}$.
\begin{remark}\label{remark41}
    By the definition of the relative spectrum, if $U$ is an affine open subset of $C$, then there exists an isomorphism  which is compatible with a restriction described below:  
    $$\pi_\chi^{-1}(U)\cong \mathrm{Spec}(H^0(U,\bigoplus\limits_{i=0}^\infty \Mfo_C(-iD)X^i/\mathcal{I}_\chi)).$$ 
\end{remark}
\begin{proposition}\label{prop:spectralcurvegeometry}
    \begin{enumerate}
        \item $Y_\chi$ is an integral projective curve.
        \item For any $x\in C$, the completed local ring of $Y_\chi$ at a point in $\pi_\chi^{-1}(x)$ is isomorphic to the localization of $\Mfo_x[X]/(\chi_x(X))$ at a maximal ideal.
Here $\chi_x(X)$ is given in Setting \ref{settings}. 
        \item A singularity of $Y_\chi$ lies over a point contained in $\mathcal{S}$. 
    \end{enumerate}
\end{proposition}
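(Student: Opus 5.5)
I will prove the three parts in the order (2), (1), (3), since the local description in (2) drives the other two.

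For (2), I use Remark \ref{remark41}. Take an affine open $U\ni x$ on which $\Mfo_C(D)$ is trivialized by $\pi_x^{-d_x}$; we may shrink $U$ so that this is a generator. Then $\Mfo_C(-iD)X^i$ is generated by $(\pi_x^{d_x}X)^i$. Setting $Y:=\pi_x^{d_x}X$, the algebra $\bigoplus_i \Mfo_C(-iD)X^i$ restricted to $U$ becomes the polynomial ring $\Mfo_C(U)[Y]$. The generator $\pi_x^{nd_x}\chi(X)$ of $\mathcal{I}_\chi$ becomes $\chi_x(Y)$, by the formula \eqref{eq:chi_x}. Hence
$$\pi_\chi^{-1}(U)\cong \mathrm{Spec}\,\Mfo_C(U)[Y]/(\chi_x(Y)).$$
This ring is finite free over $\Mfo_C(U)$. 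Completing along $x$ therefore gives $\Mfo_x[Y]/(\chi_x(Y))$, which is a finite $\Mfo_x$-algebra and hence a product of complete local rings, one for each maximal ideal, i.e.\ one for each point of $\pi_\chi^{-1}(x)$. The completed local ring of $Y_\chi$ at a point over $x$ is the corresponding factor. This factor is the localization (equivalently, the completion) at that maximal ideal.

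For (1), projectivity holds because $\pi_\chi$ is finite and $C$ is projective. The scheme $Y_\chi$ is locally cut out by one monic equation in a surface, so it is a Cartier divisor in the smooth surface $\Sigma_D$. It is therefore Cohen--Macaulay, hence has no embedded points, and it is finite flat over $C$ of degree $n$. Integrality then reduces to the generic fibre. The generic fibre is $\mathrm{Spec}\, F[X]/(\chi)$, which is a field because $\chi$ is irreducible over $F$ ($\chi\in\mathbb{A}^{ell}$). Flatness over $C$ means every associated point maps to the generic point of $C$. So $Y_\chi$ has a single associated point with reduced local ring, and is therefore integral.

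For (3), I check smoothness over points $x\notin\mathcal{S}$. By Setting \ref{settings}(2)(a), $\overline{\chi_x}$ is separable over $k$ for such $x$. Then $\Mfo_x[Y]/(\chi_x)$ is finite étale over $\Mfo_x$, so by Hensel it is a product of unramified extensions of $\Mfo_x$, all of which are regular. By (2) every completed local ring of $Y_\chi$ over $x$ is then regular, so $Y_\chi$ is smooth there. The only step needing some care is the identification in (2): I must check that the chosen trivialization gives exactly $\chi_x$, including the sign and scaling conventions. I expect no real obstacle.
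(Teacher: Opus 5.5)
Your proof is correct, and it differs from the paper's in parts (1) and (3).

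\textbf{Part (2).} This follows the paper's route (Remark \ref{remark41} plus completion). You make explicit the local substitution $Y=\pi_x^{d_x}X$, which turns the generator $\pi_x^{nd_x}\chi(X)$ of $\mathcal{I}_\chi$ into $\chi_x(Y)$. The paper leaves this implicit. Your scaling check agrees with $\chi_x(X)=X^n-\pi_x^{d_x}a_1X^{n-1}+\dots$.

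\textbf{Part (1).} The paper simply cites \cite[Proposition 4.2.1]{Ch14} for irreducibility and reducedness; you prove it instead. Your argument is that $Y_\chi$ is finite locally free over $C$, so every associated point lies over the generic point of $C$. The generic fibre is $\mathrm{Spec}\,F[X]/(\chi)$, which is a field because $\chi$ is irreducible over $F$. This is self-contained and shows that ellipticity is exactly the hypothesis being used. The Cartier divisor / Cohen--Macaulay step is superfluous: flatness alone forces all associated points into that one-point generic fibre, which already rules out embedded points.

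\textbf{Part (3).} The paper uses $k=\bar{k}$ to split $\chi_x$ into $n$ coprime linear factors and obtains $\Mfo_x[X]/(\chi_x(X))\cong\Mfo_x^n$ by the Chinese remainder theorem. Your argument needs only separability of $\overline{\chi_x}$ to get a product of unramified DVRs. It therefore works over any perfect $k$, in particular the finite fields of Section \ref{sec:dec}. The small extra cost is invoking that regular implies smooth over a perfect field.
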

\begin{proof}
    By \cite[Proposition 4.2.1]{Ch14}, $Y_\chi$ is irreducible and reduced, thus integral.
    Projectivity of $Y_\chi$ is immediate from the fact that $\pi_\chi$ is a finite morphism.
    For (2), Remark \ref{remark41} yields that $\Mfo_x[X]/(\chi_x(X))$ is isomorphic to the stalk completion of $\bigoplus\limits_{i=0}^\infty \Mfo_C(-iD)X^i/\mathcal{I}_\chi$ at $x$.

For (3), suppose that $x\notin \mathcal{S}$.
Then $\overline{\chi_x(X)}$ is separable over $k$.
    Since $k=\bar{k}$,  Hensel's lemma implies that $\chi_x(X)$ splits into the product of $n$ coprime monic polynomials of degree $1$. 
    By Chinese Remainder Theorem, $\Mfo_x[X]/(\chi_x(X))\cong \Mfo_x^n$.
    Thus any point of $Y_\chi$ lying over $x$ is smooth.   
\end{proof}
Now we state the theorem which relates the spectral curve and the Hitchin fiber.
\begin{theorem}\label{thm:spectralcorr}
    \begin{enumerate}
        \item \cite[Section 4.4.1]{Ngo10}
        The Hitchin fiber $\Phi^{-1}(\chi)$ in $\mathbb{M}^\infty$ is isomorphic to the stack $\overline{\mathrm{Pic}^0}(Y_\chi)$ of rank-1 torsion-free coherent $\Mfo_{Y_\chi}-$modules $\mathscr{F}$ of degree $0$, up to isomorphism.
        \item \cite[Theorem 4.3.1]{Ch14} 
        The Hitchin fiber $\Phi^{-1}((\chi, \tau))$ in $\mathcal{M}^{ell}$ is isomorphic to the stack of rank-1 torsion-free coherent $\Mfo_{Y_\chi}-$modules $\mathscr{F}$ of degree $0$ up to isomorphism, equipped with a trivialization of their stalk at $\infty_1$.
    \end{enumerate}
\end{theorem}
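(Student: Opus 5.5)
The statement just given is Theorem \ref{thm:spectralcorr}, and both parts are cited results (\cite[Section 4.4.1]{Ngo10} and \cite[Theorem 4.3.1]{Ch14}). My plan is therefore to recall the standard spectral correspondence and check that it applies in the present setting rather than to reprove it from scratch.

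For (1), I will work with $R$-points for a $k$-algebra $R$. Take a Hitchin bundle $(\mathcal{E},\theta)$ over $C_R$ whose characteristic polynomial is $\chi$. Cayley--Hamilton gives $\chi(\theta)=0$. Hence the action of $\theta$ makes $\mathcal{E}$ a module over $\bigoplus_i \Mfo_C(-iD)X^i/\mathcal{I}_\chi$, where $X$ acts through $\theta:\mathcal{E}\to\mathcal{E}(D)$. In other words, $\mathcal{E}=\pi_{\chi,*}\mathscr{F}$ for a coherent sheaf $\mathscr{F}$ on $Y_\chi\times_k R$, by Remark \ref{remark41} and the definition of the relative spectrum.

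The sheaf $\mathscr{F}$ has the right properties for the following reasons.
\begin{itemize}
\item It is flat over $R$, because $\pi_{\chi,*}\mathscr{F}=\mathcal{E}$ is locally free and $\pi_\chi$ is finite.
\item On each geometric fiber it is torsion free, because $\mathcal{E}$ is torsion free over $C$ and $Y_\chi$ is integral by Proposition \ref{prop:spectralcurvegeometry}(1).
\item It has rank $1$, because its generic rank over $C$ is $n=\deg\pi_\chi$.
\end{itemize}

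Conversely, given such an $\mathscr{F}$, its pushforward $\pi_{\chi,*}\mathscr{F}$ is a rank-$n$ vector bundle. Indeed, a torsion-free module over a regular curve is locally free, and in families this follows from flatness together with fiberwise torsion-freeness. Multiplication by $X$ gives $\theta$, and the characteristic polynomial of $\theta$ is $\chi$ because it can be checked generically, where $F[X]/(\chi)$ is a field of degree $n$.

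The two constructions are mutually inverse and compatible with isomorphisms and base change, so we get an equivalence of stacks. The remaining point is the degree: $\deg\pi_{\chi,*}\mathscr{F}=\deg\mathscr{F}+\deg\pi_{\chi,*}\Mfo_{Y_\chi}$ by Riemann--Roch. So "degree $0$" must be normalized by this shift, and I take this normalization as the convention in the notation $\overline{\mathrm{Pic}^0}$.

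For (2), I add the level structure at $\infty$. Since $\pi_\chi$ is étale over $\infty$, the fiber of $\mathcal{E}$ at $\infty$ decomposes as $\bigoplus_i \mathscr{F}_{\infty_i}$, and these summands are exactly the eigenlines of $\theta_\infty$ with eigenvalues $\tau_i$. Under this decomposition, choosing an eigenvector $e_1$ for $\tau_1$ (the $\mathbb{G}_m$-torsor defining $\mathcal{M}^{ell}$) is the same as trivializing $\mathscr{F}$ at $\infty_1$. This trivialization rigidifies the automorphisms: $\mathrm{Aut}(\mathscr{F})=\mathbb{G}_m$ because $\mathscr{F}$ is rank-$1$ torsion free on an integral projective curve. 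That is consistent with \cite[Theorem 6.1.1]{Ch14}, which says $\mathcal{M}^{ell}$ is a scheme.

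I expect the main obstacle to be the family version of torsion-freeness and local freeness over a non-reduced base $R$, i.e.\ proving that $\pi_{\chi,*}$ identifies flat families of fiberwise torsion-free rank-$1$ sheaves with vector bundles. I would handle it with the fiberwise criterion for flatness: a finitely presented $R$-flat sheaf whose fibers are locally free is locally free. For this step I would rely on \cite[Section 4]{Ch14} rather than redo it.
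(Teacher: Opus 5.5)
Your proposal is correct and takes the same route as the paper. The paper gives no argument beyond citing \cite[Section 4.4.1]{Ngo10} and \cite[Theorem 4.3.1]{Ch14}, and your Cayley--Hamilton/pushforward construction, with the eigenline description of the trivialization at $\infty_1$, is exactly the spectral correspondence proved there.

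Your degree remark is also right. With $\deg\mathscr{F}=\mathrm{Ec}(Y_\chi,\mathscr{F})-\mathrm{Ec}(Y_\chi,\Mfo_{Y_\chi})$, the condition $\deg\mathcal{E}=0$ corresponds to $\deg\mathscr{F}=\frac{n(n-1)}{2}\deg D$. The identification with degree $0$ is then given by twisting with $\pi_\chi^*\Mfo_C(-\frac{n-1}{2}D)$, which makes sense because $\frac{n-1}{2}D$ is an integral divisor: this is automatic for $n$ odd and follows from the evenness of $D$ in general.
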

Here, the degree of $\mathscr{F}$ is defined by using the Euler characteristic function, denoted by $\mathrm{Ec}$:
    $$\deg\mathscr{F}:=\mathrm{Ec}(Y_\chi,\mathscr{F})-\mathrm{Ec}(Y_\chi,\Mfo_{Y_\chi}), \textit{  as in the case of line bundles.}$$ 
The stack $\overline{\mathrm{Pic^0}}(Y_\chi)$ is called the \textit{compactified Jacobian} of $Y_\chi$.
In our case, it is a scheme.
\begin{corollary}\label{cor:repreofminfty}
    The Hitchin fiber $\Phi^{-1}(\chi)\left(\cong \overline{\mathrm{Pic}^0}(Y_\chi)\right)$ in $\mathbb{M}^\infty$ is representable by a scheme.
\end{corollary}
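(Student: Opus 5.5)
We need to show that the Hitchin fiber $\Phi^{-1}(\chi)$ in $\mathbb{M}^\infty$ is representable by a scheme. The plan is to descend representability from the fiber $\Phi^{-1}((\chi,\tau))$ in $\mathcal{M}^{ell}$, which is already known to be a scheme, using Theorem \ref{thm:spectralcorr} to identify both fibers in terms of sheaves on $Y_\chi$.

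First, $\Phi^{-1}((\chi,\tau))$ is a scheme. Indeed, $\mathcal{M}^{ell}$ is a smooth scheme over $k$ by \cite[Theorem 6.1.1]{Ch14}, and $\Phi^{-1}((\chi,\tau))$ is its fiber over the $k$-point $(\chi,\tau)$ of $\mathcal{A}^{ell}$. It is moreover proper.

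Next, compare the two fibers. Since $\mathcal{A}^{ell}\to\mathbb{A}^\infty$ is finite \'etale and $k=\bar{k}$, the point $\chi$ has finitely many preimages. Over $\chi$, Diagram \eqref{diag:1st} gives
\[
\Phi^{-1}((\chi,\tau))\longrightarrow \Phi^{-1}(\chi),
\]
which is the restriction of the $\mathbb{G}_m$-torsor $\mathcal{M}^{ell}\to\mathbb{M}^\infty\times_{\mathbb{A}^\infty}\mathcal{A}^{ell}$ to the fiber over $(\chi,\tau)$. By Theorem \ref{thm:spectralcorr}, this map forgets the trivialization of the stalk at $\infty_1$. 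Hence $\Phi^{-1}(\chi)\cong[\Phi^{-1}((\chi,\tau))/\mathbb{G}_m]$, where $\mathbb{G}_m$ acts by rescaling the trivialization.

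The key point is that this $\mathbb{G}_m$-action is free, so the quotient stack is an algebraic space. Freeness comes from the automorphisms of the sheaves. Since $Y_\chi$ is integral and projective (Proposition \ref{prop:spectralcurvegeometry}), any rank-1 torsion-free $\mathscr{F}$ satisfies $\mathrm{End}(\mathscr{F})\subset \mathrm{End}(\mathscr{F}_\eta)=k(Y_\chi)$. This endomorphism ring is a finite-dimensional domain over $k=\bar k$, hence equal to $k$. Therefore $\mathrm{Aut}(\mathscr{F})=\mathbb{G}_m$, acting by scalars. This makes $\overline{\mathrm{Pic}^0}(Y_\chi)$ a $\mathbb{G}_m$-gerbe over an algebraic space with trivial inertia after rigidification. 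The rigidification by the trivialization at the smooth point $\infty_1$ is exactly $\Phi^{-1}((\chi,\tau))$, so the action on it is free and $\Phi^{-1}(\chi)$ (taken as the coarse moduli/rigidified object, as in the paper's convention "up to isomorphism") is an algebraic space.

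To upgrade to a scheme, I would use the standard representability of the compactified Jacobian of an integral projective curve: it is a projective scheme, by Altman--Kleiman or by \cite[Section 4]{Ch14}. Alternatively, one can argue directly. The space is a proper algebraic space of finite type over $k$. It is a quotient of the quasi-projective scheme $\Phi^{-1}((\chi,\tau))$ by a free $\mathbb{G}_m$-action which is moreover trivial, because rescaling the trivialization by $\lambda$ is absorbed by the scalar automorphism $\lambda$ of $\mathscr{F}$. Hence the quotient map is an isomorphism on points, and in fact $\Phi^{-1}((\chi,\tau))\to\Phi^{-1}(\chi)$ identifies $\Phi^{-1}(\chi)$ with the scheme $\Phi^{-1}((\chi,\tau))$ once the scalar gerbe is rigidified. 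One could also invoke \cite[Corollary II.6.16]{Knu71}, as in Remark \ref{rmk:locallyclosed}.

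The main obstacle is making precise the convention that a stack "up to isomorphism" means the rigidification by the central $\mathbb{G}_m$. This requires carefully checking that the torsor of Diagram \eqref{diag:1st} becomes trivial after this rigidification. I would handle it by using the ordered fiber $\pi_\chi^{-1}(\infty)$: the choice of eigenvector $e_1$ corresponds to trivializing $\mathscr{F}_{\infty_1}$, and this trivialization kills exactly the scalar automorphisms.
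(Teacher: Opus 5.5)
\textbf{A false step.} The step you call the key point, that the rescaling $\mathbb{G}_m$-action on $\Phi^{-1}((\chi,\tau))$ is free, is false. The action is \emph{trivial}, and the reason is the scalar automorphisms you exhibit yourself. Indeed, $\lambda\cdot\mathrm{id}$ is an automorphism of $(\mathcal{E},\theta)$ sending the eigenvector $e_1$ to $\lambda e_1$. So $(\mathcal{E},\theta,e_1)\cong(\mathcal{E},\theta,\lambda e_1)$, and every point has stabilizer all of $\mathbb{G}_m$. Your later phrase ``a free $\mathbb{G}_m$-action which is moreover trivial'' is self-contradictory.

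Consequently $[\Phi^{-1}((\chi,\tau))/\mathbb{G}_m]\cong\Phi^{-1}((\chi,\tau))\times B\mathbb{G}_m$ is a $\mathbb{G}_m$-gerbe, not an algebraic space. The inference ``free, hence the quotient stack is an algebraic space'' must be deleted. The literal substack $\Phi^{-1}(\chi)\subset\mathbb{M}^\infty$, all of whose objects carry scalar automorphisms, is not an algebraic space at all. The statement holds for its $\mathbb{G}_m$-rigidification, equivalently the sheafified functor of isomorphism classes. That is the sense of ``up to isomorphism'' in Theorem \ref{thm:spectralcorr}, and the sense in which a compactified Jacobian is a scheme. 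Your last paragraph identifies this correctly.

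\textbf{What survives.} With that correction, your proposal contains two valid arguments.

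The fallback you mention, representability of the compactified Jacobian of an integral projective curve by Altman--Kleiman, is exactly the paper's proof. The paper applies \cite[Theorem 8.1]{AK80} to rank-1 torsion-free sheaves of fixed Euler characteristic on $Y_\chi$, which is integral and projective by Proposition \ref{prop:spectralcurvegeometry}. It then notes that for such sheaves, fixing the Euler characteristic is the same as fixing the degree; you should add that remark.

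Your ``direct'' argument is genuinely different and, with triviality in place of freeness, correct. We have $\Phi^{-1}(\chi)\cong\Phi^{-1}((\chi,\tau))\times B\mathbb{G}_m$. So the rigidification is the fiber of $\Phi^{ell}$ over the $k$-point $(\chi,\tau)$, which is a closed subscheme of the scheme $\mathcal{M}^{ell}$ and is proper by \cite[Theorem 6.1.1]{Ch14}. No Knutson-type passage from algebraic space to scheme is needed. Your computation $\mathrm{End}(\mathscr{F})=k$ is the underlying reason the inertia is exactly $\mathbb{G}_m$, though this is already built into Chen's theorem.

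The two routes buy different things:
\begin{itemize}
\item Your route stays inside the Hitchin framework and makes explicit that the rigidified $\Phi^{-1}(\chi)$ is $\Phi^{-1}((\chi,\tau))$. This is the identification the paper uses implicitly when passing from Corollary \ref{cor:smoothstr} to Corollary \ref{cor:algstraoff-1a}.
\item The paper's route is a one-line citation that needs neither the point $\infty$ nor the ordering $\tau$, and it identifies the object with the standard compactified Picard scheme.
\end{itemize}
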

\begin{proof}
    Since $Y_\chi$ is an integral projective curve by Proposition \ref{prop:spectralcurvegeometry}, \cite[Theorem 8.1]{AK80} yields the representability of the moduli space of rank-1 torsion-free sheaves on $Y_\chi$ with a fixed Euler characteristic.
    Since Euler characteristic and degree of rank-1 torsion-free sheaves are in 1-1 correspondence, this concludes the proof.
\end{proof}

Then Corollary \ref{cor:smoothstr} and Theorem \ref{thm:smoon=2} are reformulated as follows:

\begin{corollary}[\textbf{Algebraic stratification of $\Phi^{-1}(\chi)$}]\label{cor:algstraoff-1a}
Suppose that $n=2$  or $\ord_x(a_n)+n\cdot d_x\leq 2$ for all $x\in \mathcal{S}$ when $n\geq 3$.
Then $(\Phi^T)^{-1}(\chi)$ is a locally closed and smooth subvariety of $\Phi^{-1}(\chi)$ for a type $T$ of $\chi$ (cf.  Definition \ref{def:typeofchia} when $n\geq 3$ and Remark \ref{rmk:typeforn=2} when $n=2$). 
In addition, $\Phi^{-1}(\chi)(k)$ admits the following stratification, where the index set $\mathcal{I}$ is the set of types $T$ of $\chi$:
\[
\Phi^{-1}(\chi)(k)=\bigsqcup_{T\in \mathcal{I}}(\Phi^T)^{-1}(\chi)(k).
\]
\end{corollary}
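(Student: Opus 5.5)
The plan is to deduce the statement from Corollary \ref{cor:smoothstr} (for $n\geq 3$) and Theorem \ref{thm:smoon=2} (for $n=2$) by descending along the morphism $\mathcal{M}^{ell}\to\mathbb{M}^{\infty}$ of Diagram \eqref{diag:1st}. Since $k=\bar{k}$, we fix the lift $(\chi,\tau)\in\mathcal{A}^{ell}(k)$ of $\chi$. The fibre of $\mathbb{M}^\infty\times_{\mathbb{A}^\infty}\mathcal{A}^{ell}\to\mathbb{M}^\infty$ over $\Phi^{-1}(\chi)$ is a disjoint union of copies of $\Phi^{-1}(\chi)$ indexed by the orderings $\tau$. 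Hence the natural map
\[
\rho:\Phi^{-1}((\chi,\tau))\longrightarrow \Phi^{-1}(\chi)
\]
is the restriction of the $\mathbb{G}_m$-torsor of choosing an eigenvector $e_1$ at $\infty$. In the language of Theorem \ref{thm:spectralcorr}, $\rho$ forgets the trivialization of the stalk at $\infty_1$. Both sides are schemes, by Corollary \ref{cor:repreofminfty} and by $\mathcal{M}^{ell}$ being a scheme. So $\rho$ is a smooth surjection, and it is a $\mathbb{G}_m$-torsor up to the automorphism group $\mathbb{G}_m$ of torsion-free sheaves on the integral curve $Y_\chi$ acting on the trivialization.

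The first step is compatibility of the strata with $\rho$. The type conditions that cut out $\mathbb{M}^T$ inside $\mathbb{M}^{ell}$ are the cokernel condition (Remark \ref{rmk:ademf}) and the Jordan block conditions at $x\in\mathcal{S}^{\mathcal{F}}_1$ (Remark \ref{rmk:ademfly}). They only involve the points $x\in\mathcal{S}$, which are disjoint from $\infty$. Thus $\mathcal{M}^T=\mathcal{M}^{ell}\times_{\mathbb{M}^{ell}}\mathbb{M}^T$; this is the left Cartesian square of Diagram \eqref{diag:f'}. Base-changing to the fibres gives
\[
(\Phi^T)^{-1}((\chi,\tau))=\rho^{-1}\big((\Phi^T)^{-1}(\chi)\big).
\]
Local closedness of $(\Phi^T)^{-1}(\chi)$ in $\Phi^{-1}(\chi)$ then follows directly: it is the base change of the locally closed immersion $\mathbb{M}^T\hookrightarrow\mathbb{M}^{ell}$ (Remark \ref{rmk:locallyclosed}, Definition \ref{def:constmfl}) along $\Phi^{-1}(\chi)\to\mathbb{M}^{ell}$.

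The second step is smoothness. By Corollary \ref{cor:smoothstr} or Theorem \ref{thm:smoon=2}, $\rho^{-1}((\Phi^T)^{-1}(\chi))$ is smooth over $k$. The restricted map $\rho^{-1}((\Phi^T)^{-1}(\chi))\to(\Phi^T)^{-1}(\chi)$ is smooth and surjective, being a base change of $\rho$. Smoothness descends along smooth surjections (fpqc/smooth descent of smoothness over $k$), so $(\Phi^T)^{-1}(\chi)$ is smooth. As a backup, one could argue directly: Proposition \ref{prop:kmpointslift} already lifts points of $\mathbb{M}^T$ with fixed $\chi$ over $k[\epsilon]/(\epsilon^m)$. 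Combined with \cite[Corollary 1.10]{Win77} as in the proof of Theorem \ref{thm:smoothness}, this gives smoothness of the fibre of $\mathbb{M}^T$ without passing through $\mathcal{M}$.

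The third step is the stratification on $k$-points. Every point of $\Phi^{-1}(\chi)(k)$ lifts under $\rho$, since $\rho$ is a surjective torsor and $k=\bar{k}$. So the decomposition follows by pushing forward the disjoint decomposition of Corollary \ref{cor:smoothstr} (or Theorem \ref{thm:smoon=2}(2)) along $\rho$. Disjointness is preserved because the strata upstairs are full preimages of the strata downstairs. Alternatively, one can invoke Remarks \ref{rmk:stra} and \ref{rmk:stran=2} directly, which already assert the decomposition of $\Phi^{-1}(\chi)(k)$. The hypothesis $\ord_x(a_n)+n d_x\leq 2$ is exactly what makes each $\mathcal{F}_x$ either $k\oplus k$ or cyclic, so that $\mathcal{I}$ exhausts all cokernels. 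The main obstacle I expect is the first step: justifying rigorously that the type conditions are insensitive to the data at $\infty$, so that the square is genuinely Cartesian and the descent of smoothness applies.
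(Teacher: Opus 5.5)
Your proposal is correct and follows essentially the paper's route: the paper gives no separate argument and presents the corollary as an immediate reformulation of Corollary \ref{cor:smoothstr} and Theorem \ref{thm:smoon=2}, the decomposition of $\Phi^{-1}(\chi)(k)$ is already stated in Remarks \ref{rmk:stra} and \ref{rmk:stran=2}, and your backup via Proposition \ref{prop:kmpointslift} on $\mathbb{M}^T$ is exactly the reduction made in the proof of Theorem \ref{thm:smoothness}. Two minor points: (i) if both fibres are regarded as schemes, as in Corollary \ref{cor:repreofminfty}, the $\mathbb{G}_m$ of trivializations at $\infty_1$ is absorbed by the scalar automorphisms, so $\rho$ is simply an isomorphism (it is a $\mathbb{G}_m$-torsor only onto the stack with $\mathbb{G}_m$-stabilizers), and descent of smoothness holds in either picture; (ii) the Cartesian square you flag as the main obstacle holds by definition, since $\mathcal{Q}$ and $\mathcal{R}^x$ are defined by factoring through $\mathbb{M}^{ell}$.
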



\subsection{Geometric stratification of $\overline{\mathrm{Pic^0}}(Y_\chi)$}\label{subsec:geomstr}
Theorem \ref{thm:spectralcorr} yields a tool to investigate the geometric structure of the Hitchin fiber via the compactified Jacobian.
For instance, if $Y_\chi$ is smooth, then $\overline{\mathrm{Pic^0}}(Y_\chi)$ is the Jacobian variety of $Y_\chi$, which is an abelian variety.
For a general $\chi$, the smooth locus of $\overline{\mathrm{Pic^0}}(Y_\chi)$ is well-known as explained below. 
\begin{theorem}\label{thm:smoothlocus}
\cite[Theorem 2.3]{MRV17}
The smooth locus of $\overline{\mathrm{Pic^0}}(Y_\chi)$ is equal to the generalized Jacobian $\mathrm{Pic}^0(Y_\chi)$, which is a commutative group variety parametrizing line bundles $\mathcal{L}$ of $Y_\chi$ of degree $0$. 
\end{theorem}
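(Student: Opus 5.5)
We need to prove: the smooth locus of $\overline{\mathrm{Pic}^0}(Y_\chi)$ equals $\mathrm{Pic}^0(Y_\chi)$. The plan has two inclusions.

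For the inclusion $\mathrm{Pic}^0(Y_\chi)\subset$ smooth locus, I would use that $\mathrm{Pic}^0(Y_\chi)$ is an open subscheme of $\overline{\mathrm{Pic}^0}(Y_\chi)$, because being locally free is an open condition in flat families of coherent sheaves. It is smooth because it is a group scheme over an algebraically closed field that is reduced. Alternatively, deformations of a line bundle $\mathcal{L}$ are unobstructed: the tangent space is $H^1(Y_\chi,\Mfo_{Y_\chi})$ and the obstructions lie in $H^2(Y_\chi,\Mfo_{Y_\chi})=0$ on a curve.

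For the reverse inclusion, take a rank-1 torsion-free $\mathscr{F}$ that is not locally free. I would compare the dimension of the Zariski tangent space $\mathrm{Ext}^1(\mathscr{F},\mathscr{F})$ with $\dim \overline{\mathrm{Pic}^0}(Y_\chi)$. Since $Y_\chi$ is planar, being a closed curve in the surface $\Sigma_D$, the compactified Jacobian is irreducible of dimension equal to the arithmetic genus $p_a(Y_\chi)$. Here I would invoke Altman–Iarrobino–Kleiman / Rego for locally planar integral curves, and $\mathrm{Pic}^0$ is dense in it. The local-to-global spectral sequence gives
$$0\to H^1(\mathcal{H}om(\mathscr{F},\mathscr{F}))\to \mathrm{Ext}^1(\mathscr{F},\mathscr{F})\to H^0(\mathcal{E}xt^1(\mathscr{F},\mathscr{F}))\to H^2(\cdots)=0 .$$

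The main obstacle is the local computation: at a singular point $y$ where $\mathscr{F}_y$ is not free over the Gorenstein (planar) local ring $\Mfo_{Y_\chi,y}$, one must show that $\mathcal{E}xt^1(\mathscr{F},\mathscr{F})_y$ has length strictly larger than the drop $h^1(\Mfo)-h^1(\mathcal{E}nd\,\mathscr{F})$. Here $\mathcal{E}nd\,\mathscr{F}$ is the overorder $(I:I)$, as in Proposition \ref{prop:overorder}. I would try the formula for Gorenstein curve singularities, $\ell(\mathrm{Ext}^1(I,I))=2\,\ell((I:I)/\Mfo)+\ell(\ldots)$ in the style of Kleiman's and Rego's computations. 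Using duality $I^\vee=\mathrm{Hom}(I,\Mfo)$ and $\mathrm{Ext}^1(I,\Mfo)=0$, this gives $\dim \mathrm{Ext}^1(\mathscr{F},\mathscr{F})>p_a(Y_\chi)$ whenever $\mathscr{F}$ is not locally free. Consequently the point is singular. In practice I would simply cite this local input, which is the content of \cite[Theorem 2.3]{MRV17}, rather than reprove it.
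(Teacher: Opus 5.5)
Your proposal is correct and ends up resting on the same source as the paper, which gives no argument of its own and simply cites \cite[Theorem 2.3]{MRV17}. Your outline is the standard argument behind that citation: openness of local freeness plus $H^2(Y_\chi,\Mfo_{Y_\chi})=0$ for one inclusion, and for the other a comparison of $\dim\mathrm{Ext}^1(\mathscr{F},\mathscr{F})$ with $p_a(Y_\chi)=\dim\overline{\mathrm{Pic}^0}(Y_\chi)$, using that $Y_\chi\subset\Sigma_D$ has planar singularities so that AIK irreducibility applies. The decisive local length estimate is likewise imported rather than proved, so state it pointwise: $\ell(\mathrm{Ext}^1(\mathscr{F}_y,\mathscr{F}_y))>\ell((\mathscr{F}_y:\mathscr{F}_y)/\Mfo_{Y_\chi,y})$ at each non-free $y$ (the global drop $h^1(\Mfo_{Y_\chi})-h^1(\mathcal{E}nd\,\mathscr{F})$ is the sum of these terms since $h^0(\mathcal{E}nd\,\mathscr{F})=1$), and note that \cite[Theorem 2.3]{MRV17} is the global statement itself, not just its local input.
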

\begin{remark}\label{rmk:jac}
    \begin{enumerate}
    \item The generalized Jacobian $\mathrm{Pic}^0(Y_\chi)$ acts on $\overline{\mathrm{Pic}^0}(Y_\chi)$ via the tensor product:
    $$(\mathcal{L},\mathscr{F})\mapsto \mathcal{L}\otimes_{\Mfo_{Y_\chi}}\mathscr{F}.$$
    \item{\cite[Section 9.2, Corollary 11]{BRL}} 
   $\mathrm{Pic}^0(Y_\chi)$ satisfies the following short exact sequence:
    $$0\rightarrow A\rightarrow \mathrm{Pic^0}(Y_\chi) \rightarrow \mathrm{Pic}^0(\widetilde{Y}_\chi)\rightarrow 0,$$
    where $\widetilde{Y}_\chi$ is the normalization of $Y_\chi$ and $A$ is a smooth connected linear algebraic group. 
    
    \end{enumerate}
\end{remark}
From now on until the end of this section, we suppose that $n=2$ or $n\cdot d_{x}+\ord_{x}(a_n)\leq 2$ for any $x\in \mathcal{S}$ if $n\geq 3$. 
This assumption specifies the type of singularities of $Y_\chi$.
\begin{proposition}\label{prop:Bass}
The following statements are equivalent. 
\begin{enumerate}
    \item Either $n=2$ or $n\cdot d_x+\ord_x(a_n)\leq 2$ with any $x\in \mathcal{S}$.
    \item The completed local ring at any point of $Y_\chi$ is a \textit{Bass order} (cf. Definition \ref{def:bass}).
    \item The singularities of $Y_\chi$ are double points.
\end{enumerate}
\end{proposition}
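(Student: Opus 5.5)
The plan is to work locally and reduce the statement to the classification of Bass orders in Remark \ref{charbassrmk}. Since $k=\bar{k}$, Proposition \ref{prop:spectralcurvegeometry}.(2) identifies the completed local ring of $Y_\chi$ at a point over $x$ with a localization of $\Mfo_x[X]/(\chi_x(X))$. For $x\notin\mathcal{S}$ these rings are regular (proof of Proposition \ref{prop:spectralcurvegeometry}.(3)), hence trivially Bass orders. So all three conditions only concern the points $x\in\mathcal{S}$. At such an $x$, Hensel's lemma and Setting \ref{settings} give $\overline{\chi_x(X)}=X^n$, so there is a single point $y$ over $x$. Its completed local ring is $\Mfo_x[X]/(\chi_x(X))$, which is a domain by Definition \ref{def:mathcalS}. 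Moreover $\ord_x(\chi_x(0))=n\cdot d_x+\ord_x(a_n)=:e_x>0$.

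For (1)$\Leftrightarrow$(2) I will apply Remark \ref{charbassrmk} with $\mfo=\Mfo_x$. Because $k=\bar{k}$, we have $K=F_x$ and $g=\chi_x$. The ring is therefore Bass if and only if $n=2$, or, after some translation $X\mapsto X-a$ with $a\in\Mfo_x$, the constant term has order exactly $2$. Here the ring must be non-maximal; if it is maximal, then $y$ is smooth, which forces $e_x=1$ (Eisenstein) or $n=1$, and this is consistent.

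Two translation facts are needed. First, irreducibility plus $\overline{\chi_x}=X^n$ means the Newton polygon has a single slope $e_x/n$. Second, translating by $a$ with $\ord(a)\ge1$ cannot lower the constant-term order below $\min(e_x,n)$. For $n\ge3$ I expect to show:
\begin{itemize}
\item if $e_x\le 2$, then either $e_x=1$ (regular) or $e_x=2$ (Bass);
\item if $e_x\ge3$, the minimal order over translates is $\ge\min(e_x,n)\ge3$, so the ring is not Bass.
\end{itemize}
Irreducibility with $e_x=2$ also forces $\gcd(n,2)=1$, so $n$ is odd, matching Remark \ref{charbassrmk}.

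For (2)$\Leftrightarrow$(3) I will use the standard fact that a reduced curve singularity is Bass (Gorenstein with all overorders Gorenstein, equivalently every ideal 2-generated) if and only if its multiplicity is at most $2$, i.e.\ it is a double point. Concretely, the multiplicity of $\Mfo_x[X]/(\chi_x)$ with $\overline{\chi_x}=X^n$ equals $\min(n,e_x)$ once the translation minimizing the constant term is chosen. This follows from the tangent cone $\mathrm{in}(\chi_x)$, whose lowest-degree form has degree $\min(n,e_x)$ after translation, again by the single-slope Newton polygon. Multiplicity $\le2$ thus means $n=2$ or $e_x\le2$, which gives (3)$\Leftrightarrow$(1) directly and (2)$\Leftrightarrow$(3) via (1).

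The main obstacle is the translation-invariance bookkeeping: showing that no translation $X\mapsto X-a$ can produce constant-term order $2$ when $n\ge3$ and $e_x\ge3$. I will handle it through the Newton polygon of $\chi_x(X+a)$. Since $\ord(a)\ge1$, every coefficient $c_i$ of $X^i$ in the translate has order $\ge\min(e_x(n-i)/n,\ n-i)$. Hence the constant term has order $\ge\min(e_x,n)\ge3$.
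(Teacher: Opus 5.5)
Your proof of the equivalence is correct, but for $(2)\Leftrightarrow(3)$ it takes a different route from the paper.

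For $(1)\Leftrightarrow(2)$ the paper does what you do. It treats the Eisenstein case $n\cdot d_x+\ord_x(a_n)=1$ separately and otherwise just cites \cite[Proposition 3.6]{CHL}. Your Newton polygon bound $\ord_x(\chi_x(a))\geq\min(e_x,n)$ for $a\in\pi_x\Mfo_x$ makes explicit why the ``up to translation'' in that criterion is harmless for the particular $\chi_x$ fixed in Setting \ref{settings}. You should also say that translations by units give a unit constant term, so they never produce order $2$ either.

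For $(2)\Leftrightarrow(3)$ the paper uses the characterization in \cite[Proposition 3.1]{Gau97}: double points are the points where $\dim_k I/\mathfrak{m}I\leq 2$ for every ideal $I$. By Nakayama's lemma this matches the two-generator definition of a Bass order, so no computation is needed. You instead use the multiplicity criterion. In a one-dimensional Cohen--Macaulay local ring every ideal needs at most $e(R)$ generators, and $\mathfrak{m}^j$ needs exactly $e(R)$ for $j\gg 0$. You then compute the multiplicity of $k[[\pi_x,X]]/(\chi_x)$ as the order $\min(n,e_x)$ of the Weierstrass polynomial $\chi_x$.

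This is a bit more work but more self-contained. It proves $(1)\Leftrightarrow(3)$ by direct calculation. Combined with the multiplicity criterion, it already yields $(1)\Leftrightarrow(2)$, so the appeal to \cite{CHL} and the translation bookkeeping become superfluous. No translation is needed in the multiplicity computation either, since $\min(n,\ord_x\chi_x(a))=\min(n,e_x)$ for every $a\in\pi_x\Mfo_x$.

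Drop your aside that irreducibility with $e_x=2$ forces $n$ to be odd. It is never used, and it is false in general. For an irreducible polynomial, the residual polynomial of its single Newton slope need only be a power of an irreducible polynomial, and over $\bar{k}$ this allows $(Z-c)^2$.

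For example, let $\varpi^4=\pi_x$. Then $\varpi^2+\varpi^3$ generates $F_x(\varpi)$, and its minimal polynomial is $X^4-2\pi_xX^2-4\pi_x^2X+\pi_x^2-\pi_x^3$. This is irreducible with $n=4$ and $e_x=2$, and it defines an $A_4$ double point (value semigroup $\langle 2,5\rangle$). This is consistent with the proposition itself. The same parity assertion appears in Remark \ref{charbassrmk}, so it should not be relied on there either.
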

\begin{proof}
    Firstly, we prove that (1) is equivalent to (2).
If $y\notin \pi_\chi^{-1}(\mathcal{S})\subset Y_\chi$, then the completed local ring at $y$ is $\Mfo_{\pi_\chi(y)}$ by Proposition \ref{prop:spectralcurvegeometry}.
    It is a DVR and thus a Bass order. 
    For $y\in \pi_\chi^{-1}(\mathcal{S})$, let $x=\pi_\chi(y)$.
    Then $\overline{\chi_x(X)}=X^n$ in $k[X]$ since $n\cdot d_x+\ord_x(a_n)>0$ and $\chi_x(X)$ is irreducible by the assumption. 
    If $n\cdot d_x+\ord_x(a_n)=1$, then $\chi_x(X)$ is an Eisenstein polynomial, and thus  $\Mfo_x[X]/(\chi_x(X))$ is a DVR and thus a Bass order. 
    If $n=2$ or $n\cdot d_x+\ord_x(a_n)>1$, the equivalence between (1) and (2) follows from \cite[Proposition 3.6]{CHL}.

    Now we prove the equivalence of (2) and (3).
     By condition 2 of \cite[Proposition 3.1]{Gau97}, (3) is equivalent to the condition that $\dim_k(I/\mathfrak{m}_y I) \leq 2$ for every point $y \in Y_\chi$ and for every ideal $I$ in the local ring $\mathcal{O}_{Y_\chi,y}$,  where $\mathfrak{m}_y$ denotes the maximal ideal of $\mathcal{O}_{Y_\chi,y}$.
    It suffices to verify this property on the completion $\widehat{\Mfo}_{Y_\chi,y}$ of $\mathcal{O}_{Y_\chi,y}$.
    If (2) is true, then $\widehat{\Mfo}_{x_\chi,y}$ is a Bass order, which implies that every ideal $I$ in the completion is generated by at most two elements.
    Therefore, the dimension of the quotient $I/\mathfrak{m}_xI$ is at most $2$, yielding (3).
    Conversely, if the dimension of $I/\mathfrak{m}_xI$ is at most $2$, then $I$ can be generated by two elements, by Nakayama's lemma.
\end{proof}

For a curve whose singularities are double points, the compactified Jacobian is well-studied in \cite{Gau97}.
To study the compactified Jacobian in our context of $Y_\chi$, we consider intermediate curves that resolve some, but not necessarily all, of the singularities. This is given in the following definition.

\begin{definition}\cite[page 22]{Gau97}
    A partial normalization of $Y_\chi$ is a finite surjective birational morphism $\pi':Y_\chi'\rightarrow Y_\chi$ from a curve $Y_\chi'$ over $k$.
    Two partial normalizations $\pi':Y_\chi'\rightarrow Y_\chi$ and $\pi'':Y_\chi''\rightarrow Y_\chi$ are isomorphic if there exists an isomorphism $\alpha:Y_\chi'\rightarrow Y_\chi''$ such that $\pi''\circ \alpha=\pi'$.
\end{definition}
We now state the \textit{geometric stratification} of the compactified Jacobian.
\begin{theorem}[Geometric stratification]\label{thm:geomstr}\cite[Proposition 3.4]{Gau97}
$\overline{\mathrm{Pic}^0}(Y_\chi)(k)$ has the following stratification:
    $$\overline{\mathrm{Pic}^0}(Y_\chi)(k)=\bigsqcup\limits_{\pi':Y_\chi'\rightarrow Y_\chi}\mathrm{Pic^0}(Y_\chi')(k),$$
    where the index runs over the isomorphism classes of partial normalizations of $Y_\chi$.
    Here, $\mathrm{Pic}^0(Y_\chi')$ is a smooth locally closed subvariety of $\overline{\mathrm{Pic}^0}(Y_\chi)$ via pushforward under $\pi':Y_\chi'\rightarrow Y_\chi$, $\mathcal{L}'\mapsto \pi'_*\mathcal{L}'$.
\end{theorem}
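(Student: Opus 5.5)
The plan is to attach to every rank-1 torsion-free sheaf $\mathscr{F}$ on $Y_\chi$ its sheaf of endomorphism rings $\mathcal{E}nd(\mathscr{F})$, and to show that this attachment sorts $\overline{\mathrm{Pic}^0}(Y_\chi)(k)$ by partial normalizations. Concretely, for $\mathscr{F}$ we set
\[
\mathcal{A}_{\mathscr{F}}:=\mathcal{E}nd_{\Mfo_{Y_\chi}}(\mathscr{F}) \subset \mathcal{K},
\]
where $\mathcal{K}$ is the constant sheaf of the function field of $Y_\chi$; this is possible because $Y_\chi$ is integral by Proposition \ref{prop:spectralcurvegeometry}. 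Then $\mathcal{A}_{\mathscr{F}}$ is a coherent $\Mfo_{Y_\chi}$-algebra lying between $\Mfo_{Y_\chi}$ and the normalization. The curve $Y'_{\mathscr{F}}:=\underline{\mathrm{Spec}}\,\mathcal{A}_{\mathscr{F}}\to Y_\chi$ is therefore finite, surjective and birational, i.e. a partial normalization, and $\mathscr{F}=\pi'_*\mathscr{F}'$ for a unique sheaf $\mathscr{F}'$ on $Y'_{\mathscr{F}}$. Conversely, every partial normalization corresponds to a coherent sheaf of overorders, namely $\pi'_*\Mfo_{Y'}$, and two partial normalizations are isomorphic over $Y_\chi$ exactly when these subsheaves of $\mathcal{K}$ coincide. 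This identifies the index set of the decomposition with the set of families of local overorders at the singular points; by Proposition \ref{prop:spectralcurvegeometry}(3) these are finitely many points, all lying over $\mathcal{S}$.

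The key local input is Proposition \ref{prop:Bass}: all completed local rings are Bass orders. Fix $y$ and the completed stalk $\widehat{\mathscr{F}}_y$, a fractional ideal of $\widehat{\Mfo}_{Y_\chi,y}$. Proposition \ref{prop:overorder} gives $[\widehat{\mathscr{F}}_y]=[(\widehat{\mathscr{F}}_y:\widehat{\mathscr{F}}_y)]$, so $\widehat{\mathscr{F}}_y$ is invertible, indeed principal, over its own endomorphism ring. Since everything is finite, the statement passes from completions to stalks. Consequently $\mathscr{F}'$ is a line bundle on $Y'_{\mathscr{F}}$. For the degree, $\mathrm{Ec}$ is preserved under finite pushforward, so $\deg \mathscr{F}=0$ translates into the appropriately normalized degree-$0$ condition on $\mathscr{F}'$; this is how $\mathrm{Pic}^0(Y')$ is indexed in \cite{Gau97}. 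Hence every point of $\overline{\mathrm{Pic}^0}(Y_\chi)(k)$ lies in the image of some $\mathrm{Pic}^0(Y')(k)$ under $\pi'_*$.

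For disjointness and injectivity, take a line bundle $\mathcal{L}'$ on $Y'$. Then $\mathcal{E}nd(\pi'_*\mathcal{L}')=\pi'_*\Mfo_{Y'}$, since locally $\mathcal{L}'$ is free of rank one over $\Mfo_{Y'}$ and $(\mathcal{O}':\mathcal{O}')=\mathcal{O}'$. The partial normalization is thus recovered from $\pi'_*\mathcal{L}'$, which gives disjointness of the strata. Moreover, $\mathcal{L}'$ is recovered as $\pi'_*\mathcal{L}'$ with its induced $\pi'_*\Mfo_{Y'}$-module structure, which is unique because $\pi'_*\Mfo_{Y'}$ sits inside $\mathcal{K}$; this gives injectivity of $\pi'_*$.

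It remains to show that each stratum is a smooth locally closed subvariety. Smoothness of $\mathrm{Pic}^0(Y')$ follows from Theorem \ref{thm:smoothlocus} together with Remark \ref{rmk:jac}. To show that $\pi'_*:\mathrm{Pic}^0(Y')\to\overline{\mathrm{Pic}^0}(Y_\chi)$ is a locally closed immersion, I would argue in families. The condition that the coherent sheaf $\mathcal{E}nd(\mathscr{F})$ contain a given overorder $\mathcal{A}$ is closed: it amounts to $\mathcal{A}\cdot\mathscr{F}\subset\mathscr{F}$, which is the vanishing of a map into a quotient of a flat family. Equality with $\mathcal{A}$ is then the complement of finitely many such closed conditions for the strictly larger overorders, and there are finitely many of these since each Bass order has only finitely many overorders. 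I expect this family-level control of the endomorphism ring, and its identification with the image of $\pi'_*$ scheme-theoretically rather than merely on $k$-points, to be the main obstacle. At that step I would first try to invoke \cite[Proposition 3.4]{Gau97} directly, since that is exactly its content.
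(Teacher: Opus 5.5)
Your set-theoretic argument is correct. It is a self-contained replacement for the paper's bare citation of \cite[Proposition 3.4]{Gau97}. Attaching $\underline{\mathrm{Spec}}\,\mathcal{E}nd(\mathscr{F})$ to $\mathscr{F}$ and using $[I]=[(I:I)]$ from Proposition \ref{prop:overorder} gives that every point lies in some stratum. The identity $\mathcal{E}nd(\pi'_*\mathcal{L}')=\pi'_*\mathcal{O}_{Y'_\chi}$ gives disjointness and injectivity on $k$-points. This is close to how the paper itself argues later in Proposition \ref{prop:geomstrfork0}, where Gorensteinness of overorders of Bass orders together with \cite[Theorem 3.1]{Vas68} plays the role of Proposition \ref{prop:overorder}. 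Your caveat on degrees is also right: $\deg_{Y_\chi}\pi'_*\mathcal{L}'=\deg_{Y'_\chi}\mathcal{L}'+\delta$.

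The gap is in the scheme-theoretic half of the statement, which you flag but do not close. Your closed conditions ``$\mathcal{A}\cdot\mathscr{F}\subset\mathscr{F}$'' are not justified as written. For a family $\mathscr{F}_S$ on $Y_\chi\times S$, neither forming $\mathcal{A}\cdot\mathscr{F}_S$ (an image, or a quotient by torsion) nor forming $\mathcal{E}nd(\mathscr{F}_S)$ commutes with base change in general. So it is unclear that your ``vanishing locus'' is the set of $s$ with $\mathcal{A}\mathscr{F}_s\subset\mathscr{F}_s$. Even granting those conditions, $Z_{\mathcal{A}}\setminus\bigcup_{\mathcal{A}'\supsetneq\mathcal{A}}Z_{\mathcal{A}'}$ only exhibits the stratum as a locally closed \emph{subset}. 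It does not show that $\pi'_*$ is an immersion onto it. Falling back on \cite[Proposition 3.4]{Gau97} does not fill this, since the paper uses that result only for the set-theoretic decomposition.

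The missing ingredient is that $\pi'_*:\overline{\mathrm{Pic}}(Y'_\chi)\to\overline{\mathrm{Pic}}(Y_\chi)$ is a closed embedding, which is \cite[Lemma 3.1]{Bea99}. The paper combines this with the fact that $\mathrm{Pic}^0(Y'_\chi)$ is the open smooth locus of $\overline{\mathrm{Pic}}(Y'_\chi)$ (Theorem \ref{thm:smoothlocus} applied to $Y'_\chi$). The stratum is then open in a closed subscheme, hence locally closed and smooth, and no control of endomorphism rings in families is needed.

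If you want to prove the closed embedding yourself, your uniqueness argument is the right start, but it must work over an arbitrary base $S$. The map is proper because its source is. It is a monomorphism for the following reason. Take two $\pi'_*\mathcal{O}_{Y'_\chi}$-module structures on an $S$-flat family with torsion-free fibres, or an $\mathcal{O}_{Y_\chi\times S}$-linear isomorphism between two such pushforwards. After multiplying by a nonzero element $c$ of the conductor, the structures, respectively the isomorphism and its $\mathcal{A}$-compatibility defect, agree. Multiplication by $c$ is injective on such a family, since it is injective on every fibre and the family is flat. A proper monomorphism is a closed immersion.
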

\begin{proof}
    Set-theoretically, the stratification of $\overline{\mathrm{Pic}^0}(Y_\chi)(k)$ is given in \cite[Proposition 3.4]{Gau97}.
Theorem \ref{thm:smoothlocus} yields that $\mathrm{Pic^0}(Y_\chi')$ is smooth and an open subscheme of $\overline{\mathrm{Pic}^0}(Y_\chi')$, which is a closed subscheme of $\overline{\mathrm{Pic}^0}(Y_\chi)$ via $\pi_*'$ by \cite[Lemma 3.1]{Bea99}.
This completes the proof. 
\end{proof}

This stratification is the same as the partition of $\overline{\mathrm{Pic^0}}(Y_\chi)(k)$ into  $\mathrm{Pic^0}(Y_\chi)(k)$-orbits as follows.
\begin{proposition}\label{prop:orbits}
    $\mathrm{Pic^0}(Y_\chi)$ acts transitively on $\mathrm{Pic^0}(Y_\chi')$ for a partial normalization $\pi':Y_\chi'\rightarrow Y_\chi$.
\end{proposition}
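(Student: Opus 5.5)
The action of $\mathrm{Pic}^0(Y_\chi)$ on $\overline{\mathrm{Pic}^0}(Y_\chi)$ is given by Remark \ref{rmk:jac}.(1), and via $\pi'_*$ we regard $\mathrm{Pic}^0(Y_\chi')$ as a subvariety of $\overline{\mathrm{Pic}^0}(Y_\chi)$ by Theorem \ref{thm:geomstr}. The plan is to check two things: the action preserves the stratum $\pi'_*\mathrm{Pic}^0(Y_\chi')$, and the induced action is transitive. For invariance, the projection formula gives $\mathcal{L}\otimes\pi'_*\mathcal{L}'\cong\pi'_*(\pi'^*\mathcal{L}\otimes\mathcal{L}')$ for $\mathcal{L}\in\mathrm{Pic}^0(Y_\chi)$ and $\mathcal{L}'\in\mathrm{Pic}^0(Y'_\chi)$. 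Since $\pi'$ is finite and birational, $\pi'^*\mathcal{L}$ is a line bundle of degree $0$ on $Y'_\chi$. Here degree is measured by Euler characteristic; one checks that $\deg$ on $Y_\chi$ of $\pi'_*$ equals the shifted degree on $Y'_\chi$, and that this compatibility is consistent with the identification in Theorem \ref{thm:geomstr}. So the action on the stratum is identified with the action of $\mathrm{Pic}^0(Y_\chi)$ on $\mathrm{Pic}^0(Y_\chi')$ through the pullback homomorphism $\pi'^*:\mathrm{Pic}^0(Y_\chi)\to\mathrm{Pic}^0(Y_\chi')$ followed by translation.

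Transitivity is then equivalent to surjectivity of $\pi'^*:\mathrm{Pic}^0(Y_\chi)\to\mathrm{Pic}^0(Y'_\chi)$ on $k$-points, with $k=\bar k$. The approach is to compare the two exact sequences of Remark \ref{rmk:jac}.(2). Since $Y_\chi$ and $Y'_\chi$ share the normalization $\widetilde{Y}_\chi$, both sequences map onto the same $\mathrm{Pic}^0(\widetilde Y_\chi)$, compatibly with $\pi'^*$. It therefore suffices to show that the map on the linear parts $A\to A'$ is surjective.

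To get this, I will use the standard local description of the linear parts,
\[
A\cong \Bigl(\prod_{y}\widetilde{\Mfo}_y^\times/\widehat{\Mfo}_{Y_\chi,y}^\times\Bigr)/\text{(constants)},
\]
and similarly for $A'$ with $\widehat{\Mfo}_{Y'_\chi,y}$ in place of $\widehat{\Mfo}_{Y_\chi,y}$. Since $\widehat{\Mfo}_{Y_\chi,y}\subset\widehat{\Mfo}_{Y'_\chi,y}$, each local factor $\widetilde{\Mfo}^\times/\widehat{\Mfo}_{Y_\chi}^\times\to\widetilde{\Mfo}^\times/\widehat{\Mfo}_{Y'_\chi}^\times$ is a quotient map, hence surjective. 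Alternatively one can argue directly on line bundles: a line bundle on $Y'_\chi$ is trivial near the finitely many points over the singularities, so it descends by gluing a trivialization along $Y_\chi\setminus\{\text{singular points}\}$. In either form the conclusion is that $\pi'^*$ is surjective on $\bar k$-points, and a diagram chase (or the five lemma on points) finishes the argument.

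The main obstacle I expect is the degree bookkeeping. One must verify that $\pi'_*$ takes the chosen degree-$0$ component of $\mathrm{Pic}(Y'_\chi)$ into degree $0$ on $Y_\chi$; this amounts to fixing the normalization of degree used in Theorem \ref{thm:geomstr}. One must also make sure that the stabilizer issue does not matter: the action need not be free, only transitive, and since we only claim transitivity this is fine.
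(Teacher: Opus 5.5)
Your proposal is correct, and its first half matches the paper. Both use the projection formula $\mathcal{L}\otimes\pi'_*\mathcal{L}'\cong\pi'_*(\pi'^*\mathcal{L}\otimes\mathcal{L}')$ to show the stratum is preserved and to reduce transitivity to surjectivity of $\pi'^*:\mathrm{Pic}^0(Y_\chi)(k)\to\mathrm{Pic}^0(Y'_\chi)(k)$.

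The difference is in how that surjectivity is obtained. The paper first reduces from $R$-points to $k$-points, using local closedness of orbits, reducedness and the Nullstellensatz. It then cites \cite[Proposition 2.3]{Gau97} for surjectivity of $\pi'^*$ as a map of fppf sheaves, and passes to $k$-points by the Nullstellensatz again.

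You prove the surjectivity directly. The cokernel of $\Mfo_{Y_\chi}^\times\to\pi'_*\Mfo_{Y'_\chi}^\times$ is a skyscraper sheaf at the singular points, so its $H^1$ vanishes and $\mathrm{Pic}(Y_\chi)\to\mathrm{Pic}(Y'_\chi)$ is onto. Equivalently, the local maps $\widetilde{\Mfo}_y^\times/\widehat{\Mfo}_{Y_\chi,y}^\times\to\widetilde{\Mfo}_y^\times/\widehat{\Mfo}_{Y'_\chi,y}^\times$ are onto, and the diagram chase over $\mathrm{Pic}^0(\widetilde{Y}_\chi)$ finishes. Together with $\deg\pi'^*\mathcal{L}=\deg\mathcal{L}$, this gives surjectivity on $\mathrm{Pic}^0$. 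Your route is self-contained and elementary. The paper's citation gives the stronger sheaf-level surjectivity, but leaves the actual content to the reference.

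Two small remarks.
\begin{enumerate}
\item Since $Y_\chi$ is integral, $\widetilde{Y}_\chi$ is connected and the constants already lie in every $\widehat{\Mfo}_{Y_\chi,y}^\times$. So your quotient ``$/(\text{constants})$'' is trivial here.
\item Your degree bookkeeping addresses a point the paper leaves implicit. With $\deg\mathscr{F}=\mathrm{Ec}(\mathscr{F})-\mathrm{Ec}(\Mfo_{Y_\chi})$, the stratum consists of line bundles on $Y'_\chi$ of degree $-\delta$, where $\delta$ is the length of $\pi'_*\Mfo_{Y'_\chi}/\Mfo_{Y_\chi}$. This is a torsor under $\mathrm{Pic}^0(Y'_\chi)$ rather than $\mathrm{Pic}^0(Y'_\chi)$ itself, which does not affect transitivity.
\end{enumerate}
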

\begin{proof}
We first claim that the action is well-defined.
It suffices to show that $\mathrm{Pic^0}(Y_\chi)(R)$ acts on $\mathrm{Pic^0}(Y_\chi')(R)$ functorially, for a $k$-algebra $R$.
  Let $\mathcal{L}'_R$ be a line bundle on ${Y_\chi'}_{R}:={Y_\chi'}\times_k\mathrm{Spec}~R$ and let $\mathcal{L}_R$ be a line bundle on ${Y_\chi}_R:=Y_\chi\times_k\mathrm{Spec}~R$.
    Let $\pi_R':{Y_\chi'}_R\rightarrow {Y_\chi}_R$ be the base change of $\pi'$.
    By Theorem \ref{thm:geomstr}, it suffices to show that $\mathcal{L}_R\otimes_{\Mfo_{{Y_\chi}_R}}{\pi_R'}_*{\mathcal{L}'_R}$ is again a pushforward of a line bundle on ${Y_\chi'}_R$ by $\pi_R'$.
    This follows from the projection formula, which yields an isomorphism $${\pi_R'}_*\mathcal{L}'_R\otimes_{\Mfo_{{Y_\chi}_R}}\mathcal{L}_R\cong {\pi_R'}
    _*(\mathcal{L}'_R\otimes_{\Mfo_{{Y_\chi'}_R}}\pi_R'^*\mathcal{L}_R).$$
    The functoriality also follows from the projection formula.

Note that an orbit of $\mathrm{Pic^0}(Y_\chi)$ on $\mathrm{Pic^0}(Y_\chi')$ is locally closed by \cite[Lemma 2.3.3]{Spr98}.
Thus it suffices to show the transitivity of the action when $R=k$, by Hilbert's Nullstellensatz and the reducedness of $\mathrm{Pic}^0(Y_\chi')$.
The projection formula above yields that it suffices to show that the pullback map $\pi'^*:\mathrm{Pic}^0(Y_\chi)\rightarrow \mathrm{Pic}^0(Y_\chi')$ is surjective.

By \cite[Proposition 2.3]{Gau97}, $\pi'^*:\mathrm{Pic}^0(Y_\chi)\rightarrow \mathrm{Pic}^0(Y_\chi')$ is surjective as the fppf topology, so that it induces a surjective morphism of schemes.
Each point in $\mathrm{Pic}^0(Y_\chi')(k)$ is a closed point, and its fiber under $\pi'^*$ is a scheme of finite type over $k$.
Then, Hilbert's Nullstellensatz guarantees that the fiber has a closed point.
This yields that $\pi'^*:\mathrm{Pic}^0(Y_\chi)(k)\rightarrow \mathrm{Pic}^0(Y_\chi')(k)$ is surjective.
\end{proof}


\subsection{Equivalence between two stratifications of $\Phi^{-1}(\chi)$}
In this subsection, we prove in Theorem \ref{thm:eqoftwostra} that the algebraic stratification of $\Phi^{-1}(\chi)$ (cf. Corollary \ref{cor:algstraoff-1a}) and the geometric stratification of  $\overline{\mathrm{Pic}^0}(Y_\chi)(k)\left(=\Phi^{-1}(\chi)(k)\right)$  (cf. Theorem \ref{thm:geomstr}) agree, as schemes.


\subsubsection{The product formula}
From now on, fix $\gamma\in \mathfrak{gl}_n(F)$ to be the companion matrix corresponding to the characteristic polynomial $\chi$ and let $\gamma_x:=\pi_x^{d_x}\gamma$ for $x\in C$.
We rewrite the adelic description of points in $\Phi^{-1}(\chi)(k)$ in Remark \ref{rmk:adelic} to relate them with \textit{fractional ideals} of an order, following \cite[Section 3.5]{Ch14}. 
Choose $(g,\theta)(=((g_x)_{x\in C},\theta)$ in $\Phi^{-1}(\chi)(k)$.
Since $\gamma$ is elliptic regular semisimple over $F$, we may assume that $\theta=\gamma$ after applying the adjoint action by $\mathrm{GL}_n(F)$.
Note that $(g,\gamma)$ and $(g',\gamma)$ are equivalent if and only if $g$ and $g'$ are in the same orbit of the centralizer $T_\gamma(F)$ of $\gamma$, up to right action by $\mathrm{GL}_n(\Mfo)$.
Then the condition $g^{-1}\gamma g\in \pi_D^{-1}\mathfrak{gl}_n(\Mfo) $ is equivalent to $g_x^{-1}\gamma_xg_x\in \mathfrak{gl}_n(\Mfo_x)$ for each $x\in C$.
The elements $g_x\in \mathrm{GL}_n(F_x)/\mathrm{GL}_n(\Mfo_x)$ satisfying this condition are parametrized by the \textit{affine Springer fiber} $\mathfrak{X}_{\gamma_x}$, whose $R$-points are given by $$\mathfrak{X}_{\gamma_x}(R)=\{g_x\in \mathrm{GL}_n(F_x\otimes_k R)/\mathrm{GL}_n(\Mfo_x\otimes_kR)\mid g_x^{-1}\gamma_xg_x\in \mathfrak{gl}_n(\Mfo_x\otimes_k R) \}$$
for a $k$-algebra $R$.
The correspondence $((g_x)_{x\in C},\gamma)\mapsto (g_x)_{x\in C}$ therefore yields the product formula:
    \begin{proposition}\label{prop:prod}\cite[Proposition 3.5.1]{Ch14}
 The map $\Phi^{-1}(\chi)(k) \longrightarrow T_{\gamma}(F)\backslash {\prod\limits_{x\in C}}'\mathfrak{X}_{\gamma_x}(k)$, $((g_x)_{x\in C},\gamma)$ $\mapsto (g_x)_{x\in C}$ is bijective, where ${\prod\limits_{x\in C}}'$ means that $g_x=1$ for all but finitely many $x\in C$.
\end{proposition}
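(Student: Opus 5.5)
The map is well defined once we check that it respects the equivalence relation and that the target condition is the right one. By Remark \ref{rmk:adelic}, a point of $\Phi^{-1}(\chi)(k)$ is a class of pairs $(g,\theta)$ modulo $\mathrm{GL}_n(F)$ on the left (acting on $\theta$ by conjugation) and $\mathrm{GL}_n(\Mfo)$ on the right. The pairs satisfy $\deg\det g=0$, $\chi_\theta=\chi$, and $g^{-1}\theta g\in\pi_D^{-1}\gl_n(\Mfo)$. Since $\chi$ is irreducible over $F$, any $\theta\in\gl_n(F)$ with characteristic polynomial $\chi$ is $\mathrm{GL}_n(F)$-conjugate to the companion matrix $\gamma$: $F^n$ becomes a cyclic $F[X]/(\chi)$-module of rank one, i.e. $\theta$ is regular. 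So every class has a representative with $\theta=\gamma$.

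Two such representatives $(g,\gamma)$ and $(g',\gamma)$ are equivalent exactly when $g'=hgk$ with $h\in\mathrm{GL}_n(F)$, $h\gamma h^{-1}=\gamma$ and $k\in\mathrm{GL}_n(\Mfo)$. The condition on $h$ says $h\in T_\gamma(F)$, the centralizer, which is $(F[X]/(\chi))^\times$. The integrality condition $g^{-1}\gamma g\in\pi_D^{-1}\gl_n(\Mfo)$ is checked componentwise, and multiplying by $\pi_x^{d_x}$ turns it into $g_x^{-1}\gamma_x g_x\in\gl_n(\Mfo_x)$. This is precisely the condition $g_x\Mfo_x^n\in\mathfrak{X}_{\gamma_x}(k)$.

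For the restricted product, recall that an adelic $g$ has $g_x\in\mathrm{GL}_n(\Mfo_x)$ for almost all $x$, so its class in $\mathrm{GL}_n(F_x)/\mathrm{GL}_n(\Mfo_x)$ is the base point $1$ for almost all $x$. Conversely, a family $(g_x)$ with $g_x=1$ for almost all $x$ defines an adelic element. Therefore
\[
(g,\gamma)\mapsto (g_x\mathrm{GL}_n(\Mfo_x))_{x}
\]
is a well-defined map to $T_\gamma(F)\backslash{\prod}'_x\mathfrak{X}_{\gamma_x}(k)$, and it is injective by the description of the equivalence relation just given.

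For surjectivity we need that every element of the target lifts to a pair with $\deg\det g=0$. This step is the main subtlety, because the right-hand side as written does not impose the degree condition. There are two ways to handle it. One is to use the degree-zero normalization built into $\mathbb{M}$: $\deg\det g$ is constant on $T_\gamma(F)$-orbits up to $\deg\det$ of principal elements, and $\deg\mathrm{Nm}(t)=0$ for $t\in F[X]/(\chi)^\times$ by the product formula for valuations. The other is to adopt the convention (as in \cite[Proposition 3.5.1]{Ch14}) that the restricted product is taken within the degree-zero component. I plan to state this explicitly: interpret ${\prod}'$ as the degree-zero part, or note that each component $\mathfrak{X}_{\gamma_x}$ is indexed by lattices whose index relative to $\Mfo_x^n$ is recorded by $\ord\det g_x$. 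Then $\sum_x\ord\det g_x=0$ is exactly the degree-zero condition. With that convention, surjectivity is immediate from the componentwise construction above, and bijectivity follows.
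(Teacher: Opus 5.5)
Your proposal is correct and follows the paper's own route: you conjugate $\theta$ to the companion matrix $\gamma$ using irreducibility of $\chi$, identify the remaining ambiguity as $T_\gamma(F)$ on the left and $\mathrm{GL}_n(\Mfo)$ on the right, and read the integrality condition pointwise as $g_x^{-1}\gamma_x g_x\in\gl_n(\Mfo_x)$. Your point about the degree is also right, and the paper leaves it implicit: since $\deg\det t=0$ for $t\in T_\gamma(F)$, the target splits according to $\sum_x\ord\det g_x$, and the map is a bijection only onto the degree-zero part (rescaling one component $g_{x_0}$ by the scalar $\pi_{x_0}$ keeps it in $\mathfrak{X}_{\gamma_{x_0}}$ but shifts the degree by $n$).
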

We now introduce the relation between the affine Springer fibers and fractional ideals, following \cite[Section 3.2.9]{Yun16}.
As shown in \cite[Proposition 2.3.1]{Ch14}, the affine Springer fiber yields an alternative description using lattices.
Namely, the map $g_x\mapsto g_x\Mfo_x^n$ gives a bijection between $\mathfrak{X}_{\gamma_x}(k)$ and the set of full lattices in $F_x^n$ stable under $\gamma_x$.
Let $R_x=\Mfo_x[X]/(\chi_x(X))$.
Then $R_x$ is an order in its total ring of fractions $E_x:=F_x[X]/(\chi_x(X))$.
It is isomorphic to the ring of matrices $\Mfo_x[\gamma_x]$, having a basis $\{1,\gamma_x,\dots,\gamma_x^{n-1}\}$ as an $\Mfo_x$-module since $\gamma_x$ is regular, which yields an $R_x$-module structure on $\Mfo_x^n$.
Thus we describe the $k$-points of the affine Springer fiber via fractional ideals.
\begin{proposition}\label{prop:frac} \cite[Lemma 3.3.1]{Yun16}
    The map $g_x\mapsto g_x\Mfo_x^n$ gives a bijection between $\mathfrak{X}_{\gamma_x}(k)$ and the set of fractional $R_x$-ideals as follows.
\end{proposition}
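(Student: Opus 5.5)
The plan is to factor the map $g_x\mapsto g_x\Mfo_x^n$ through the lattice description and then transport the $\gamma_x$-stability condition into an $R_x$-module condition. By \cite[Proposition 2.3.1]{Ch14}, $g_x\mapsto g_x\Mfo_x^n$ is already a bijection between $\mathfrak{X}_{\gamma_x}(k)$ and the set of full $\Mfo_x$-lattices $\Lambda\subset F_x^n$ with $\gamma_x\Lambda\subset\Lambda$. Indeed, $g_x^{-1}\gamma_x g_x\in\mathfrak{gl}_n(\Mfo_x)$ says exactly that $\gamma_x(g_x\Mfo_x^n)\subset g_x\Mfo_x^n$. Two elements $g_x$ and $g_x'$ define the same lattice if and only if they differ by $\mathrm{GL}_n(\Mfo_x)$ on the right, and every full lattice has the form $g_x\Mfo_x^n$. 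So it remains to identify $\gamma_x$-stable full lattices in $F_x^n$ with fractional $R_x$-ideals in $E_x$.

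For this identification I would fix an $E_x$-module isomorphism $F_x^n\cong E_x$. Since $\gamma_x$ is the companion matrix of $\chi_x$, it is regular. Hence $F_x^n$ is a cyclic $F_x[\gamma_x]$-module, generated for instance by the first standard basis vector $e_1$, and $F_x[\gamma_x]\cong F_x[X]/(\chi_x(X))=E_x$ because $\chi_x$ is both the minimal and the characteristic polynomial. The map $p(\gamma_x)e_1\mapsto p(X)$ then identifies $F_x^n$ with $E_x$, and under it multiplication by $\gamma_x$ becomes multiplication by $X$. Since $R_x=\Mfo_x[X]/(\chi_x(X))$ is generated over $\Mfo_x$ by $X$, an $\Mfo_x$-submodule of $E_x$ is $\gamma_x$-stable if and only if it is an $R_x$-submodule.

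It then remains to check that full $\Mfo_x$-lattices correspond exactly to fractional $R_x$-ideals in the sense of Definition \ref{def:bass}.(2). A full lattice is a finitely generated $\Mfo_x$-module $\Lambda$ with $\Lambda\otimes F_x=E_x$. If $\Lambda$ is also $R_x$-stable, it is finitely generated over $R_x$ and spans $E_x$, so it is a fractional ideal. Conversely, a fractional $R_x$-ideal $I$ is finitely generated over $R_x$, which is finite free of rank $n$ over $\Mfo_x$, so $I$ is finitely generated over $\Mfo_x$. It is torsion-free, being contained in $E_x$, and it spans $E_x$, so it is a full lattice.

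The step most likely to need care is the last one, when $E_x$ is only a product of fields, i.e. when $\chi_x$ is reducible over $F_x$ for $x\notin\mathcal{S}$. There I would read ``$I\otimes_{R_x}F_x=E_x$'' as ``$I$ spans $E_x$ over $F_x$'', and the argument above is unchanged. The bijection one obtains depends on the choice of cyclic vector $e_1$. Changing the cyclic vector multiplies the ideals by a unit of $E_x$, which is compatible with the later $T_\gamma(F)$-quotient in Proposition \ref{prop:prod}.
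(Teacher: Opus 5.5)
Your argument is correct and follows the same route as the paper: the lattice description of \cite[Proposition 2.3.1]{Ch14} combined with the $R_x$-module structure on $F_x^n$ coming from the regularity of $\gamma_x$, where you make explicit (via a cyclic vector) the identification of $\gamma_x$-stable full lattices with fractional $R_x$-ideals that the paper delegates to \cite[Lemma 3.3.1]{Yun16}. One harmless slip is that $\gamma_x=\pi_x^{d_x}\gamma$ is not literally the companion matrix of $\chi_x$ when $d_x>0$, but it is regular with characteristic polynomial $\chi_x$ and $e_1$ is still a cyclic vector over $F_x$ (indeed already over $F$, since $\chi$ is irreducible), which is all your argument uses.
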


 If $x\notin \mathcal{S}$, then $R_x\cong \Mfo_x^n$ as rings by Proposition \ref{prop:spectralcurvegeometry} since  $\overline{\chi_x(X)}$ is assumed to be separable over $k$.
Thus both sides in the proposition are singleton. 
 We henceforth assume that $x\in\mathcal{S}$, so that $R_x$ is an integral domain. 
 Recall that $R_x$ is a Bass order (cf. Definition \ref{def:bass}) by Proposition \ref{prop:Bass}.

\begin{corollary}\label{cor:prod2}
The map $\mathrm{Pic}^0(Y_\chi)(k)\backslash\overline{\mathrm{Pic}^0}(Y_\chi)(k) \longrightarrow  {\prod\limits_{x\in C}}'\{\text{overorders of $R_x$}\}, ~~(g_x)_{x\in C}\mapsto (g_x\Mfo_x^n:g_x\Mfo_x^n)$, is bijective. 
    Here, $(g_x)_{x\in C}$ is viewed as an element in $\overline{\mathrm{Pic}^0}(Y_\chi)(k)$ using Proposition \ref{prop:prod}.
\end{corollary}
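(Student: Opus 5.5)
The plan is to combine the product formula (Proposition \ref{prop:prod}) with the lattice/fractional-ideal description (Proposition \ref{prop:frac}) and then invoke the Bass property (Proposition \ref{prop:overorder}) locally at each $x\in\mathcal{S}$. First, I will check that the map is well defined. Via Proposition \ref{prop:prod} a point of $\overline{\mathrm{Pic}^0}(Y_\chi)(k)$ is a class of $(g_x)_{x\in C}$ modulo $T_\gamma(F)$ acting on the left. Via Proposition \ref{prop:frac} each $g_x$ corresponds to a fractional $R_x$-ideal $I_x=g_x\Mfo_x^n$. Left multiplication by $t\in T_\gamma(F)\subset E^\times$ replaces $I_x$ by $tI_x$, and $(tI_x:tI_x)=(I_x:I_x)$. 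So the overorder $(I_x:I_x)$ depends only on the point. For almost all $x$ we have $g_x=1$ and $I_x=R_x$, and for $x\notin\mathcal{S}$ the ring $R_x\cong\Mfo_x^n$ is already maximal, so the tuple lies in the restricted product.

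Second, I will show that the action of $\mathrm{Pic}^0(Y_\chi)(k)$ preserves the tuple, so the map descends to orbits. A line bundle on $Y_\chi$ is locally free of rank one. Adelically it is a tuple $(u_x)$ with $u_x\in (E_x)^\times$, where $E_x=F_x[X]/(\chi_x(X))$ and $u_x\in R_x^\times$ almost everywhere, modulo $E^\times$ and $\prod R_x^\times$. Tensoring multiplies $I_x$ by $u_x$, which again leaves $(I_x:I_x)$ unchanged. Degree-zero normalisation does not interfere, because the degree can be adjusted by scaling at the unramified point $\infty$, and this does not alter any overorder at points of $\mathcal{S}$.

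Third, surjectivity. Given overorders $\Mfo'_x\supseteq R_x$ with $\Mfo'_x=R_x$ for almost all $x$, I take $I_x:=\Mfo'_x$, which is itself a fractional $R_x$-ideal satisfying $(I_x:I_x)=\Mfo'_x$. Choosing $g_x$ with $g_x\Mfo_x^n=I_x$ gives an adelic point. I then fix the degree by modifying $g_\infty$ by a scalar power of $\pi_\infty$ times an element of $E_\infty^\times$ with suitable determinant valuation; at $\infty$ we have $R_\infty\cong\Mfo_\infty^n$, so this only changes a line-bundle factor. The determinant valuations available at $\infty$ should allow any integer degree, since $\pi_\chi$ splits completely there.

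Fourth, injectivity, which I expect to be the main step. Suppose two points $(I_x)$ and $(I'_x)$ have $(I_x:I_x)=(I'_x:I'_x)$ for all $x$. By Proposition \ref{prop:overorder}, since $R_x$ is Bass, $[I_x]=[(I_x:I_x)]$ in $\clb(R_x)$. Hence $I_x=u_xI'_x$ for some $u_x\in E_x^\times$, and $u_x$ may be taken to be $1$ whenever both equal $R_x$. The tuple $(u_x)$ then defines a line bundle $\mathcal{L}$ on $Y_\chi$ with $\mathcal{L}\otimes\mathscr{F}'\cong\mathscr{F}$. Comparing degrees, and using that both $\mathscr{F}$ and $\mathscr{F}'$ have degree $0$ together with additivity of degree under tensoring by a line bundle, gives $\deg\mathcal{L}=0$. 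The delicate point is exactly this degree bookkeeping, namely that $\deg(\mathcal{L}\otimes\mathscr{F}')=\deg\mathcal{L}+\deg\mathscr{F}'$ for torsion-free $\mathscr{F}'$. I would verify it through the Euler characteristic, by computing the local length $[I_x:u_xI_x]=[R_x:u_xR_x]$, which is valid because $u_x$ acts as an $E_x$-linear automorphism.
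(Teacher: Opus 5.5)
Your proposal is correct and follows the same route as the paper. Both arguments use the product formula, Proposition \ref{prop:frac} to pass to fractional ideals, and the Bass property $[I]=[(I:I)]$ of Proposition \ref{prop:overorder} to identify $\clb(R_x)$ with the overorders of $R_x$. The only difference is that the paper imports the orbit bijection $\mathrm{Pic}^0(Y_\chi)(k)\backslash\overline{\mathrm{Pic}^0}(Y_\chi)(k)\cong{\prod}'(E_x^\times/R_x^\times)\backslash{\prod}'\mathfrak{X}_{\gamma_x}(k)$ directly from \cite[Proposition 4.4.3]{Yun16}, whereas you reprove it by hand, including the degree bookkeeping (the adjustment at $\infty$ for surjectivity, additivity of degree for injectivity) that the citation leaves implicit.
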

\begin{proof}

    By \cite[Proposition 4.4.3]{Yun16}, the bijection in Proposition \ref{prop:prod} yields a bijection of orbits
    $$\left.\mathrm{Pic}^0(Y_\chi)(k)\middle\backslash \overline{\mathrm{Pic}^0}(Y_\chi)(k)\right.\rightarrow \left.{\prod\limits_{x\in C}}'\left(E_x^\times/R_x^\times\right)\middle\backslash{\prod\limits_{x\in C}}'\mathfrak{X}_{\gamma_x}(k)\right.\cong {\prod\limits_{x\in C}}'\clb(R_x).$$
The second isomorphism is by Proposition \ref{prop:frac}.
The claim then follows from Proposition \ref{prop:overorder}.
\end{proof}    

In the following proposition, we describe an explicit enumeration of overorders of $R_x$. 
\begin{proposition}{\cite[Theorem 3.11]{CHL}}\label{thm:overorders}
    Write $R_x=\Mfo_x[\gamma_x]$, and $E_x=F_x[\gamma_x]$ for $x\in\mathcal{S}$. Suppose that $R_x\neq \Mfo_{E_x}$. Then we have the following classification of overorders of $R_x$.
\begin{enumerate}
    \item Suppose that $n\geq 3$.
    Then $n$ is odd, and every overorder of $R_x$ is of the form $$R_{x,r_x}=\Mfo_x[\gamma_x,\frac{\gamma_x^{r_x}}{\pi_x}], ~~~~  \textit{ where }  ~~~~~~ \frac{n+1}{2}\leq r_x\leq n. $$
    \item Suppose that $n=2$.
    Then $\ord_x(a_2)$ is odd, and every overorder of $R_x$ is of the form $$R_{x,r_x}=\Mfo_x[\frac{\gamma_x}{\pi_x^{r_x}}], ~~~~  \textit{ where }  ~~~~~~ 0\leq r_x \leq \frac{2d_x+\ord_x(a_2)-1}{2}. $$
    Here $2d_x+\ord_x(a_2)-1$ is the exponential order of $\gamma_x$ with respect to a uniformizer of $E_x$. 
\end{enumerate}
\end{proposition}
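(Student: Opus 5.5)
This statement is a classification that the paper quotes from \cite[Theorem 3.11]{CHL}, so the plan is to rederive it from the local structure of $R_x$. First I pin down $\chi_x$ at $x\in\mathcal{S}$. Since $k=\bar{k}$ and $\chi_x$ is irreducible over $\Mfo_x$ with $\overline{\chi_x(X)}=X^n$, Remark \ref{charbassrmk} together with Proposition \ref{prop:Bass} leaves only a few cases. If $n\cdot d_x+\ord_x(a_n)=1$, then $\chi_x$ is Eisenstein and $R_x=\Mfo_{E_x}$, which is excluded by hypothesis. So for $n\geq 3$ we have $\ord_x(\pi_x^{nd_x}a_n)=2$. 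Irreducibility with constant term of valuation $2$ forces the Newton polygon to be a single segment of slope $2/n$ with no lattice points in between, so $\gcd(2,n)=1$ and $n$ is odd. For $n=2$, write $v=2d_x+\ord_x(a_2)$ for the order of the constant term. We may translate so that the linear coefficient has large enough valuation (using $p>2$), and then irreducibility forces $v$ odd. Since $d_x$ enters doubled, this means $\ord_x(a_2)$ is odd.

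In both cases $E_x/F_x$ is totally ramified of degree $n$, and we get $\ord_{E}(\gamma_x)=2$ (case $n\geq3$) or $\ord_E(\gamma_x)=v$ (case $n=2$). The case $n=2$ is then the easier one and I would do it first. Here $\varpi=\gamma_x/\pi_x^{(v-1)/2}$ is a uniformizer of $E_x$, and $\Mfo_{E_x}=\Mfo_x[\varpi]$. Any order $\Mfo$ with $R_x\subseteq \Mfo\subseteq\Mfo_{E_x}$ is an $\Mfo_x$-lattice containing $1$, so $\Mfo=\Mfo_x\oplus \Mfo_x\pi_x^{j}\varpi$ for a unique $j$ with $0\le j\le (v-1)/2$. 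Writing $r_x=(v-1)/2-j$, this order is exactly $\Mfo_x[\gamma_x/\pi_x^{r_x}]$. It is closed under multiplication because $\varpi^2\in \pi_x\Mfo_x^\times+\Mfo_x\varpi$. This gives the list in (2).

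For odd $n=2m+1$, let $\varpi$ be a uniformizer of $E_x$ with $\gamma_x=u\varpi^2$, $u$ a unit. Since $\pi_x\sim\varpi^n$, the element $\gamma_x^{j}$ has $E$-valuation $2j$. For $j\le m$ this is less than $n$, and for $j\geq m+1$ it lies in $[n+1,2n-2]$. It follows that $\gamma_x^{j}/\pi_x$ is integral exactly when $j\geq m+1=(n+1)/2$. An overorder $\Mfo$ is determined by its valuation semigroup, together with the fact that $\Mfo_{E_x}/R_x$ is small. The valuation semigroup of $R_x$ is $\langle 2,n\rangle$, and its gaps are the odd numbers less than $n$. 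So $\dim_k\Mfo_{E_x}/R_x=m$, and the only possible valuation semigroups of intermediate orders are $\langle 2, 2r-n\rangle$ for $m+1\le r\le n$, realized by $\gamma_x^{r}/\pi_x$. This is consistent with the conductor being $\varpi^{n-1}\Mfo_{E_x}$. Then I check two things. First, $R_{x,r}=\Mfo_x[\gamma_x,\gamma_x^r/\pi_x]$ is indeed an order with this semigroup, using $(\gamma_x^r/\pi_x)^2=\gamma_x^{2r-n}\cdot(\text{unit})\cdot\gamma_x^{n}/\pi_x^2$, which stays inside. Second, any overorder contains some element of odd valuation $2r-n$. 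After subtracting elements of $R_x$, that element can be taken to be $\gamma_x^r/\pi_x$ times a unit of $R_x$ modulo higher terms. Then $\Mfo\supseteq R_{x,r}$ for the minimal such $r$, and equality holds by a length count.

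The main obstacle is this last step: showing that an overorder is generated by a single $\gamma_x^r/\pi_x$. This amounts to controlling how elements of given valuation can be normalized modulo $R_x$. I would handle it via the chain $R_x=R_{x,n}\subset R_{x,n-1}\subset\cdots\subset R_{x,m+1}=\Mfo_{E_x}$ with successive indices $1$. Since each step has index one, every overorder must equal a member of the chain.
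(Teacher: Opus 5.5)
Your route differs from the paper's, which imports both the classification and the parity of $n$ from \cite[Theorem 3.11, Proposition 3.14]{CHL}. A self-contained rederivation would be a good thing, but two steps in yours do not hold.

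\textbf{The parity argument fails.} Irreducibility forces the Newton polygon of $\chi_x$ to be a single segment. It does \emph{not} rule out the interior lattice point $(n/2,1)$ when $n$ is even. Eisenstein--Dumas is only a sufficient criterion, and over $k=\bar k$ Ore's refinement only forces the residual quadratic (the one in Theorem \ref{thm:localBassmain}) to be a perfect square.

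Here is a concrete instance. Put $\varpi^4=\pi_x$ and $\gamma=\varpi^2+\varpi^3$ in $E=F_x(\varpi)$. No nontrivial substitution $\varpi\mapsto\zeta\varpi$ fixes $\gamma$, so its minimal polynomial $X^4-2\pi_xX^2-4\pi_x^2X+\pi_x^2-\pi_x^3$ is irreducible. It reduces to $X^4$, its constant term has valuation $2$, and its residual quadratic is $(X-1)^2$. Since $\gamma^2-\pi_x=2\varpi^5+\varpi^6$, the order $\Mfo_x[\gamma]$ has value semigroup $\langle 2,5\rangle$. It is therefore a non-maximal Bass order (multiplicity $2$). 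Its overorders are $\Mfo_x[\gamma]$, $k+\varpi^2\Mfo_E$ and $\Mfo_E=\Mfo_x[\gamma,\gamma^2/\pi_x]$, and the last is not in the list (1).

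So oddness of $n$ cannot be extracted from irreducibility together with $n d_x+\ord_x(a_n)=2$ at all. The paper does not try to; it cites \cite[Proposition 3.14]{CHL}, using that $R_x$ is monogenic. In view of this example, you would have to pin down exactly which extra hypothesis that input rests on. Oddness is also precisely what your semigroup claim $\Gamma(R_x)=\langle 2,n\rangle$ needs. For odd $n$ the valuations $an+2i$ of $\pi_x^a\gamma_x^i$ ($0\le i<n$) are pairwise distinct. For even $n$ the leading terms of $\gamma_x^{n/2}$ and $\pi_x$ can cancel, exactly as above.

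\textbf{The closing argument for odd $n$ is a non sequitur.} The rest of your outline for odd $n$ can be completed, but a chain of overorders with index-one steps says nothing about overorders off the chain. Already $k+t^4k[[t]]$ has the pairwise incomparable overorders $k+k(t^2+at^3)+t^4k[[t]]$, $a\in k$.

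What actually closes the argument is the conductor. An overorder $\Mfo$ has semigroup $\langle 2,2c+1\rangle$ with $2c+1\le n$. By completeness (the residue fields agree) it contains $\varpi^{2c}\Mfo_{E_x}$, hence contains $\gamma_x^r/\pi_x$ with $2r-n=2c+1$, and so $\Mfo\supseteq R_{x,r}$. Both $\Mfo$ and $R_{x,r}$ have semigroup $\langle 2,2c+1\rangle$, so a length count gives $\Mfo=R_{x,r}$. Equivalently, $\Mfo=\sum_{i<c}k\gamma_x^i+\varpi^{2c}\Mfo_{E_x}$, because $1,\gamma_x,\dots,\gamma_x^{c-1}$ span $\Mfo$ modulo $\varpi^{2c}\Mfo_{E_x}$ for dimension reasons. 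No closure check is needed for $R_{x,r}$, since it is by definition a subring of $\Mfo_{E_x}$.

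\textbf{Case $n=2$.} Your argument is fine once $a_2$ is read after the translation that kills the linear term. Your value $v_{E_x}(\gamma_x)=2d_x+\ord_x(a_2)$ is the correct one. The number $2d_x+\ord_x(a_2)-1$ in the statement is the conductor exponent of $R_x$, not the valuation of $\gamma_x$.
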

\begin{proof}
$E_x$ is a totally ramified field extension over $F_x$ by \cite[Proposition 3.5]{CHL} since $k$ is algebraically closed. Then the claim directly follows from \cite[Theorem 3.11]{CHL}.
Here, if $n<2$, then it cannot be even by \cite[Proposition 3.14]{CHL} since $R_x$ is a simple extension of $\Mfo_x$. 
\end{proof}

\begin{theorem}[Equivalence of two stratifications]\label{thm:eqoftwostra}
   Let $\pi':Y_\chi'\rightarrow Y_\chi$ be a partial normalization of $Y_\chi$.
    Then there exists a unique type $T$ of $\chi$ (cf.  Definition \ref{def:typeofchia} and Remark \ref{rmk:typeforn=2}) such that
    $$(\Phi^T)^{-1}(\chi)=\mathrm{Pic}^0(Y_\chi')$$
with respect to the identification between $\Phi^{-1}(\chi)\cong \overline{\mathrm{Pic}^0}(Y_\chi)$ in Theorem \ref{thm:spectralcorr}. 
\end{theorem}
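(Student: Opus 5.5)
We first reduce everything to local data at the points of $\mathcal{S}$. A partial normalization $\pi':Y_\chi'\rightarrow Y_\chi$ is an isomorphism away from the singular points, and these lie over $\mathcal{S}$ by Proposition \ref{prop:spectralcurvegeometry}.(3). So $\pi'$ is determined by the completed stalks of $\pi'_*\Mfo_{Y_\chi'}$ at the points $x\in\mathcal{S}$. These stalks form a tuple of overorders $(R'_x)_{x\in\mathcal{S}}$ of $R_x$, and each $R'_x$ is one of the orders $R_{x,r_x}$ listed in Proposition \ref{thm:overorders}.

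Next we identify $\mathrm{Pic}^0(Y_\chi')(k)$ inside $\overline{\mathrm{Pic}^0}(Y_\chi)(k)$. By Theorem \ref{thm:geomstr} and Proposition \ref{prop:orbits}, it is the single $\mathrm{Pic}^0(Y_\chi)(k)$-orbit of sheaves whose local endomorphism rings are the $R'_x$. Under Corollary \ref{cor:prod2}, this is exactly the set of points $(g_x)_{x\in C}$ with $(g_x\Mfo_x^n:g_x\Mfo_x^n)=R_{x,r_x}$ for every $x\in\mathcal{S}$. Proposition \ref{prop:overorder} then lets us replace the lattice $g_x\Mfo_x^n$, up to $E_x^\times$, by the order $R_{x,r_x}$ itself.

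The central local computation is to take the lattice $\Lambda=R_{x,r_x}$ and write $\gamma_x$ in a suitable $\Mfo_x$-basis of it.
\begin{itemize}
\item For $n\geq3$ odd, use the basis $1,\gamma_x,\dots,\gamma_x^{r_x-1},\gamma_x^{r_x}/\pi_x,\dots,\gamma_x^{n-1}/\pi_x$. From it, compute the cokernel of $\gamma_x$ on $\Lambda$, namely $\Lambda/\gamma_x\Lambda$, which has length $n d_x+\ord_x(a_n)\le 2$, and compute the Jordan type of $\overline{\gamma_x}$ on $\Lambda/\pi_x\Lambda$.
\item If $r_x=n$, then $\Lambda=R_x$, $\gamma_x$ acts as a companion matrix, and the cokernel is cyclic, $\Mfo_x/(\pi_x^2)$.
\item If $(n+1)/2\le r_x\le n-1$, I expect the cokernel to be $k\oplus k$ and $\overline{\gamma_x}$ to be conjugate to $J_{l_x+1,n-l_x-1}$ with $l_x+1=n-r_x$, so that $l_x=n-1-r_x$ runs over $0,\dots,(n-3)/2$. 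This range is exactly the range allowed in Definition \ref{def:typeofchia}.
\item For $n=2$, the lattice $\Mfo_x[\gamma_x/\pi_x^{r_x}]$ gives cokernel $\Mfo_x/(\pi_x^{r_x})\oplus\Mfo_x/(\pi_x^{2d_x+\ord_x(a_2)-r_x})$, so $a_x=r_x$.
\end{itemize}
The result is a bijection $r_x\leftrightarrow(\mathcal{F}_x,l_x)$ between overorders and local types. It is injective because distinct $r_x$ give distinct invariants, and surjective by counting. The product of these local bijections matches partial normalizations with global types $T$, which proves both existence and uniqueness.

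This gives equality on $k$-points (with $k=\bar k$): both $(\Phi^T)^{-1}(\chi)(k)$ and $\mathrm{Pic}^0(Y_\chi')(k)$ are the set of $(g_x)$ whose local lattices have the prescribed overorder. Indeed, the cokernel and the Jordan type are invariants of the lattice up to $E_x^\times$ and $\mathrm{GL}_n(\Mfo_x)$. To upgrade to an equality of schemes, note two facts.
\begin{itemize}
\item Both are locally closed subschemes of $\Phi^{-1}(\chi)$, by Corollary \ref{cor:algstraoff-1a} and Theorem \ref{thm:geomstr}.
\item Both are reduced, since both are smooth: $(\Phi^T)^{-1}(\chi)$ is smooth by Corollary \ref{cor:algstraoff-1a}, and $\mathrm{Pic}^0(Y_\chi')$ is smooth by Theorem \ref{thm:smoothlocus}.
\end{itemize}
Two reduced locally closed subschemes of a finite type $k$-scheme with the same $\bar k$-points coincide, by the Nullstellensatz.

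The main obstacle is the explicit local matrix computation of the Jordan type of $\overline{\gamma_x}$ on $R_{x,r_x}/\pi_x R_{x,r_x}$ for $n\geq3$. The key inputs there are $\ord_x$ of the constant term equal to $2$ (after scaling by $\pi_x^{nd_x}$) and $\pi_x\mid$ all the other coefficients. I would first verify that $\gamma_x\cdot(\gamma_x^{n-1}/\pi_x)=\chi$-relation$/\pi_x$ lands in $\Lambda$ with unit coefficient only where expected, and then read off the two nilpotent chains $1\to\gamma_x\to\cdots\to\gamma_x^{r_x-1}$ and $\gamma_x^{r_x}/\pi_x\to\cdots$.
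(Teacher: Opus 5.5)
Your argument is correct, but its local core runs in the opposite direction from the paper's.

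The paper starts from an arbitrary point, with $M_x=g_x^{-1}\gamma_xg_x$ and $L=g_x\Mfo_x^n$, and goes from type to overorder without choosing a basis. It uses that $\gamma_x^{r}/\pi_x\in(L:L)$ iff $\pi_x^{-1}M_x^{r}\in\gl_n(\Mfo_x)$ iff $\overline{M_x}^{\,r}=0$. So the minimal $r_x$ in Proposition \ref{thm:overorders} is the nilpotency index of $\overline{M_x}$, which the type fixes: $n-l_x-1$ on $\mathcal{S}^{\mathcal{F}}_1$, and $n$ otherwise. For $n=2$, $\gamma_x/\pi_x^{r}\in(L:L)$ iff $M_x\in\pi_x^{r}\gl_2(\Mfo_x)$, giving $r_x=a_x$.

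You instead go from overorder to type. You use Proposition \ref{prop:overorder} to replace $L$ by the representative $R_{x,r_x}$ and compute the cokernel and Jordan type in an explicit basis. You then conclude with injectivity, counting, and the fact that both families partition $\Phi^{-1}(\chi)(k)$. Your route costs an explicit $\Mfo_x$-basis of $R_{x,r_x}$ and a Jordan-form computation. In return it makes the bijection between partial normalizations and types fully explicit, including that every type occurs. It also spells out the reverse inclusion, which the paper leaves implicit in the formulas $r_x=n-l_x-1$, $r_x=n$, $r_x=a_x$.

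On the step you flagged: the two chains you describe are not literally Jordan chains. Write $\chi_x(X)=X^n-c_1X^{n-1}+\cdots+(-1)^nc_n$. Then $\gamma_x\cdot\gamma_x^{n-1}/\pi_x=\sum_i\pm(c_i/\pi_x)\gamma_x^{n-i}$. For $n-r_x<i\le\frac{n-1}{2}$, nothing forces $\ord_x(c_i)\ge 2$, so $c_i/\pi_x$ can be a unit (e.g. $n=5$, $c_2=\pi_x$, $c_5=\pi_x^2$, $r_x=4$). In that case $\overline{\gamma_x}$ sends $\gamma_x^{n-1}/\pi_x$ to a nonzero combination of the $\gamma_x^{j}$ with $\frac{n+1}{2}\le j\le r_x-1$.

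This is harmless, and there are two ways to handle it:
\begin{itemize}
\item A triangular change of basis removes these terms.
\item More simply, apply the paper's observation to your lattice $\Lambda=R_{x,r_x}$. On $\Lambda/\pi_x\Lambda$, $\overline{\gamma_x}^{\,s}=0$ iff $\gamma_x^{s}/\pi_x\in R_{x,r_x}$ iff $s\ge r_x$. Your basis shows $\mathrm{rank}\,\overline{\gamma_x}=n-2$ when $r_x<n$. Two blocks with the larger of size $r_x$ give $J_{n-r_x}\perp J_{r_x}$. Rank $n-2$ together with $\Lambda/\gamma_x\Lambda$ of length $2$ forces $\Lambda/\gamma_x\Lambda\cong k\oplus k$.
\end{itemize}
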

\begin{proof}
    Since both sides are smooth and locally closed, it suffices to prove the equality on the level of $k$-points.
    Let $(g_x)_{x\in C}$ be the affine Springer fiber description of $\Phi^{-1}(\chi)(k)$, using Proposition \ref{prop:prod}.    
We claim that the tuple of overorders $((g_x\Mfo_x^n:g_x\Mfo_x^n))_{x\in C}$ of $(R_x)_{x\in C}$ is completely determined by the type $T$ of $\chi$ such that $((g_x)_{x\in C},\gamma)\in (\Phi^T)^{-1}(\chi)(k)$.
Suppose that it is true. Since each orbit of $\mathrm{Pic}^0(Y_\chi)(k)$ on $\overline{\mathrm{Pic}^0}(Y_\chi)(k)$ is completely determined by  $((g_x\Mfo_x^n:g_x\Mfo_x^n))_{x\in C}$  by Corollary \ref{cor:prod2} and is of the form $\mathrm{Pic}^0(Y_\chi')(k)$ by Proposition \ref{prop:orbits}, it completes the proof.

Since a type is defined on each $x\in \mathcal{S}$, we prove the claim by a case-by-case analysis.

(1) If $x\notin \mathcal{S}$ or if $x\in \mathcal{S}$ with $n\cdot d_{x}+\ord_{x}(a_n)=1$, then the proof of Proposition \ref{prop:Bass} yields that $R_x$ is a maximal order, and thus there are no nontrivial overorders of $R_x$.

(2)     If $x\in \mathcal{S}$ with $n=2$, then the cokernel of $g_{x}^{-1}\gamma_{x}g_{x}$ is  isomorphic to $\Mfo_{x}/(\pi_x^{a_x})\oplus \Mfo_{x}/(\pi_x^{b_x})$ such that $0\leq a_x\leq b_x$ and $a_x+b_x=2  d_x+\ord_x(a_2)$ (cf. Section \ref{subsecn=2}).
Note that $a_x$ is the biggest integer satisfying $g_{x}^{-1}\gamma_{x}g_{x}\in \pi_x^{a_x}\gl_n(\Mfo_x)$, and that the set of $a_x$'s with $x\in \mathcal{S}$ determines a type $T(=\mathcal{F})$.  
Thus it suffices to show that $a_x$  determines  the overorder $(g_{x}\Mfo_{x}^n:g_{x}\Mfo_{x}^n)$ of $R_{x}$. 
    By Proposition \ref{thm:overorders}, an overorder $R_{x}'$ of $R_{x}$ is determined by the biggest integer $r_{x}$ such that $\frac{\gamma_{x}}{\pi_{x}^{r_{x}}}\in R_{x}'$. 
 Note that 
    $$\frac{\gamma_{x}}{\pi_{x}^{r_{x}}}\in (g_{x}\Mfo_{x}^n:g_{x}\Mfo_{x}^n) ~~~~~ \textit{ if and only if } ~~~~~ \frac{1}{\pi_{x}^{r_{x}}}g_{x}^{-1}\gamma_{x}g_{x}\in \mathfrak{gl}_n(\Mfo_{x}).$$
Here the biggest integer $r_x$ is exactly the same as $a_x$.

  (3)    When $x\in \mathcal{S}$ and $n\cdot d_{x}+\ord_{x}(a_n)=2$ with $n\geq 3$, we define the nilpotency index of $\overline{g_{x}^{-1}\gamma_{x}g_{x}}$ to be the smallest positive integer $r$ such that $\left(\overline{g_{x}^{-1}\gamma_{x}g_{x}}\right)^r=0 \in \gl_n(k)$.
    Then we claim that the nilpotency index of $\overline{g_{x}^{-1}\gamma_{x}g_{x}}$ is completely determined by the type $T=(\mathcal{F}, (l_x)_{x\in \mathcal{S}^{\mathcal{F}}_1})$.

If $x\in \mathcal{S}^{\mathcal{F}}_1$  so that $\mathcal{F}_{x}\cong k\oplus k$, then the nilpotency index of $\overline{g_{x}^{-1}\gamma_{x}g_{x}}$ is $n-l_x-1$, by Remark \ref{rmk:ademfly}.
If $x\in \mathcal{S}\backslash \mathcal{S}^{\mathcal{F}}_1$ so that the cokernel of $g_{x}^{-1}\gamma_{x}g_{x}$ is  isomorphic to $\Mfo_{x}/(\pi_{x}^2)$, then $\overline{g_{x}^{-1}\gamma_{x}g_{x}}$ has rank $n-1$. 
Thus the nilpotency index of $\overline{g_{x}^{-1}\gamma_{x}g_{x}}$ is $n$.
    This proves the claim.

    Then it suffices to show that the nilpotency index of $\overline{g_{x}^{-1}\gamma_{x}g_{x}}$ determines the overorder $(g_{x}\Mfo_{x}^n:g_{x}\Mfo_{x}^n)$ of $R_{x}$.
    By Proposition \ref{thm:overorders}, an overorder $R_{x}'$ of $R_{x}$ is determined by the minimal integer $r_{x}$ such that $\frac{\gamma_{x}^{r_{x}}}{\pi_{x}}\in R_{x}'$. 
 Note that 
    $$\frac{\gamma_{x}^{r_{x}}}{\pi_{x}}\in (g_{x}\Mfo_{x}^n:g_{x}\Mfo_{x}^n) ~~~~~ \textit{ if and only if } ~~~~~ \frac{1}{\pi_{x}}g_{x}^{-1}\gamma_{x}^{r_{x}}g_{x}=\frac{1}{\pi_{x}}(g_{x}^{-1}\gamma_{x}g_{x})^{r_{x}}\in \mathfrak{gl}_n(\Mfo_{x}).$$
Thus the minimal integer $r_x$ is exactly the same as the nilpotency index of $\overline{g_{x}^{-1}\gamma_{x}g_{x}}$.
\end{proof}

\begin{remark}\label{rmk:nouse}
For a partial normalization    $\pi':Y_\chi'\rightarrow Y_\chi$,
choose $y\in Y_\chi'(k)$ and $x\in C(k)$ such that $\pi_\chi\cdot\pi'(y)=x$. Here  $\pi_\chi: Y_\chi\rightarrow C$ is the canonical projection.
If $Y_\chi'$ is also a spectral curve over $C$, then the completed local ring of $Y_\chi'$ at $y$ should be a simple extension of $\Mfo_x$ by Proposition \ref{prop:spectralcurvegeometry}.(2).
On the other hand, Proposition \ref{thm:overorders} yields that it is not necessarily simple. Thus $Y_\chi'$ is not a spectral curve in general.

Nonetheless, Theorem \ref{thm:eqoftwostra} implies that $\mathrm{Pic}^0(Y_\chi')$ can still be described within the framework of a restricted Hitchin fibration (cf. Diagram \eqref{diag:f'}), and that each point of $\mathrm{Pic}^0(Y_\chi')(k)$ has an adelic description (cf. Remark \ref{rmk:ademf} for $n=2$ and Remark \ref{rmk:ademfly} for $n\geq 3$). 
\end{remark}

\begin{remark}\label{rmk:rx}
    The proof of Theorem \ref{thm:eqoftwostra} gives a a precise description of overorders $(R_{x,r_x})_{x\in \mathcal{S}}$ corresponding to a given type $T=(\mathcal{F},(l_x)_{x\in \mathcal{S}^{\mathcal{F}}_1})$ of $\chi$ (cf.  Definition \ref{def:typeofchia} and Remark \ref{rmk:typeforn=2}). In this remark, we will describe both a type and an overorder.  
    These will be used in Section \ref{sec:dec}.
    
    \begin{enumerate}
 \item The sheaf $\mathcal{F}$ is completely determined by $\mathcal{F}_x$'s for $x\in \mathcal{S}$ (cf. the paragraph following Definition \ref{def:coherentsheafF}). Let 
 \begin{equation}\label{eq:defofs'}
  \mathcal{S}':=\{x\in \mathcal{S}|n\cdot d_x+\ord_x(a_n)>1\}.    
 \end{equation}
 Note that $\mathcal{S}'=\{x\in \mathcal{S}|n\cdot d_x+\ord_x(a_n)=2\}$ if $n\geq 3$. 
 Then, for $x\in \mathcal{S}\setminus \mathcal{S}'$, we have $\mathcal{F}_x\cong k$ and $R_x=\Mfo_{E_x}$. 
 Since $\mathcal{F}_x$ with $x\in \mathcal{S}'$ for $n\geq 3$ is either $k\oplus k$ or $\Mfo_x/(\pi_x^2)$,  $$\textit{the number of $\mathcal{F}$'s}=\begin{cases}
2^{\#\mathcal{S}'}&  \textit{ if $n\geq 3$};\\\prod\limits_{x\in \mathcal{S'}}(\lfloor\frac{2d_x+\ord_x(a_2)}{2}\rfloor+1)
& \textit{ if $n=2$}.
    \end{cases}$$ 
    
Here $\mathcal{S}^{\mathcal{F}}_1$ depends on $\mathcal{F}$ but is contained in $\mathcal{S}'$ for any $\mathcal{F}$.
We also have
$$\textit{the number of types}=\begin{cases}
\left(\frac{n+1}{2}\right)^{\#\mathcal{S}'}&  \textit{ if $n\geq 3$};\\
\prod\limits_{x\in \mathcal{S'}}(\lfloor\frac{2d_x+\ord_x(a_2)}{2}\rfloor+1)
& \textit{ if $n=2$}.
    \end{cases}$$

    \item We will explain a precise description of an overorder of $R_x$ given in Proposition \ref{thm:overorders}, in terms of a type $T$.  
   \begin{enumerate}
       \item     If $x\in \mathcal{S}\backslash\mathcal{S}'$ then $R_x=\Mfo_{E_x}$, the maximal order. 
    
    \item For $n\geq 3$, if  $x\in \mathcal{S}^{\mathcal{F}}_1$ then $r_x=n-l_x-1$ and if $x\in \mathcal{S}'\backslash \mathcal{S}^{\mathcal{F}}_1$, then $r_x=n$.
    

    \item
    If $n=2$ and  $x\in \mathcal{S}'$, then $r_x=a_x$, where $\mathcal{F}_x\cong \Mfo_x/(\pi_x^{a_x})\oplus \Mfo_x/(\pi_x^{b_x})$ with $a_x\leq b_x$ and $a_x+b_x=2d_x+\ord_x(a_2)$ (cf. Section \ref{subsecn=2}).
   \end{enumerate}
    \end{enumerate}
\end{remark}



\section{Decomposition of $H_c^i(\Phi^{-1}(\chi),\mathbb{Q}_\ell)$}\label{sec:dec}
The goal of this section is Theorem \ref{thm:cohomology}, which describes the $\ell$-adic cohomology of the Hitchin fiber (equivalently $\overline{\mathrm{Pic}^0}(Y_\chi)$ by Theorem \ref{thm:spectralcorr}) under a certain restriction described below.

In this section let $k=\mathbb{F}_q$ and we maintain Setting \ref{settings} over $\bar{k}$ and the situation in Proposition \ref{prop:Bass} over $\bar{k}$. 
Thus $Y_\chi$ is geometrically integral,  $x\in \mathcal{S}$ is defined over $k$,  and  $R_x\left(=\Mfo_x[X]/(\chi_x(X))\right)$ for $x\in \mathcal{S}\left(\subset C(k)\right)$ is a Bass order in $E_x\left(=F_x[X]/(\chi_x(X))\right)$ which is a totally ramified field extension of $F_x$. 
This also yields that any singularity of $Y_{\chi}\times_k\bar{k}$ lies in $Y_\chi(k)$ and that the canonical projection $\pi_\chi: Y_\chi\rightarrow C$ yields a bijection between the set of singularities in $Y_\chi(\bar{k})$ and $\mathcal{S}'$ (cf. \eqref{eq:defofs'}).
Thus we identify them.
Since this is important in Theorem \ref{thm:cohomology}, we emphasize it below:
\begin{equation}\label{eq:emphas'}
\pi_\chi:  \{\textit{singularities of }Y_\chi(\bar{k})\}\left(\subset Y_\chi(k)\right)   \xrightarrow{\sim} \mathcal{S}'.
\end{equation}
We also note that $Y_\chi$ has double singularities and that its local rings are integral domains(cf. Proposition \ref{prop:Bass}).

Then the classification of overorders of $R_x$ described in Proposition \ref{thm:overorders} also holds. 
In addition, the number of overorders of $R_x$ is the same as that of $R_x\otimes_k\bar{k}$.
This yields that any partial normalization of $Y_\chi\times_k\bar{k}$ is defined over $k$ since a partial normalization corresponds to a tuple of overorders of $R_x$'s with $x\in \mathcal{S}$. 
In the following, we provide a different proof of Theorem \ref{thm:geomstr} over $k$ to avoid the assumption that $k=\bar{k}$. For a field extension $k'/k$ in $\bar{k}$, let $Y_{\chi,k'}:=Y_\chi\times_kk'$. 
\begin{proposition}[Geometric stratification over $k$]\label{prop:geomstrfork0}
    $\overline{\mathrm{Pic}^0}(Y_{\chi})(k')$ has the following stratification:
    $$\overline{\mathrm{Pic}^0}(Y_{\chi})(k')=\bigsqcup\limits_{\pi':Y_{\chi}'\rightarrow Y_{\chi}}\mathrm{Pic^0}(Y_{\chi}')(k') ~~~~  \textit{ for a finite field extension $k'$ over $k$},$$
    where $\pi'$ runs over the isomorphism classes of partial normalizations of $Y_\chi$ over $k$.
\end{proposition}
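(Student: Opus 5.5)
We plan to prove the stratification over $k'$ by descending from the already established stratification over $\bar{k}$ (Theorem \ref{thm:geomstr}, applied to $Y_{\chi,\bar{k}}$) with a Galois-equivariance argument. The ambient object $\overline{\mathrm{Pic}^0}(Y_\chi)$ is a $k$-scheme whose $\bar{k}$-points carry an action of $\mathrm{Gal}(\bar{k}/k')$, and $\overline{\mathrm{Pic}^0}(Y_\chi)(k')$ is exactly the set of fixed points. Hence it suffices to show two things: each stratum $\mathrm{Pic}^0(Y'_{\chi,\bar{k}})$ is stable under this Galois action, and the $k'$-points of that stratum are precisely $\mathrm{Pic}^0(Y'_\chi)(k')$ for a partial normalization $Y'_\chi$ defined over $k$.

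\textbf{Step 1: partial normalizations descend to $k$.} A partial normalization of $Y_{\chi,\bar{k}}$ corresponds to a tuple of overorders $(R'_x)_{x\in\mathcal{S}'}$ of $R_x\otimes_k\bar{k}$. By Proposition \ref{thm:overorders}, these overorders are $\Mfo_x[\gamma_x,\gamma_x^{r_x}/\pi_x]$ or $\Mfo_x[\gamma_x/\pi_x^{r_x}]$. Since $\gamma_x$ and $\pi_x$ are defined over $k$, these are visibly defined over $k$. Gluing the corresponding finite $\Mfo_{Y_\chi}$-algebra $\mathcal{O}'\subset \widetilde{\pi}_*\Mfo_{\widetilde{Y}_\chi}$ (modified only at the $k$-rational points of \eqref{eq:emphas'}) gives $Y'_\chi=\underline{\mathrm{Spec}}\,\mathcal{O}'$ over $k$, with $Y'_\chi\times_k\bar{k}$ equal to the given one. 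This yields a bijection between isomorphism classes over $k$ and over $\bar{k}$; in particular, every stratum is Galois stable.

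\textbf{Step 2: the strata as subschemes over $k$.} For each such $Y'_\chi$, the pushforward $\pi'_*$ gives a morphism over $k$, $\mathrm{Pic}^0(Y'_\chi)\to\overline{\mathrm{Pic}^0}(Y_\chi)$. After base change to $\bar{k}$, it is the locally closed immersion of Theorem \ref{thm:geomstr}. Locally closed immersions descend along field extensions by fpqc descent, so it is a locally closed immersion over $k$. Taking $k'$-points then gives injections. Disjointness of the images follows from disjointness over $\bar{k}$.

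\textbf{Step 3: covering.} Let $\mathscr{F}\in\overline{\mathrm{Pic}^0}(Y_\chi)(k')$. Over $\bar{k}$, $\mathscr{F}$ lies in a unique stratum. Concretely, that stratum is indexed by $\mathcal{E}nd(\mathscr{F})$, the tuple of overorders $(I_x:I_x)$ from Corollary \ref{cor:prod2}. Since $\mathscr{F}$ is defined over $k'$, the sheaf $\mathcal{E}nd(\mathscr{F})$ is an $\Mfo_{Y_\chi}$-algebra defined over $k'$, and by Step 1 it equals $\mathcal{O}'_{k'}$ for some $Y'_\chi$ over $k$. Then $\mathscr{F}$ is naturally an $\mathcal{O}'$-module over $k'$. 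It is invertible on $Y'_{\chi,k'}$ because invertibility can be checked after base change to $\bar{k}$ (Proposition \ref{prop:overorder}: $I_x\cong (I_x:I_x)$). So $\mathscr{F}=\pi'_*\mathcal{L}'$ with $\mathcal{L}'\in\mathrm{Pic}^0(Y'_\chi)(k')$; the degree is preserved as in the $\bar{k}$ case.

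\textbf{Main obstacle.} The delicate point is Step 3, specifically identifying the stratum of a $k'$-point intrinsically via $\mathcal{E}nd(\mathscr{F})$, so that rationality is automatic, and verifying that the $\mathcal{O}'$-module structure gives a line bundle over $k'$ rather than merely over $\bar{k}$. I would handle this by noting that the formation of $\mathcal{E}nd$ commutes with flat base change $k'\to\bar{k}$, and that local freeness of rank one descends along faithfully flat maps.
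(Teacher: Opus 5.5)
Your argument is correct, but it is organized differently from the paper's. The paper does not descend from $\bar k$ at all: this section is explicitly meant to re-prove Theorem~\ref{thm:geomstr} without assuming $k=\bar k$. Given $\mathcal{L}\in\overline{\mathrm{Pic}^0}(Y_\chi)(k')$, the paper proceeds as follows:
\begin{enumerate}
\item It sets $Y'_\chi=\underline{\mathrm{Spec}}\,\mathcal{H}\mathit{om}(\mathcal{L},\mathcal{L})$ and cites Beauville to see that this is a partial normalization. It is defined over $k$ by the overorder count.
\item It notes that overorders of Bass orders are Gorenstein.
\item It applies Vasconcelos' theorem to deduce from $\mathcal{E}nd(\mathcal{L}')=\mathcal{O}_{Y'_\chi}$ that $\mathcal{L}'$ is invertible.
\item Disjointness is then automatic, because $Y'_\chi$ is recovered intrinsically from $\mathcal{L}$ as $\mathcal{E}nd(\mathcal{L})$.
\end{enumerate}

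Your Step~3 uses the same mechanism, indexing the stratum by $\mathcal{E}nd(\mathscr{F})$. The difference is how you get invertibility: you pass to $\bar k$, use Gaudin's theorem as a black box, and descend local freeness. Note that Proposition~\ref{prop:overorder} ($I=a\,(I:I)$) could be applied directly over $k'$. The completed local rings of $Y_{\chi,k'}$ at the singular points are still Bass orders in fields, since $E_x\otimes_k k'$ is a field. That would make Step~3 independent of both $\bar k$ and Vasconcelos.

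What your route buys is Step~2. Descending the immersions shows that each stratum is a locally closed subscheme defined over $k$. This is a scheme-theoretic, Galois-compatible refinement that the paper's proof of this proposition does not state. The cost is reliance on Theorem~\ref{thm:geomstr} over $\bar k$, which the paper deliberately avoids.

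One small correction, which the paper's own formulation shares: $\pi'_*$ does not preserve degree. In fact $\deg_{Y_\chi}\pi'_*\mathcal{L}'=\deg_{Y'_\chi}\mathcal{L}'+\delta$ with $\delta=\dim_k(\pi'_*\mathcal{O}_{Y'_\chi}/\mathcal{O}_{Y_\chi})$. So the stratum is really $\mathrm{Pic}^{-\delta}(Y'_\chi)$. It is identified with $\mathrm{Pic}^0(Y'_\chi)$ over $k'$ by twisting with a $k$-rational line bundle of degree $-\delta$, e.g.\ $\mathcal{O}(-\delta\infty_1)$, which makes sense because $\pi'$ is an isomorphism near $\infty_1$.
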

\begin{proof}
For $\mathcal{L}\in \overline{\mathrm{Pic}^0}(Y_{\chi})(k')$ let $Y_{\chi}':=\underline{\mathrm{Spec}}(\mathcal{H}\mathit{om}_{\Mfo_{Y_\chi,k'}}(\mathcal{L},\mathcal{L}))$.  
 \cite[The second paragraph of Section 2]{Bea99} yields that 
  $\pi': Y_{\chi}'\rightarrow Y_{\chi,k'}$  is 
  a partial normalization of $Y_{\chi,k'}$ defined over $k'$  such that $\mathcal{L}$ is a rank $1$ torsion-free $\Mfo_{Y_{\chi}'}$-sheaf. We write $\mathcal{L}'$ to stand for $\mathcal{L}$ as an $\Mfo_{Y_{\chi}'}$-sheaf. 
  Note that $\pi'$ is defined over $k$ as mentioned just before the proposition.
 We claim that $\mathcal{L}'$ is locally free of rank $1$.

    For each $y'\in Y_{\chi}'(\bar{k})$ with $y:=\pi'(y')\in Y_{\chi,k'}(\bar{k})$, the local completion of $Y_{\chi}'$ at $y’$ is a Gorenstein ring since it is  an overorder of $\widehat{\Mfo}_{Y_{\chi,y}}$ which is a Bass order by Proposition \ref{prop:Bass} (cf. \cite[pages 96-97]{Lam99}).
    Therefore, $Y_{\chi}'$ is a Gorenstein curve.
Since $\Mfo_{Y_\chi'}\left(=\mathcal{H}\mathit{om}_{\Mfo_{Y_\chi,k'}}(\mathcal{L},\mathcal{L})\right)\cong \mathcal{H}\mathit{om}_{\Mfo_{Y_\chi'}}(\mathcal{L}',\mathcal{L}')$, \cite[Theorem 3.1]{Vas68} yields that $\mathcal{L}'$ is a locally free $\Mfo_{Y_\chi'}$-sheaf of rank $1$.
In addition, the fact that $\Mfo_{Y_\chi'}=\mathcal{H}\mathit{om}_{\Mfo_{Y_\chi,k'}}(\mathcal{L},\mathcal{L})$ yields that the right hand side is a disjoint union.
\end{proof}

From now on, we assume that the normalization of $Y_\chi$ is isomorphic to $\mathbb{P}_k^1$.
This assumption yields that each algebraic stratum is isomorphic to affine space, as seen in the following lemma.
\begin{lemma}\label{lem:affine}
For a type $T=(\mathcal{F}, (l_x)_{x\in \mathcal{S}^{\mathcal{F}}_1})$ of $\chi$, $(\Phi^T)^{-1}(\chi)$ is isomorphic to an affine space over $k$ of dimension
  $$ \begin{cases}
\sum\limits_{x\in \mathcal{S}^{\mathcal{F}}_1}(\frac{n-3}{2}-l_x)+\#(\mathcal{S}'\setminus\mathcal{S}^{\mathcal{F}}_1)\cdot\frac{n-1}{2}& \textit{ if $n\geq 3$};\\
\sum\limits_{x\in\mathcal{S}'}\frac{b_x-a_x-1}{2}=\sum\limits_{x\in \mathcal{S}'}(\frac{2d_x+\ord_x(a_2)-1}{2}-a_x)& \textit{ if $n=2$}.
    \end{cases}$$
Here,  $\mathcal{F}_x\cong \Mfo_x/(\pi_x^{a_x})\oplus \Mfo_x/(\pi_x^{b_x})$ with $a_x+b_x=2d_x+\ord_x(a_2)$ and $0\leq a_x\leq b_x$ when $n=2$.

\end{lemma}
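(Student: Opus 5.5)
The plan is to combine Theorem \ref{thm:eqoftwostra} with the structure of generalized Jacobians (Remark \ref{rmk:jac}.(2)). Then the dimension count reduces to computing $\delta$-invariants of the overorders listed in Proposition \ref{thm:overorders}.

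\textbf{Reduction.} By Theorem \ref{thm:eqoftwostra}, together with Proposition \ref{prop:geomstrfork0} to descend from $\bar{k}$ to $k$, the stratum $(\Phi^T)^{-1}(\chi)$ is identified with $\mathrm{Pic}^0(Y_\chi')$. Here $\pi':Y_\chi'\rightarrow Y_\chi$ is the partial normalization whose completed local ring at the point over $x\in\mathcal{S}'$ is the overorder $R_{x,r_x}$. The exponent $r_x$ is read off from $T$ via Remark \ref{rmk:rx}.(2): for $n\geq 3$ we have $r_x=n-l_x-1$ if $x\in\mathcal{S}^{\mathcal{F}}_1$ and $r_x=n$ otherwise; for $n=2$ we have $r_x=a_x$. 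Since the normalization of $Y_\chi'$ is $\mathbb{P}^1_k$, whose Jacobian is trivial, the exact sequence in Remark \ref{rmk:jac}.(2) gives $\mathrm{Pic}^0(Y_\chi')\cong A$, the linear kernel.

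\textbf{Describing $A$.} I would use the standard description of this kernel through local unit groups:
\[
A\cong \Big(\prod_{x\in \mathcal{S}'}\Mfo_{E_x}^\times/R_{x,r_x}^\times\Big)\Big/ k^\times .
\]
Each $E_x$ is totally ramified over $F_x$, so every point over $x$ is unibranch with residue field $k$. The constants $k^\times$ therefore already lie in every $R_{x,r_x}^\times$, and the quotient by $k^\times$ has no effect. Each factor equals $(1+\mathfrak{m}_{E_x})/(1+\mathfrak{m}_{R_{x,r_x}})$, which is a connected commutative unipotent group. It carries a filtration by the images of $1+\mathfrak{m}_{E_x}^j$ whose graded pieces are $\mathbb{G}_a$'s. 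Since $\mathbb{G}_a$-torsors over affine schemes are trivial, an induction on this filtration shows the factor is isomorphic, as a $k$-variety, to affine space of dimension $\delta_x:=\dim_k \Mfo_{E_x}/R_{x,r_x}$. This step is the main obstacle: one must check that the filtration quotients really are $\mathbb{G}_a$ over $k$ and not merely over $\bar{k}$. That follows because $\mathfrak{m}_{E_x}^j/\mathfrak{m}_{E_x}^{j+1}\cong k$ and the orders $R_{x,r_x}$ are defined over $k$.

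\textbf{Computing $\delta_x$.} Let $\varpi$ be a uniformizer of $E_x$, and use that for a unibranch order $\delta_x$ is the number of gaps of its valuation semigroup.
For $n\geq 3$: $n$ is odd and $\ord_{\varpi}(\gamma_x)=2$, since $n\cdot d_x+\ord_x(a_n)=2$. Also $\ord_\varpi(\pi_x)=n$, so $\ord_\varpi(\gamma_x^{r}/\pi_x)=2r-n$, which is odd. The semigroup of $R_{x,r}$ is therefore $\langle 2,2r-n\rangle$, which has $(2r-n-1)/2$ gaps. Substituting $r_x=n-l_x-1$ gives $\frac{n-3}{2}-l_x$, and substituting $r_x=n$ gives $\frac{n-1}{2}$.
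For $n=2$: $v:=\ord_\varpi(\gamma_x)=2d_x+\ord_x(a_2)$ is odd and $\ord_\varpi(\pi_x)=2$. The semigroup of $R_{x,r}$ is $\langle 2, v-2r\rangle$, giving $\delta_x=(v-2r-1)/2$. With $r_x=a_x$ and $v=a_x+b_x$ this equals $\frac{b_x-a_x-1}{2}$.

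Summing $\delta_x$ over $x\in\mathcal{S}'$ gives exactly the stated dimension. The points of $\mathcal{S}\setminus\mathcal{S}'$ contribute $0$, because there the local ring is already the maximal order $\Mfo_{E_x}$.
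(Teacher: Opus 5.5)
Your proof is correct. Up to the identification $\mathrm{Pic}^0(Y_\chi')\cong\prod_{x}\Mfo_{E_x}^\times/R_{x,r_x}^\times$ it is the paper's argument. The paper obtains this identification from the exact sequence of \cite[Exercise II.6.9]{Har77} against $\mathrm{Pic}(\mathbb{P}^1_k)$ rather than from Remark \ref{rmk:jac}.(2), but the content is the same.

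The two routes diverge at the step that shows this group is an affine space and computes its dimension.

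\textbf{The paper's route.} It never looks inside the group. It counts $k'$-points over every finite extension $k'/k$ using \cite[Equation 3.2]{CHL} and finds $q^{[k':k]N}$. It then appeals to the structure theory of commutative linear algebraic groups.

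\textbf{Your route.} You filter $(1+\mathfrak{m}_{E_x})/(1+\mathfrak{m}_{R_{x,r_x}})$ by the images of $1+\mathfrak{m}_{E_x}^j$. The $j$-th graded piece is $\mathbb{G}_a$ exactly when $j$ is a gap of the value semigroup, and is trivial otherwise. You trivialize the successive torsors over affine bases and read off $N=\sum_x\delta_x$ from the gap count of $\langle 2,2r_x-n\rangle$ or $\langle 2,v-2r_x\rangle$.

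\textbf{What each buys.} The paper's route is shorter and ties the dimension to the local orbital-integral count of \cite{CHL}. However, it leaves implicit why the point counts force the structure: a count of $q^{dN}$ for all $d$ excludes a torus factor, and a connected unipotent group over a perfect field is split, hence isomorphic to $\mathbb{A}^N$ as a variety. Your route is self-contained, makes $k$-rationality explicit, and identifies the dimension with the $\delta$-invariant.

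Your route also proves exactly the variety-level statement the lemma needs. The paper's wording ``represented by $\mathbb{G}_a^{N}$'' is stronger, and read as a statement about group schemes it can fail for $n=2$. Indeed, if $p<v-2r_x$ then $\varpi^p\notin R_{x,r_x}$, so $(1+\varpi)^p=1+\varpi^p$ is nontrivial in the quotient and the group is not killed by $p$. Your argument is unaffected by this.

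\textbf{One point to back with a line.} You should justify that the value semigroup of $R_{x,r}$ is exactly $\langle 2,2r-n\rangle$ and not larger. For example, $R_{x,r}$ is spanned over $\Mfo_x$ by $1,\gamma_x,\dots,\gamma_x^{r-1},\gamma_x^{r}/\pi_x,\dots,\gamma_x^{n-1}/\pi_x$. Their valuations are pairwise distinct modulo $n=\ord_\varpi(\pi_x)$, so no cancellation can produce a smaller odd valuation. For $n=2$ the same conclusion is immediate from the basis $1,\gamma_x/\pi_x^{r}$.
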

\begin{proof}
    Using Theorem \ref{thm:eqoftwostra}, consider a partial normalization $\pi':Y_\chi'\rightarrow Y_\chi$ corresponding to $T$.
    As discussed in the beginning of this section, $Y_\chi'$ corresponds bijectively to a tuple of overorders of $R_x$ for each $x\in \mathcal{S}$.
    Following Proposition \ref{prop:overorder} and Remark \ref{rmk:rx}, for $x\in\mathcal{S}'$, let $R_{x,r_x}$ be this overorder.
    
    By \cite[Exercise II.6.9.(a)]{Har77}, we have an exact sequence
    \begin{equation}\label{eq:fppf}
            0\rightarrow \prod\limits_{x\in \mathcal{S}}\left(\Mfo_{E_{x}}\right)^\times/(R_{x,r_x})^\times\rightarrow \mathrm{Pic}(Y_{\chi}')\rightarrow \mathrm{Pic}(\mathbb{P}^1_{k})\rightarrow 0 ~~~ \textit{  as fppf sheaves of groups on $\mathrm{Spec}~k$,}
                \end{equation}
    where the right map is the pullback of the normalization map. 
    We have $\mathrm{Pic}(\mathbb{P}^1_{k})(k')\cong \mathbb{Z}$ via the degree map.
    Since pullback preserves the degree, $\mathrm{Pic}(Y_{\chi}')\cong \mathbb{Z}\times \mathrm{Pic}^0(Y_{\chi}')$ so that $\prod\limits_{x\in \mathcal{S}}\left(\Mfo_{E_{x}}\right)^\times/(R_{x,r_x})^\times\cong \mathrm{Pic}^0(Y_{\chi}')$, which is smooth over $k$ (cf. Remark \ref{rmk:jac}.(2)). 

    Let $k'=\mathbb{F}_{q^d}$ so that $[k':k]=d$.
    We count the number of $k'$-points of $\mathrm{Pic}^0(Y_\chi')$ using the above isomorphism.
    By \cite[Equation 3.2]{CHL}, 
    $$\prod\limits_{x\in \mathcal{S}}\left(\#\left(\Mfo_{E_{x}}\otimes_{k} k'\right)^\times/(R_{x,r_x}\otimes_{k} k')^\times\right)= 
    \begin{cases}
        q^{\sum\limits_{x\in \mathcal{S}'}d(r_x-\frac{n+1}{2})}& \textit{ if $n\geq 3$};\\
        q^{\sum\limits_{x\in \mathcal{S}'}d(\frac{2d_x+\ord_x(a_2)-1}{2}-r_x)}& \textit{ if $n=2$}.
    \end{cases}$$
   Note that the above holds for any positive integer $d$. 
    Thus the structural theorem of linear algebraic groups yields that $\left(\Mfo_{E_{x}}\right)^\times/(R_{x,r_x})^\times$  is represented by $\mathbb{G}_a^{\sum\limits_{x\in \mathcal{S}'}(r_x-\frac{n+1}{2})}$ (if $n\geq 3$) or $\mathbb{G}_a^{\sum\limits_{x\in \mathcal{S}'}(\frac{2d_x+\ord_x(a_2)-1}{2}-r_x)}$ (if $n=2$), since it is commutative.
    Then Remark \ref{rmk:rx}.(2) yields the desired dimension formula. 
\end{proof}
\begin{theorem}\label{thm:cohomology}
    Let $\mathcal{I}$ be the set of types of $\chi$.
 Then 
 $$H_c^i(\Phi^{-1}(\chi)_{\bar{k}},\mathbb{Q}_\ell)\left(\cong  H_c^i(\overline{\mathrm{Pic}^0}(Y_\chi)_{\bar{k}},\mathbb{Q}_\ell)\right) \cong \bigoplus \limits_{T\in \mathcal{I}}H_c^i((\Phi^T)^{-1}(\chi)_{\bar{k}},\mathbb{Q}_\ell)$$
 as a $\mathrm{Gal}(\bar{k}/k)$-module. For a type $T=(\mathcal{F},(l_x)_{x\in \mathcal{S}^{\mathcal{F}}_1})$,
    $$H_c^i((\Phi^T)^{-1}(\chi)_{\bar{k}},\mathbb{Q}_\ell)\cong \begin{cases}
        \mathbb{Q}_\ell(-\frac{i}{2})&\textit{ if $n\geq 3$ and $i=2(\sum\limits_{x\in \mathcal{S}'}(\frac{n-3}{2}-l_x))$};\\
        \mathbb{Q}_\ell(-\frac{i}{2})&\textit{ if $n=2$ and $i=\sum\limits_{x\in\mathcal{S}'}(b_x-a_x-1)=2(\sum\limits_{x\in\mathcal{S}'}(\frac{2d_x+\ord_x(a_2)-1}{2}-a_x))$};\\
        0&otherwise,
    \end{cases}$$
where   we let $l_x:=-1$ if $x\in \mathcal{S}'\backslash\mathcal{S}^{\mathcal{F}}_1$ with $n\geq 3$. Here
\begin{itemize}
\item $\mathcal{S}'$ is given in \eqref{eq:defofs'} and \eqref{eq:emphas'} (see Setting \ref{settings}.(2) for $\mathcal{S}$), $\mathcal{S}_1^\mathcal{F}$ is given in Definition \ref{def:typeofchia}.(2);
\item $\mathcal{F}$ is described in Remark \ref{rmk:rx}.(1) (cf. Definition \ref{def:coherentsheafF});
\item when $n\geq 3$,  $n$ is odd (cf. Proposition \ref{thm:overorders}.(1)) and $l_x$ for $x\in \mathcal{S}_1^\mathcal{F}$ is an integer such that $0\leq l_x \leq n/2-1$ (cf. Definition \ref{def:typeofchia});
    \item when $n=2$,  $\ord_x(a_2)$ is odd (cf. Proposition \ref{thm:overorders}.(2)) and    $0\leq a_x\leq b_x \in \mathbb{Z}$   such that $a_x+b_x=2d_x+\ord_x(a_2)$ (cf. Lemma \ref{lem:affine}).    
\end{itemize}
\end{theorem}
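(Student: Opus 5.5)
The plan is to combine the stratification of Corollary \ref{cor:algstraoff-1a} (or, over $k$, Proposition \ref{prop:geomstrfork0} together with Theorem \ref{thm:eqoftwostra}) with the affine-space description of the strata in Lemma \ref{lem:affine}. This gives an affine paving of $\Phi^{-1}(\chi)$ by finitely many locally closed pieces, each defined over $k$. The cohomology of such a paving splits as a direct sum of the cohomologies of the pieces.

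First I will arrange the strata in a filtration by closed subvarieties. Order the types by the partial order on the corresponding tuples of overorders $(R_{x,r_x})_{x\in\mathcal{S}'}$ from Remark \ref{rmk:rx}.(2). The closure of $\mathrm{Pic}^0(Y_\chi')$ is $\overline{\mathrm{Pic}^0}(Y_\chi')$, pushed forward by $\pi'_*$ and closed by \cite[Lemma 3.1]{Bea99}. That closure is the union of the strata $\mathrm{Pic}^0(Y_\chi'')$ for those $Y_\chi''$ dominating $Y_\chi'$, equivalently those with larger overorders at every $x$. Choose a linear extension of this order. This produces a filtration $\emptyset=Z_0\subset Z_1\subset\dots\subset Z_N=\Phi^{-1}(\chi)$ by closed $k$-subvarieties, with each difference $Z_j\setminus Z_{j-1}$ equal to a single stratum $(\Phi^T)^{-1}(\chi)\cong\mathbb{A}^{e_T}_k$. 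Each stratum is defined over $k$ because every partial normalization is defined over $k$, as noted before Proposition \ref{prop:geomstrfork0}.

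Next, for each $j$ I will use the excision long exact sequence
\[
\cdots\to H_c^i((Z_j\setminus Z_{j-1})_{\bar k})\to H_c^i(Z_{j,\bar k})\to H_c^i(Z_{j-1,\bar k})\to\cdots,
\]
which is Galois-equivariant. The open piece has $H_c^i(\mathbb{A}^{e}_{\bar k},\mathbb{Q}_\ell)=\mathbb{Q}_\ell(-e)$ for $i=2e$ and $0$ otherwise. By induction, $H_c^*(Z_{j-1,\bar k})$ is concentrated in even degrees. So the connecting maps, which change degree by one, go between a zero group and a group in even degree and must vanish. The sequence therefore splits into short exact sequences. These short exact sequences also split Galois-equivariantly: each term is pure of weight equal to its degree (a sum of Tate twists $\mathbb{Q}_\ell(-i/2)$), and Frobenius acts semisimply by $q^{i/2}$. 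This yields the direct sum decomposition over $T\in\mathcal{I}$.

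Finally I will read off the degree $i=2e_T$ from Lemma \ref{lem:affine}. For $n\ge3$, set $l_x=-1$ for $x\in\mathcal{S}'\setminus\mathcal{S}^{\mathcal{F}}_1$. Then $\frac{n-3}{2}-l_x=\frac{n-1}{2}$, so the two sums in Lemma \ref{lem:affine} merge into $\sum_{x\in\mathcal{S}'}(\frac{n-3}{2}-l_x)$. For $n=2$ the exponent is $\sum_x(b_x-a_x-1)$ directly. The identification $H_c(\Phi^{-1}(\chi))\cong H_c(\overline{\mathrm{Pic}^0}(Y_\chi))$ is Theorem \ref{thm:spectralcorr}.

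The main obstacle is the closure relation, namely that the union of the strata in the first $j$ positions of the linear extension is closed. This needs the closure of $\mathrm{Pic}^0(Y'_\chi)$ to be contained in the union of strata with larger overorders. If that is hard to verify directly, the fallback is to check it locally through the product formula of Proposition \ref{prop:prod}. There, the invariants $r_x$ are given by conditions such as $\pi_x^{-1}(g_x^{-1}\gamma_xg_x)^{r}\in\mathfrak{gl}_n(\mathcal{O}_x)$, and these conditions are closed in the affine Springer fibers. That would give semicontinuity of the type. Alternatively, one can avoid the closure question entirely: the Grothendieck-group additivity of compactly supported Euler characteristics, combined with purity of the pieces, already forces the decomposition provided $H_c^*(\Phi^{-1}(\chi))$ is known to be pure.
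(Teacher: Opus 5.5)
Your route is the paper's: the types give a $k$-rational stratification whose strata are affine spaces by Lemma \ref{lem:affine}, and the cohomology of an affine paving splits over the cells. The paper simply cites \cite[Section 12.2]{Jan04} for this and \cite{RS77} for Galois-equivariance. Several of your steps are fine and more explicit than the paper: the filtration argument, the parity argument killing the connecting maps, and the degree bookkeeping (setting $l_x=-1$ merges the two sums). In the filtration argument only the containment $\overline{\mathrm{Pic}^0(Y'_\chi)}\subseteq\pi'_*\overline{\mathrm{Pic}^0}(Y'_\chi)$ is needed; the right-hand side is the union of strata with larger overorders, so unions over up-sets are closed.

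\textbf{The gap is the Galois-equivariant splitting.} Take the sequence $0\to\mathbb{Q}_\ell(-e)\to H^{2e}_c(Z_{j,\bar k})\to H^{2e}_c(Z_{j-1,\bar k})\to 0$. Knowing that its outer terms are sums of $\mathbb{Q}_\ell(-e)$ does not make Frobenius semisimple on the middle term.
\begin{itemize}
\item Purity only pins down eigenvalues. In continuous $\mathrm{Gal}(\bar k/k)$-representations, $\mathrm{Ext}^1(\mathbb{Q}_\ell(-e),\mathbb{Q}_\ell(-e))\cong H^1(\widehat{\mathbb{Z}},\mathbb{Q}_\ell)\neq 0$. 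A two-dimensional representation on which Frobenius acts by $\left(\begin{smallmatrix} q^e&1\\ 0&q^e\end{smallmatrix}\right)$ is pure of weight $2e$ but a non-split extension.
\item This case really occurs here: for $\#\mathcal{S}'\geq 2$, several strata have the same dimension.
\item Your fallback via additivity in the Grothendieck group has the same defect, since it only recovers the semisimplification. It also presupposes purity of $H^*_c(\Phi^{-1}(\chi)_{\bar k})$, which you have not established.
\end{itemize}

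\textbf{One way to close the gap.} Let $r_e$ be the number of types with $e_T=e$.
\begin{enumerate}
\item For each stratum $C_T$ of dimension $e$, the closure $\overline{C_T}$ is irreducible of dimension $e$ and defined over $k$. Restriction therefore gives a Galois-equivariant map $H^{2e}_c(\Phi^{-1}(\chi)_{\bar k})\to H^{2e}_c(\overline{C_T}_{\bar k})\cong\mathbb{Q}_\ell(-e)$.
\item Take the extension by zero of a generator of $H^{2e}_c(C_{T,\bar k})$ in $H^{2e}_c(Z_{j,\bar k})$, at the step $j$ where $C_T$ is added, and lift it to $H^{2e}_c(\Phi^{-1}(\chi)_{\bar k})$.
\item This lift restricts to a generator on $\overline{C_T}$, because $H^{2e}_c(C_T)\to H^{2e}_c(\overline{C_T})$ is an isomorphism in top degree.
\item It restricts to zero on the closure of every earlier $e$-dimensional stratum, since those closures lie in the closed complement $Z_{j-1}$.
\item Hence the sum of these restriction maps over all $T$ with $e_T=e$ is triangular with invertible diagonal. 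It is therefore an isomorphism $H^{2e}_c(\Phi^{-1}(\chi)_{\bar k})\cong\mathbb{Q}_\ell(-e)^{\oplus r_e}$ of Galois modules.
\end{enumerate}
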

\begin{proof}
    The second statement is a direct consequence of Lemma \ref{lem:affine}.
    By Lemma \ref{lem:affine}, the algebraic stratification of $\Phi^{-1}(\chi)$ yields an affine paving over $k$.
    Then the decomposition of $H_c^i(\Phi^{-1}(\chi)_{\bar{k}},\mathbb{Q}_\ell)$ as $\mathbb{Q}_l$-vector spaces follows from \cite[Section 12.2]{Jan04}, and the verification of this decomposition as a $\mathrm{Gal}(\bar{k}/k)$-module is from \cite[beginning of Section 2]{RS77}.
\end{proof}

Using Remark \ref{rmk:rx}.(1), we describe a combinatorially closed formula of the above theorem.

\begin{corollary}\label{cor:explicitcohomology}
Let $\mathcal{S}'=\{x_1, \dots, x_t\}$ so that $\#\mathcal{S}'=t$, the number of singularities of $Y_\chi$ (cf. \eqref{eq:emphas'}).  
\[
\textit{Let }  \begin{cases}
        n=2m+1, ~P_m(X)=1 + X + X^2 + \dots + X^m  &\textit{ if $n\geq 3$};\\
       2d_{x_j}+\ord_{x_j}(a_2)=1+2m_j\left(\geq 3\right), ~Q_j(X) = 1 + X + X^2 + \dots + X^{m_j} &\textit{ if $n=2$}. 
    \end{cases}
\]

\begin{enumerate}
    \item 
Then $H_c^{2i+1}(\Phi^{-1}(\chi)_{\bar{k}},\mathbb{Q}_\ell)=0$ for any $i$ and $H_c^{2i}(\Phi^{-1}(\chi)_{\bar{k}},\mathbb{Q}_\ell)$ is formulated below:
\[
H_c^{2i}(\Phi^{-1}(\chi)_{\bar{k}},\mathbb{Q}_\ell)\left(\cong  H_c^{2i}(\overline{\mathrm{Pic}^0}(Y_\chi)_{\bar{k}},\mathbb{Q}_\ell)\right) \cong
 \begin{cases}
        \left(\mathbb{Q}_\ell(-i)\right)^{P(i,m,t)}&\textit{ if $n\geq 3$ and $0\leq i \leq mt$};\\
        0&\textit{ if $n\geq 3$ and $i>mt$};\\
        \left(\mathbb{Q}_\ell(-i)\right)^{Q(i,m_1,\dots, m_t)}&\textit{ if $n=2$ and $i\leq\sum{m_j}$};\\
        0&\textit{ if $n=2$ and $i>\sum{m_j}$},
    \end{cases}
\]
\[
\textit{where }  \begin{cases}
      P(i,m,t)=\sum\limits_{k=0}^{\lfloor \frac{i}{m+1} \rfloor} (-1)^k \binom{t}{k} \binom{i - k(m+1) + t - 1}{t - 1}=\textit{the coefficient of }X^i \textit{ in } \left(P_m(X)\right)^t;\\
    Q(i,m_1,\dots,m_t)=\sum\limits_{A \subseteq \{1, \dots, t\}} (-1)^{\#A} \binom{i - M_A + t - 1}{t - 1}=\textit{the coefficient of }X^i \textit{ in } Q_1(X) Q_2(X) \dots Q_t(X).
    \end{cases}
\]
Here $M_A = \sum_{j \in A} (m_j + 1)$ and the binomial coefficient $\binom{n}{k}$ is defined to be $0$ if $n < k$.    

\item 
The Poincar\'e polynomial is $\begin{cases}
\left(P_m(X^2)\right)^t  &\textit{ if $n\geq 3$};\\
Q_1(X^2) Q_2(X^2) \dots Q_t(X^2) &\textit{ if $n=2$}. 
    \end{cases}$

\end{enumerate}

\end{corollary}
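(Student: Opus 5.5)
The plan is to read everything off Theorem \ref{thm:cohomology}: the compactly supported cohomology of $\Phi^{-1}(\chi)_{\bar{k}}$ is the direct sum, over types $T\in\mathcal{I}$, of a single copy of $\mathbb{Q}_\ell(-\dim)$ placed in degree $2\dim(\Phi^T)^{-1}(\chi)$. Here the dimension is the one given in Lemma \ref{lem:affine}. Odd-degree vanishing is then immediate, and every nonzero summand of $H_c^{2i}$ is $\mathbb{Q}_\ell(-i)$. So the whole proof reduces to a counting problem: for each $i$, count the types $T$ whose stratum has dimension $i$. Equivalently, we must show that the generating function $\sum_{T\in\mathcal{I}}X^{\dim(\Phi^T)^{-1}(\chi)}$ equals $P_m(X)^t$ when $n\ge 3$, and equals $\prod_j Q_j(X)$ when $n=2$. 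Substituting $X\mapsto X^2$ then gives the Poincaré polynomial in (2).

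For the count, use Remark \ref{rmk:rx}: a type is the same as an independent choice of local datum at each $x\in\mathcal{S}'$, since points of $\mathcal{S}\setminus\mathcal{S}'$ contribute nothing. The dimension in Lemma \ref{lem:affine} is a sum of local contributions, so the generating function factors as a product over $x\in\mathcal{S}'$.

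For $n=2m+1\ge3$, the local choices at $x$ are the following.
\begin{itemize}
\item $\mathcal{F}_x\cong \Mfo_x/(\pi_x^2)$, which contributes $\frac{n-1}{2}=m$.
\item $\mathcal{F}_x\cong k\oplus k$ with $0\le l_x\le m-1$ (since $l_x\le n/2-1$ and $n$ is odd), which contributes $\frac{n-3}{2}-l_x=m-1-l_x$.
\end{itemize}
The second bullet runs through the exponents $0,\dots,m-1$, and the first adds the exponent $m$. So the local factor is $1+X+\dots+X^m=P_m(X)$. This is consistent with the convention $l_x=-1$ in Theorem \ref{thm:cohomology}, and the total number of types is $(m+1)^t$, matching Remark \ref{rmk:rx}.(1).

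For $n=2$, write $2d_x+\ord_x(a_2)=2m_x+1$. The local choices are $0\le a_x\le b_x$ with $a_x+b_x=2m_x+1$, so $a_x$ ranges over $0,\dots,m_x$ and contributes $m_x-a_x$. The local factor is therefore $Q_x(X)$.

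It remains to identify the coefficient of $X^i$ in the product with the stated inclusion–exclusion sums. Write $1+X+\dots+X^{m}=(1-X^{m+1})/(1-X)$. Then
\[\prod_{j}\frac{1-X^{m_j+1}}{1-X}=\Big(\sum_{A}(-1)^{\#A}X^{M_A}\Big)\sum_{s\ge0}\binom{s+t-1}{t-1}X^s.\]
Extracting the coefficient of $X^i$ gives $Q(i,m_1,\dots,m_t)$, using the convention that the binomial vanishes for negative top entry. Specializing to all $m_j=m$ and grouping the subsets $A$ by $k=\#A$ gives $P(i,m,t)$, with the range $k\le\lfloor i/(m+1)\rfloor$. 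The vanishing of $H_c^{2i}$ for $i>mt$ (respectively $i>\sum m_j$) is then just the degree of the polynomial.

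The only real obstacle is bookkeeping. One must make sure the indexing of types in Definition \ref{def:typeofchia} and Remark \ref{rmk:typeforn=2} matches the local choices exactly, including the edge cases $l_x\le n/2-1$ forcing $l_x\le m-1$ and $a_x\le b_x$ forcing $a_x\le m_x$. One must also check that the stratum dimensions in Lemma \ref{lem:affine} agree with the degrees stated in Theorem \ref{thm:cohomology}, for instance that $\#(\mathcal{S}'\setminus\mathcal{S}_1^{\mathcal{F}})\cdot m$ matches the convention $l_x=-1$. Everything else is formal.
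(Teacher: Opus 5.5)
Your proposal is correct and follows essentially the same route as the paper's proof. Both arguments read off the odd-degree vanishing and the Tate twists from Theorem \ref{thm:cohomology}. Both then use Remark \ref{rmk:rx}.(1) together with the stratum dimensions of Lemma \ref{lem:affine} to reduce the $2i$-th Betti number to counting tuples $0\le Y_j\le m$ (resp.\ $0\le Y_j\le m_j$) with $\sum_j Y_j=i$, which is the coefficient of $X^i$ in $P_m(X)^t$ (resp.\ $\prod_j Q_j(X)$). Your explicit generating-function derivation of the inclusion--exclusion formulas simply fills in the step the paper dismisses as well known.
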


\begin{proof}
Theorem \ref{thm:cohomology} directly yields that the $(2i+1)$-th Betti number is zero.  
By Remark \ref{rmk:rx}.(1), the $2i$-th Betti number is formulated as follows:
\begin{itemize}
    \item when $n\geq 3$, it is the number of integer solutions to the equation
    $Y_1 + Y_2 + \dots + Y_t = i$
subject to the constraint
    $0 \le Y_j \le m \quad \text{for all } j = 1, \dots, t$;

    \item when $n=2$, it is the number of integer solutions to the equation
    $Y_1 + Y_2 + \dots + Y_t = i$
subject to the constraints
$0 \le Y_j \le m_j \quad \text{for each } j = 1, 2, \dots, t$.    
\end{itemize}
The first (resp. the second) is the same as the coefficient of $X^i$ in $\left(P_m(X)\right)^t$ (resp. $Q_1(X) Q_2(X) \dots Q_t(X)$). These formulas are well-known, which completes the proof. 
\end{proof}


    

\begin{remark}\label{rmk:kun}
\begin{enumerate}
\item
Theorem \ref{thm:cohomology} yields that     $H_c^i(\Phi^{-1}(\chi)_{\bar{k}},\mathbb{Q}_\ell)$ is pure. 
    That is, the eigenvalues of the action of the geometric Frobenius  $\mathrm{Frob}_{q^d}\in \mathrm{Gal}(\bar{k}/k)$ on $H_c^i(\Phi^{-1}(\chi)_{\bar{k}},\mathbb{Q}_\ell)$  have absolute value $q^{\frac{id}{2}}$ in $\mathbb{C}$, regardless of the choice of an isomorphism between $\overline{\mathbb{Q}_\ell}$ and $\mathbb{C}$.  
The purity also follows from \cite[Theorem 1]{Che24} which states the purity of $H^i_c(\mathfrak{X}_{\gamma_x,\bar{k}},\mathbb{Q}_\ell)$ for each $x$.
    
    By \cite[Section 2]{Che24}, we have $\mathfrak{X}_{\gamma_x}\cong \mathfrak{X}_{\gamma_x}^0\times \mathbb{Z}$, where $\mathfrak{X}_{\gamma_x}^0(k)$ parametrizes lattices $L\subset F_x^n$ such that $[L:\Mfo_x^n\cap L]=[\Mfo_x^n:\Mfo_x^n\cap L]$.
    \cite[page 519]{Lau06} shows that $\mathfrak{X}_{\gamma_x}^0$ is isomorphic to the quotient of $\mathfrak{X}_{\gamma_x}$ by the action of an infinite cyclic group $E_x^\times/\Mfo_{E_x}^\times$ generated by a uniformizer of $E_x$.
    Since $E_x^\times/R_x^\times=(E_x^\times/\Mfo_{E_x}^\times)\times (\Mfo_{E_x}^\times/R_x^\times)\cong \mathbb{Z}\times (\Mfo_{E_x}^\times/R_x^\times)$ and since $\mathrm{Pic}^0(Y_\chi)$ is identified with $\prod\limits_{x\in\mathcal{S}}(\Mfo_{E_x}^\times/R_x^\times)$ by the paragraph following  \eqref{eq:fppf}, the following homeomorphism 
    $$\mathrm{Pic}^0(Y_\chi)\times^{\prod\limits_{x\in \mathcal{S}}(E_x^\times/R_x^\times)}\prod\limits_{x\in \mathcal{S}'}\mathfrak{X}_{\gamma_x}\rightarrow \overline{\mathrm{Pic}^0}(Y_\chi) ~~~~   \textit{  in \cite[Proposition 4.4.3]{Yun16}}$$
     yields that $\Phi^{-1}(\chi)\left(\cong \overline{\mathrm{Pic}^0}(Y_\chi)\right)\cong\prod\limits_{x\in \mathcal{S}'}\mathfrak{X}_{\gamma_x}^0$, so that $H^i_c(\Phi^{-1}(\chi)_{\bar{k}},\mathbb{Q}_\ell)$ is pure.
\item
When $\mathcal{S}'$ is a singleton $\{x\}$, Corollary \ref{cor:explicitcohomology} combined with the above (1) yields that 
$$H_c^i(\overline{\mathrm{Pic}^0}(Y_\chi)_{\bar{k}},\mathbb{Q}_\ell)\cong H_c^i(\mathfrak{X}_{\gamma_x,\bar{k}}^0,\mathbb{Q}_\ell)\cong H_c^i(\mathbb{P}_{\bar{k}}^{m_x},\mathbb{Q}_\ell).$$

For general $\mathcal{S}'$, by using the above formula for each $H_c^i(\mathfrak{X}_{\gamma_{x_j},\bar{k}}^0,\mathbb{Q}_\ell)$, we have the following observation:
$$H_c^{i}(\Phi^{-1}(\chi)_{\bar{k}},\mathbb{Q}_\ell)\cong H_c^i(\overline{\mathrm{Pic}^0}(Y_\chi)_{\bar{k}},\mathbb{Q}_\ell)\cong H_c^i(\prod\limits_{x\in \mathcal{S}'}\mathbb{P}_{\bar{k}}^{m_x},\mathbb{Q}_\ell), \textit{ where } m_x=\begin{cases}
    \frac{n-1}{2}&\textit{ if }n\geq3;\\
    \frac{2d_x+\ord_x(a_2)-1}{2}&\textit{ if }n=2.
\end{cases}$$
By this equation, 
$H_c^i(\Phi^{-1}(\chi)_{\bar{k}},\mathbb{Q}_\ell)$ is understood via the Künneth formula.
\end{enumerate}
\end{remark}

\bibliographystyle{alpha}
\bibliography{References}
\end{document}